\documentclass[review]{elsarticle}
\usepackage{srcltx}
\usepackage{eurosym}
\usepackage{mathtools}
\usepackage{amsmath}
\usepackage{amsfonts}
\usepackage{amssymb}
\usepackage{amsthm}
\usepackage{graphicx}
\usepackage{mathrsfs}
\usepackage{xcolor}
\usepackage{exscale}
\usepackage{latexsym}
\usepackage[colorlinks,plainpages=true,pdfpagelabels,hypertexnames=true,colorlinks=true,pdfstartview=FitV,linkcolor=blue,citecolor=red,urlcolor=black]{hyperref}
\PassOptionsToPackage{unicode}{hyperref}
\PassOptionsToPackage{naturalnames}{hyperref}
\usepackage{enumerate}
\usepackage[shortlabels]{enumitem}
\usepackage{bookmark}
\usepackage{wasysym}
\usepackage{esint}
\usepackage[ddmmyyyy]{datetime}
\usepackage[margin=2.2cm]{geometry}
\parskip = 0.0in

\headsep=0.0in
\makeatletter
\g@addto@macro\normalsize{%
  \setlength\abovedisplayskip{4pt}
  \setlength\belowdisplayskip{4pt}
  \setlength\abovedisplayshortskip{4pt}
  \setlength\belowdisplayshortskip{4pt}
}
\numberwithin{equation}{section}
\everymath{\displaystyle}
\newtheorem{theorem}{Theorem}[section]
\newtheorem{lemma}[theorem]{Lemma}
\newtheorem{corollary}[theorem]{Corollary}
\newtheorem{proposition}[theorem]{Proposition}

\newtheorem{definition}[theorem]{Definition}
\newtheorem{remark}[theorem]{Remark}        
  
\numberwithin{equation}{section}
\def\aa{\mathcal{A}}
\newcommand{\bb}{\mathcal{B}}
\newcommand{\bbb}{\overline{\mathcal{B}}}
\newcommand{\lamot}{\La_0,\La_1}

\newcommand{\pa}{\partial}
\newcommand{\ddt}[1]{\frac{d#1}{dt}}
\newcommand{\dds}[1]{\frac{d#1}{ds}}

\newcommand{\vlh}{\lsbt{v}{\la,h}}
\newcommand{\vl}{\lsbt{v}{\la}}
\newcommand{\elam}{\lsbo{E}{\lambda}}
\newcommand{\bff}{{\bf f}}

\newcommand{\mm}{\mathcal{M}}
\newcommand{\pp}{p(\cdot)}
\newcommand{\qq}{q(\cdot)}

\newcommand{\sss}{s(\cdot)}

\newcommand{\modp}{\om_{\pp}}
\newcommand{\modq}{\om_{\qq}}



\newcommand{\plog}{p^{\pm}_{\log}}
\newcommand{\qlog}{q^{\pm}_{\log}}
\newcommand{\rlog}{r^{\pm}_{\log}}
\newcommand{\slog}{s^{\pm}_{\log}}
\newcommand{\logh}{\log^{\pm}}
\newcommand{\tQ}{\tilde{Q}}
\newcommand{\hQ}{\hat{Q}}

\newcommand{\tom}{\tilde{\Om}}

\newcommand{\tx}{\tilde{x}}

\newcommand{\tlt}{\tilde{t}}

\newcommand{\tz}{\tilde{z}}

\makeatletter
\newcommand{\vo}{\vec{o}\@ifnextchar{^}{\,}{}}
\makeatother
\def\Yint#1{\mathchoice
    {\YYint\displaystyle\textstyle{#1}}%
    {\YYint\textstyle\scriptstyle{#1}}%
    {\YYint\scriptstyle\scriptscriptstyle{#1}}%
    {\YYint\scriptscriptstyle\scriptscriptstyle{#1}}%
      \!\iint}
\def\YYint#1#2#3{{\setbox0=\hbox{$#1{#2#3}{\iint}$}
    \vcenter{\hbox{$#2#3$}}\kern-.50\wd0}}
\def\longdash{-\mkern-9.5mu-} 
\def\tiltlongdash{\rotatebox[origin=c]{18}{$\longdash$}}
\def\fiint{\Yint\tiltlongdash}
\def\Xint#1{\mathchoice
    {\XXint\displaystyle\textstyle{#1}}%
    {\XXint\textstyle\scriptstyle{#1}}%
    {\XXint\scriptstyle\scriptscriptstyle{#1}}%
    {\XXint\scriptscriptstyle\scriptscriptstyle{#1}}%
      \!\int}
\def\XXint#1#2#3{{\setbox0=\hbox{$#1{#2#3}{\int}$}
    \vcenter{\hbox{$#2#3$}}\kern-.50\wd0}}
\def\hlongdash{-\mkern-13.5mu-}
\def\tilthlongdash{\rotatebox[origin=c]{18}{$\hlongdash$}}
\def\hint{\Xint\tilthlongdash}
\makeatletter
\def\namedlabel#1#2{\begingroup
   \def\@currentlabel{#2}%
   \label{#1}\endgroup
}
\makeatother

\makeatletter
\def\ps@pprintTitle{%
 \let\@oddhead\@empty
 \let\@evenhead\@empty
 \def\@oddfoot{}%
 \let\@evenfoot\@oddfoot}
\makeatother
\makeatletter
\newcommand{\rmh}[1]{\mathpalette{\raisem@th{#1}}}
\newcommand{\raisem@th}[3]{\hspace*{-1pt}\raisebox{#1}{$#2#3$}}
\makeatother

\newcommand{\lsb}[2]{#1_{\rmh{-3pt}{#2}}}
\newcommand{\lsbo}[2]{#1_{\rmh{-1pt}{#2}}}
\newcommand{\lsbt}[2]{#1_{\rmh{-2pt}{#2}}}
\newcommand{\redref}[2]{\texorpdfstring{\protect\hyperlink{#1}{\textcolor{black}{(}\textcolor{red}{#2}\textcolor{black}{)}}}{}}
\newcommand{\redlabel}[2]{\hypertarget{#1}{\textcolor{black}{(}\textcolor{red}{#2}\textcolor{black}{)}}}
\newcounter{desccount}
\newcommand{\descitem}[2]{\item[#1] \refstepcounter{desccount}\label{#2}}
\newcommand{\descref}[2]{\hyperref[#1]{\textnormal{\textcolor{black}{(}\textcolor{blue}{\bf #2}\textcolor{black}{)}}}}

\newcommand{\dref}[2]{\hyperref[#1]{\textcolor{black}{(}\textcolor{blue}{\bf #2}\textcolor{black}{)}}}

\newcommand{\tvh}{\tilde{v}_h}

\newcommand{\tg}{\tilde{g}}

\newcommand{\tv}{\tilde{v}}

\newcommand{\ty}{\tilde{y}}


\newcommand{\mfx}{\mathfrak{x}}

\newcommand{\mfz}{\mathfrak{z}}


\newcommand{\mfs}{\mathfrak{s}}
\newcommand{\mft}{\mathfrak{t}}


\newcommand{\mfi}{\mathfrak{X}}


\newcommand{\mcf}{\mathcal{F}}
\newcommand{\mcg}{\mathcal{G}}

\newcommand{\mci}{\mathcal{I}}




\newcommand{\bm}{{\bf M}}
\newcommand{\br}{{\bf R}}
\newcommand{\bs}{{\bf S}}

\newcommand\RR{\mathbb{R}}

\newcommand\NN{\mathbb{N}}
\newcommand{\al}{\alpha}
\newcommand{\be}{\beta}
\newcommand{\ga}{\gamma}
\newcommand{\de}{\delta}
\newcommand{\ve}{\varepsilon}

\newcommand{\tht}{\theta}
\newcommand{\ep}{\epsilon}
\newcommand{\ka}{\kappa}

\newcommand{\om}{\omega}
\newcommand{\la}{\lambda}
\newcommand{\vt}{\vartheta}
\newcommand{\Ga}{\Gamma}

\newcommand{\Th}{\Theta}

\newcommand{\Om}{\Omega}

\newcommand{\La}{\Lambda}
\newcommand{\tTh}{{\Upsilon}}
\DeclareMathOperator{\dv}{div}
\DeclareMathOperator{\spt}{spt}

\DeclareMathOperator{\diam}{diam}

\newcommand{\avg}[2]{ \left\langle{#1}\right\rangle_{#2}}
\newcommand{\iprod}[2]{\langle #1 \ ,  #2\rangle}

\newcommand{\abs}[1]{\left| #1\right|}
\newcommand{\gh}[1]{\left( #1\right)}
\newcommand{\mgh}[1]{\left\{ #1\right\}}
\newcommand{\bgh}[1]{\left[ #1\right]}

\newcommand{\lbr}[1][(]{\left#1}
\newcommand{\rbr}[1][)]{\right#1}

\newcommand{\avgs}[2]{\lsbo{\lbr #1 \rbr}{#2}}
\newcommand{\avgsnoleft}[2]{\lsbo{( #2 )}{#1}}
\newcommand{\txt}[1]{\qquad \text{#1} \qquad}


\newcommand{\htq}{\hat{Q}}

\newcommand{\htb}{\hat{B}}

\newcommand{\htr}{\hat{r}}



\newcommand{\scaletexp}[2]{-1+d}

\newcommand{\scalex}[2]{#1^{-\frac{1}{p(#2)}+\frac{d}{2}}}
\newcommand{\scalet}[2]{#1^{-1+d}}

\newcommand{\scalexn}[2]{#1^{-\frac{n}{p(#2)}+\frac{nd}{2}}}

\newcommand{\nscalex}[2]{#1^{\frac{1}{p(#2)}-\frac{d}{2}}}
\newcommand{\nscalet}[2]{#1^{1-d}}
\newcommand{\omt}{ \Om \times (-T,T)}
\newcommand{\omtbr}{\lbr \Om \times (-T,T)\rbr}
\newcommand{\ov}{\overline{V}}
\newcommand{\cc}{\mathcal{D}}

\newcommand{\zv}{\zeta_{\ve}}
\newcommand{\vh}{v_h}
\newcommand{\integral}[3]{\int \hspace{-0.3cm} \int_{#1} #2 \hspace{0.1cm} #3}

\newcommand{\mint}[3]{\fiint_{#1} #2 \hspace{0.1cm} #3}

\newcommand{\Norm}[1]{\left|\hspace{-0.2mm}\left| #1 \right|\hspace{-0.2mm}\right|}

\newcommand{\OO}{\Omega}

\newcommand{\F}{\mathcal{F}}
\newcommand{\G}{\mathcal{G}}
\newcommand{\I}{\mathcal{I}}
\newcommand{\J}{\mathcal{J}}

\newcounter{whitney}
\refstepcounter{whitney}

\newcounter{ineqcounter}
\refstepcounter{ineqcounter}
\usepackage[titletoc,toc,page]{appendix}

\begin{document}

\begin{frontmatter}

\title{End point gradient estimates for quasilinear parabolic equations with variable exponent growth on nonsmooth domains}

\author[myaddress]{Karthik Adimurthi\corref{mycorrespondingauthor}\tnoteref{thanksfirstauthor}}
\cortext[mycorrespondingauthor]{Corresponding author}
\ead{karthikaditi@gmail.com and kadimurthi@snu.ac.kr}
\tnotetext[thanksfirstauthor]{Supported by the National Research Foundation of Korea grant NRF-2015R1A2A1A15053024.}

\author[myaddress,myaddresstwo]{Sun-Sig Byun\tnoteref{thankssecondauthor}}
\ead{byun@snu.ac.kr}
\tnotetext[thankssecondauthor]{Supported by the National Research Foundation of Korea grant  NRF-2015R1A4A1041675. }

\author[myaddressthree]{Jung-Tae Park\tnoteref{thanksthirdauthor}}
\ead{ppark00@kias.re.kr}
\tnotetext[thanksthirdauthor]{Supported by the National Research Foundation of Korea grant NRF-2017R1C1B1010966.}

\address[myaddress]{Department of Mathematical Sciences, Seoul National University, Seoul 08826, Korea.}
\address[myaddresstwo]{Research Institute of Mathematics, Seoul National University, Seoul 08826, Korea.}
\address[myaddressthree]{Korea Institute for Advanced Study, Seoul 02455, Korea.}

\begin{abstract}
In this paper, we study quasilinear parabolic equations  with the nonlinearity structure modeled after the $p(x,t)$-Laplacian on nonsmooth domains. The main goal is to obtain end point Calder\'on-Zygmund type estimates in the variable exponent setting. In a recent work \cite{byun2016nonlinear}, the estimates  obtained  were strictly above the natural exponent $p(x,t)$ and hence there was a gap between the natural energy estimates and the estimates above $p(x,t)$ (see \eqref{energy} and \eqref{byunok}).  Here, we bridge this gap to obtain the end point case of the estimates obtained in \cite{byun2016nonlinear}. 
To this end, we make use of the parabolic Lipschitz truncation developed in \cite{KL} and  obtain significantly  improved a priori estimates below the natural exponent with stability of the constants. An important feature of the techniques used here is that we make use of the unified intrinsic scaling introduced in \cite{adimurthi2018sharp},  which enables us to handle both the singular and degenerate cases simultaneously.
\end{abstract}

\begin{keyword}
Quasilinear parabolic equations, Calderon-Zygmund theory, variable exponent spaces, unified intrinsic scaling.
 \MSC[2010] 35K59\sep 35B65\sep 35R05\sep 46F30. 
\end{keyword}

\end{frontmatter}

\tableofcontents

\section{Introduction}
Calder\'on-Zygmund theory was first developed for the  Poisson equation in \cite{CZ}, which related the integrability of the gradient  of the solution for the Poisson equation with the associated  data. This represented the starting point of obtaining a priori estimates in Sobolev spaces for elliptic and parabolic equations. Since we are interested in Calder\'on-Zygmund theory for parabolic equations in this paper, we shall discuss the history of the problem only for parabolic equations and refer the reader to \cite{adimurthi2017sharp} and references therein for the elliptic counterpart.

\emph{All the estimates mentioned in this introduction are quantitative in nature, but to avoid being too technical, we only recall the qualitative nature of the bounds. This is sufficient to highlight the nature of the results that we will prove in this paper. }

The starting point of Calder\'on-Zygmund theory for quasilinear parabolic equations was developed in \cite{AM2}, where they considered the following problem:
\[
 \begin{array}{rcll}
  u_t - \dv (a(x,t)|\nabla u|^{p-2} \nabla u) &=& - \dv (|\bff|^{p-2} \bff) & \quad \text{in} \ \omt,
 \end{array}
\]
with $a(x,t) \in \text{VMO}$ and $p > \frac{2n}{n+2}$, proving 
\[
 |\bff| \in L^{q}_{loc}\omtbr \Longrightarrow |\nabla u| \in L^q_{loc}\omtbr \qquad \text{for all} \ q >p. 
\]	

After this pioneering work, there have been numerous publications which extended these estimates to other quasilinear parabolic equations with constant $p$-growth. In \cite{bogelein2014global}, the authors improved the estimate in \cite{AM2} to obtain global a priori estimates (with non homogeneous boundary data) and proved
\[
 |\bff| \in L^{q}\lbr \Om \times (-T+\de, T)\rbr \Longrightarrow |\nabla u| \in L^q\lbr \Om \times (-T+\de, T)\rbr \qquad \text{for all} \ q >p \ \text{and some}\  \de \in (0,2T).
\]
This was subsequently extended in \cite{BOS1} to prove global a priori estimates for more general nonlinear structures satisfying a small BMO condition and Reifenberg-flat domains (see Section \ref{two} for the precise definitions).

In this paper, we are interested in obtaining Calder\'on-Zygmund type bounds for the problem
\begin{equation}
 \label{basic_pde}
\left\{ \begin{array}{rcll}
  u_t - \dv \aa(x,t,\nabla u) &=& -\dv (|\bff|^{p(x,t)-2} \bff) & \quad \text{in} \ \omt,\\
  u &=& 0 & \quad \text{on} \ \partial \Om \times (-T,T).
 \end{array}\right. 
\end{equation}
Here,  the quasilinear operator $\aa(x,t,\nabla u)$ is modeled after well known $p(x,t)$-Laplacian operator having the  form $ |\nabla u|^{p(x,t) - 2} \nabla u$ with $p(\cdot) > \frac{2n}{n+2}$. For more on the importance of variable exponent problems, see \cite{AS,CLR,HU,RR,Ru,VVZ} and the references therein.

In a recent paper \cite{baroni2014calderon}, the authors were able to show
\[
 |\bff|^{\pp} \in L^{q}_{loc}\lbr \Om \times (-T, T)\rbr \Longrightarrow |\nabla u|^{\pp} \in L^q_{loc}\lbr \Om \times (-T, T)\rbr \quad \text{for all} \ 1<q<\infty.
\]
This was subsequently improved to a global estimate in \cite{byun2016nonlinear}, where they proved
\begin{equation}
\label{byunok}
 |\bff|^{\pp} \in L^{\qq}\lbr \Om \times (-T, T)\rbr \Longrightarrow |\nabla u|^{\pp} \in L^{\qq}\lbr \Om \times (-T, T)\rbr \quad \text{for all} \ 1<q^- \leq \qq \leq q^+ <\infty.
\end{equation}
In particular, they could not take $q^- = 1$.

On the other hand, from the definition of weak solution, it is easy to see that the following energy-type estimate holds:
\begin{equation}
 \label{energy}
 |\bff|^{\pp} \in L^{1}\lbr \Om \times (-T, T)\rbr \Longrightarrow |\nabla u|^{\pp} \in L^{1}\lbr \Om \times (-T, T)\rbr.
\end{equation}

Comparing \eqref{byunok} and \eqref{energy}, it seems reasonable to expect that \eqref{byunok} should hold with $1 \leq q^-\leq \qq \leq q^+ < \infty$, i.e., \emph{it should be possible to take $q^- =1$}. 

In this paper, we  prove that we can indeed take $q^- = 1$ in \eqref{byunok}. In order to do this, we will obtain improved estimates below the natural exponent $\pp$ using the method of parabolic Lipschitz truncation developed in the seminal paper \cite{KL}, as well as the unified intrinsic scaling of \cite{adimurthi2018sharp}.

In order to prove our results,  we need to impose some restrictions on the variable exponent $p(x,t)$, on the nonlinear structure $\aa(x,t,\nabla u)$ as well as on the boundary of the domain $\partial \Om$. These restrictions will be described in detail in Section \ref{two}.

The plan of the paper is as follows: In Section \ref{two}, we collect all assumptions that will be needed on the structure of the nonlinearity $\aa$, on the domain $\Om$ and on the variable exponent $\pp$. In Section \ref{Weak solution}, we define the notion of weak solutions and collect some of their well known properties. In Section \ref{three}, we state the main results of this paper. In Section \ref{four}, we collect all the preliminary results and well known lemmas that will be needed in subsequent parts of the paper. In Section \ref{four-two}, we describe the approximations that will be made along the way. In Section \ref{five} and Section \ref{six}, we prove crucial difference estimates below the natural exponent for energy solutions. In Section \ref{eight}, we demonstrate some important covering arguments. In Section \ref{nine}, the proof of the main theorems will be provided. Finally in Appendix \ref{lipschitz_truncation} and Appendix \ref{lipschitz_truncation_B}, we will describe the construction of test functions having Lipschitz regularity which will be needed to prove the estimates in Section \ref{five} and Section \ref{six}, respectively.


\section{Regularity assumptions and notation}
\label{two}

In this section, we shall collect all the structure assumptions as well as recall several useful lemmas that are already available in existing literature. 
 
 \subsection{Metrics needed}
 Let us first collect a few metrics on $\RR^{n+1}$ that will be used throughout the paper.
 \begin{definition}
 \label{parabolic_metric}
 We define the parabolic metric $d_p$ on $\RR^{n+1}$ as follows: Let $z_1 = (x_1,t_1)$ and $z_2 = (x_2,t_2)$ be any two points on $\RR^{n+1}$, then 
 \begin{equation*}
 \label{par_met}
 d_p(z_1,z_2) := \max \mgh{|x_1-x_2|, \sqrt{|t_1-t_2|}}.
 \end{equation*}

 \end{definition}

 Since we will use intrinsically scaled cylinders where the scaling depends on the center of the cylinder, we will also need to consider the following localized parabolic metric:
 \begin{definition}
 \label{loc_parabolic_metric}
Given a function $1 < \pp < \infty$, some fixed point $z= (x,t) \in \RR^{n+1}$ and any $\tau > 0$, $d > 0$, we define the localized parabolic metric $d_z^{\tau,d}$ as follows: Let $z_1 = (x_1,t_1)$ and $z_2 = (x_2,t_2)$ be any two points on $\RR^{n+1}$, then 
 \begin{equation*}
 \label{loc_par_met}
 d_z^{\tau,d}(z_1,z_2) := \max \mgh{\nscalex{\tau}{z}|x_1-x_2|, \sqrt{\nscalet{\tau}{z}|t_1-t_2|}}.
 \end{equation*}
 \end{definition}
\subsection{Structure of the variable exponent}
\label{exponent_structure} 
 
\begin{definition}
\label{definition_p_log}
 We say that, a bounded measurable function $\pp : \RR^{n+1} \rightarrow \RR$  belongs to the $\log$-H\"older class $\logh$, if the following conditions are satisfied:
 \begin{itemize}
  \item There exist constants $p^-$ and $p^+$ such that $1< p^- \leq p(z) \leq p^+ < \infty$ for every $z \in \RR^{n+1}$.
  \item $ |p(z_1)  - p(z_2)| \leq \frac{L}{- \log |z_1-z_2|}$ holds  for every $ z_1,z_2 \in \RR^{n+1}$ with $ d_p(z_1,z_2) \leq \frac12 $ and for some  $L>0$.
 \end{itemize}
 
\end{definition}

\begin{remark}\label{remark_def_p_log} We remark that  $\pp$ is log-H\"{o}lder continuous in $\RR^{n+1}$ if and only if there is a nondecreasing continuous function ${\modp} : [0,\infty) \rightarrow [0,\infty)$ such that 
\begin{itemize}
 \item $\lim_{r\rightarrow 0} \modp(r) = 0$ and $|p(z_1)- p(z_2)| \leq \modp(d_p(z_1,z_2))$ for every $z_1,z_2 \in \RR^{n+1}$.
 \item $\modp(r) \log \lbr \frac{1}{r} \rbr \leq {L}$ holds for all $ 0< r \leq \frac12.$
\end{itemize}
The function $\modp$ is called the modulus of continuity of the variable exponent $\pp$. 
 \end{remark}

\subsection{Structure of the domain}
 The domain that we consider may be nonsmooth but should satisfy some regularity condition. This condition would essentially say that at each boundary point and every scale, we require the boundary of the domain to be between two hyperplanes separated by a distance  proportional to the scale.  

\begin{definition}
\label{reif_flat}
Given any $\ga \in (0,1)$ and any $\bs_0 >0$, we say that $\Om$ is $(\ga,\bs_0)$-Reifenberg flat domain if for every $x_0 \in \pa \Om$ and every $r \in (0,\bs_0]$, there exists a system of coordinates $\{y_1,y_2,\ldots,y_n\}$ (possibly depending on $x_0$ and $r$) such that in this coordinate system, $x_0 =0$ and 
\[
B_r(0) \cap \{y_n > \ga r\} \subset B_r(0) \cap \OO \subset B_r(0) \cap \{y_n > -\ga r\}.
\]
\end{definition}

The class of Reifenberg flat domains are standard in obtaining Calder\'on-Zygmund type estimates, in the elliptic case, see \cite{AP2,BO,BO1,BW-CPAM} and references therein, whereas for the parabolic case, see \cite{Bui1,MR3461425,BOS1,MR2836359} and the references therein. 

\begin{definition}
\label{measure_def}
We say that a bounded domain $\Om$ is said to satisfy a uniform measure density condition with a constant $m_e >0$ if for every $x \in \overline{\Om}$ and every $r >0$, there holds
\[
|\Om^c \cap B_r(x)| \geq m_e |B_r(x)|.
\]
\end{definition}

From the definition of $(\ga,\bs_0)$-Reifenberg flat domains, it is easy to see that the following property holds:
\begin{lemma}
\label{measure_density}
Let $\ga \in (0,1/8)$ and $\bs_0>0$ be given and suppose that $\Om$ is a $(\ga,\bs_0)$-Reifenberg flat domain. Then the following measure density conditions hold:
\begin{equation}\label{measure_one}\begin{array}{c}
\sup_{y \in \Om} \sup_{r \leq \bs_0} \frac{|B_r(y)|}{|B_r(y) \cap \Om|} \leq \lbr \frac{2}{1-\ga} \rbr^n \leq \lbr \frac{16}{7} \rbr^n, \\
\inf_{y \in \pa\Om} \inf_{r \leq \bs_0} \frac{|\Om^c \cap B_r(y)|}{|B_r(y)|} \geq \lbr \frac{1-\ga}{2} \rbr^n \geq \lbr \frac7{16} \rbr^n.
\end{array}\end{equation}
\end{lemma}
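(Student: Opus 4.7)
The plan is to establish both parts of \eqref{measure_one} by the same inscribed-ball technique: in each case I will exhibit a Euclidean ball of radius comparable to $\tfrac{(1-\ga)r}{2}$ sitting inside the relevant set and then compare volumes. The quantitative endpoints $(7/16)^n$ and $(16/7)^n$ follow once the proportional bound $\bgh{\tfrac{1-\ga}{2}}^n$ is established, since $\ga \leq 1/8$ gives $\tfrac{1-\ga}{2} \geq \tfrac{7}{16}$ by monotonicity, and correspondingly $\tfrac{2}{1-\ga} \leq \tfrac{16}{7}$.

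For the exterior density (the second inequality), I would fix $y \in \pa\Om$ and $r \leq \bs_0$, and pass via Definition \ref{reif_flat} to local coordinates in which $y=0$ and $B_r(0) \cap \Om \subset B_r(0) \cap \{y_n > -\ga r\}$. Complementing this inclusion gives $B_r(0) \cap \{y_n \leq -\ga r\} \subset \Om^c$. Inside this lower strip I would inscribe the ball of radius $\tfrac{(1-\ga)r}{2}$ centered at $p = -\tfrac{(1+\ga)r}{2}\,e_n$: its upper face touches $\{y_n = -\ga r\}$ and $|p| = \tfrac{(1+\ga)r}{2}$ keeps the whole ball inside $B_r(0)$. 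A direct volume comparison then yields $|\Om^c \cap B_r(y)| \geq \lbr \tfrac{1-\ga}{2} \rbr^n |B_r(y)|$.

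For the interior density (the first inequality), I would fix $y \in \Om$ and $r \leq \bs_0$, and split on $d := \dist(y,\pa\Om)$. When $d \geq \tfrac{(1-\ga)r}{2}$ the ball $B_{(1-\ga)r/2}(y)$ is contained in $\Om \cap B_r(y)$ by definition of the distance, and the bound is immediate. When $d < \tfrac{(1-\ga)r}{2}$, I would select a nearest boundary point $y^* \in \pa\Om$ with $|y-y^*|=d$ and apply Reifenberg flatness at $y^*$ at scale $r$ to obtain $B_r(y^*) \cap \{y_n > \ga r\} \subset \Om$ in the associated coordinates. I would then place an inscribed ball of radius $\tfrac{(1-\ga)r}{2}$ along the vertical axis of these coordinates, at a height near $\tfrac{(1+\ga)r}{2}$ above $y^*$, so that it fits simultaneously inside $B_r(y)$, inside $B_r(y^*)$, and above $\{y_n = \ga r\}$; volume comparison again yields $|B_r(y) \cap \Om| \geq \lbr \tfrac{1-\ga}{2} \rbr^n |B_r(y)|$.

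The main obstacle will be the second subcase of the interior estimate, where verifying the simultaneous three-fold containment of the candidate inscribed ball requires exploiting the fact that the tangential displacement of $y$ from $y^*$ is bounded by $d < \tfrac{(1-\ga)r}{2}$, together with the smallness $\ga \leq 1/8$, so that the Reifenberg cap $B_r(y^*) \cap \{y_n > \ga r\}$ leaves just enough room to accommodate a ball of the claimed radius with its center in the interior of $B_r(y)$. Once this inscribed ball has been exhibited in both parts, the statement reduces to elementary volume ratios and the monotonicity of $\tau \mapsto \tfrac{1-\tau}{2}$ and $\tau \mapsto \tfrac{2}{1-\tau}$ evaluated at $\ga = 1/8$.
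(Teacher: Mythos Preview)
The paper itself gives no proof of this lemma beyond the phrase ``it is easy to see,'' so there is nothing to compare your approach against. Your treatment of the exterior density estimate and of Case~1 of the interior estimate is correct.

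The gap is in Case~2 of the interior estimate. Once you pass to Reifenberg coordinates at $y^*$ at scale $r$, a ball of radius $\rho=\tfrac{(1-\ga)r}{2}$ satisfying both $B_\rho(q)\subset B_r(y^*)$ and $B_\rho(q)\subset\{y_n>\ga r\}$ is forced to have $|q|\le\tfrac{(1+\ga)r}{2}$ and $q_n\ge\tfrac{(1+\ga)r}{2}$, hence $q=\tfrac{(1+\ga)r}{2}\,e_n$ exactly. For this $q$, a direct computation gives
\[
|q-y|^2=d^2+\Bigl(\tfrac{(1+\ga)r}{2}\Bigr)^2-(1+\ga)r\,y_n,
\]
so $B_\rho(q)\subset B_r(y)$ requires $y_n\ge \tfrac{d^2}{(1+\ga)r}$. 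The single-scale Reifenberg information at $y^*$ (together with $B_d(y)\subset\Om$) only yields $y_n\ge d-\ga r$, and when $d$ is small---for instance below the smaller root of $d^2-(1+\ga)rd+\ga(1+\ga)r^2$, which at $\ga=1/8$ is about $0.14\,r$---this is strictly weaker than what you need. So the ``tangential displacement bounded by $d$'' together with $\ga\le 1/8$ does not by itself close the argument: the three-fold containment can fail for the stated radius.

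Two ways to repair this. First, for the purposes of the paper only a uniform constant is ever used, so you may accept a smaller inscribed ball: using $B_{(1+\ga)r/2}(y^*)\subset B_r(y)$ (which holds since $|y-y^*|=d<\tfrac{(1-\ga)r}{2}$) and inscribing inside $B_{(1+\ga)r/2}(y^*)\cap\{y_n>\ga r\}$ gives radius $\tfrac{(1-\ga)r}{4}$ and hence the bound $\bigl(\tfrac{4}{1-\ga}\bigr)^n$, which is enough for every application in the paper. Second, if you want the exact constant $\bigl(\tfrac{2}{1-\ga}\bigr)^n$, you must use the Reifenberg condition at a second, smaller scale (comparable to $d$) to force $y$ to lie nearly along the normal direction at $y^*$; this is a standard multi-scale normal-stability argument for Reifenberg domains, but it is genuinely additional work and not what your sketch describes.
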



\subsection{Structure of the nonlinearity \texorpdfstring{$\aa$}.}
\label{nonlinear_structure}
We first assume that $\aa(\cdot,\cdot,\cdot)$ is a Carath\'eodory function in the sense:
\begin{gather}
(x,t) \mapsto \aa(x,t,\zeta) \ \text{is measurable for every } \ \zeta \in \RR^n \nonumber, \\
\zeta \mapsto\aa(x,t,\zeta) \ \text{is continuous for almost every } \ (x,t) \in \RR^n \times \RR \nonumber.
\end{gather}
Let $\mu \in [0,1]$ be given, then there exist two positive constants $\La_0,\La_1$ such that the following holds for almost every $x \in \Om$ and every $\zeta, \eta \in \RR^n$, 
\begin{gather}
(\mu^2 + |\zeta|^2)^{\frac12} |D_{\zeta} \aa(x,t,\zeta)| + |\aa(x,t,\zeta)| \leq \La_1 (\mu^2 + |\zeta|^2)^{\frac{p(x,t) -1}{2}} \label{abounded}, \\
(\mu^2 + |\zeta|^2 )^{\frac{p(x,t)-2}{2}} |\eta|^2 \La_0 \leq \iprod{D_{\zeta}\aa(x,t,\zeta)\eta}{\eta} \label{aellipticity}.
\end{gather}
%
%
%
%
We point out that from \eqref{aellipticity}, one can derive the following monotonicity bound: 
\begin{equation}
 \label{monotonicity}
 \iprod{\aa(x,t,\zeta) - \aa(x,t,\eta)}{\zeta - \eta} \geq \tilde{\La}_0 (\mu^2 + |\zeta|^2 + |\eta|^2)^{\frac{p(x,t)-2}{2}} |\zeta- \eta|^2,
\end{equation}
where $\tilde{\La}_0 = \tilde{\La}_0(\La_0,n,p^+,p^-)>0$.  By inserting $\eta=0$ into \eqref{monotonicity}, we also have the following coercivity bound:
\begin{equation*}\label{coercivity}
 \tilde{\La}_2 |\zeta|^{p(x,t)} \leq \iprod{\aa(x,t,\zeta)}{\zeta} + \tilde{\La}_1,
\end{equation*}
where $\tilde{\La}_1 = \tilde{\La}_1 (\La_1,\La_0,p^+,p^-,n)>0$ and $\tilde{\La}_2 = \tilde{\La}_2(\La_1,\La_0,p^+,p^-,n)>0$.

\subsection{Smallness assumption}
\label{smallness_assumption}
In order to prove the main results, we need to assume a smallness condition satisfied by  $(\pp,\aa,\Om)$. 
\begin{definition}\label{further_assumptions}
 Let $\ga\in (0,1/8)$ and $\bs_0>0$ be given, we then say $(\pp,\aa,\Om)$ is $(\ga,\bs_0)$-vanishing  if the following three structure conditions are satisfied:
 \begin{description}[leftmargin=*]
  \item[(i) Assumption on $\pp$:] The variable exponent $\pp$ with modulus of continuity $\modp$ as defined in Definition \ref{definition_p_log} with $p^- > \frac{2n}{n+2}$, is further assumed to satisfy the smallness condition:
  \begin{equation}
   \label{small_px}
   \sup_{0<r\leq \bs_0} {\modp}(r) \log \lbr \frac{1}{r} \rbr \leq \ga.
  \end{equation}

  \item[(ii) Assumption on $\aa$:] For a bounded open set $U \subset \RR^n$, let us denote
  \begin{equation*}
   \label{a_difference}
   \Theta(\aa,U)(x,t) := \sup_{\zeta \in \RR^n} \lbr[|] \frac{\aa(x,t,\zeta)}{(\mu^2 + |\zeta|^2)^{\frac{p(x,t)-1}{2}}} - \avg{\frac{\aa(\cdot,t,\zeta)}{(\mu^2 + |\zeta|^2)^{\frac{p(\cdot,t)-1}{2}}}}{U} \rbr[|],
  \end{equation*}
where we have used the notation $\avg{f}{U} := \fint_U f(y)\ dy$. Note that if $\mu=0$, then $\zeta \in \RR^n\setminus \{0\}$.

  We assume that the nonlinearity $\aa$ has small BMO with constant $\ga$ if there holds
\begin{equation}
 \label{small_aa}
\sup_{\substack{t_1,t_2 \in \RR,\\ t_1 < t_2}} \sup_{0<r\leq \bs_0} \sup_{y \in  \RR^n} \fint_{t_1}^{t_2}\fint_{B_r(y)} \Theta(\aa,B_r(y))(x,t) \ dx \ dt \leq \ga.
\end{equation}
\item[(iii) Assumption on $\pa \Om$:] The domain $\Om$ is $(\ga,\bs_0)$-Reifenberg flat in the sense of  Definition \ref{reif_flat}. 
 \end{description}
\end{definition}

\subsection{Notation}
We shall use the following notations throughout the paper:
\begin{itemize}
\item We will use $z,\mfz,\tz,\ldots$ to be points in $\RR^{n+1}$, symbols $x,\mfx,\tx,y,\ty,\ldots$ to denote space variables in $\RR^n$ and symbols $t,\mft,s,\mfs,\ldots$ to denote time variables. We will also specifically match symbols, i.e., $z = (x,t)$ or $\mfz = (\mfx,\mft)$ and so on.
\item In all subsequent sections, the subscript $[\cdot]_h$ will always denote the usual Steklov average.
\item In what follows, the function $\modp$ denotes the modulus of continuity of $p(\cdot)$ and we denote $\modq$ for the modulus of continuity of $q(\cdot)$.
\item We shall write $\pp$ as well as $p(\cdot,\cdot)$ depending on the necessity and we will switch between the two notations without notice throughout the paper.
\item For the variable exponent $\pp$, we shall denote by $\plog$ to include the constants $p^+$, $p^-$ and those that are part of the $\log$-H\"older continuity structure of $\pp$. Analogously, for variable exponents $\qq$, $r(\cdot)$ and $\sss$, we shall use $\qlog$, $\rlog$ and $\slog$ to denote corresponding constants. 
\item Capital alphabets with subscripts as in radii $\br_0,\br_1\ldots$, or  bounding values $\bm,\bm_0,\bm_1,\ldots$ will be fixed in subsequent sections once they are chosen. 
\item We shall use $\apprle$,  $\apprge$ and $\approx$ to suppress writing the constants that could possibly change from line to line as long as they depend only on the structure constants of the form $n,\plog,\qlog,\La_0,\La_1,\bs_0$ and related quantities.
\item We shall sometimes use $\sim$ to denote variables (without subscripts) that occur only within the proof of the concerned result, for example $\tilde{r}, \tilde{m}, \cdots$.
\item Given a variable exponent $\pp$, we shall use the following notation:
\begin{equation*}\label{notation_p_inf}
 p_E^- : = \inf_{x \in E} p(x) \qquad \text{and} \qquad p_E^+ := \sup_{x \in E} p(x).
\end{equation*}
We will drop the set $E$ and denote $p^+:= \sup_{z\in \RR^{n+1}} p(z)$ and $p^-:=\inf_{z\in \RR^{n+1}} p(z)$.
\item We will denote $\Om_T:= \Om \times (-T,T)$ which is the region on which \eqref{basic_pde} is considered. We will also use the notation $\pa_p$ to denote the parabolic boundary, i.e, \[\pa_p Q_{\rho,s}(x,t):= B_{\rho}(x) \times \{t-s\} \bigcup \pa B_{\rho}(x) \times [t-s,t+s).\]
\end{itemize}

\subsection{Unified intrinsic cylinders}\label{Intrinsic cylinders}

 We will describe the intrinsically scaled cylinders that will be used in this paper. Let $\Om$ be a bounded domain in $\RR^n$, and let $\rho >0,s>0$, $\la >0$ and $\mfz = (\mfx,\mft) \in \RR^{n+1}$ be given.  Furthermore, let $d$ be a fixed exponent satisfying
 \begin{equation}
 \label{def_d}
 \min\left\{ \frac{2}{p^+},1 \right\} > d > \frac{2n}{(n+2)p^-}.
 \end{equation}
We  define the following cylinders that will be used throughout the paper:
 \begin{gather*}
 Q_{\rho,s}(\mfz) := B_{\rho}(\mfx) \times (\mft - s^2, \mft + s^2),\\
 Q^{\la}_{\rho,s}(\mfz) := B_{\scalex{\la}{\mfz}\rho}(\mfx) \times (\mft - \scalet{\la}{\mfz}s^2, \mft + \scalet{\la}{\mft}s^2) := B_{\rho}^{\la}(\mfx) \times I_{s}^{\la}(\mft).
 \end{gather*}

 We will also use the following short notation:
 \begin{equation*}
 \begin{array}{ll}
 \Om_{\rho}(\mfx) := B_{\rho}(\mfx) \cap \Om, & K_{\rho,s}(\mfz) := Q_{\rho,s}(\mfz) \cap \Om_T, \\
 I_{\rho}(\mft) := (\mft - \rho^2, \mft + \rho^2), & I_{\rho}^{\la}(\mft) := (\mft - \scalet{\la}{\mfz}\rho^2, \mft + \scalet{\la}{\mfz}\rho^2),\\
 \Om^{\la}_{\rho}(\mfx) := B_{\scalex{\la}{\mfx,\mft}\rho(\mfx)} \cap \Om,&   K^{\la}_{\rho,s}(\mfz) := Q^{\la}_{\rho,s}(\mfz) \cap \Om_T, \\
 \pa_w \Om_{\rho}(\mfx):= B_{\rho}(\mfx) \cap \pa \Om, & \pa_w K_{\rho,s}(\mfz) := K_{\rho,s}(\mfz) \cap \left\{ \pa \Om \times (-T,T) \right\},\\
 \pa_w \Om^{\la}_{\rho}(\mfx):= B_{\scalex{\la}{\mfz}\rho}(\mfx) \cap \pa \Om, & \pa_w K^{\la}_{\rho,s}(\mfz) := K^{\la}_{\rho,s}(\mfz) \cap \left\{ \pa \Om \times (-T,T) \right\},\\
 \pa_p Q_{\rho,s}(\mfz):= B_{\rho}(\mfx) \times \{\mft-s^2\} \bigcup \pa B_{\rho}(x) \times I_s(\mft), & \pa_p Q^{\la}_{\rho,s}(\mfz):= B^{\la}_{\rho}(\mfx) \times \{\mft-\scalet{\la}{\mfz}s^2\} \bigcup \pa B^{\la}_{\rho}(x) \times I^{\la}_s(\mft),\\
 Q_{\rho}(\mfz) := Q_{\rho,\rho}(\mfz), \quad K_{\rho}(\mfz) := K_{\rho,\rho}(\mfz), & Q_{\rho}^{\la}(\mfz) := Q_{\rho,\rho}^{\la}(\mfz), \quad K_{\rho}^{\la}(\mfz) := K_{\rho,\rho}^{\la}(\mfz).
 \end{array}
 \end{equation*}

 We will also have to deal with half spaces, and use the following notation in that regard:
 \begin{equation*}
 \begin{array}{c}
 B_{\rho}^+(\mfx) := B_{\rho}(\mfx) \cap \{ \mfx_n > 0\}, \qquad B_{\rho}^{\la,+}(\mfx) := B_{\rho}^{\la}(\mfx) \cap \{ \mfx_n > 0\}, \\
 Q_{\rho}^{\la,+}(\mfz) := B_{\scalex{\la}{\mfz}\rho}^+(\mfx) \times \lbr \mft - \scalet{\la}{\mfz}\rho^2,\mft + \scalet{\la}{\mfz}\rho^2\rbr,\\
 T_{\rho}^{\la}(\mfz) := B_{\scalex{\la}{\mfz}\rho}(\mfx) \cap \{\mfx_n >0\} \times \lbr \mft - \scalet{\la}{\mfz}\rho^2,\mft + \scalet{\la}{\mfz}\rho^2\rbr.
 \end{array}
 \end{equation*}

 An important thing to note is that the cylinders considered above are intrinsically scaled both in space and time simultaneously. This enables us to handle both the singular case ($p(\cdot) <2$) and degenerate case ($p(\cdot) >2$) simultaneously.

\subsection{Restriction on radii}
\label{sub_radii}

In this subsection, let us collect all the restrictions we will make on some universal constants. First, let us describe all the restriction on the radii $\rho_0$: 
\begin{description}
\descitem{(R1)}{R1} Let $\rho_0 \leq \frac14$ such that $|Q_{\rho_0}| = (\rho_0)^{n+2} |B_1|\leq 1$.
\descitem{(R2)}{R2} Let $\rho_0$ be such that $\frac{\modp(8\rho_0)}{p^-} < \min\{\tilde{\be}_1,\tilde{\be}_2\}$, where $\tilde{\be}_1$ is from Theorem \ref{high_weak}  and $\tilde{\be}_2$ is from Theorem \ref{high_very_weak} applied with $\bm_{\vec{f}}=\bm_0$. Here $\bm_0$ is given in \eqref{def_M_0}.
\descitem{(R3)}{R3} Let $\rho_0 \leq \min \{ \tilde{\rho_1}, \tilde{\rho}_2\}$, where $\tilde{\rho_1}$ is from Theorem \ref{high_weak} and $\tilde{\rho_2}$ is from Theorem \ref{high_very_weak} applied with $\bm_{\vec{f}}=\bm_0$.
\descitem{(R4)}{R4} Let $1024\rho_0 \leq \min\left\{ \frac{1}{\bm_0},\frac{1}{\bm_u},\frac{1}{\bm_w}\right\}$, where $\bm_0$, $\bm_u$, and $\bm_w$ are from \eqref{def_M_0}, \eqref{size_u_f}, and \eqref{size_w}, respectively.
\descitem{(R5)}{R5} Let $\rho_0$ satisfy $\frac{\modp(12\rho_0)}{{p^{-}} -1}  \leq \be_0 \overset{\text{Section \ref{def_be_0}}}{\le}\min\{\tilde{\be}_1,\tilde{\be}_2\}$, where $\tilde{\be}_1$ is from Theorem \ref{high_weak} and $\tilde{\be}_2$ is from Theorem \ref{high_very_weak} applied with $\bm_{\vec{f}}=\bm_0$.
 \descitem{(R6)}{R6}  With $\bm_p = \max\{ \bm_0, \bm_u, \bm_w\}$, we will apply Lemma \ref{scaled_poincare} and Theorem \ref{measure_density_poincare} which will impose the restriction  $\rho_0 \leq \br_p$. 
\descitem{(R7)}{R7} Let $\rho_0 \le \frac{\bs_0}{\Ga^2}$, where $\Ga$ is given in \eqref{cv0-1} and $S_0$ is from Definition \ref{further_assumptions}.
\descitem{(R8)}{R8} Let $\modp(2\rho_0) \leq \min \lbr[\{] \frac{p^-\sigma}{2}, \frac{\La_0}{2\La_1}, \frac{(p^--1)\sigma}{4}, \frac14, d_0 p^-, d_0 p^- (p^--1) \rbr[\}]$, where $\sigma$ is given in Remark \ref{high_int_remark} and $d_0$ is defined in \eqref{def_d_0}.
\descitem{(R9)}{R9} Let $\modq(2\rho_0) \leq \min \left\{\frac{q^-\sigma}{4}, \frac14 \right\}$, where $\sigma$ is given in Remark \ref{high_int_remark} and $\qq$ is the exponent appearing in Theorem \ref{main_theorem1}.
\end{description}

\begin{remark}\label{remark_radius}Note that all the restrictions on $\rho_0$ are such that $\rho_0 = \rho_0(n,\lamot,\plog,\bm_0)\in (0,1/4)$ and henceforth we will always take the radius $\rho$ to satisfy $128\rho \leq \rho_0$. \end{remark}

\subsection{Fixing a few other exponents} \label{def_be_0}

We will first collect all the restrictions on the higher integrability exponent:
 \begin{description}
\descitem{(B1)}{B1} Let $0<\be_0 \leq \min\mgh{ \frac{1}{p^+}, \tilde{\be}_1, \tilde{\be}_2, \tilde{\be}_3, \tilde{\be}_4 }$, where $\tilde{\be}_1$, $\tilde{\be}_2$, $\tilde{\be}_3$, and $\tilde{\be}_4$ are given in Theorem \ref{high_weak}, Theorem \ref{high_very_weak}, Theorem \ref{first_diff_thm}, and Theorem \ref{second_diff_thm}, respectively.
 \descitem{(B2)}{B2} Once $\be_0$ is fixed, let $\sigma_0$ be a number chosen such that $0<\sigma_0 \le \min\mgh{\frac{\beta_0}{3(1-\beta_0)}, \frac{q^- - 1}{3}, 1}$ holds.
\descitem{(B3)}{B3} Let $\vt_0 = \max\{ \tilde{\vt}_1, \tilde{\vt}_2\}$, where $\tilde{\vt}_1$ and $\tilde{\vt}_2$ are from Theorem \ref{high_weak} and Theorem \ref{high_very_weak} with $\bm_{\vec{f}} = \bm_0$. Here $\bm_0$ is given in \eqref{def_M_0}.
 \end{description}

 \begin{remark} \label{high_int_remark}Henceforth, we will assume $0<\sigma \leq \sigma_0$ and $0<\be \leq \be_0$.\end{remark}

\section{Weak solution}\label{Weak solution}

\subsection{Sobolev spaces with variable exponents}
Let $\tom$ be a bounded domain in $\RR^{N}$ for some $N\geq 1$, and let $\sss$ be an admissible variable exponent as in Section \ref{exponent_structure}. Given a  positive integer $m$, the \emph{variable exponent Lebesgue space} $L^{\sss}(\tom,\RR^m)$ consists of all measurable functions $\bff: \tilde{\Om} \to \RR^m$ satisfying 
\[
 \int_{\tilde{\Om}} |\bff(z)|^{s(z)} \ dz < \infty,
\]
endowed with the Luxemburg norm
\[
 \|\bff\|_{L^{\sss}(\tom,\RR^m)} := \inf \left\{ \la >0 : \int_{\tilde{\Om}} \left|\frac{\bff(z)}{\la} \right|^{s(z)} \ dz \leq 1 \right\}.
\]

Analogously, we can define the \emph{variable exponent Sobolev space} as 
\[
 W^{1,\sss}(\tom,\RR^m) := \{ \bff \in L^{\sss}(\tom,\RR^m) : \nabla\bff \in L^{\sss}(\tom,\RR^{mN}) \},
\]
equipped with the norm 
\begin{equation}\label{norm_up}
 \| \bff \|_{W^{1,\sss}(\tom,\RR^m)} := \| \bff \|_{L^{\sss}(\tom,\RR^m)} + \| \nabla \bff \|_{L^{\sss}(\tom,\RR^{mN})}.
\end{equation}
We shall denote $W_0^{1,\sss}(\tom,\RR^m)$ to be the closure of $C_c^\infty(\tom,\RR^m)$ under the norm from \eqref{norm_up}. 
Then all function spaces mentioned above are separable Banach spaces. For $m=1$, we write $L^{\sss}(\tom)$ and $W^{1,\sss}(\tom)$ for simplicity. 
We will also use the following modular function:
\[ \varrho_{L^{\sss}(\tom)}(f) := \int_{\tom} |f(z)|^{s(z)} \ dz.\]
We mention the following useful relation between the modular and the norm in  variable exponent spaces (see \cite[Lemma 3.2.5]{diening} for details):
\begin{lemma}
 \label{integral_norm}
 For any $f \in L^{\sss}(\tom)$, the following holds:
 \begin{equation*}
  \label{norm_integral}
  \min \left\{ \varrho_{L^{\sss}(\tom)} (f)^{\frac{1}{s^-}}, \varrho_{{L^{\sss}(\tom)}} (f)^{\frac{1}{s^+}} \right\} \leq \| f\|_{L^{\sss}(\tom)} \leq \max \left\{ \varrho_{{L^{\sss}(\tom)}} (f)^{\frac{1}{s^-}}, \varrho_{{L^{\sss}(\tom)}} (f)^{\frac{1}{s^+}} \right\}.
 \end{equation*}
\end{lemma}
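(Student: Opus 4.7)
The plan is to reduce everything to the scaling behaviour of the modular $\varrho_{L^{s(\cdot)}}(f/\lambda)=\int_{\tilde\Omega}|f(z)|^{s(z)}\lambda^{-s(z)}\,dz$ under the rescaling $\lambda\mapsto f/\lambda$, exploiting the two-sided bound $s^-\le s(z)\le s^+$. Throughout I would write $\varrho(f)$ for $\varrho_{L^{s(\cdot)}(\tilde\Omega)}(f)$ and use freely the elementary fact that for a positive real $\mu$ one has $\mu^{s(z)}\le \mu^{s^+}\vee \mu^{s^-}$ depending on whether $\mu\ge 1$ or $\mu\le 1$. The case $\varrho(f)=0$ (hence $f\equiv 0$ a.e.) is trivial, so I would assume $\varrho(f)>0$.

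For the upper bound, set $\lambda_*:=\max\{\varrho(f)^{1/s^-},\varrho(f)^{1/s^+}\}$ and show $\varrho(f/\lambda_*)\le 1$; by the definition of the Luxemburg norm this immediately gives $\|f\|_{L^{s(\cdot)}(\tilde\Omega)}\le\lambda_*$. If $\varrho(f)\ge 1$, then $\lambda_*=\varrho(f)^{1/s^-}\ge 1$, so $\lambda_*^{-s(z)}\le\lambda_*^{-s^-}=\varrho(f)^{-1}$, and integrating yields $\varrho(f/\lambda_*)\le 1$. If instead $\varrho(f)\le 1$, then $\lambda_*=\varrho(f)^{1/s^+}\le 1$, so $\lambda_*^{-s(z)}\le\lambda_*^{-s^+}=\varrho(f)^{-1}$, and the same conclusion holds.

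For the lower bound, set $\lambda_{**}:=\min\{\varrho(f)^{1/s^-},\varrho(f)^{1/s^+}\}$ and I would argue by contradiction: pick any admissible $\lambda$, i.e.\ $\varrho(f/\lambda)\le 1$, and show $\lambda\ge\lambda_{**}$. Using the reverse pointwise bounds $\lambda^{-s(z)}\ge\lambda^{-s^+}$ when $\lambda\ge 1$ and $\lambda^{-s(z)}\ge\lambda^{-s^-}$ when $\lambda\le 1$, one obtains in each of the two regimes $\varrho(f/\lambda)\ge \lambda^{-s^\pm}\varrho(f)$ with the appropriate exponent. The inequality $\varrho(f/\lambda)\le 1$ then forces $\lambda^{s^\pm}\ge\varrho(f)$, which in both regimes implies $\lambda\ge\lambda_{**}$. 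Taking the infimum over such $\lambda$ gives $\|f\|_{L^{s(\cdot)}(\tilde\Omega)}\ge\lambda_{**}$, completing the proof.

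There is no real obstacle here; the only thing to be careful with is the bookkeeping for the two regimes $\varrho(f)\lessgtr 1$ and $\lambda\lessgtr 1$, because the role of $s^-$ and $s^+$ swaps in each case. A clean presentation simply treats $\max/\min$ abstractly and then verifies both regimes in parallel, rather than splitting into four subcases.
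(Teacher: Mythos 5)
Your proof is correct. The paper does not actually prove this lemma but simply cites \cite[Lemma 3.2.5]{diening}; your argument is the standard one behind that reference --- verifying that $\lambda_*=\max\{\varrho(f)^{1/s^-},\varrho(f)^{1/s^+}\}$ is admissible in the Luxemburg infimum and that every admissible $\lambda$ dominates $\min\{\varrho(f)^{1/s^-},\varrho(f)^{1/s^+}\}$ --- and the case bookkeeping for $\varrho(f)\lessgtr 1$ and $\lambda\lessgtr 1$ is handled correctly.
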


Let us now define some function spaces involving time. Let $\Om \subset \RR^n$ be a bounded domain, and the space $L^{\sss}\lbr-T,T; W^{1,\sss}(\Om)\rbr$ is defined as
\[
 L^{\sss}\lbr-T,T; W^{1,\sss}(\Om)\rbr := \left\{ f \in L^{\sss}(\Om_T) : \nabla_{\text{space}} f \in L^{\sss}(\Om_T,\RR^{n}) \right\},
\]
equipped with the norm 
\[
 \| f \|_{L^{\sss}\lbr-T,T; W^{1,\sss}(\Om)\rbr} := \| f \|_{L^{\sss}(\Om_T)} + \| \nabla f \|_{L^{\sss}(\Om_T,\RR^{n})}.
\]
We shall define $L^{\sss}\lbr-T,T; W_0^{1,\sss}(\Om)\rbr := L^{\sss}\lbr-T,T; W^{1,\sss}(\Om)\rbr \cap L^1\lbr-T,T; W_0^{1,1}(\Om)\rbr$, and let us denote $L^{\sss}\lbr-T,T; W^{1,\sss}(\Om)\rbr'$ the dual space of $L^{\sss}\lbr-T,T; W_0^{1,\sss}(\Om)\rbr$. We remark that if $\sss$ is constant function, then all function spaces considered above become well known classical parabolic Sobolev spaces.

\subsection{Definition of weak solution}
There is a well known difficulty in defining the notion of solution for \eqref{basic_pde} due to \emph{ a lack of time derivative of $u$}. To overcome this, one can either use Steklov average or convolution in time. In this paper, we shall use the former approach (see also \cite[Chapter 2]{DiB1} for further details).

Let us first define Steklov average as follows: let $h \in (0, 2T)$ be any positive number, then we define
\begin{equation}\label{stek1}
  [u]_{h}(\cdot,t) := \left\{ \begin{array}{ll}
                              \hint_t^{t+h} u(\cdot, \tau) \ d\tau \quad & t\in (-T,T-h), \\
                              0 & \text{else}.
                             \end{array}\right.
 \end{equation}
 Let us now define the notion of solution that will considered in this paper.
 \begin{definition}
\label{weak_solution}
 
 Let $h \in (0,2T)$ be given, we  then say $u \in L^2\lbr-T,T; L^2(\Om)\rbr \cap L^{\pp}\lbr-T,T; W_0^{1,\pp}(\Om)\rbr$ is a weak solution of \eqref{basic_pde} if for any $\phi \in W_0^{1,{\pp}}(\Om)$, the following holds:
 \begin{equation*}
 \label{def_weak_solution}
  \int_{\Om \times \{t\}} \frac{d [u]_{h}}{dt} \phi + \iprod{[\aa(x,t,\nabla u)]_{h}}{\nabla \phi} \ dx = 0 \quad \text{for almost every} \  -T < t < T-h.
 \end{equation*} 
\end{definition}

\subsection{Existence and uniqueness of weak solution}
\label{existence}
We begin with the following well known existence and uniqueness result:
\begin{proposition}[\cite{erhardt2013existence,DNR12}]
\label{ext_sol}
Let $\tom$ be any bounded domain satisfying a uniform measure density condition (see Definition \ref{measure_def}). Suppose that $\vec{f} \in L^{\pp}(\tom_T)$, $f \in L^{\pp}\lbr-T,T; W^{1,\pp}(\tom)\rbr$ with $\ddt{f} \in L^{\pp}\lbr-T,T; W^{1,\pp}(\tom)\rbr'$  and $f_0 \in L^{2}(\tom)$ are given. Then there is a unique weak solution $\phi \in C^0\lbr-T,T;L^2(\tom)\rbr\cap L^{\pp}\lbr-T,T; W^{1,\pp}(\tom)\rbr$ solving
\begin{equation*}
\label{ext_u}
\left\{ \begin{array}{rcll}
  \phi_t - \dv \cc(z,\nabla \phi) &=& -\dv |\vec{f}|^{\pp-2}\vec{f} & \quad \text{in} \ \tom_T, \\
  \phi &=& f & \quad \text{on} \  \pa \tom \times (-T,T),\\
  \phi(\cdot,-T) & = & f_0 & \quad \text{on} \ \tom,
 \end{array}\right. 
\end{equation*}
where $\cc$ is any operator satisfying all the assumptions in Section \ref{nonlinear_structure}.

Moreover if $f =0$,  we then  have the following energy estimate:
\begin{equation*}
\label{energy_phi}
\sup_{-T \leq t \leq T} \|\phi(\cdot,t)\|_{L^2(\tom)}^2 + \iint_{\tom_T} |\nabla \phi|^{\pp} \ dz \apprle_{(n,\plog,\lamot)} \lbr \iint_{\tom_T}\left[|\vec{f}|^{\pp} + 1\right] \ dz + \|f_0\|_{L^2(\tom)}^2\rbr.
\end{equation*}
\end{proposition}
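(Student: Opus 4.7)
The plan is to follow the classical scheme of Galerkin approximation combined with monotonicity, adapted to the variable exponent setting as in \cite{erhardt2013existence,DNR12}. First I would reduce to zero boundary data by setting $\psi := \phi - f$ and rewriting the equation as
\[
\psi_t - \dv \tilde{\cc}(z,\nabla \psi) = -\dv |\vec{f}|^{\pp-2}\vec{f} + \dv \cc(z,\nabla f) - f_t,
\]
where $\tilde{\cc}(z,\xi) := \cc(z,\xi + \nabla f(z)) - \cc(z,\nabla f(z))$ still satisfies the structure conditions of Section \ref{nonlinear_structure} (with modified constants depending on $\|\nabla f\|_{L^{\pp}}$), and the right-hand side lies in $L^{\pp}\lbr-T,T;W^{1,\pp}(\tom)\rbr'$ with initial datum $f_0 - f(\cdot,-T) \in L^2(\tom)$. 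This reduces matters to the Dirichlet problem with zero lateral data.

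Next I would approximate the problem by a sequence of finite-dimensional problems built from a Galerkin basis $\{w_k\}_{k\in\NN} \subset W_0^{1,\infty}(\tom)$ whose span is dense in $W_0^{1,\pp}(\tom)$ (the uniform measure density condition on $\pa \tom$ is exactly what guarantees density of smooth functions and prevents the Lavrentiev gap from obstructing this step). Writing $\psi^m(z) = \sum_{k=1}^m c_k^m(t) w_k(x)$ and testing against each basis function, the system of ODEs for $(c_k^m)$ is uniquely solvable on a short time interval by Carath\'eodory theory. Testing with $\psi^m$ itself and using the coercivity bound on $\tilde{\cc}$, Young's inequality with variable exponent, and Lemma \ref{integral_norm}, I would derive an a priori bound of the form
\[
\sup_{t}\|\psi^m(\cdot,t)\|_{L^2(\tom)}^2 + \iint_{\tom_T}|\nabla\psi^m|^{\pp}\,dz \apprle C\lbr \iint_{\tom_T}[|\vec{f}|^{\pp} + |\nabla f|^{\pp} + 1]\,dz + \|f_0\|_{L^2(\tom)}^2\rbr,
\]
which both extends the local ODE solution to $(-T,T)$ and yields uniform control of $(\psi^m)$ in $L^\infty(-T,T;L^2(\tom)) \cap L^{\pp}(-T,T;W_0^{1,\pp}(\tom))$, with $\psi^m_t$ bounded in $L^{\pp}\lbr-T,T;W^{1,\pp}(\tom)\rbr'$.

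Passing to a subsequence, weak compactness yields a limit $\psi$ with $\nabla\psi^m \rightharpoonup \nabla\psi$ and $\tilde{\cc}(z,\nabla\psi^m) \rightharpoonup \chi$ weakly in the appropriate dual spaces, together with strong compactness in $L^2(\tom_T)$ via an Aubin–Lions type argument. Identifying $\chi = \tilde{\cc}(z,\nabla\psi)$ is the main technical obstacle: this requires the Minty–Browder trick adapted to variable exponents, using the monotonicity estimate \eqref{monotonicity} and the Steklov-averaged formulation to justify testing with $\psi$ itself in the limit equation. Uniqueness follows directly by subtracting two solutions, testing the difference with itself via Steklov averages (the chain rule for $\frac{d}{dt}\|[\psi_1 - \psi_2]_h\|_{L^2}^2$ is admissible because both solutions lie in $L^{\pp}\lbr-T,T;W_0^{1,\pp}(\tom)\rbr$), and applying the strict monotonicity of $\tilde{\cc}$.

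Finally, for the energy estimate when $f=0$, I would work directly with the Steklov formulation from Definition \ref{weak_solution}, test with $[\phi]_h$ on the time interval $(-T,t)$, and use
\[
\int_{-T}^t \int_{\tom} \frac{d[\phi]_h}{dt}[\phi]_h \,dx\,d\tau = \tfrac12\|[\phi]_h(\cdot,t)\|_{L^2}^2 - \tfrac12\|[\phi]_h(\cdot,-T)\|_{L^2}^2,
\]
together with the coercivity of $\cc$ and Young's inequality with exponents $p(z)$ and $p(z)/(p(z)-1)$ to absorb the $|\vec{f}|^{\pp-1}|\nabla\phi|$ term. Letting $h \to 0$ and taking the supremum over $t \in [-T,T]$ gives the claimed estimate, with the implicit constant depending only on $n,\plog,\lamot$ through the structure of $\cc$.
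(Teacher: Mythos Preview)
The paper does not supply a proof of this proposition at all: it is stated with attribution to \cite{erhardt2013existence,DNR12} and then immediately applied. Your outline---reduction to zero lateral data, Galerkin approximation with a priori bounds from coercivity, weak compactness, Minty--Browder identification of the limit, and a Steklov-averaged energy computation---is precisely the standard route taken in those references, so there is nothing to compare against in the paper itself. One small caution: the claim that $\tilde{\cc}(z,\xi)=\cc(z,\xi+\nabla f(z))-\cc(z,\nabla f(z))$ again satisfies the pointwise structure of Section~\ref{nonlinear_structure} with merely ``modified constants depending on $\|\nabla f\|_{L^{\pp}}$'' is not literally true (the growth and ellipticity bounds are pointwise in $\zeta$, and shifting by a non-constant vector perturbs them nonuniformly), but the monotone-operator framework in \cite{DNR12} handles the nonhomogeneous boundary data directly without needing this reduction, so the gap is cosmetic rather than fatal.
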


Returning to our problem \eqref{basic_pde}, Proposition \ref{ext_sol} yields the existence and uniqueness result as follows:
\begin{corollary}
There exists a unique weak solution $u \in C^0\lbr-T,T;L^2(\Om)\rbr\cap L^{\pp}\lbr-T,T; W^{1,\pp}(\Om)\rbr$ solving \eqref{basic_pde} with  the estimate
\begin{equation}
\label{energy_u}
\sup_{-T \leq t \leq T} \|u(\cdot,t)\|_{L^2(\Om)}^2 + \iint_{\Om_T} |\nabla u|^{\pp} \ dz \leq C_{(n,\plog,\lamot)}  \iint_{\Om_T}\left[|\bff|^{\pp} + 1\right] \ dz.
\end{equation}
\end{corollary}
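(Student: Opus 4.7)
The plan is to derive this corollary as a direct specialization of Proposition \ref{ext_sol}. The hypotheses of \eqref{basic_pde} match those of the general framework of Proposition \ref{ext_sol} once we verify two things: (i) that our nonlinearity $\aa$ satisfies all the structural assumptions of Section \ref{nonlinear_structure} (which it does, by assumption), and (ii) that $\Om$ satisfies a uniform measure density condition. For (ii) we invoke Lemma \ref{measure_density}: since $(\pp,\aa,\Om)$ is $(\ga,\bs_0)$-vanishing with $\ga \in (0,1/8)$, the domain $\Om$ is in particular $(\ga,\bs_0)$-Reifenberg flat, which via \eqref{measure_one} yields the required measure density with constant $m_e \geq (7/16)^n$.

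Next, I would apply Proposition \ref{ext_sol} with the choices $\tom = \Om$, $\cc = \aa$, $\vec{f} = \bff \in L^{\pp}(\Om_T)$ (which is given as the data of \eqref{basic_pde}), and with the homogeneous boundary/initial data $f \equiv 0$ and $f_0 \equiv 0$. These trivially satisfy the required regularity: $0 \in L^{\pp}(-T,T;W^{1,\pp}(\Om))$, $\frac{d}{dt} 0 \in L^{\pp}(-T,T;W^{1,\pp}(\Om))'$, and $0 \in L^2(\Om)$. Proposition \ref{ext_sol} then produces a unique weak solution $u \in C^0(-T,T;L^2(\Om)) \cap L^{\pp}(-T,T;W^{1,\pp}(\Om))$, which by Definition \ref{weak_solution} is exactly the solution we seek, since the zero boundary trace places $u$ in $L^{\pp}(-T,T;W_0^{1,\pp}(\Om))$.

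The estimate \eqref{energy_u} follows directly from the energy inequality in Proposition \ref{ext_sol} with $f=0$ and $f_0 = 0$, since the term $\|f_0\|_{L^2(\Om)}^2$ vanishes and the remaining $\iint_{\Om_T}[|\bff|^{\pp} + 1]\,dz$ reproduces the right-hand side of \eqref{energy_u} (with the constant depending only on $n$, $\plog$, and $\lamot$, matching the dependencies claimed).

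There is no serious obstacle here, as this is essentially a bookkeeping step verifying that the setting of \eqref{basic_pde} falls into the more general existence theory already established in \cite{erhardt2013existence,DNR12}. The only mild point of attention is ensuring that the constant in \eqref{energy_u} does not pick up a dependence on $T$ or $|\Om|$; this is automatic because the additive ``$+1$'' on the right-hand side already absorbs the contribution from the constant term in the coercivity bound $\tilde{\La}_2|\zeta|^{p(x,t)} \leq \iprod{\aa(x,t,\zeta)}{\zeta} + \tilde{\La}_1$, so the bound remains structural.
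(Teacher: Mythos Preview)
Your proposal is correct and matches the paper's approach exactly: the paper simply states that Proposition \ref{ext_sol} yields this corollary without further elaboration, and your argument fills in precisely the bookkeeping (measure density via Lemma \ref{measure_density}, then the choices $\tom=\Om$, $\cc=\aa$, $\vec f=\bff$, $f=0$, $f_0=0$) that makes the application immediate.
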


\section{Main results}
\label{three}

We now state the main results of this paper. Let us first set
\begin{gather}\label{vt_def} 
\vt(z) := \frac{1}{-\frac{n}{p(z)} + \frac{nd}{2} + d} \quad \text{and} \quad \vt^+ := \sup_{z\in \Om_T} \vt(z),
\end{gather}
where the constant $d$ is given in \eqref{def_d}.

The first theorem concerns the local estimate around small balls. 
\begin{theorem}
 \label{main_theorem1}
Assume that $u$ is the weak solution of the problem \eqref{basic_pde} under the structure conditions \eqref{abounded} and \eqref{aellipticity}. Let $0< \bs_0 < 1$, and $q(\cdot)$ be log-H\"{o}lder continuous satisfying $1 < q^- \le q(\cdot) \le q^+ < \infty$. There exist constants ${\gamma_0} \in (0,1/8)$ and ${\be_0} \in (0,1/4)$, both depending only on $\La_0$, $\La_1$, $\plog$, $\qlog$, $n$, such that if $(p(\cdot),\aa,\Om)$ is $(\gamma,\bs_0)$-vanishing for some $\ga \in (0,{\ga_0})$, then there exists a constant $C_0 = {C_0}_{(\La_0,\La_1,\plog, \qlog, n, \bs_0)}>0$ such that for any $\mfz \in \Om_T$, $\be \in (0,{\be_0})$  and $\rho \in (0,1/(C_0 \bm)]$, we have 
\begin{equation*}\label{main-r1}
\begin{aligned}
\fiint_{K_{\rho}(\mfz)} |\nabla u|^{p(z)(1-\be)q(z)} \ dz &\le C \left\{\fiint_{K_{4\rho}(\mfz)} |\nabla u|^{p(z)(1-\be)} \ dz  + \lbr \fiint_{K_{4\rho}(\mfz)} |\bff|^{p(z)(1-\be)q(z)} \ dz \rbr^{\frac{1}{q(\mfz)}} +1 \right\}^{1+ \vt(\mfz)(q(\mfz) -1)},
\end{aligned}
\end{equation*}
for some constant $C=C_{(\La_0,\La_1,\plog, \qlog, n)}>0$. Here $\bm$ and $\vt(\mfz)$ are given in \eqref{def_MM} and \eqref{vt_def}, respectively.
\end{theorem}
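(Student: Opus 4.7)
The plan is to follow a Calder\'on-Zygmund scheme based on a stopping-time decomposition in unified intrinsic cylinders, coupled with the comparison and difference estimates \emph{below} the natural exponent that were established in Section \ref{five} and Section \ref{six} via parabolic Lipschitz truncation. The overall architecture parallels that of \cite{byun2016nonlinear}, but, crucially, the reference problems and comparison estimates are now worked out at the exponent $\pp(1-\be)$ rather than $\pp$; this is what allows us to reach the endpoint case $q^- = 1$.

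After a normalization, we regard $\bm$ from \eqref{def_MM} as the threshold level. For each $\la \geq \bm$, I would perform a stopping-time argument on the super-level set $\mgh{|\nabla u|^{\pp(1-\be)} > \la^{\pp(1-\be)d}} \cap K_{\rho}(\mfz)$, using Lebesgue differentiation in the unified intrinsic cylinders $K_{r}^{\la}(z)$ from Section \ref{Intrinsic cylinders} to produce, at each point of the super-level set, a maximal-radius cylinder on which the intrinsic average of $|\nabla u|^{\pp(1-\be)} + |\bff|^{\pp(1-\be)q(\cdot)}$ sits exactly at the level $\la$, with strict upper bound on every strictly larger concentric cylinder. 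A Vitali covering then extracts a pairwise-disjoint family $\mgh{K_{r_i}^{\la}(z_i)}$ whose fivefold dilates cover the super-level set, and the log-H\"older control on $\pp$ and $\qq$ (via the restrictions \dref{R2}{R2}, \dref{R5}{R5}, \dref{R8}{R8}, \dref{R9}{R9}) guarantees that $p$ and $q$ are effectively frozen to $p(\mfz),q(\mfz)$ on each such cylinder.

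On each cylinder $K_{r_i}^{\la}(z_i)$ I would invoke the two key difference estimates, Theorem \ref{first_diff_thm} in the interior and Theorem \ref{second_diff_thm} near the boundary, to obtain a reverse-H\"older type inequality of the form
\begin{equation*}
\fiint_{K_{r_i}^{\la}(z_i)} \lbr |\nabla u|^{\pp(1-\be)} + |\bff|^{\pp(1-\be)q(\cdot)} \rbr^{1+\sig} dz \lesssim \la^{p(\mfz)(1-\be)(1+\sig) d},
\end{equation*}
strengthened by the higher integrability of Theorem \ref{high_weak} and Theorem \ref{high_very_weak}. This gives the quantitative level-set decay $|\mgh{|\nabla u|^{\pp(1-\be)}>A\la^{\pp(1-\be)d}}|$ with a power gain $1+\sig$, provided $\ga$ is small. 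Translating the intrinsic level $\la^{p(\mfz)(1-\be)d}$ to the Euclidean level $\la^{p(\mfz)(1-\be)}$ introduces precisely the exponent $\vt(\mfz)$ of \eqref{vt_def}, which is the scaling exponent dictated by the unified (singular+degenerate) cylinders. Multiplying by $\la^{q(\mfz)-1}$ and integrating in $\la$ from $\bm$ to $\infty$, the usual layer-cake identity together with Lemma \ref{integral_norm} produces the target exponent $1+\vt(\mfz)(q(\mfz)-1)$ on the right-hand side of the claimed estimate.

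The principal obstacle is the endpoint itself: the difference estimates in \cite{byun2016nonlinear} delivered integrability strictly above $\pp$, so the Vitali--level-set scheme only closed up for $q^- > 1$. Substituting in the sub-natural estimates of Section \ref{five}-\ref{six} requires propagating the Lipschitz-truncation constants through the variable-exponent unified intrinsic scaling without any loss of the small parameters $\be_0,\ga_0,\sig_0$, and then matching the resulting decay rate $\sig$ against the exponent $\vt(\mfz)$. Checking that the full list of restrictions in Section \ref{sub_radii} and Section \ref{def_be_0} can be met simultaneously by one choice of $(\rho_0,\be_0,\sig_0)$ independent of $\mfz$ is the technical heart of the argument, and it is precisely what forces the scale restriction $\rho \leq 1/(C_0\bm)$ in the statement.
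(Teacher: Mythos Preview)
Your outline has the right large-scale architecture (stopping time in unified intrinsic cylinders, comparison estimates below the natural exponent, layer cake), but two points are genuinely wrong and would make the argument fail.

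First, you misidentify Theorems~\ref{first_diff_thm} and~\ref{second_diff_thm}. They are not an interior/boundary dichotomy; they are two \emph{successive} comparison steps, $u\to w$ and $w\to v$, and both are applied on every selected cylinder. Near the boundary there is a third step $v\to\ov$ via Lemma~\ref{existence_ov}, and it is this last approximation --- a constant-exponent homogeneous problem on a half-cylinder --- for which classical regularity theory gives the pointwise bound $\|\nabla\ov\|_{L^\infty}\apprle\al^{1/p(\mfz)}$. That $L^\infty$ bound is the engine of the level-set decay; without it you cannot conclude that $|\nabla u|$ is small except on a set of small measure.

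Second, and more seriously, the mechanism you describe (``reverse-H\"older inequality with gain $1+\sigma$'') cannot close the argument. A fixed small self-improvement $\sigma$ would at best bound $|\nabla u|^{\pp(1-\be)(1+\sigma)}$, not $|\nabla u|^{\pp(1-\be)q(z)}$ for arbitrary $q^+<\infty$. What the paper actually obtains from the chain $u\to w\to v\to\ov$ is Lemma~\ref{cv3_lemma2}: on each stopping cylinder, $\fiint|\nabla u-\nabla\ov|^{\,\pp(1-\be)q(\cdot)/q^-}\le\ve\,\al$ and $\|\,|\nabla\ov|^{\pp(1-\be)q(\cdot)/q^-}\|_{L^\infty}\le c_2\al$, with $\ve$ as small as one likes provided $\ga$ is taken small. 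This yields the decay \eqref{cv4-9} with an \emph{arbitrary} small factor $\ve$, which is precisely what is required so that, after multiplying by $\al^{q^-_{K_{4\rho}(\mfz)}-2}$ and integrating, the right-hand side can be absorbed via the iteration Lemma~\ref{useful tech} regardless of the size of $q^+$. Note also that the paper runs the stopping time on the upper-level set of $|\nabla u|^{p(z)(1-\be)q(z)/q^-_{K_{4\rho}(\mfz)}}$ (not of $|\nabla u|^{\pp(1-\be)}$), uses a truncation $(\,\cdot\,)_k$ to keep all integrals finite during the iteration, and only at the very end (estimates \eqref{ma1-6}--\eqref{ma1-9}) converts $q^-_{K_{4\rho}(\mfz)}$ and $\vt^+_{K_{4\rho}(\mfz)}$ back to $q(\mfz)$ and $\vt(\mfz)$ using $\log$-H\"older continuity.
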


In the above theorem, it is important to note that the exponent $q^->1$, on the other hand, the above estimate has $p(z)(1-\be)q(z)$ as the exponent. In particular, the term $(1-\be)$ in the exponent provides sufficient gap in order to prove the end point version of the result as highlighted in the introduction. To do this, we use a standard covering argument followed by uniformizing the exponents which enables us to remove the term $(1-\be)$. Thus our main theorem now takes the following form:

\begin{theorem}
\label{main_theorem2}
Let  $M^+ > 1$ and let $r(\cdot)$ be log-H\"{o}lder continuous satisfying $1 \le r^- \leq r(\cdot) \leq r^+ < M^+ < \infty$. Then under the assumptions in Theorem \ref{main_theorem1}, there is a constant ${\gamma_0} \in (0,1/4)$  depending only on $\La_0$, $\La_1$, $\plog$, $r^{\pm}_{\log}$, $M^+$, $n$, such that if $(p(\cdot),\aa,\Om)$ is $(\gamma,\bs_0)$-vanishing for some $\ga \in (0,{\ga_0})$, then 
there exists a constant $C = C_{(\La_0, \La_1, \plog, r^{\pm}_{\log}, M^+, n, \Om_T, \bs_0)}>0$ such that the following global bound holds:
\begin{equation*}\label{main-r2}
\iint_{\Om_T} |\nabla u|^{p(z)r(z)} \ dz \le C \left\{ \lbr \iint_{\Om_T} |\bff|^{p(z)r(z)} \ dz \rbr^{\gh{1+\vt^+ \gh{M^+ -1}}(n+3)M^+ - (n+2)} +1\right\},
\end{equation*}
where the constant $\vt^+$ is given in \eqref{vt_def}.
\end{theorem}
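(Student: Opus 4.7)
The plan is to deduce Theorem \ref{main_theorem2} from Theorem \ref{main_theorem1} in three steps: first, encode the endpoint $r^- = 1$ inside the $(1-\be)$-margin of the local estimate by introducing an auxiliary log-H\"older exponent $\qq$; then, apply Theorem \ref{main_theorem1} on a Vitali-type cover of $\Om_T$ by cylinders of a single scale; and finally, close the global bound via the energy estimate \eqref{energy_u}. For the first step I would fix a small parameter $\be \in (0,1/4)$ and set
\[ q(z) := \frac{r(z)}{1-\be}, \qquad z \in \RR^{n+1}. \]
Then $q^- = r^-/(1-\be) > 1$ precisely because $\be > 0$, $q^+ \leq M^+/(1-\be) < \infty$, and the log-H\"older data $\qlog$ is controlled by $\rlog$ uniformly in $\be \in (0,1/4)$. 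I would then shrink $\be = \be_*$ so that $\be_* < \be_0(\qlog)$, the threshold from Theorem \ref{main_theorem1}; this is possible since $\be_0$ depends continuously on $\qlog$, which in turn stabilizes as $\be \to 0$. The point of this choice is the pointwise identity $p(z)(1-\be)q(z) = p(z) r(z)$, which produces the desired exponent on the left-hand side of the local estimate.

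Next, I would choose $\rho_* \in (0,\bs_0]$ satisfying $4\rho_* \leq 1/(C_0 \bm)$, where $\bm$ is the quantity appearing in Theorem \ref{main_theorem1} (which, through \eqref{energy_u}, is bounded by a power of $\iint_{\Om_T}(|\bff|^{\pp}+1)\ dz$). A Vitali-type cover $\{K_{\rho_*}(\mfz_i)\}_{i \in I}$ of $\overline{\Om}_T$ is then constructed so that both $K_{\rho_*}(\mfz_i)$ and their enlargements $K_{4\rho_*}(\mfz_i)$ have finite overlap; in particular $\#I \apprle |\Om_T|\rho_*^{-(n+2)}$. Applying Theorem \ref{main_theorem1} on each cylinder, and using the identity $p(z)(1-\be)q(z) = p(z) r(z)$, yields
\[ \fiint_{K_{\rho_*}(\mfz_i)} |\nabla u|^{p(z) r(z)} \ dz \apprle \left\{ \fiint_{K_{4\rho_*}(\mfz_i)} |\nabla u|^{p(z)(1-\be)} \ dz + \lbr \fiint_{K_{4\rho_*}(\mfz_i)} |\bff|^{p(z) r(z)} \ dz \rbr^{\frac{1}{q(\mfz_i)}} + 1 \right\}^{b_i}, \]
where $b_i := 1 + \vt(\mfz_i)(q(\mfz_i) - 1)$ lies in $[1, b^+]$ with $b^+ := 1 + \vt^+(q^+-1)$.

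For the closing I would expand $\{A+B+C\}^{b_i} \apprle A^{b_i} + B^{b_i} + C^{b_i}$, multiply by $|K_{\rho_*}(\mfz_i)| \approx \rho_*^{n+2}$ to convert $\fiint$ into $\iint$, and sum in $i$ using finite overlap together with the elementary convexity bound $\sum_i x_i^{b_i} \leq \lbr \sum_i x_i \rbr^{b^+}$ (valid for $x_i \geq 0$, $1 \leq b_i \leq b^+$). This should produce
\[ \iint_{\Om_T} |\nabla u|^{p(z) r(z)} \ dz \apprle \rho_*^{-(n+2)(b^+ - 1)} \left\{ \lbr \iint_{\Om_T} |\nabla u|^{p(z)(1-\be)} \ dz \rbr^{b^+} + \lbr \iint_{\Om_T} |\bff|^{p(z) r(z)} \ dz \rbr^{b^+} + 1 \right\}. \]
Since $1-\be < 1$, H\"older reduces $|\nabla u|^{\pp(1-\be)}$ to $|\nabla u|^{\pp} + 1$, which via \eqref{energy_u} is controlled by $|\bff|^{\pp} + 1 \leq |\bff|^{\pp \rr} + 1$. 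The prefactor $\rho_*^{-(n+2)(b^+-1)}$ is a power of $\bm$, and $\bm$ is itself a power of $\iint_{\Om_T}(|\bff|^{\pp}+1)\ dz$; assembling all these exponents and taking $\be \to 0$ in the resulting constants (so $b^+ \to 1 + \vt^+(M^+ - 1)$) should recover the quoted exponent $(1+\vt^+(M^+-1))(n+3)M^+ - (n+2)$.

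The main obstacle will be the exponent bookkeeping in the closing step: one must track precisely how the $\rho_*^{-(n+2)}$ coming from the cardinality of the cover, the $b^+$-th power on the right-hand side, and the power dependence of $\bm$ on $\bff$ combine to produce exactly the stated exponent rather than the naive $b^+$. A subsidiary difficulty is the uniform comparability of the variable exponents $p(\cdot), q(\cdot), r(\cdot)$ across each $K_{\rho_*}(\mfz_i)$, which rests on the log-H\"older smallness from Subsection \ref{smallness_assumption} and on the standard identity $|K_{\rho_*}|^{\modp(\rho_*)} \approx 1$; while routine in variable-exponent literature, it must be invoked at every $\fiint \leftrightarrow \iint$ transition.
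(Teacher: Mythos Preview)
Your strategy is the same as the paper's: introduce $q(z)=r(z)/(1-\be)$ so that $p(z)(1-\be)q(z)=p(z)r(z)$, apply Theorem~\ref{main_theorem1} on a single-scale cover of $\Om_T$, and close via \eqref{energy_u} together with $\rho\approx\bm^{-1}$. The paper phrases it in the reverse direction (start from $q(\cdot)$, set $r(\cdot)=(1-\be)q(\cdot)$), but this is the same maneuver.

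Two points need repair. First, the ``elementary convexity bound'' $\sum_i x_i^{b_i}\le(\sum_i x_i)^{b^+}$ is false as stated: with a single term $x_1\in(0,1)$ and $1\le b_1<b^+$ one has $x_1^{b_1}>x_1^{b^+}$. What is true (and sufficient) is that $x_i^{b_i}\le(x_i+1)^{b^+}$ together with $\sum_i y_i^{b^+}\le(\sum_i y_i)^{b^+}$ for $y_i\ge0$ and $b^+\ge1$; since your local estimate already contains the $+1$, this patch is immediate. Second, ``taking $\be\to0$'' at the end is both unnecessary and unsafe: as $\be\to0$ one has $q^-=r^-/(1-\be)\to r^-$, and if $r^-=1$ then $q^-\to1$, which is precisely the forbidden endpoint of Theorem~\ref{main_theorem1} and may blow up intermediate constants (e.g.\ through the choice of $\sigma_0$ in \descref{B2}{B2}). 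The paper avoids this by observing that $\be_0$ and the constants of Theorem~\ref{main_theorem1} can be taken to depend only on the upper bound $M^+\ge q^+$, \emph{not} on $q^-$; one then simply fixes any $\be\in(0,\be_0(M^+)]$ small enough that $q^+=r^+/(1-\be)\le M^+$, which is possible because $r^+<M^+$ strictly. With these two corrections your argument goes through; in fact your summation yields the exponent $(n+3)b^+-(n+2)$, which is smaller than the quoted $(n+3)M^+b^+-(n+2)$, so you will not literally ``recover'' the paper's exponent but will prove the theorem with a better one.
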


\begin{remark}
\textcolor{black}{ If $p(\cdot) \equiv p$,  then we can take $d = \min \mgh{\frac2p,1}$ in \eqref{def_d} (see \cite{adimurthi2018sharp} for more details). Substituting this into \eqref{vt_def} yields
\begin{equation*}
\vt(z) \equiv \vt = \left\{
\begin{array}{lrl}
\frac{p}{2} &\text{if} &p \ge 2,\\
\frac{2p}{p(n+2)-2n} &\text{if} &\frac{2n}{n+2} < p < 2.
\end{array}
\right.
\end{equation*}
This is the standard scaling deficit coefficient introduced in \cite{AM}.}
\end{remark}

\section{Some useful inequalities}
\label{four}

In this section, we shall collect and prove in some cases well known estimates that will be used in subsequent sections. We first recall an integral version of Poincar\'e's inequality which was proved in \cite[Lemma 4.12]{adimurthi2017sharp}:
\begin{lemma}
\label{scaled_poincare}
 Let $\sss \in \logh$ and let $\bm_{p} \ge 1$ be given. Define  $\br_p := \min \left\{\frac{1}{2\bm_p}, \frac1{\om_n^{1/n}}, \frac12 \right\}$. Then for any $\phi \in W^{1,\sss}(B_{4r})$ with $4r < \br_p$ satisfying 
 \begin{equation*}\label{assumption_on_phi_no_weight} 
 \int_{B_{4r}} |\nabla \phi (x)|^{s(x)} \ dx + 1 \leq \bm_p, 
 \end{equation*} 
 the following estimate holds: 
 \[
  \int_{B_r} \lbr \frac{|\phi - \avg{\phi}{B_r}|}{\diam(B_r)}\rbr^{s(x)} \ dx \apprle_{(n,\slog)}   \int_{B_{r}} |\nabla \phi(x)|^{s(x)} \ dx + |B_{r}|,
 \]
where we have used the notation $\avg{\phi}{B_r} := \fint_{B_r} \phi(y)\ dy$.
%
%
 Since $\diam(B_r) = 2r \leq \br_p <1$, we also obtain
 \[
  \int_{B_r} {|\phi - \avg{\phi}{B_r}|}^{s(x)} \ dx \apprle_{(n,\slog)}  \int_{B_{r}} |\nabla \phi(x)|^{s(x)} \ dx + |B_{r}|.
 \]
\end{lemma}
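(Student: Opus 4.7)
The plan is to derive the estimate from a classical integral representation for the oscillation $\phi-\avg{\phi}{B_r}$ and then bridge from the constant-exponent to the variable-exponent setting using the log-H\"older continuity of $\sss$ together with the a priori bound $\int_{B_{4r}}|\nabla\phi|^{s(x)}\,dx+1\leq \bm_p$.

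First, I would start with the standard pointwise Sobolev representation
\[
|\phi(x) - \avg{\phi}{B_r}| \leq C(n)\int_{B_r}\frac{|\nabla\phi(y)|}{|x-y|^{n-1}}\,dy,
\]
valid for a.e.\ $x \in B_r$. Noting that $\int_{B_r}|x-y|^{-(n-1)}\,dy\leq C(n)\,r$, one can renormalize $r^{n-1}|x-y|^{-(n-1)}\,dy/|B_r|$ to a probability measure on $B_r$ and apply Jensen's inequality with the convex function $t\mapsto t^{s(x)}$. Dividing by $\diam(B_r)=2r$ and raising to the power $s(x)$, this gives
\[
\lbr \frac{|\phi(x)-\avg{\phi}{B_r}|}{\diam(B_r)}\rbr^{s(x)} \leq C(n,\slog)\fint_{B_r}|\nabla\phi(y)|^{s(x)}\frac{r^{n-1}}{|x-y|^{n-1}}\,dy.
\]

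The main obstacle, and the technical heart of the argument, is the exponent exchange $s(x)\rightsquigarrow s(y)$ inside the integrand. Writing $|\nabla\phi(y)|^{s(x)}=|\nabla\phi(y)|^{s(y)}\cdot|\nabla\phi(y)|^{s(x)-s(y)}$ and splitting the integration according to whether $|\nabla\phi(y)|\leq 1$ or $>1$, the first region contributes only a trivial additive constant. On the second region, the log-H\"older inequality $\mods(|x-y|)\log(1/|x-y|)\leq L$ gives
\[
|\nabla\phi(y)|^{|s(x)-s(y)|} \leq \exp\lbr L\cdot \frac{\log|\nabla\phi(y)|}{\log(1/|x-y|)}\rbr,
\]
and to turn this into a universal constant one needs $|\nabla\phi(y)|\leq |x-y|^{-\alpha}$ for a suitable dimensional $\alpha$. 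This is precisely where the radius restriction $4r<\br_p\leq (2\bm_p)^{-1}$ and the a priori bound $\int_{B_{4r}}|\nabla\phi|^{s(x)}\,dx\leq \bm_p$ come in: via a Chebyshev-type estimate the measure of the ``large gradient'' set $\{|\nabla\phi(y)|>|x-y|^{-\alpha}\}$ is so small that its contribution to the final integral can be absorbed into the $+|B_r|$ term together with a piece dominated by $\int_{B_r}|\nabla\phi|^{s(y)}\,dy$; balancing $\alpha$ against $s^{-}>1$ and against the $n$-dimensional scaling of the singular weight is the delicate quantitative step.

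After this exponent exchange one arrives at
\[
\lbr\frac{|\phi(x)-\avg{\phi}{B_r}|}{\diam(B_r)}\rbr^{s(x)} \leq C(n,\slog)\fint_{B_r}\lbr 1+|\nabla\phi(y)|^{s(y)}\rbr \frac{r^{n-1}}{|x-y|^{n-1}}\,dy.
\]
Integrating over $x\in B_r$, applying Fubini, and using $\int_{B_r}|x-y|^{-(n-1)}\,dx\leq C(n)\,r$ for every fixed $y\in B_r$ to collapse the singular weight into a dimensional constant, the first assertion follows. The second assertion is immediate because $\diam(B_r)=2r\leq\br_p<1$, which forces $\diam(B_r)^{-s(x)}\geq 1$ pointwise, so the left-hand side of the second inequality is dominated by the left-hand side of the first.
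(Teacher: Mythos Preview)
The paper does not prove this lemma; it quotes it from \cite[Lemma~4.12]{adimurthi2017sharp}. Your Riesz-potential strategy is a recognized route, and you correctly isolate the exponent exchange $s(x)\rightsquigarrow s(y)$ as the heart of the matter. The gap is in the handling of the bad set. A Chebyshev bound only controls the \emph{measure} of $\{|\nabla\phi(y)|>|x-y|^{-\alpha}\}$, not the integral of $|\nabla\phi|^{s(x)}$ over it; on that set $|\nabla\phi|$ is unbounded above, so the factor $|\nabla\phi|^{s(x)-s(y)}$ cannot be made uniformly bounded, and the $x$-dependence of the threshold through $|x-y|$ further obstructs the Fubini step. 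You honestly flag this as ``the delicate quantitative step'' without carrying it out, but a direct balance of $\alpha$, $s^-$, and the kernel does not close the estimate; making the Riesz-potential approach rigorous typically requires the nontrivial mapping properties of $I_1$ on $L^{\sss}$ rather than an elementary Chebyshev splitting.

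A cleaner argument, and the one usually given in this setting, avoids the potential representation entirely. Apply the constant-exponent Sobolev--Poincar\'e inequality on $B_r$ with exponent $s^-_{B_r}$ to gain the Sobolev conjugate $(s^-_{B_r})^*>s^+_{B_r}$ on the left, then H\"older down to $s(x)\le s^+_{B_r}$. The right-hand side becomes $\big(\fint_{B_r}|\nabla\phi|^{s^-_{B_r}}\,dx\big)^{s^+_{B_r}/s^-_{B_r}}$; since $|\nabla\phi|^{s^-_{B_r}}\le|\nabla\phi|^{s(x)}+1$ and $s^+_{B_r}/s^-_{B_r}=1+O(\mods(2r))$, the a priori bound $\int_{B_{4r}}|\nabla\phi|^{s(x)}\,dx+1\le\bm_p$ together with the radius restriction $4r<\br_p\le(2\bm_p)^{-1}$ yields $(\bm_p/|B_r|)^{O(\mods(2r))}\le r^{-C\mods(2r)}\apprle_{(n,\slog)}1$ via log-H\"older. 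This is where the hypotheses are actually used, and it closes the inequality without any bad-set splitting.
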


Another Poincar\'e's inequality that will be needed is one where the function has a reasonable large zero set:
\begin{theorem}
 \label{measure_density_poincare}
 Let $\sss \in \logh$ and let $\bm_p\geq 1$ and $\varepsilon \in (0,1)$ be given. Define  $\br_p := \min \left\{\frac{1}{2\bm_p}, \frac1{\om_n^{1/n}}, \frac12 \right\}$.  For any $\phi \in W^{1,\pp}(B_{2r})$ with $2r < \br_p$ satisfying 
 \begin{gather*}
 |\{ N(\phi)\}| := |\{ x \in B_r : \phi(x) =0\}|> \varepsilon |B_r|\txt{and} \int_{B_{2r}} |\nabla \phi (x)|^{s(x)} \ dx + 1 \leq \bm_p, 
 \end{gather*}
 the following estimate holds:
 \[
  \int_{B_r} \lbr \frac{|\phi|}{\diam(B_r)}\rbr^{s(x)} \ dx \apprle_{(\slog,n,\ve)}  \int_{B_{r}} |\nabla \phi(x)|^{s(x)} \ dx + |B_{r}|.
 \]
\end{theorem}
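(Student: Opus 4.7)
The plan is to reduce the estimate to the zero-mean variable exponent Poincar\'e inequality already provided by Lemma \ref{scaled_poincare}, by decomposing $\phi$ around its mean on $B_r$. Writing $\phi = (\phi - \avg{\phi}{B_r}) + \avg{\phi}{B_r}$ and using the pointwise triangle inequality together with $2^{s(x)} \leq 2^{s^+}$, I would first reduce the problem to
\[
\int_{B_r} \lbr \frac{|\phi|}{\diam(B_r)} \rbr^{s(x)} dx \apprle \int_{B_r} \lbr \frac{|\phi - \avg{\phi}{B_r}|}{\diam(B_r)} \rbr^{s(x)} dx + \int_{B_r} \lbr \frac{|\avg{\phi}{B_r}|}{\diam(B_r)} \rbr^{s(x)} dx.
\]
The first integral is immediately controlled by $\int_{B_r} |\nabla \phi|^{s(x)} dx + |B_r|$ via Lemma \ref{scaled_poincare}, since the hypotheses on $\phi$ and $r$ transfer unchanged. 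Thus the entire burden of the proof rests on controlling the second term, which involves the constant $A := |\avg{\phi}{B_r}|$ raised to the variable power $s(x)$.

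To bound $A$, I would exploit the vanishing set: since $\phi \equiv 0$ on $N(\phi)$,
\[
0 = \frac{1}{|N(\phi)|} \int_{N(\phi)} \phi(y)\, dy = \avg{\phi}{B_r} - \frac{1}{|N(\phi)|}\int_{N(\phi)} (\avg{\phi}{B_r} - \phi(y)) \, dy,
\]
and combining this with the measure density $|N(\phi)| \geq \varepsilon |B_r|$ yields the key elementary bound
\[
A = |\avg{\phi}{B_r}| \leq \frac{1}{\varepsilon} \fint_{B_r} |\phi - \avg{\phi}{B_r}|\, dy.
\]
The remaining task is to convert this $L^1$-type bound on a constant into an $L^{s(\cdot)}$ estimate: since $A$ does not depend on $x$, we must estimate $\int_{B_r} (A/\diam(B_r))^{s(x)} dx$ by the $s(\cdot)$-energy of $\nabla \phi$ plus $|B_r|$.

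I would then split according to the size of $A$. If $A/\diam(B_r) \leq 1$, then $(A/\diam(B_r))^{s(x)} \leq (A/\diam(B_r))^{s^-}$, so Jensen's inequality at the constant exponent $s^-$ and the elementary bound $|f|^{s^-} \leq |f|^{s(y)} + 1$ reduce the integral to $\int_{B_r} |\phi - \avg{\phi}{B_r}|^{s(y)} dy + |B_r|$, after which Lemma \ref{scaled_poincare} closes the estimate (using $\diam(B_r) < 1$). In the opposite case $A/\diam(B_r) > 1$, one needs the log-H\"older trick: from $|s(x_1) - s(x_2)| \leq L/\log(1/(2r))$ on $B_r$, one writes $(A/\diam(B_r))^{s(x)} = (A/\diam(B_r))^{s(x_0)} \cdot (A/\diam(B_r))^{s(x) - s(x_0)}$, and shows that the a priori bound $\int_{B_{2r}} |\nabla \phi|^{s(x)} dx + 1 \leq \bm_p$ (via Lemma \ref{scaled_poincare} applied to $\phi - \avg{\phi}{B_r}$) forces $\log(A/\diam(B_r)) \apprle \log(1/r)$ up to constants depending on $\bm_p$, $\varepsilon$, $n$, $\slog$. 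The factor $(A/\diam(B_r))^{|s(x) - s(x_0)|}$ is therefore absorbed into a universal constant, reducing to the previous case.

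The main obstacle is the last paragraph, where the hypotheses $\int |\nabla \phi|^{s(x)} + 1 \leq \bm_p$, the log-H\"older continuity of $\sss$, and the smallness $2r < \br_p$ must be carefully interwoven to stabilize the ``exponent shift'' $(A/\diam(B_r))^{s^+_{B_r} - s^-_{B_r}}$. This is the standard source of difficulty in variable-exponent Poincar\'e estimates, and it is precisely why the normalization constant $\bm_p$ and the radius restriction $\br_p$ appear in the statement, exactly as in Lemma \ref{scaled_poincare}.
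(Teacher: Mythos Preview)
Your strategy is sound and is essentially the standard route for this kind of estimate; the paper itself does not give a detailed proof but only remarks that one should repeat the argument of \cite[Theorem~4.13]{adimurthi2017sharp} and combine it with the technical lemma \cite[Lemma~3.4]{han2011elliptic}. Your decomposition around the mean, the bound $|\avg{\phi}{B_r}|\le \varepsilon^{-1}\fint_{B_r}|\phi-\avg{\phi}{B_r}|$ extracted from the zero set, and the log-H\"older exponent-freezing to absorb $(A/\diam(B_r))^{s^+_{B_r}-s^-_{B_r}}$ are exactly the ingredients that underlie that argument.

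There is one genuine technical mismatch you should not gloss over: Lemma~\ref{scaled_poincare} as stated requires the a~priori bound $\int_{B_{4r}}|\nabla\phi|^{s(x)}\,dx+1\le \bm_p$ together with $4r<\br_p$, whereas Theorem~\ref{measure_density_poincare} only supplies this on $B_{2r}$ with $2r<\br_p$. So the claim that ``the hypotheses on $\phi$ and $r$ transfer unchanged'' is not literally correct, and you cannot invoke Lemma~\ref{scaled_poincare} on $B_r$ as a black box. This is precisely why the paper says to \emph{repeat the arguments} rather than cite the lemma, and why it invokes \cite[Lemma~3.4]{han2011elliptic} (the constant-exponent Poincar\'e with a vanishing set, which already has $B_r$ on both sides without any enlargement). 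The fix is routine: either re-run the proof of Lemma~\ref{scaled_poincare} with the smaller enlargement factor (the factor~$4$ there is an artifact of the proof in \cite{adimurthi2017sharp} and is not sharp), or bypass the black-box call entirely by applying the Han--Lin constant-exponent inequality at exponent $s^-_{B_r}$ and then carrying out your exponent-freezing on top of it. With either adjustment your plan goes through and matches what the paper has in mind.
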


We note that Theorem \ref{measure_density_poincare} is slightly different than the one proved in \cite[Theorem 4.13]{adimurthi2017sharp}. In order to obtain this improvement where the ball $B_r$ is the same on both sides of the inequality, we can repeat the arguments in the proof of \cite[Theorem 4.13]{adimurthi2017sharp} and combine them with the technical lemma from \cite[Lemma 3.4]{han2011elliptic}.

The next lemma that we need  is an estimate in $L\log L$-space which can be found in \cite{AM} and references therein:
\begin{lemma}
 \label{llogl}
 Let $\be >0$ and let $s>1$. Then for any $f \in L^s(\tom)$, we have
 \[
  \fint_{\tom} |f| \lbr[[] \log \lbr e + \frac{|f|}{\avg{|f|}{\tom}}\rbr \rbr[]]^{\be}\ dx \apprle_{(n,s,\be)} \lbr \fint_{\tom} |f|^s \ dx \rbr^{\frac{1}{s}},
 \]
where we have used the notation ${\avg{|f|}{\tom}} := \fint_{\tom} |f(y)|\ dy$.
\end{lemma}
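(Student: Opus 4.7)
The plan is to establish this classical $L\log L$-type bound by truncating at the mean. Set $M := \fint_{\tom}|f|\,dx$ and $A := \fint_{\tom}|f|^{s}\,dx$, so that Jensen's inequality gives the baseline comparison $M \le A^{1/s}$, which is exactly the target right-hand side. I would split the integration into the two regions $\{|f|\le M\}$ and $\{|f|>M\}$ and estimate each separately.

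On $\{|f|\le M\}$, the argument of the logarithm is at most $e+1$, so $\log^{\be}(e+|f|/M) \le \log^{\be}(e+1)$ is a universal constant, and this portion of the integral is bounded by a constant multiple of $M \le A^{1/s}$. On $\{|f|>M\}$ I would invoke the elementary inequality $\log^{\be}(e+t)\le C_{\ve,\be}\,t^{\ve}$ for $t\ge 1$ and any $\ve>0$, yielding
\[
 \fint_{\{|f|>M\}} |f|\log^{\be}\!\left(e+\frac{|f|}{M}\right) dx \;\le\; C_{\ve,\be}\,M^{-\ve}\fint_{\tom}|f|^{1+\ve}\,dx.
\]
Since $s>1$, pick $\ve\in(0,(s-1)/s)$ and set $\theta:=\ve/(s-1)\in(0,1)$, so that $(1-\theta)+\theta s = 1+\ve$. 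H\"older's inequality with conjugate exponents $1/(1-\theta)$ and $1/\theta$ then interpolates $L^{1+\ve}$ between $L^{1}$ and $L^{s}$,
\[
 \fint_{\tom}|f|^{1+\ve}\,dx \;\le\; \left(\fint_{\tom}|f|\,dx\right)^{1-\theta}\!\left(\fint_{\tom}|f|^{s}\,dx\right)^{\theta} = M^{1-\theta}A^{\theta}.
\]
Substituting back and applying Jensen once more to the leftover factor $M^{1-\theta-\ve}\le A^{(1-\theta-\ve)/s}$ (valid since $1-\theta-\ve>0$ by the choice of $\ve$) gives
\[
 M^{-\ve}\fint_{\tom}|f|^{1+\ve}\,dx \;\le\; A^{(1-\theta-\ve)/s+\theta} \;=\; A^{(1-\ve+(s-1)\theta)/s} \;=\; A^{1/s},
\]
where the last step uses $(s-1)\theta=\ve$. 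Combining the contributions from the two regions completes the argument.

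The only genuine subtlety is the parameter calibration at the end: $\ve$ must be chosen strictly less than $(s-1)/s$ so that $1-\theta-\ve>0$ and Jensen can be applied in the correct direction; this is precisely where the hypothesis $s>1$ is consumed. The implicit constant absorbs $\log^{\be}(e+1)$ from the easy region together with $C_{\ve,\be}$ from the polynomial decay bound on the logarithm, and depends on $n$ only through the use of averages on a domain in $\RR^{n}$, as claimed.
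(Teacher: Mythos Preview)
Your argument is correct. The paper does not actually prove this lemma; it simply quotes the estimate as a known fact from \cite{AM} and references therein. Your truncation at the mean, combined with the polynomial bound $\log^{\be}(e+t)\le C_{\ve,\be}t^{\ve}$ and the $L^1$--$L^s$ interpolation via H\"older, is a standard and entirely self-contained route to the estimate, and the parameter bookkeeping in the final step (choosing $\ve<(s-1)/s$ so that $1-\theta-\ve>0$) is exactly right. One minor remark: your proof shows that the constant does not in fact depend on $n$ at all---the argument works verbatim on any finite measure space---so the $n$-dependence recorded in the paper's statement is harmless but superfluous.
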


We record some useful property as follows:
\begin{lemma}\label{useful int}
Let $\tom$ be an open set in $\RR^N$ and let $q > s \ge 0$. For $g \in L^1(\tom)$, we have
\begin{equation}\label{useful int1}
\int_{\tom} |g|_k^{q-s} |g|\ dx = (q-s) \int_0^k \al^{q-s-1} \int_{\{y\in \tom : |g(y)| > \al\}} |g(x)| \ dx d\al,
\end{equation}
where the truncation function $|g|_k := \min\mgh{|g|, k}$ for some constant $k>0$. If $g \in L^{q-s+1}(U)$, then \eqref{useful int1} also holds for $k=\infty$.
\end{lemma}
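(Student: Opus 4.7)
The plan is to prove the identity by the classical layer-cake representation combined with Fubini--Tonelli. First I would observe that for any $t \ge 0$ and any $q > s \ge 0$ there is the elementary formula $t^{q-s} = (q-s)\int_0^t \al^{q-s-1}\,d\al$. Applying this with $t = |g|_k(x)$, and noting that since $|g|_k(x) = \min\{|g(x)|,k\}$, for $0 < \al < k$ the inequality $\al < |g|_k(x)$ is equivalent to $\al < |g(x)|$, I obtain the pointwise identity
\[
|g|_k(x)^{q-s} = (q-s)\int_0^k \al^{q-s-1}\,\mathbf{1}_{\{|g(x)|>\al\}}\,d\al.
\]

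Next I would multiply through by $|g(x)|$, integrate over $x \in \tom$, and swap the order of integration. Since the integrand $(x,\al)\mapsto \al^{q-s-1}|g(x)|\mathbf{1}_{\{|g(x)|>\al\}}$ is non-negative and measurable on $\tom \times (0,k)$, Tonelli's theorem applies unconditionally and produces exactly the identity claimed in the lemma for finite $k$.

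For the $k = \infty$ case the same argument goes through verbatim after replacing the upper limit $k$ by $\infty$ in the layer-cake formula; the hypothesis $g \in L^{q-s+1}(\tom)$ ensures that the left-hand side is finite, whence so is the right. Alternatively one may pass to the limit $k \to \infty$ in the truncated identity via monotone convergence, which is legitimate since both sides are monotone increasing in $k$. The proof presents no serious obstacle; the only point requiring attention is the identification $\{|g|_k > \al\} = \{|g|>\al\}$ for $\al < k$, which is what allows the distribution functions on the two sides to match up after the Tonelli swap.
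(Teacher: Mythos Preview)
Your proof is correct and takes essentially the same approach as the paper, which simply states that the identity follows from Fubini's theorem. Your write-up spells out the layer-cake representation and the key observation $\{|g|_k > \al\} = \{|g| > \al\}$ for $\al < k$, which is exactly what underlies the paper's one-line appeal to Fubini.
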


\begin{proof}
By Fubini's theorem, it is easy to check that Lemma \ref{useful int} holds.
\end{proof}

We also use the following technical lemma which was proved in \cite[Lemma 4.3]{han2011elliptic}:
\begin{lemma}\label{useful tech}
Let $g$ be a bounded nonnegative function in $[\tau_0, \tau_1]$ with $\tau_0 \ge 0$. Suppose that for $\tau_0 \le s_1 < s_2 \le \tau_1$, we have
\begin{equation*}
f(s_1) \le \theta f(s_2) + \frac{P_1}{(s_2-s_1)^k} + P_2,
\end{equation*}
for some $k,P_1,P_2 \ge 0$ and $\theta \in [0,1)$. Then for any $\tau_0 \le s_1 < s_2 \le \tau_1$, there holds
$$
f(s_1) \apprle_{(k,\theta)} \mgh{\frac{P_1}{(s_2-s_1)^k} + P_2}.
$$
\end{lemma}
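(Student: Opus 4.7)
The plan is to prove this by the standard Giaquinta--Giusti hole-filling-style iteration, where one defines a geometric sequence of intermediate radii between $s_1$ and $s_2$ and iterates the given inequality along this sequence. The key point is to choose the geometric ratio in such a way that the factor $\theta$ coming from each iteration is dominated by the growth of $(\text{gap})^{-k}$, so that the resulting series converges.

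Concretely, fix $\tau_0 \le s_1 < s_2 \le \tau_1$ and choose $\lambda \in (0,1)$ so that $\theta \lambda^{-k} < 1$; for instance $\lambda := \bigl(\tfrac{1+\theta^{1/(k+1)}}{2}\bigr)^{-1}$ or any explicit choice with $\theta < \lambda^k$. Then define the increasing sequence
\[
t_0 := s_1, \qquad t_{i+1} := t_i + (1-\lambda)\lambda^{i}(s_2 - s_1), \quad i \ge 0,
\]
so that $t_{i+1}-t_i = (1-\lambda)\lambda^{i}(s_2-s_1)$ and $t_i \nearrow s_2$. Applying the hypothesis on each pair $(t_i, t_{i+1})$ gives
\[
f(t_i) \le \theta f(t_{i+1}) + \frac{P_1}{\bigl((1-\lambda)\lambda^{i}(s_2-s_1)\bigr)^{k}} + P_2.
\]
Iterating this $N$ times yields
\[
f(s_1) \le \theta^{N} f(t_{N}) + \frac{P_1}{(1-\lambda)^{k}(s_2-s_1)^{k}} \sum_{i=0}^{N-1}\bigl(\theta \lambda^{-k}\bigr)^{i} + P_2 \sum_{i=0}^{N-1}\theta^{i}.
\]

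Since $f$ is bounded on $[\tau_0,\tau_1]$ and $\theta<1$, the first term $\theta^{N}f(t_N)$ tends to $0$ as $N\to\infty$. Both geometric series converge because $\theta<1$ and $\theta\lambda^{-k}<1$ by the choice of $\lambda$. Passing to the limit $N\to\infty$ gives
\[
f(s_1) \le \frac{P_1}{(1-\lambda)^{k}(1-\theta\lambda^{-k})(s_2-s_1)^{k}} + \frac{P_2}{1-\theta},
\]
which is exactly the claimed bound with a constant depending only on $k$ and $\theta$.

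No real obstacle is expected: the only thing to be careful about is the admissibility of choosing $\lambda$ with $\theta \lambda^{-k}<1$ (which is possible precisely because $\theta<1$ and $k\ge 0$; in the degenerate case $k=0$ the inequality collapses and one can take $\lambda=1/2$), and the use of boundedness of $f$ to discard $\theta^{N}f(t_N)$ in the limit. This boundedness hypothesis is what prevents us from having to assume anything about $f$ beyond the iterative inequality, and is the only reason the argument is not purely algebraic.
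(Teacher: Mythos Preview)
Your argument is correct and is precisely the standard Giaquinta--Giusti iteration; the paper does not give its own proof but simply cites \cite[Lemma 4.3]{han2011elliptic}, where exactly this argument appears. One cosmetic slip: your explicit formula $\lambda = \bigl(\tfrac{1+\theta^{1/(k+1)}}{2}\bigr)^{-1}$ gives $\lambda>1$, but since you immediately add ``or any explicit choice with $\theta<\lambda^k$'' (e.g.\ $\lambda = \tfrac{1+\theta^{1/k}}{2}$ for $k>0$), the argument goes through unchanged.
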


\subsection{Maximal Function}

For any $f \in L^1(\RR^{n+1})$, let us now define the strong maximal function in $\RR^{n+1}$ as follows:
\begin{equation}
 \label{par_max}
 \mm(|f|)(x,t) := \sup_{\tQ \ni(x,t)} \fiint_{\tQ} |f(y,s)| \ dy \ ds,
\end{equation}
where the supremum is  taken over all parabolic cylinders $\tQ_{a,b}$ with $a,b \in \RR^+$ such that $(x,t)\in \tQ_{a,b}$. An application of the Hardy-Littlewood maximal theorem in $x-$ and $t-$ directions shows that the Hardy-Littlewood maximal theorem still holds for this type of maximal function (see \cite[Lemma 7.9]{Gary} for details):
\begin{lemma}
\label{max_bound}
 If $f \in L^1(\RR^{n+1})$, then for any $\al >0 $, there holds
 \[
  |\{ z \in \RR^{n+1} : \mm(|f|)(z) > \al\}| \leq \frac{5^{n+2}}{\al} \|f\|_{L^1(\RR^{n+1})},
 \]
 and if $f \in L^{\vartheta}(\RR^{n+1})$ for some $1 < \vartheta \leq \infty$, then there holds
 \[
  \| \mm(|f|) \|_{L^{\vartheta}(\RR^{n+1})} \leq C_{(n,\vartheta)} \| f \|_{L^{\vartheta}(\RR^{n+1})}.
 \]

\end{lemma}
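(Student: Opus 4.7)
The plan is to reduce the claim to the classical Hardy--Littlewood maximal theorem by iterating the maximal operator in the spatial and temporal variables separately, and then to handle the weak-type $(1,1)$ bound through a parabolic Vitali covering argument.

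First I would establish a pointwise domination. For any parabolic cylinder $\tQ_{a,b} = B_a(x_0) \times (t_0 - b^2, t_0 + b^2)$ containing $(x,t)$, Fubini's theorem gives
\[
\fiint_{\tQ_{a,b}} |f(y,s)|\,dy\,ds = \fint_{t_0-b^2}^{t_0+b^2} \fint_{B_a(x_0)} |f(y,s)|\,dy\,ds \leq M_t\bigl(M_x(|f|)\bigr)(x,t),
\]
where $M_x$ and $M_t$ denote the usual Hardy--Littlewood maximal operators acting in the space and time variables respectively (with the other variable frozen). Taking the supremum over admissible $(a,b)$ yields the pointwise bound $\mm(|f|) \leq M_t(M_x(|f|))$.

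For the strong $L^{\vartheta}$ bound with $\vartheta > 1$, I would apply the classical $L^\vartheta$ maximal theorem twice: first bound $\|M_x(|f|)(\cdot,t)\|_{L^\vartheta(\RR^n)} \leq C_{(n,\vartheta)}\|f(\cdot,t)\|_{L^\vartheta(\RR^n)}$ for each $t$, then apply the one-dimensional $L^\vartheta$ bound for $M_t$ at each $x$, and combine via Fubini to obtain $\|\mm(|f|)\|_{L^\vartheta(\RR^{n+1})} \leq C_{(n,\vartheta)}\|f\|_{L^\vartheta(\RR^{n+1})}$. No delicate geometry is needed here, and the constants accumulate multiplicatively with harmless dimensional dependence.

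For the weak-type $(1,1)$ estimate, the iterated-maximal approach is insufficient (the composition $M_t \circ M_x$ satisfies only an $L\log L$ bound on $L^1$, by Jessen--Marcinkiewicz--Zygmund), so I would proceed directly using the parabolic geometry. Let $E_\alpha = \{\mm(|f|) > \alpha\}$; for each $z \in E_\alpha$ select a parabolic cylinder $\tQ_z \ni z$ with $\fiint_{\tQ_z} |f|\,dy\,ds > \alpha$. A standard Vitali-type selection (ordering the chosen cylinders by a suitable scale parameter and passing to a maximal disjoint subfamily) produces disjoint $\{\tQ_{z_i}\}$ whose fivefold parabolic dilates cover $E_\alpha$. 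Since a parabolic dilation by factor $5$ in space and $5^2$ in time multiplies the measure of such a cylinder by $5^{n+2}$, one obtains
\[
|E_\alpha| \leq \sum_i 5^{n+2}|\tQ_{z_i}| \leq \frac{5^{n+2}}{\alpha}\sum_i \int_{\tQ_{z_i}} |f|\,dy\,ds \leq \frac{5^{n+2}}{\alpha}\|f\|_{L^1(\RR^{n+1})},
\]
with the last inequality using disjointness. The main obstacle is precisely this Vitali step: the admissible family of parabolic cylinders must be structured so that two intersecting cylinders satisfy a containment-up-to-dilation property, and it is this geometric input (supplied by Lemma~7.9 in the reference) that distinguishes the parabolic case from a genuine two-parameter strong maximal function. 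Once that selection principle is in place, the rest of the argument is the standard weak-$(1,1)$ proof as outlined above.
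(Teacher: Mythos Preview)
Your $L^{\vartheta}$ argument via the pointwise domination $\mm(|f|)\le M_t(M_x(|f|))$ and two applications of the one-parameter Hardy--Littlewood theorem is correct, and it is exactly the approach the paper indicates (``an application of the Hardy--Littlewood maximal theorem in $x$- and $t$-directions'', with a reference to \cite[Lemma 7.9]{Gary}); the paper gives no further detail.

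Your weak-$(1,1)$ argument, however, has a genuine gap, and it cannot be repaired. The cylinders $\tQ_{a,b}=B_a(x_0)\times(t_0-b^2,t_0+b^2)$ carry two \emph{independent} scale parameters $a$ and $b$, so there is no single size by which to order them in a Vitali selection: if $\tQ_{a_1,b_1}\cap\tQ_{a_2,b_2}\neq\emptyset$ with $a_1\ge a_2$ but $b_1\ll b_2$, no bounded dilate of the first cylinder can swallow the second. The engulfing property you invoke simply fails here, and the constant $5^{n+2}$ you obtain has no justification. In fact this operator is precisely the bi-parameter strong maximal function (spatial scale and temporal scale decoupled), for which the weak-$(1,1)$ inequality is \emph{false}: for $f$ an $L^1$-approximation to a Dirac mass at the origin one has $\mm(|f|)(x,t)\gtrsim |x|^{-n}|t|^{-1}$, whose superlevel sets have infinite measure. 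You yourself cite Jessen--Marcinkiewicz--Zygmund for exactly this obstruction to the iterated approach; the same obstruction kills the Vitali approach. The paper offers no proof of the weak-$(1,1)$ part beyond the citation (whose method is iteration and thus yields only the $L^{\vartheta}$ bound), and a check of the sequel shows that only the strong $L^{\vartheta}$ estimate is ever invoked, so the defect is harmless for the paper's purposes.
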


\section{Approximations}
\label{four-two}

In this section, we describe gradient higher integrability type results and the approximations that will be made.

\subsection{Gradient higher integrability estimates}
In this subsection, let us collect a few important higher integrability results that will be used throughout the paper. In order to state the general theorems, let $\phi \in L^{\pp}\lbr -T,T;W_0^{1,\pp}(\Om)\rbr$ be a weak solution of 
\begin{equation}
 \label{pde_high}
\left\{ \begin{array}{rcll}
  \phi_t - \dv \aa(x,t,\nabla \phi) &=& -\dv (|\vec{f}|^{p(x,t)-2} \vec{f}) & \quad \text{in} \ \omt,\\
  \phi &=& 0 & \quad \text{on} \ \partial \Om \times (-T,T),
 \end{array}\right. 
\end{equation}
where the nonlinearity is assumed to satisfy \eqref{abounded} and \eqref{monotonicity}. Here the domain $\OO$ is assumed to satisfy a uniform measure density condition with constant $m_e$ as defined in Definition \ref{measure_def}.  Let us define 
\begin{equation}
\label{deb_bm_f}
\bm_{\vec{f}}:= \iint_{\Om_T} \lbr[[]|\vec{f}|^{p(z)}+1\rbr[]]\ dz + 1,
\end{equation}
which combined with \eqref{energy_u} shows
\[
\bm_{\phi}:= \iint_{\Om_T} \lbr[[]|\nabla \phi|^{p(z)}+1\rbr[]] \ dz + 1 \leq C_{(n,\plog,\lamot)}\bm_{\vec{f}}.
\]

The first result we recall is the higher integrability above the natural exponent. In the interior case, this was proved in \cite{AZ05,bogelein2011higher} whereas in the boundary case, using the measure density condition satisfied by $\Om$, the result was proved in \cite[Lemma 3.5]{byun2016nonlinear}. Using the unified intrinsic scaling approach, we can obtain the following modified higher integrability above the natural exponent:
\begin{theorem}
\label{high_weak}
Let $\tilde{\sigma}>0$ be given, then there exists $\tilde{\be}_1 = \tilde{\be}_1(n,\lamot,\plog,\Om) \in (0,\tilde{\sigma}]$ such that if $\vec{f} \in L^{\pp(1+\tilde{\sigma})}(\Om_T)$ and $\phi\in L^{\pp}\lbr-T,T;W_0^{1,\pp}(\Om)\rbr$ is a weak solution to \eqref{pde_high}, then $|\nabla \phi| \in L^{\pp(1+\be)}(\Om_T)$ for all $\be \in (0,\tilde{\be}_1]$. Moreover, with $\bm_{\vec{f}}$ defined as \eqref{deb_bm_f}, there exists a radius $\tilde{\rho}_1  = \tilde{\rho}_1(n,\plog,\lamot,\bm_{\vec{f}})$ such that for any $2\rho \in (0,\tilde{\rho}_1]$ and any $\mfz \in \overline{\Om} \times (-T,T)$, there holds
\[
\fiint_{K_{\rho}(\mfz)} |\nabla \phi|^{\pp(1+\be)} \ dz \apprle_{(n,\lamot,\plog,\Om)} \lbr \fiint_{K_{2\rho}(\mfz)}\lbr  |\nabla \phi|+ |\vec{f}|\rbr^{\pp} \ dz \rbr^{1+\be \tilde{\vt}_1} + \fiint_{K_{2\rho}(\mfz)}\lbr  |\vec{f}|+1\rbr^{\pp(1+\be)} \ dz,
\]
where the constant $\tilde{\vt}_1 = \tilde{\vt}_1(p(\mfz),n)\geq 1$.
\end{theorem}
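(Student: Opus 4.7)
The plan is to follow the classical Gehring-type higher integrability scheme, but carried out on the unified intrinsic cylinders $Q^\la_\rho(\mfz)$ of \cite{adimurthi2018sharp} so that the degenerate and singular cases are treated in one stroke. The argument has three main pieces: (a) a Caccioppoli inequality on $Q^\la_\rho(\mfz)\cap \Om_T$, (b) a Sobolev--Poincaré inequality on the same cylinders, combined with (a) to give a reverse Hölder inequality, and (c) a stopping time / Vitali covering argument that upgrades the reverse Hölder inequality to higher integrability.

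For step (a), in the interior I would test the Steklov-averaged equation against $(\phi - \avg{\phi}{B^\la_{2\rho}})\eta^{p^+}$ with $\eta$ a standard parabolic cut-off on $Q^\la_{2\rho}(\mfz)$, and use the ellipticity \eqref{aellipticity} along with Young's inequality. At boundary points $\mfz \in \pa\Om\times(-T,T)$, the same computation is performed after extending $\phi$ by zero across $\pa\Om$; the uniform measure density condition (Lemma \ref{measure_density}) lets the Poincaré inequality of Theorem \ref{measure_density_poincare} replace Lemma \ref{scaled_poincare}. The intrinsic coupling is chosen so that $\la^{p(\mfz)} \approx \fiint_{Q^\la_{2\rho}(\mfz)}\left(|\nabla\phi| + |\vec{f}|\right)^{p(z)}\,dz + 1$, which, after freezing $p(z)\to p(\mfz)$ via log-Hölder continuity, yields
\begin{equation*}
\fiint_{Q^\la_\rho(\mfz)} |\nabla \phi|^{p(z)}\,dz \apprle \left(\fiint_{Q^\la_{2\rho}(\mfz)} |\nabla\phi|^{p(z)/\chi}\,dz\right)^\chi + \fiint_{Q^\la_{2\rho}(\mfz)}|\vec{f}|^{p(z)}\,dz + 1,
\end{equation*}
for some $\chi>1$ independent of $\la$. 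Freezing the exponent forces $\rho \le \tilde\rho_1$ so that the error factor $\la^{\modp(\rho)}$ is harmless, and the dependence of $\tilde\rho_1$ on $\bm_{\vec{f}}$ stems from the a priori upper bound on $\la$ produced by the global energy estimate \eqref{energy_u}.

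Step (c) is the standard stopping time argument at level $\la$: for every $\mfz$ in the superlevel set $\{|\nabla \phi|^{p(\cdot)} > \la^{p(\cdot)}\}$ one finds a maximal intrinsic radius $r_\la(\mfz) \le 2\rho$ satisfying the coupling identity above. Covering the superlevel set by a Vitali family of such intrinsic cylinders, applying the reverse Hölder inequality on each, and integrating the resulting distribution estimate against $\la^{\be p(\cdot) - 1}\,d\la$ produces $|\nabla\phi|\in L^{\pp(1+\be)}$ for $\be\in(0,\tilde\be_1]$. The passage from intrinsic cylinders $Q^\la_\rho(\mfz)$ back to the standard cylinders $K_\rho(\mfz)$, using $|Q^\la_\rho(\mfz)|=\scalexn{\la}{\mfz}\scalet{\la}{\mfz}|Q_\rho(\mfz)|$, is exactly what produces the scaling-deficit exponent $\tilde\vt_1=\tilde\vt_1(p(\mfz),n)\ge 1$ on the right-hand side.

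The principal obstacle, and the reason the unified intrinsic scaling is indispensable, is keeping the constants stable as $p(\mfz)$ crosses the value $2$: the classical Kinnunen--Lewis scaling with only the time direction rescaled works only for $p>2$, while a purely spatial rescaling works only for $p<2$. The unified scaling makes both directions depend on $\la$, and the log-Hölder hypothesis \eqref{small_px} on $\pp$ is exactly what makes the exponent-freezing error $\la^{\modp(\rho)}$ on $Q^\la_\rho(\mfz)$ a bounded power of $\la$. The bookkeeping of the various powers of $\la$ (which appear when reverse Hölder is written with the intrinsic measure $|Q^\la_\rho|$ rather than $|Q_\rho|$, and again when one integrates against $d\la$) is the single delicate point, and this is precisely where the explicit value of $\tilde\vt_1 = \tilde\vt_1(p(\mfz),n)$ crystallises.
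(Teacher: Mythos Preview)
Your sketch is essentially the correct approach and matches what the paper has in mind. Note, however, that the paper does not actually prove Theorem \ref{high_weak}: it is stated as a known result, with the interior case attributed to \cite{AZ05,bogelein2011higher} and the boundary case to \cite[Lemma 3.5]{byun2016nonlinear}, and the paper merely remarks that recomputing those proofs on the unified intrinsic cylinders of \cite{adimurthi2018sharp} yields the stated form with the unified scaling deficit $\tilde{\vt}_1 = \frac{1}{-\frac{n}{p(\mfz)}+\frac{d(n+2)}{2}}$. Your outline---Caccioppoli on $Q^\la_\rho(\mfz)\cap\Om_T$, Sobolev--Poincar\'e (via Lemma \ref{scaled_poincare} in the interior and Theorem \ref{measure_density_poincare} at the boundary), the resulting reverse H\"older inequality, and the stopping-time/Vitali covering to integrate up---is precisely the standard Gehring scheme those references use, and your identification of the log-H\"older freezing error and the passage $|Q^\la_\rho|\to|Q_\rho|$ as the source of $\tilde{\vt}_1$ is correct.
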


We will also need an improved higher integrability result below the natural exponent. The following theorem was proved for a weaker class of solutions called \emph{very weak solutions}, but also holds true for \emph{weak solutions} as considered in this paper. The interior regularity in the singular case, i.e., when $\frac{2n}{n+2}<p^+\leq 2$, the result was proved in \cite{li2017very} and the interior regularity in the degenerate case, i.e., when $p^- \geq 2$, the result was proved in \cite{bogelein2014very}. Subsequently, using the \emph{unified intrinsic scaling}, this restriction can be removed and the full result up to the boundary with $\frac{2n}{n+2} < p^- \leq \pp\leq p^+ < \infty$ was proved in \cite{adimurthi2018sharp} for domains satisfying a uniform measure density condition as in Definition \ref{measure_def}. 

\begin{theorem}[\cite{adimurthi2018sharp}]
\label{high_very_weak}
Let $\tilde{\sigma}>0$ be given and suppose $\vec{f} \in L^{\pp(1+\tilde{\sigma})}(\Om_T)$ and $\phi\in L^{\pp}\lbr -T,T;W_0^{1,\pp}(\Om)\rbr$ is a weak solution to \eqref{pde_high}. With $\bm_{\vec{f}}$ defined as \eqref{deb_bm_f}, there exist radius $\tilde{\rho}_2  = \tilde{\rho}_2(n,\plog,\lamot,\bm_{\vec{f}})$ and  $\tilde{\be}_2 = \tilde{\be}_2(n,\lamot,\plog) \in (0,\tilde{\sigma}]$ with $\tilde{\be}_2 \leq \frac14$ such that for any $2\rho \in (0,\tilde{\rho}_2]$, $\be \in (0,\tilde{\be}_2]$ and any $\mfz \in \overline{\Om} \times (-T,T)$, there holds
\[
\fiint_{K_{\rho}(\mfz)} |\nabla \phi|^{\pp} \ dz \apprle_{(n,\lamot,\plog)} \lbr \fiint_{K_{2\rho}(\mfz)}\lbr  |\nabla \phi|+ |\vec{f}|\rbr^{\pp(1-\be)} \ dz \rbr^{1+\be \tilde{\vt}_2} + \fiint_{K_{2\rho}(\mfz)}\lbr  |\vec{f}|+1\rbr^{\pp} \ dz,
\]
where the constant $\tilde{\vt}_2 = \tilde{\vt}_2(n,p(\mfz))\ge1$.
\end{theorem}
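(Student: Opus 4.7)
The target estimate is a reverse H\"older inequality below the natural exponent, and the plan is to adapt the Gehring--Giaquinta--Modica machinery but driven by a Caccioppoli inequality obtained via \emph{parabolic Lipschitz truncation} rather than the usual cut-off test function. First I would work on intrinsically scaled cylinders $Q_\rho^\la(\mfz)$ calibrated by $\la$, chosen at each center so that the $p(z)(1-\be)$-energy of $\nabla\phi$ on the cylinder matches $\la^{p(\mfz)(1-\be)}$ up to a fixed constant; here the unified intrinsic scaling from \cite{adimurthi2018sharp} is what allows the singular ($p^-<2$) and degenerate ($p^+>2$) regimes to be treated by a single calculation. The log-H\"older assumption on $\pp$, together with the smallness condition on the radius dictated by $\bm_{\vec f}$, is used at this stage to freeze the exponent on $Q_{2\rho}^\la(\mfz)$: on such small cylinders $p(z)$ differs from $p(\mfz)$ by at most $\modp(2\rho)$, and this error is absorbed by the higher-integrability exponent once combined with Lemma \ref{integral_norm}.

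The main step is the Caccioppoli estimate below the natural exponent. In the weak formulation of \eqref{pde_high} one cannot simply test with $\eta^{p'}\phi$ because $\phi$ need not lie in $L^{\pp}(-T,T;W^{1,\pp})$ with the required integrability when $\be>0$; instead I would construct, on each intrinsic cylinder, a Lipschitz truncation $\phi_\lambda$ of $\phi$ at level $\lambda$ (with respect to the parabolic maximal function of $|\nabla\phi|^{(1-\be)}$), using the machinery of \cite{KL}. The truncation coincides with $\phi$ outside a small bad set $B_\lambda$ with $|B_\lambda|\apprle \lambda^{-p(\mfz)(1-\be)}\|\nabla\phi\|_{L^{p(1-\be)}}^{p(1-\be)}$, it is Lipschitz with constant $\lambda$ in the intrinsic metric, and crucially it is admissible as a test function in the Steklov-averaged equation. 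Testing with $\eta^{p^+}\phi_\lambda$, using ellipticity \eqref{aellipticity}, the coercivity/monotonicity bounds, and a layer-cake integration (Lemma \ref{useful int}) over the truncation parameter $\lambda$, yields
\[
\fiint_{K_\rho^\la(\mfz)} |\nabla\phi|^{p(z)}\,dz \apprle \lbr \fiint_{K_{2\rho}^\la(\mfz)} (|\nabla\phi|+|\vec f|)^{p(z)(1-\be)}\,dz \rbr^{\tfrac{1}{1-\be}} + \fiint_{K_{2\rho}^\la(\mfz)} (|\vec f|+1)^{p(z)}\,dz,
\]
after absorbing the exponent-shift error via log-H\"older continuity.

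Next I would remove the intrinsic scaling via a standard stopping-time / Vitali argument: at each Lebesgue point where $|\nabla\phi|^{p(z)(1-\be)}$ is large, one chooses $\la$ so that the intrinsic cylinder satisfies the calibration condition, and the Vitali covering produces a system of disjoint intrinsic cylinders on which the above Caccioppoli inequality holds uniformly. A Gehring-type self-improvement lemma, applied to the resulting reverse H\"older inequality on the level sets, then upgrades the inequality from the exponent $1$ to the exponent $1+\be\tilde\vt_2$ on the right-hand side, with the scaling deficit $\tilde\vt_2 = \tilde\vt_2(n,p(\mfz))$ encoding how the intrinsic scaling distorts averages. Near $\pa\Om$ the measure density condition from Definition \ref{measure_def} and Theorem \ref{measure_density_poincare} are used to obtain the boundary Poincar\'e inequality needed for the Lipschitz truncation.

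The main obstacle I expect is the construction and analysis of the Lipschitz truncation itself: one must simultaneously (i) respect the intrinsic parabolic scaling, (ii) keep the truncation admissible after Steklov averaging and control the time-derivative terms it produces (this is where the factor $\phi_\lambda - \phi$ appears and must be estimated on the bad set), and (iii) track the variable exponent $p(z)$ carefully so that no constant blows up when $\be\to 0$. The remaining pieces --- the Vitali covering, the Gehring iteration, and the boundary treatment via Reifenberg flatness --- are technical but follow standard templates once the truncated Caccioppoli inequality is in hand.
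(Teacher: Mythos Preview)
The paper does not give its own proof of this statement: Theorem \ref{high_very_weak} is simply quoted from \cite{adimurthi2018sharp} (with the interior precursors \cite{bogelein2014very,li2017very}), so there is no in-paper argument to compare against. Your sketch is a fair summary of the method used in those references --- parabolic Lipschitz truncation on unified intrinsic cylinders to obtain a Caccioppoli-type inequality below the natural exponent, followed by a stopping-time/Vitali covering and a Gehring-type iteration --- and it identifies the genuine technical obstacles (handling the time derivative of the truncated function, tracking the variable exponent via log-H\"older continuity, and the boundary Poincar\'e inequality).

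Two corrections are in order. First, your stated reason for needing the Lipschitz truncation (``$\phi$ need not lie in $L^{\pp}$ with the required integrability when $\be>0$'') is inaccurate in the present setting: by hypothesis $\phi$ \emph{is} a weak solution, so $\nabla\phi\in L^{\pp}$ and $\phi$ itself is an admissible test function. The truncation is needed not for admissibility but because it is the device that produces a Caccioppoli inequality whose right-hand side involves only the $p(z)(1-\be)$-power of $|\nabla\phi|$; the result in \cite{adimurthi2018sharp} was in fact proved for \emph{very weak} solutions (where $\nabla\phi$ is only in $L^{\pp(1-\be)}$ a priori and admissibility does fail), and the weak-solution case follows as a special instance. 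Second, the boundary treatment in \cite{adimurthi2018sharp} requires only the uniform measure density condition of Definition \ref{measure_def}, not Reifenberg flatness; the latter enters the present paper only through the comparison estimates in Sections \ref{six}--\ref{eight}, not through Theorem \ref{high_very_weak}.
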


\begin{remark}
For weak solutions, from the papers \cite{bogelein2011higher} and \cite{byun2016nonlinear}, the exponent $\tilde{\vt}_1$ in Theorem \ref{high_weak} was explicitly given by
\begin{equation*}
\tilde{\vt}_1 :=
\left\{
\begin{array}{ll}
\frac{p(\mfz)}{2} & \txt{if} p(\mfz) \geq 2,\\
\frac{2p(\mfz)}{p(\mfz)(n+2) -2n} &  \txt{if} \frac{2n}{n+2} < p(\mfz) < 2.
\end{array}\right.
\end{equation*}

On the other hand, using the unified intrinsic scaling approach and recalculating the estimates from \cite{bogelein2011higher}, we can obtain the following unified exponent $\tilde{\vt}_1 = \frac{1}{-\frac{n}{p(\mfz)}+\frac{d(n+2)}{2}}$ (recall the exponent $d$ from \eqref{def_d}) which holds in the full range $\frac{2n}{n+2} < p(\mfz) < \infty$.

For very weak solutions, in \cite{adimurthi2018sharp}, the exponent $\tilde{\vt}_2 := \frac{1}{-\frac{n}{p(\mfz)} + \frac{(n+2)d}{2} - \be}$ for any $\frac{2n}{n+2} < p(\mfz) < \infty$. Note that since $\be \leq \frac14$, one can uniformize the exponent $\tilde{\vt}_2 = \tilde{\vt}_2(n,p(\mfz))$ only, i.e., it does not depend on $\be$.

For the purposes of this paper, the explicitly computed exponents $\tilde{\vt}_1$ and $\tilde{\vt}_2$ will not be needed except for the following two  properties: firstly, we observe that $\tilde{\vt}_1, \tilde{\vt}_2 \geq 1$ and secondly, $\tilde{\vt}_1$ and $\tilde{\vt}_2$ can be made to  depend only on $n$ and $p(\mfz)$.
\end{remark}

Before we end this subsection, let us prove the following important corollary:
\begin{corollary}
\label{normalized_higher_integrability}
Let $\mfz \in \Om_T$ be any fixed point, and let $\al \geq 1$ be given. Suppose that $\phi$ and $\vec{f}$ solve
\begin{equation}
 \label{pde_local_norm}
\left\{ \begin{array}{rcll}
  \phi_t - \dv \aa(x,t,\nabla \phi) &=& -\dv (|\vec{f}|^{p(x,t)-2} \vec{f}) & \quad \text{in} \ K_{3r}^{\al}(\mfz),\\
  \phi &=& 0 & \quad \text{on} \ \partial_w K_{3r}^{\al}(\mfz).
 \end{array}\right. 
\end{equation}
Let $\be \leq \min\{ \tilde{\be}_1, \tilde{\be}_2\}$ where $\tilde{\be}_1$ is from Theorem \ref{high_weak} and $\tilde{\be}_2$ is from Theorem \ref{high_very_weak}.

Assume the following  are satisfied for some constants $\tilde{\bm}\geq 1$, $c_{\ast}$, $c_p$ and $\Ga$:
\begin{equation}\label{bm_1}
\iint_{K_{3r}^{\al}(\mfz)} |\nabla \phi|^{\pp(1-\be)} + |\vec{f}|^{\pp(1-\be)} + 1 \ dz \leq \tilde{\bm},
\end{equation}
\begin{equation}\label{hyp_one}
\fiint_{K_{3r}^{\al}(\mfz)} |\nabla \phi|^{\pp(1-\be)} + \lbr \fiint_{K_{3r}^{\al}(\mfz)} |\vec{f}|^{\pp(1-\be)\ka} \ dz \rbr^{\frac{1}{\ka}} \leq c_{\ast} \al^{1-\be} \quad \text{for some}\ \  \ka \geq \frac{1+\be}{1-\be}.
\end{equation}

Let $3r\leq \min\{\tilde{\rho}_1, \tilde{\rho}_2\}$ where $\tilde{\rho}_1$ is from Theorem \ref{high_weak} and $\tilde{\rho}_2$ is from Theorem \ref{high_very_weak},  furthermore, for the strictly positive constant (see \eqref{def_d}) defined by 
\begin{equation}
\label{def_d_0}
d_0 := \frac{d(n+2)}{2} - \frac{n}{p^-} >0,
\end{equation}
assume the following assumptions hold:
\begin{equation}
\label{one_5.18}
p^+_{K_{3r}^{\al}(\mfz)} - p^-_{K_{3r}^{\al}(\mfz)} \leq \modp(32r) \leq \min\left\{d_0p^-, d_0 p^- (p^--1)\right\} \txt{and} \al^{p^+_{K_{3r}^{\al}(\mfz)} - p^-_{K_{3r}^{\al}(\mfz)}} \leq c_p.
\end{equation}

Then for any  $\sigma \in (0,\be]$, the following estimate holds:
\begin{equation}\label{one_bnd}
\fiint_{K_{r}^{\al}(\mfz)} |\nabla \phi|^{\pp(1+\sigma)} \ dz \apprle_{(c_{\ast},c_p,p^-,p(\mfz))} \al^{1+\sigma}.
\end{equation}

\end{corollary}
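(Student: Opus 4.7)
The plan is a two-step bootstrap on the intrinsic cylinder $K_{3r}^{\al}(\mfz)$: first raise the integrability of $|\nabla \phi|$ from the very-weak exponent $\pp(1-\be)$ up to the natural exponent $\pp$ via Theorem \ref{high_very_weak}, and then from $\pp$ to $\pp(1+\sigma)$ via Theorem \ref{high_weak}. Throughout, hypothesis \eqref{hyp_one} encodes the natural $\al$-normalization adapted to the intrinsic cylinder, and the oscillation bound \eqref{one_5.18} ensures $\al^{|\pp - p(\mfz)|} \leq \al^{\modp(32r)} \leq c_p$ on $K_{3r}^{\al}(\mfz)$. This last inequality is what permits one to freely exchange the variable exponent $\pp$ with the frozen exponent $p(\mfz)$ inside any average on the cylinder, at the price of a multiplicative constant depending only on $c_p$ and $p(\mfz)$, and is what makes the right-hand side of \eqref{one_bnd} read $\al^{1+\sigma}$ rather than some more complicated power.

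For the first step, I would apply Theorem \ref{high_very_weak} on $K_{2r}^{\al}(\mfz) \subset K_{3r}^{\al}(\mfz)$ to obtain
\[
\fiint_{K_{2r}^{\al}(\mfz)} |\nabla \phi|^{\pp}\ dz \apprle \lbr \fiint_{K_{3r}^{\al}(\mfz)} \lbr |\nabla\phi| + |\vec{f}|\rbr^{\pp(1-\be)}\ dz \rbr^{1+\be\tilde{\vt}_2} + \fiint_{K_{3r}^{\al}(\mfz)} \lbr |\vec{f}|+1 \rbr^{\pp}\ dz.
\]
The first bracket is bounded by \eqref{hyp_one} (after using H\"older to dominate $|\vec{f}|^{\pp(1-\be)}$ by the $\ka$-th-power average) to yield $\al^{(1-\be)(1+\be\tilde{\vt}_2)}$; since $\tilde{\vt}_2$ depends only on $n$ and $p(\mfz)$ and $\be\leq 1/4$, the excess over $\al$ is absorbed into the implicit constant through the $\al^{\modp(32r)} \leq c_p$ mechanism of \eqref{one_5.18}. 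The second term is handled by H\"older with exponent $(1-\be)\ka \geq 1+\be$: this converts $\fiint |\vec{f}|^{\pp}$ into $\lbr \fiint |\vec{f}|^{\pp(1-\be)\ka} \rbr^{1/(1-\be)\ka}$, which by \eqref{hyp_one} is $\apprle \al$. Combined, $\fiint_{K_{2r}^{\al}(\mfz)} |\nabla \phi|^{\pp}\ dz \apprle \al$.

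For the second step, I would apply Theorem \ref{high_weak} on $K_r^{\al}(\mfz) \subset K_{2r}^{\al}(\mfz)$ with $\sigma$ in place of $\be$ (permitted since $\sigma \leq \be \leq \tilde{\be}_1$), giving
\[
\fiint_{K_r^{\al}(\mfz)} |\nabla \phi|^{\pp(1+\sigma)}\ dz \apprle \lbr \fiint_{K_{2r}^{\al}(\mfz)} \lbr |\nabla\phi| + |\vec{f}|\rbr^{\pp}\ dz \rbr^{1+\sigma\tilde{\vt}_1} + \fiint_{K_{2r}^{\al}(\mfz)} \lbr |\vec{f}|+1 \rbr^{\pp(1+\sigma)}\ dz.
\]
The first bracket is $\apprle \al^{1+\sigma\tilde{\vt}_1}$ by inserting the Step 1 bound for $|\nabla\phi|^{\pp}$ and repeating the H\"older argument for the $|\vec{f}|^{\pp}$ contribution. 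The second term is dominated by a power of $\fiint |\vec{f}|^{\pp(1-\be)\ka}\ dz$ via H\"older, which is justified because $(1-\be)\ka \geq 1+\be \geq 1+\sigma$; \eqref{hyp_one} then yields at most $\al^{1+\sigma}$ up to constants. Finally, the conversion of $\al^{1+\sigma \tilde{\vt}_1}$ into the target $\al^{1+\sigma}$ uses the $\al^{\modp(32r)} \leq c_p$ bound one last time.

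The main obstacle is the exponent arithmetic: through the two bootstraps the power on $\al$ accumulates to $(1-\be)(1+\be\tilde{\vt}_2)(1+\sigma\tilde{\vt}_1)$, which is slightly larger than the desired $1+\sigma$. Showing that the excess is absorbed into the implicit constant hinges on the radius condition $\modp(32r) \leq \min\mgh{d_0 p^-, d_0 p^- (p^--1)}$ together with the definition $d_0 = \frac{d(n+2)}{2} - \frac{n}{p^-} > 0$ in \eqref{def_d_0}: these jointly bound the deficit $\al^{\be \tilde{\vt}_2 + \sigma \tilde{\vt}_1 - \be - \sigma}$ by a constant depending only on $c_p$ and $p(\mfz)$, which closes the estimate with the stated constant dependence $(c_{\ast}, c_p, p^-, p(\mfz))$.
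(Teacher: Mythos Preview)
Your two-step bootstrap is the right outline, but the exponent arithmetic you flag as the ``main obstacle'' is in fact a genuine gap, and the absorption mechanism you propose does not close it. After your two applications the power of $\al$ on the right is $(1-\be)(1+\be\tilde\vt_2)(1+\sigma\tilde\vt_1)$, and you want the excess over $1+\sigma$ to be controlled by $\modp(32r)$ so that $\al^{\text{excess}}\le c_p$. But the excess is essentially $\be(\tilde\vt_2-1)+\sigma(\tilde\vt_1-1)+O(\be\sigma)$, and by the Remark following Theorem~\ref{high_very_weak} the scaling deficits $\tilde\vt_1,\tilde\vt_2$ are \emph{fixed} constants depending only on $n$ and $p(\mfz)$; they do not tend to zero with $r$. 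The bound $\al^{\modp(32r)}\le c_p$ in \eqref{one_5.18} only absorbs exponent drifts of size $\modp(32r)$ coming from swapping $\pp$ and $p(\mfz)$ on the cylinder; it cannot absorb a fixed positive quantity like $\sigma(\tilde\vt_1-1)$. So your right-hand side is genuinely $\al^{1+\sigma+\text{fixed positive}}$, which is not \eqref{one_bnd}.

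The paper avoids this by rescaling \emph{before} applying the higher-integrability theorems. Under $x=\al^{\scalexexp{}{\mfz}}y$, $t=\al^{-1+d}\tau$, $\phi_1=\al^{-d/2}\phi$, the intrinsic cylinder $K_{3r}^{\al}(\mfz)$ becomes the standard cylinder $K_{3r}(\mfz)$, and hypothesis \eqref{hyp_one} becomes $\fiint_{K_{3r}(\mfz)}|\nabla\phi_1|^{\tilde p(\cdot)(1-\be)}\,dz\le c_\ast c_p^{1/p^-}$, a bound by a constant \emph{independent of $\al$}. Now the deficits $\tilde\vt_1,\tilde\vt_2$ act only on this constant, yielding $\fiint_{K_r(\mfz)}|\nabla\phi_1|^{\tilde p(\cdot)(1+\sigma)}\,dz\le C(c_\ast,c_p,n,p(\mfz))$, still a constant. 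Scaling back then produces exactly the factor $\al^{1+\sigma}$ (up to a harmless $c_p$-correction from $|\tilde p-p(\mfz)|\le\modp(32r)$), and this is precisely where the assumption $\modp(32r)\le d_0 p^-$ from \eqref{one_5.18} is used. The rescaling also resolves a secondary issue you glossed over: Theorems~\ref{high_weak} and~\ref{high_very_weak} are stated for standard cylinders $K_\rho\subset K_{2\rho}$, not for the $\al$-anisotropic cylinders $K_r^{\al}\subset K_{2r}^{\al}$, so a direct application there would itself require justification.
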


\begin{proof}
From \eqref{pde_local_norm}, we see that  under the change of variables, $x := \scalex{\al}{\mfz} y$ and $t := \scalet{\al}{\mfz} \tau$, with 
\begin{gather*}
\phi_1(y,\tau) := \frac{\phi(y,\tau)}{\al^{\frac{d}{2}}}, \qquad  \vec{f}_1 := \al^{\frac{1-p(\mfz)}{p(\mfz)(p(y,\tau) -1)}} \vec{f}(y,\tau)  \txt{and} \bar{\bf a}(y,\tau,\zeta) := \al^{\frac{1-p(\mfz)}{p(\mfz)}} \aa(y,\tau,\al^{\frac{1}{p(\mfz)}} \zeta), 
\end{gather*}
the following equation is satisfied:
\begin{equation*}
\left\{ \begin{array}{rcll}
  \frac{d \phi_1(y,\tau)}{d\tau} - \dv_y \bar{\bf a}(y,\tau,\nabla_y \phi_1(y,\tau)) &=& -\dv_y (|\vec{f}_1(y,\tau)|^{p(y,\tau)-2} \vec{f}_1(y,\tau)) & \quad \text{in} \ K_{3r}(\mfz),\\
  \phi_1 &=& 0 & \quad \text{on} \ \partial_w K_{3r}(\mfz).
 \end{array}\right. 
\end{equation*}

From the assumptions \eqref{def_d_0}, \eqref{one_5.18} and \eqref{def_d}, it is easy to see that the following bounds hold:
\begin{gather}
-\frac{\pp(1-\be)}{p(\mfz)} + \frac{n}{p(\mfz)} - \frac{d(n+2)}{2} + 1 \leq \frac{p(\mfz)-\pp}{p(\mfz)} + \frac{n}{p^-} - \frac{d(n+2)}{2} \leq \frac{\modp(32r)}{p^-} - d_0 \overset{\eqref{one_5.18}}{\leq} 0, \label{one5.19}\\
\frac{(1-p(\mfz))\pp}{p(\mfz)(\pp -1)}+ \frac{n}{p(\mfz)} - \frac{d(n+2)}{2} + 1 \leq \frac{\pp - p(\mfz)}{p(\mfz) (\pp -1)} -d_0 \leq \frac{\modp(32r)}{p^-(p^--1)} - d_0 \overset{\eqref{one_5.18}}{\leq} 0.\label{one5.21}
\end{gather}

From a simple change of variables and using the fact that $\al \geq 1$, we see that 
\begin{equation}
\label{one5.17}
\begin{array}{rcl}
\iint_{K_{3r}(\mfz)} |\nabla_y \phi_1(y,\tau)|^{p(y,\tau)(1-\be)} \ dy\ d\tau 
& = & \iint_{K_{3r}^{\al}(\mfz)} \al^{-\frac{p(x,t)(1-\be)}{p(\mfz)}+ \frac{n}{p(\mfz)} -\frac{d(n+2)}{2} +1} \abs{\nabla_x \phi(x,t)}^{p(x,t)(1-\be)} \ dx\ dt \\
& \overset{\eqref{one5.19}}{\leq} &  \iint_{K_{3r}^{\al}(\mfz)}  \abs{\nabla_x \phi(x,t)}^{p(x,t)(1-\be)} \ dx\ dt.
\end{array}
\end{equation}

%

Analogously, we get
\begin{equation}
\label{5.20}
\begin{array}{rcl}
\iint_{K_{3r}(\mfz)} |\vec{f}_1(y,\tau)|^{p(y,\tau)(1-\be)} \ dy\ d\tau 
& = & \iint_{K_{3r}^{\al}(\mfz)} \al^{\frac{(1-p(\mfz))p(x,t)}{p(\mfz)(p(x,t) -1)}+ \frac{n}{p(\mfz)} - \frac{d(n+2)}{2} + 1}\abs{ \vec{f}(x,t)}^{p(x,t)(1-\be)} \ dx\ dt \\
& \overset{\eqref{one5.21}}{\leq} &  \iint_{K_{3r}^{\al}(\mfz)}  \abs{\vec{f}(x,t)}^{p(x,t)(1-\be)} \ dx\ dt.
\end{array}
\end{equation}

Thus combining \eqref{one5.17} and  \eqref{5.20} and using the hypothesis \eqref{bm_1}, we get
\begin{equation*}
\label{one5.26}
\iint_{K_{3r}(\mfz)}\lbr[[]|\nabla \phi_1(y,\tau)|^{p(y,\tau)(1-\be)} + |\vec{f}_1(y,\tau)|^{p(y,\tau)(1-\be)} + 1 \rbr[]]\ dy \ ds \leq \tilde{\bm}.
\end{equation*}

For the sake of simplicity, let us denote $p(y,\tau) = \tilde{p}(z)$ and $p(x,t) = p(z)$. We will now proceed with proving \eqref{one_bnd} as follows: 
\begin{equation}
\label{one5.27}
\begin{array}{rcl}
\fiint_{K_{r}^{\al}(\mfz)} |\nabla \phi|^{p(z)(1+\sigma)} \ dz & = & \fiint_{K_{r}(\mfz)} \al^{\frac{\tilde{p}(z)(1+\sigma)}{p(\mfz)} - \frac{n}{p(\mfz)} + \frac{d(n+2)}{2} -1}|\nabla \phi_1|^{\tilde{p}(z)(1+\sigma)} \ dz\\
& \leq & \fiint_{K_{r}(\mfz)} \al^{\frac{(\tilde{p}(z)-p(\mfz))(1+\sigma)}{p(\mfz)} - \frac{n}{p^+} + \frac{d(n+2)}{2} +\sigma}|\nabla \phi_1|^{\tilde{p}(z)(1+\sigma)} \ dz\\
& \overset{\redlabel{a5.25}{a}}{\apprle} & c_p^{\frac{2}{p^-}}\fiint_{K_{r}(\mfz)} \al^{- \frac{n}{p^+} + \frac{d(n+2)}{2} +\sigma}|\nabla \phi_1|^{\tilde{p}(z)(1+\sigma)} \ dz\\
& \overset{\redlabel{b5.25}{b}}{\apprle} & c_p^{\frac{2}{p^-}}\fiint_{K_{r}(\mfz)} \al^{1 +\sigma}|\nabla \phi_1|^{\tilde{p}(z)(1+\sigma)} \ dz.
\end{array}
\end{equation}
To obtain \redref{a5.25}{a}, we made use of \eqref{one_5.18} and the fact $1+\sigma \leq 2$ and to obtain \redref{b5.25}{b}, we made use of the bound $- \frac{n}{p^+} + \frac{d(n+2)}{2} \leq 1$ which follows from \eqref{def_d}.


We can now apply Theorem \ref{high_weak} to obtain the higher integrability from $\tilde{p}(z)$ to $\tilde{p}(z)(1+\sigma)$ and  apply Theorem \ref{high_very_weak} to obtain the higher integrability from $\tilde{p}(z)(1-\be)$ to $\tilde{p}(z)$. Thus the expression on the right of \eqref{one5.27} can be estimated as 
\begin{equation}
\label{one5.30}
\begin{array}{rcl}
\fiint_{K_{r}(\mfz)} |\nabla \phi_1|^{\tilde{p}(z)(1+\sigma)} \ dz  
 & \apprle & \bgh{  \lbr \fiint_{K_{3r}(\mfz)} (|\nabla \phi_1|+ |\vec{f}_1|)^{\tilde{p}(z)(1-\be)}\ dz\rbr^{1+ \be \tilde{\vt}_2}  +  \fiint_{K_{3r}(\mfz)}  |\vec{f}_1|^{\tilde{p}(z)}  \ dz }^{1+ \sigma \tilde{\vt}_1} \\ && \qquad +  \fiint_{K_{3r}(\mfz)} |\vec{f}_1|^{\tilde{p}(z) (1+\sigma)} \ dz + 1.
\end{array}
\end{equation}

In order to prove \eqref{one_bnd}, it is sufficient to bound \eqref{one5.30} by a constant from which the result will follow by using \eqref{one5.27}.
In order to do this, we scale back to get
\begin{equation}\label{on5.26}
\begin{array}{rcl}
\fiint_{K_{3r}(\mfz)} |\nabla \phi_1|^{\tilde{p}(z)(1-\be)} \ dz& = & \frac{|K_{3r}^{\al}(\mfz)|}{|K_{3r}(\mfz)|}\fiint_{K_{3r}^{\al}(\mfz)} \al^{-\frac{\tilde{p}(z)(1-\be)}{p(\mfz)}+\frac{n}{p(\mfz)}- \frac{d(n+2)}{2} +1}|\nabla \phi|^{p(z)(1-\be)}\ dz\\
& \overset{\redlabel{a5.26}{c}}{\leq} & \al^{(1-\be) \lbr\frac{p(\mfz)-{p}^-_{K_{3\rho}^{\al}(\mfz)}}{p(\mfz)}\rbr} \al^{-(1-\be)}\fiint_{K_{3r}^{\al}(\mfz)} |\nabla \phi|^{p(z)(1-\be)}\ dz\\
& \overset{\redlabel{b5.26}{d}}{\apprle} & c_{\ast} c_p^{\frac{1}{p^-}}.
\end{array}
\end{equation}
To obtain \redref{a5.26}{c}, we used the fact that $\frac{|K_{3r}^{\al}(\mfz)|}{|K_{3r}(\mfz)|} = \scalexn{\al}{\mfz} \scalet{\al}{\mfz}$ and to obtain \redref{b5.26}{d}, we made use of \eqref{hyp_one} and \eqref{one_5.18} along with the trivial bound $1-\be \leq 1$.

To estimate the terms containing $\vec{f}_1$ in \eqref{one5.30}, let us denote $\varpi$ to be either $(1-\be)$, $1$ or  $(1+\sigma)$ and estimate $\fiint_{K_{3r}(\mfz)} |\vec{f}_1|^{\tilde{p}(z)\varpi}\ dz$ as follows:

\begin{equation}\label{on5.27}
\begin{array}{rcl}
\fiint_{K_{3r}(\mfz)} |\vec{f}_1|^{\tilde{p}(z)\varpi} & \overset{\redlabel{a5}{e}}{=} & \frac{|K_{3r}^{\al}(\mfz)|}{|K_{3r}(\mfz)|}\fiint_{K_{3r}^{\al}(\mfz)} \al^{\frac{(1-p(\mfz))p(x,t)\varpi}{p(\mfz)(p(x,t) -1)}+\frac{n}{p(\mfz)}- \frac{d(n+2)}{2} +1}|\vec{f}|^{p(z)\ka}\ dz\\
& \overset{\redlabel{b5}{f}}{\leq} & \al^{-\frac{\lbr {p}^+_{K_{3\rho}^{\al}(\mfz)}\rbr'\varpi}{p(\mfz)'}} \fiint_{K_{3r}^{\al}(\mfz)} |\vec{f}|^{p(z)\varpi}\ dz\\
& \overset{\redlabel{c5}{g}}{\leq} & \al^{\varpi \lbr \frac{p^+_{K_{3\rho}^{\al}(\mfz)}-p(\mfz)}{p(\mfz) ({p}^+_{K_{3\rho}^{\al}(\mfz)}-1}\rbr}\al^{-\varpi} \lbr \fiint_{K_{3r}^{\al}(\mfz)} |\vec{f}|^{p(z)(1-\be)\ka}\ dz\rbr^{\frac{\varpi}{(1-\be)\ka}}\\
& \overset{\redlabel{d5}{h}}{\apprle} & c_{\ast}c_p^{\frac{2}{(p^-)^2}}.
\end{array}
\end{equation}
To obtain \redref{a5}{e}, we performed the usual change of variables, to obtain \redref{b5}{f}, we used the fact that $\frac{|K_{3r}^{\al}(\mfz)|}{|K_{3r}(\mfz)|} = \scalexn{\al}{\mfz} \scalet{\al}{\mfz}$, to obtain \redref{c5}{g}, we used the fact that $\ka (1-\be) \geq \varpi$ from \eqref{hyp_one} and finally to obtain \redref{d5}{h}, we made use of \eqref{hyp_one} and \eqref{one_5.18} along with the bound $\varpi \leq 2$.

Thus combining \eqref{on5.26} and \eqref{on5.27} into \eqref{one5.30} and finally substituting the resulting expression into \eqref{one5.27}, we see that for some $\tilde{\vt} = \tilde{\vt}(n,p(\mfz))$, there holds
\[
\fiint_{K_{r}^{\al}(\mfz)} |\nabla \phi|^{\pp(1+\sigma)} \ dz \leq C_{(c_{\ast},c_p,p^-,p(\mfz))} \al^{1+\sigma},
\]
which completes the proof.
\end{proof}

\subsection{Approximations}
In this subsection, let $\al \geq 1$ be a given constant, let $\rho$ be as in Remark \ref{remark_radius}, and let $\mfz = (\mfx,\mft) \in \Om_T$ be any fixed point. Also note that the existence  of all the solutions considered below follows from Proposition \ref{ext_sol}.

First, let us consider the unique weak solution  $w \in C^0\lbr I_{4\rho}^{\al}(\mft);L^2(\Om_{4\rho}^{\al}(\mfx)\rbr  \cap L^{\pp}\lbr I_{4\rho}^{\al}(\mft);W^{1,\pp}(\Om_{4\rho}^{\al}(\mfx))\rbr$ solving
\begin{equation}
 \label{wapprox_int}
\left\{ \begin{array}{rcll}
  w_t - \dv \aa(x,t,\nabla w) &=& 0 & \quad \text{in} \ K_{4\rho}^{\al}(\mfz),\\
  w &=&u & \quad \text{on} \ \partial_p K_{4\rho}^{\al}(\mfz).
 \end{array}\right. 
\end{equation}
This is possible, since \eqref{basic_pde} shows $u \in L^{\pp}\lbr I_{4\rho}^{\al}(\mft);W^{1,\pp}(\Om_{4\rho}^{\al}(\mfx)\rbr$ and $\ddt{u} \in L^{\pp}\lbr I_{4\rho}^{\al}(\mft);W^{1,\pp}(\Om_{4\rho}^{\al}(\mfx)\rbr'$.

We can now compare the  solutions of \eqref{basic_pde} and \eqref{wapprox_int} to  get the following lemma:
\begin{lemma}
\label{energy_diff_est}
For any $\rho >0$ and any weak solution $w$ to \eqref{wapprox_int}, the following estimate holds:
\begin{gather}
\iint_{K_{4\rho}^{\al}(\mfz)} |\nabla w - \nabla u|^{p(z)} \ dz \apprle_{(n,\plog,\lamot)} \iint_{K_{4\rho}^{\al}(\mfz)} |\nabla u|^{p(z)} + |\bff|^{p(z)} + 1 \ dz, \label{diff_energy_w}\\
\iint_{K_{4\rho}^{\al}(\mfz)} |\nabla w|^{p(z)} \ dz \apprle_{(n,\plog,\lamot)}  \iint_{K_{4\rho}^{\al}(\mfz)} |\nabla u|^{p(z)} + |\bff|^{p(z)} + 1 \ dz. \label{energy_w}
\end{gather}
\end{lemma}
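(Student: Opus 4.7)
The approach is the standard comparison argument: test the difference of the two equations with $u - w$ (or rather its Steklov-averaged version, since the time derivative of $u$ is only in a negative Sobolev space), and exploit the monotonicity bound \eqref{monotonicity} together with Young's inequality adapted to variable exponents. Because $w = u$ on $\partial_p K_{4\rho}^{\al}(\mfz)$, the function $u - w$ is an admissible test function, and its spatial gradient will absorb all boundary contributions.

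More precisely, I would form $[u]_h - [w]_h$ inside the weak formulations of \eqref{basic_pde} and \eqref{wapprox_int}, subtract, test with $[u]_h - [w]_h$, integrate over $I_{4\rho}^{\al}(\mft)$ and pass to the limit $h \downarrow 0$ in the standard way. The parabolic term gives a nonnegative contribution $\tfrac{1}{2}\sup_{t} \|(u-w)(\cdot,t)\|_{L^2}^2$ at an arbitrary terminal slice (boundary terms vanish since $u = w$ on $\partial_p K_{4\rho}^{\al}$). For the elliptic part, the monotonicity bound \eqref{monotonicity} furnishes
\begin{equation*}
\iint_{K_{4\rho}^{\al}(\mfz)} \bigl(\mu^2 + |\nabla u|^2 + |\nabla w|^2\bigr)^{\frac{p(z)-2}{2}} |\nabla u - \nabla w|^2 \, dz \apprle \iint_{K_{4\rho}^{\al}(\mfz)} |\bff|^{p(z)-1} |\nabla u - \nabla w|\, dz.
\end{equation*}
On the right hand side, an application of Young's inequality with variable exponent $p(z)/(p(z)-1)$ and $p(z)$ (with a small parameter $\varepsilon$) separates $|\nabla(u-w)|^{p(z)}$ from $|\bff|^{p(z)}$. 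The $\varepsilon |\nabla(u-w)|^{p(z)}$ term is then absorbed into the left hand side after recovering $|\nabla(u-w)|^{p(z)}$ from the monotone quantity.

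The main technical issue is recovering $|\nabla(u-w)|^{p(z)}$ from the weighted monotone quantity in the singular range $p(z) < 2$. The trick is to write
\begin{equation*}
|\nabla u - \nabla w|^{p(z)} = \Bigl[\bigl(\mu^2 + |\nabla u|^2 + |\nabla w|^2\bigr)^{\frac{p(z)-2}{2}} |\nabla u - \nabla w|^2\Bigr]^{\frac{p(z)}{2}} \bigl(\mu^2 + |\nabla u|^2 + |\nabla w|^2\bigr)^{\frac{p(z)(2-p(z))}{4}},
\end{equation*}
and apply a pointwise form of Young's inequality (with exponents $2/p(z)$ and $2/(2-p(z))$) to split off the weight, which can then be estimated by $|\nabla u|^{p(z)} + |\nabla w|^{p(z)} + 1$; the $|\nabla w|^{p(z)}$ contribution is absorbed after using $|\nabla w|^{p(z)} \apprle |\nabla w - \nabla u|^{p(z)} + |\nabla u|^{p(z)}$ and choosing Young's parameter small. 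In the degenerate range $p(z) \geq 2$ the passage is direct since the weight is bounded below by $|\nabla(u-w)|^{p(z)-2}$ pointwise. Since $p^-$ and $p^+$ are both finite and $p^- > \tfrac{2n}{n+2}$, all the constants coming from Young's inequality depend only on $n, \plog, \La_0, \La_1$, as asserted.

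This yields \eqref{diff_energy_w}. The second bound \eqref{energy_w} is then immediate: write $|\nabla w|^{p(z)} \leq 2^{p^+-1}\bigl(|\nabla w - \nabla u|^{p(z)} + |\nabla u|^{p(z)}\bigr)$, integrate, and apply \eqref{diff_energy_w}. The only delicate point I anticipate is bookkeeping the variable-exponent Young's inequality carefully so that the constants remain universal, but this is by now a standard computation in the $p(x,t)$-Laplacian literature.
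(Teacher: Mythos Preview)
Your proposal is correct and follows exactly the approach the paper indicates: the paper simply states that the proof follows by taking $u-w$ as a test function in \eqref{basic_pde} and \eqref{wapprox_int} (referring to \cite[(4.11)]{byun2016nonlinear} for the details of \eqref{diff_energy_w}), and that \eqref{energy_w} follows from \eqref{diff_energy_w} by the triangle inequality. Your write-up supplies the standard details (Steklov averaging, the monotonicity bound \eqref{monotonicity}, variable-exponent Young's inequality, and the $p(z)<2$ versus $p(z)\ge 2$ split) that the paper leaves to the cited reference.
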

The proof of Lemma \ref{energy_diff_est} follows by taking $u-w$ as a test function in \eqref{basic_pde} and \eqref{wapprox_int} (see for example \cite[(4.11)]{byun2016nonlinear} for the proof of \eqref{diff_energy_w}). A simple application of triangle inequality  to \eqref{diff_energy_w} implies \eqref{energy_w}.

\begin{lemma}
\label{sobolev_reg_w}
Let $2\rho\leq \rho_0$ with $\rho_0$ as in Remark \ref{remark_radius}, then any weak solution $w\in  L^{\pp}\lbr I_{4\rho}^{\al}(\mft);W^{1,\pp}(\Om_{4\rho}^{\al}(\mfx))\rbr$ has the improved regularity $\nabla w \in L^{p(\mfz)} \gh{K_{3\rho}^{\al}(\mfz)}$. 
\end{lemma}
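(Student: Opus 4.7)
The plan is to upgrade the natural $L^{\pp}$-integrability of $\nabla w$ to the constant-exponent bound $L^{p(\mfz)}$ on the slightly smaller intrinsic cylinder $K_{3\rho}^{\al}(\mfz)$ by absorbing the oscillation $p(z)-p(\mfz)$ into a small higher-integrability gain. Since $w$ solves the homogeneous equation on $K_{4\rho}^{\al}(\mfz)$ (the right-hand side is absent), I would first apply a localized version of Theorem \ref{high_weak} with $\vec{f}\equiv 0$. Sub-cylinders compactly contained in $K_{4\rho}^{\al}(\mfz)$ are handled by the standard Caccioppoli-plus-Gehring argument, while portions of $\pa K_{4\rho}^{\al}(\mfz)$ meeting $\pa\Om\times(-T,T)$ are handled using the Reifenberg flatness of $\Om$ and the fact that $w=u=0$ there. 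The output is an exponent $\be\in(0,\tilde\be_1]$ (chosen as in \descref{B1}{B1}) such that
\[
\nabla w \in L^{\pp(1+\be)}(K_{3\rho}^{\al}(\mfz)).
\]

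\textbf{Bridging via log-H\"older continuity.} The second step is to show that $p(\mfz)\le p(z)(1+\be)$ uniformly on $K_{3\rho}^{\al}(\mfz)$. Since $\al\ge 1$ and, by \eqref{def_d}, both $-1+d$ and $-\tfrac{1}{p(\mfz)}+\tfrac{d}{2}$ are strictly negative (because $d<1$ and $d<2/p^+\le 2/p(\mfz)$), the intrinsic scaling factors satisfy $\al^{-1+d}\le 1$ and $\al^{-1/p(\mfz)+d/2}\le 1$. Consequently, for every $z\in K_{3\rho}^{\al}(\mfz)$,
\[
d_p(z,\mfz)\ \le\ \max\bigl\{3\rho\, \al^{-1/p(\mfz)+d/2},\ 3\rho\, \al^{(-1+d)/2}\bigr\}\ \le\ 3\rho.
\]
Log-H\"older continuity (Definition \ref{definition_p_log}), monotonicity of $\modp$, and the restriction \descref{R2}{R2} then give, for every $z \in K_{3\rho}^{\al}(\mfz)$,
\[
p(\mfz)-p(z)\ \le\ \modp(d_p(z,\mfz))\ \le\ \modp(8\rho_0)\ \le\ p^-\,\tilde\be_1\ \le\ p(z)\,\be,
\]
so $p(\mfz)\le p(z)(1+\be)$ on $K_{3\rho}^{\al}(\mfz)$. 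Using the trivial pointwise bound $s^a\le 1 + s^b$ for $0\le a\le b$ and $s\ge 0$, this yields $|\nabla w|^{p(\mfz)}\le 1 + |\nabla w|^{\pp(1+\be)}$ almost everywhere, which is integrable on $K_{3\rho}^{\al}(\mfz)$ thanks to the first step.

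\textbf{Main obstacle.} The only delicate point is step one: Theorem \ref{high_weak} is stated for the global Dirichlet problem on $\Om_T$, while here I need a purely local higher-integrability statement on the intrinsically scaled cylinder $K_{4\rho}^{\al}(\mfz)$. The proof of Theorem \ref{high_weak} is local in nature, but the intrinsic scaling exponents $-1/p(\mfz)+d/2$ and $-1+d$ must be tracked carefully through the reverse H\"older estimate, and the boundary case (should $K_{4\rho}^{\al}(\mfz)$ touch $\pa\Om\times(-T,T)$) requires the Reifenberg hypothesis together with the vanishing trace of $w$. None of this requires new input beyond a routine localization of the methods already underlying Theorem \ref{high_weak}.
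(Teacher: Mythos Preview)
Your proposal is correct and follows essentially the same route as the paper: first invoke the higher integrability of Theorem \ref{high_weak} (with $\vec f\equiv 0$) to obtain $\nabla w\in L^{\pp(1+\be)}(K_{3\rho}^{\al}(\mfz))$, then use the smallness of the oscillation of $p(\cdot)$ guaranteed by \descref{R2}{R2} to absorb $p(\mfz)$ into $\pp(1+\be)$. The paper writes this absorption as $\frac{p^+_{K_{3\rho}^{\al}(\mfz)}}{p^-_{K_{3\rho}^{\al}(\mfz)}(1+\be_0)}\le 1$, which is exactly your inequality $p(\mfz)\le p(z)(1+\be)$ in slightly different notation; your discussion of the localization issue for Theorem \ref{high_weak} is also treated in the paper only implicitly.
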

\begin{proof}
Since $\rho$ satisfies Remark \ref{remark_radius}, we can apply  Theorem \ref{high_weak} to \eqref{wapprox_int} which implies $\nabla w \in L^{\pp(1+\be)}K_{3\rho}^{\al}(\mfz)$ for any $\be \in (0,\be_0]$ with $\be_0$ as in Remark \ref{high_int_remark}. As a consequence, we have the following sequence of estimates
\begin{equation*}
\begin{array}{rcl}
\fiint_{K_{3\rho}^{\al}(\mfz)} |\nabla w|^{p(\mfz)} \ dz & =& \fiint_{K_{3\rho}^{\al}(\mfz)} |\nabla w|^{p(\mfz)\frac{\pp(1+\be_0)}{\pp(1+\be_0)}} \ dz \\
& \apprle& \fiint_{K_{3\rho}^{\al}(\mfz)} \lbr  |\nabla w|+1\rbr^{\pp(1+\be_0)\frac{p^+_{K_{3\rho}^{\al}(\mfz)}}{p^-_{K_{3\rho}^{\al}(\mfz)}(1+\be_0)}} \ dz \\
& \overset{\redlabel{a1}{a}}{\apprle}&  \fiint_{K_{3\rho}^{\al}(\mfz)} \lbr  |\nabla w|+1\rbr^{\pp(1+\be_0)} \ dz\\
& \overset{\redlabel{b1}{b}}{\apprle} & \lbr \fiint_{K_{4\rho}^{\al}(\mfz)} \lbr  |\nabla w|+1\rbr^{\pp} \ dz\rbr^{1+\be_0 \vt_0}.
\end{array}
\end{equation*}
To obtain \redref{a1}{a}, we made use of \descref{R2}{R2} which implies ${\frac{p^+_{K_{3\rho}^{\al}(\mfz)}}{p^-_{K_{3\rho}^{\al}(\mfz)}(1+\be)}}\leq 1$ and to obtain \redref{b1}{b}, we made use of Theorem \ref{high_weak} along \descref{B3}{B3}.

\end{proof}
 
 We will also need the following regularity with respect to the time derivative of the weak solution $w$ to \eqref{wapprox_int} which will enable us to use $w$ as boundary data so that Proposition \ref{ext_sol} can be applied. 
 \begin{lemma}
 \label{time_reg_w}
 We have $\ddt{w} \in L^{p(\mfz)}\lbr I_{3\rho}^{\al}(\mft);W^{1,p(\mfz)}(\Om_{3\rho}^{\al}(\mfx))\rbr'$.
 \end{lemma}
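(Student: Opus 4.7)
The plan is to read off the time derivative from the equation \eqref{wapprox_int} itself and then show that the spatial part, namely $\aa(x,t,\nabla w)$, lies in $L^{p(\mfz)'}\gh{K_{3\rho}^{\al}(\mfz)}$, where $p(\mfz)'=\frac{p(\mfz)}{p(\mfz)-1}$. Indeed, for any $\varphi \in L^{p(\mfz)}\lbr I_{3\rho}^{\al}(\mft); W_0^{1,p(\mfz)}(\Om_{3\rho}^{\al}(\mfx))\rbr$, the Steklov-average formulation of \eqref{wapprox_int} together with a standard density argument gives
\[
\vgh{\ddt{w},\varphi} = -\iint_{K_{3\rho}^{\al}(\mfz)} \iprod{\aa(x,t,\nabla w)}{\nabla \varphi} \ dz,
\]
so that Hölder's inequality with exponents $p(\mfz)$ and $p(\mfz)'$ yields
\[
|\vgh{\ddt{w},\varphi}| \leq \lbr \iint_{K_{3\rho}^{\al}(\mfz)} |\aa(x,t,\nabla w)|^{p(\mfz)'} \ dz \rbr^{1/p(\mfz)'} \|\nabla \varphi\|_{L^{p(\mfz)}(K_{3\rho}^{\al}(\mfz))}.
\]
Hence it suffices to verify that the first factor on the right is finite.

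By the growth bound \eqref{abounded}, we have $|\aa(x,t,\nabla w)| \apprle (|\nabla w|+1)^{p(x,t)-1}$, so the task reduces to showing
\[
\iint_{K_{3\rho}^{\al}(\mfz)} (|\nabla w|+1)^{(p(x,t)-1) p(\mfz)'} \ dz < \infty.
\]
The exponent in the integrand would already be controlled by $p(\mfz)$ if $p(x,t) \leq p(\mfz)$, but since this is not guaranteed, I would exploit the higher integrability of $\nabla w$. Specifically, the proof of Lemma \ref{sobolev_reg_w} (via Theorem \ref{high_weak} applied to \eqref{wapprox_int}) in fact gives $\nabla w \in L^{p(\mfz)(1+\beta_0)}(K_{3\rho}^{\al}(\mfz))$, and the inequality
\[
(p(x,t)-1)p(\mfz)' \leq p(\mfz)(1+\be_0) \iff p(x,t) - p(\mfz) \leq \be_0(p(\mfz)-1)
\]
is guaranteed by the restriction \descref{R5}{R5} on $\rho_0$, since $p(x,t)-p(\mfz) \leq \modp(12\rho_0) \le \be_0(p^--1)$ on $K_{3\rho}^{\al}(\mfz)$. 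Plugging this back, the desired integral is finite.

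Combining the two steps, we arrive at the quantitative bound
\[
\|\ddt{w}\|_{L^{p(\mfz)}(I_{3\rho}^{\al}(\mft); W^{1,p(\mfz)}(\Om_{3\rho}^{\al}(\mfx)))'} \apprle \lbr \iint_{K_{3\rho}^{\al}(\mfz)} (|\nabla w|+1)^{p(\mfz)(1+\be_0)} \ dz \rbr^{1/p(\mfz)'},
\]
which is finite by the higher integrability estimate of Theorem \ref{high_weak} together with Lemma \ref{energy_diff_est}. The only delicate point is calibrating the oscillation of $p(\cdot)$ against the higher-integrability gap $\be_0$, but this is precisely what the radius restriction \descref{R5}{R5} provides, so the argument goes through cleanly.
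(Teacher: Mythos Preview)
Your proposal is correct and follows essentially the same route as the paper: reduce to showing $\aa(\cdot,\cdot,\nabla w)\in L^{p(\mfz)'}(K_{3\rho}^{\al}(\mfz))$ via \eqref{abounded}, then control the resulting exponent by the higher-integrability gap using the oscillation restriction \descref{R5}{R5}. The only minor difference is that you bound $(p(x,t)-1)p(\mfz)'$ by the \emph{constant} exponent $p(\mfz)(1+\be_0)$, whereas the paper bounds it by the \emph{variable} exponent $\pp(1+\be_0)$ (via the chain $(\pp-1)\tfrac{p(\mfz)}{p(\mfz)-1}\le \pp\,(1+\tfrac{p^+_K-p^-_K}{p^-_K-1})\le \pp(1+\be_0)$); the latter matches exactly what Theorem~\ref{high_weak} delivers, so your assertion that Lemma~\ref{sobolev_reg_w} ``in fact gives $\nabla w\in L^{p(\mfz)(1+\be_0)}$'' is slightly stronger than what is written there and would need one more line of oscillation control to justify, but the argument goes through either way.
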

 \begin{proof}
 In order to prove the lemma, from \eqref{wapprox_int}, we see that it is sufficient to show $\aa(x,t,\nabla w) \in L^{\frac{p(\mfz)}{p(\mfz)-1}}(K_{3\rho}^{\al}(\mfz))$. We show this as follows:
 \begin{equation}
 \label{5.15}
 \begin{array}{rcl}
 \iint_{K_{3\rho}^{\al}(\mfz)} |\aa(x,t,\nabla w)|^{\frac{p(\mfz)}{p(\mfz)-1}} \ dz & \overset{\eqref{abounded}}{\apprle}&  \iint_{K_{3\rho}^{\al}(\mfz)} (|\nabla w|+1)^{(\pp-1)\frac{p(\mfz)}{p(\mfz)-1}} \ dz\\
 & \overset{\redlabel{a2}{a}}{\leq} & \iint_{K_{3\rho}^{\al}(\mfz)} (|\nabla w|+1)^{\pp\lbr 1+ \frac{p^+_{K_{3\rho}^{\al}(\mfz)}-p^-_{K_{3\rho}^{\al}(\mfz)}}{p^-_{K_{3\rho}^{\al}(\mfz)}-1}\rbr} \ dz. 
 \end{array}
 \end{equation}
 To obtain \redref{a2}{a}, we used the following sequence of estimates on $K_{3\rho}^{\al}(\mfz)$:
 \begin{equation*}
 (\pp -1) \frac{p(\mfz)}{p(\mfz) -1} \leq (p^+_{K_{3\rho}^{\al}(\mfz)} -1) \frac{p(\mfz)}{p(\mfz) -1} \leq (p^+_{K_{3\rho}^{\al}(\mfz)} -1) \frac{p^-_{K_{3\rho}^{\al}(\mfz)}}{p^-_{K_{3\rho}^{\al}(\mfz)} -1} \leq \pp \lbr 1 + \frac{p^+_{K_{3\rho}^{\al}(\mfz)}-p^-_{K_{3\rho}^{\al}(\mfz)}}{p^-_{K_{3\rho}^{\al}(\mfz)}-1}\rbr.
 \end{equation*}

Using Remark \ref{remark_def_p_log} with the observation $\al \geq 1$ which implies $K_{3\rho}^{\al}(\mfz) \subset K_{3\rho}(\mfz)$,  we see that
\begin{equation}
\label{5.16}
\frac{p^+_{K_{3\rho}^{\al}(\mfz)}-p^-_{K_{3\rho}^{\al}(\mfz)}}{p^-_{K_{3\rho}^{\al}(\mfz)}-1} \leq\frac{\modp(6\rho)}{p^-_{K_{3\rho}^{\al}(\mfz)}-1}\leq\frac{\modp(6\rho)}{p^- -1} \overset{\text{\descref{R5}{R5}}}{\leq} \be_0.
\end{equation}

Substituting \eqref{5.16} into \eqref{5.15} and making use of Theorem \ref{high_weak} (where $\tilde{\be}_1$ is obtained), we get
\[
\begin{array}{rcl}
\iint_{K_{3\rho}^{\al}(\mfz)} |\aa(x,t,\nabla w)|^{\frac{p(\mfz)}{p(\mfz)-1}} \ dz & \apprle&  \iint_{K_{3\rho}^{\al}(\mfz)} (|\nabla w|+1)^{\pp\lbr 1+\be_0 \rbr} \ dz\\
& \overset{\redlabel{a3}{b}}{\apprle} & |K_{3\rho}^{\al}(\mfz)| \lbr \fiint_{K_{3\rho}^{\al}(\mfz)} (1 + |\nabla w|)^{\pp(1-\be_0)} \ dz \rbr^{(1+\be_0\vt_0)(1+\be_0\vt_0)}\\
& \overset{\redlabel{b2}{c}}{\apprle} & |K_{3\rho}^{\al}(\mfz)| \lbr \fiint_{K_{3\rho}^{\al}(\mfz)} (1 + |\nabla w|)^{\pp(1-\be_0)} \ dz \rbr^{1+\be_0c_0}.
\end{array}
\]
To obtain \redref{a3}{b}, we have used Theorem \ref{high_weak} and Theorem \ref{high_very_weak} along with \descref{B3}{B3} and to obtain \redref{b2}{c}, we have used the fact that $\be_0 <1$ and $\vt_0 \geq 1$. 
This completes the proof of the lemma. 
\end{proof}

Let us now construct an averaged operator which will be needed.
For any $\al\ge 1$ and any $4\rho \leq \rho_0$, let us define the following vector valued function $\bb:  K_{3\rho}^{\al}(\mfz) \times \RR^n \rightarrow \RR^n$ by
\begin{equation}
\label{def_bb}
\bb(z,\zeta):= \aa(z,\zeta) \lbr \mu^2 + |\zeta|^2 \rbr^{\frac{p(\mfz)-p(z)}{2}}.
\end{equation}

From direct computations (see \cite[(4.18)]{byun2016nonlinear}), we see that the following bounds are satisfied:
\begin{equation}\label{bbounded}\begin{array}{c}
(\mu^2 + |\zeta|^2)^{\frac12} |D_{\zeta} \bb(z,\zeta)| + |\bb(z,\zeta)| \leq 3\La_1 (\mu^2 + |\zeta|^2)^{\frac{p(\mfz) -1}{2}}, \\
(\mu^2 + |\zeta|^2 )^{\frac{p(\mfz)-2}{2}} |\eta|^2 \frac{\La_0}{2} \leq \iprod{D_{\zeta}\bb(z,\zeta)\eta}{\eta}.
\end{array}\end{equation}
In particular, the operator $\bb(\cdot,\zeta)$ which is defined on $K_{3\rho}^{\al}(\mfz)$ is a constant exponent operator.

\begin{description}[leftmargin=*]
\item[Interior case:] Subsequently, in this case, i.e., when $K_{3\rho}^{\al}(\mfz) = Q_{3\rho}^{\al}(\mfz)\subset \Om_T$, we define another averaged operator $\bbb: \RR^n \times (\mft - \scalet{\al}{\mfz} 9\rho^2, \mft + \scalet{\al}{\mfz} 9\rho^2) \to \RR^n$ by 
\begin{equation*}
\label{avg_bbb}
\bbb(t,\zeta):= \fint_{B_{\scalex{\al}{\mfz}3\rho}(\mfx)} \bb(y,\mft,\zeta) \ dy.
\end{equation*}
From \eqref{small_aa}, we see that 
\[
\fiint_{K^{\al}_{3\rho}(\mfz)} \sup_{\zeta \in \RR^n} \frac{\abs{\bbb(t,\zeta)- \bb(z,\zeta)}}{\lbr\mu^2 + |\zeta|^2 \rbr^{\frac{p(\mfz)-1}{2}}} \ dz \leq \fiint_{Q^{\al}_{3\rho}(\mfz)} \Th(\aa, B_{4\rho}^{\al}(\mfx))(z) \ dz \leq \ga.
\]
In the above estimate, we have used the fact $\al\geq 1$ which implies $\scalex{\al}{\mfz} \leq 1$. 

\item[Boundary case:] Subsequently, in this case we make use of the $(\ga, \bs_0)$-Reifenberg flat condition, i.e., when $K_{3\rho}^{\al}(\mfz) = B_{3\rho}^{\al}(\mfx) \cap \Om \times I_{3\rho}^{\al}(\mft)$ and 
\[
B^{\al,+}_{3\rho}(\mfx) \subset \Om_{3\rho}(\mfx) \subset B_{3\rho}^{\al} \cap \{x_n > -3 \scalex{\al}{\mfz} \ga \rho\},
\]
we define another averaged operator $\bbb: (\mft - \scalet{\al}{\mfz} 9\rho^2, \mft + \scalet{\al}{\mfz} 9\rho^2) \times \RR^n  \to \RR^n$ by 
\begin{equation*}
\label{avg_bbbb}
\bbb(t,\zeta):= \fint_{B_{\scalex{\al}{\mfz}3\rho}^+(\mfx)} \bb(y,\mft,\zeta) \ dy.
\end{equation*}
From \eqref{small_aa}, we see that 
\[
\fiint_{Q^{\al,+}_{3\rho}(\mfz)} \sup_{\zeta \in \RR^n} \frac{\abs{\bbb(t,\zeta)- \bb(z,\zeta)}}{\lbr\mu^2 + |\zeta|^2 \rbr^{\frac{p(\mfz)-1}{2}}} \ dz = \fiint_{Q^{\al,+}_{3\rho}(\mfz)} \Th(\aa, B_{3\rho}^{\al,+}(\mfx))(z) \ dz \leq 4 \fiint_{Q^{\al}_{3\rho}(\mfz)} \Th(\aa, B_{3\rho}^{\al}(\mfx))(z) \ dz\leq 4 \ga.
\]
In the above estimate, we have used the fact $\al\geq 1$ which implies $\scalex{\al}{\mfz} \leq 1$. 

\end{description}

From Lemma \ref{sobolev_reg_w} and Lemma \ref{time_reg_w}, we can now define the following approximation:
\begin{equation}
 \label{vapprox_bnd}
\left\{ \begin{array}{rcll}
  v_t - \dv \bbb(t,\nabla v) &=& 0 & \quad \text{in} \ K_{3\rho}^{\al}(\mfz),\\
  v &=& w & \quad \text{on} \ \pa_p K_{3\rho}^{\al}(\mfz),
 \end{array}\right. 
\end{equation}
which admits a unique weak solution $v \in C^0\lbr I_{3\rho}^{\al}(\mft);L^2(\Om_{3\rho}^{\al}(\mfx)\rbr \cap L^{p(\mfz)}\lbr I_{3\rho}^{\al}(\mft);W^{1,p(\mfz)}(\Om_{3\rho}^{\al}(\mfx))\rbr$ since Proposition \ref{ext_sol} is applicable.
%
%
%

In the interior case, it is well known that the weak solution $v$ has locally Lipschitz bounds (see \cite{DiB1} for details). On the other hand, in the boundary case, we need to make one further approximation in which we consider a weak solution $\ov \in  C^0\lbr I_{2\rho}^{\al}(\mft);L^2(\Om_{2\rho}^{\al,+}(\mfx)\rbr \cap L^{p(\mfz)}\lbr I_{2\rho}^{\al}(\mft);W^{1,p(\mfz)}(\Om_{2\rho}^{\al,+}(\mfx))\rbr$ solving
\begin{equation}
 \label{Vapprox_bnd}
\left\{ \begin{array}{rcll}
  \ov_t - \dv \bbb(t,\nabla \ov) &=& 0 & \quad \text{in} \ Q_{2\rho}^{\al,+}(\mfz),\\
  \ov &=& 0 & \quad \text{on} \  \pa_w Q_{2\rho}^{\al,+}(\mfz).
 \end{array}\right. 
\end{equation}

\begin{lemma}
\label{existence_ov}
For any $\ve \in (0,1)$, there exists $\ga = \ga(n,\lamot,\plog,\ve)>0$ such that if $v$ is the weak solution of \eqref{vapprox_bnd}, then there is a weak solution $\ov \in  C^0\lbr I_{2\rho}^{\al}(\mft);L^2(\Om_{2\rho}^{\al,+}(\mfx)\rbr \cap L^{p(\mfz)}\lbr I_{2\rho}^{\al}(\mft);W^{1,p(\mfz)}(\Om_{2\rho}^{\al,+}(\mfx))\rbr$ solving \eqref{Vapprox_bnd} such that 
\begin{equation}\label{ov est1}
\fiint_{Q^{\al,+}_{2\rho}(\mfz)} |\nabla v - \nabla \ov|^{p(\mfz)} \ dz \le \ve^{p(\mfz)} \fiint_{K_{3\rho}^{\al}(\mfz)} |\nabla v|^{p(\mfz)} \ dz.
\end{equation}
Furthermore, we have
\begin{gather}\label{ov est2}
\sup_{Q^{\al,+}_{\rho}(\mfz)} |\nabla \ov| \apprle_{(n,\plog,\lamot)} \lbr \fiint_{Q^{\al,+}_{2\rho}(\mfz)} |\nabla \ov|^{p(\mfz)} \ dz + 1\rbr^{\frac{1}{p(\mfz)}}.
\end{gather}
\end{lemma}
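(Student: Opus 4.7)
The statement contains three assertions: (i) existence of the weak solution $\ov$ to \eqref{Vapprox_bnd}; (ii) the comparison estimate \eqref{ov est1} between $\nabla v$ and $\nabla \ov$ on $Q^{\al,+}_{2\rho}(\mfz)$; and (iii) the Lipschitz-type sup bound \eqref{ov est2} for $\nabla \ov$ on the smaller cylinder. Existence is immediate: the operator $\bbb(t,\zeta)$ is a constant-exponent Carath\'eodory operator of $p(\mfz)$-Laplacian type satisfying \eqref{bbounded}, and the half-cylinder $Q^{\al,+}_{2\rho}(\mfz)$ has a Lipschitz bottom face $\{x_n=0\}$ and hence satisfies the uniform measure-density condition of Definition \ref{measure_def}. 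Taking boundary data equal to $v$ on the curved/time parts of $\pa_p Q^{\al,+}_{2\rho}(\mfz)$ and equal to zero on the flat face $\{x_n=0\}\cap Q^{\al,+}_{2\rho}(\mfz)$, Proposition \ref{ext_sol} (applied with the constant exponent $p(\mfz)$) produces a unique $\ov$ in the claimed class.

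\textbf{Comparison estimate \eqref{ov est1}.} The plan is to test the Steklov-averaged versions of \eqref{vapprox_bnd} and \eqref{Vapprox_bnd} against $\ph:=v-\ov$, extended by zero into the thin strip $\Om^{\al}_{3\rho}(\mfx)\setminus B^{\al,+}_{2\rho}(\mfx)$, then pass to the limit in $h$. The monotonicity bound encoded in \eqref{bbounded} yields
\[
\iint_{Q^{\al,+}_{2\rho}(\mfz)}(\mu^2+|\nabla v|^2+|\nabla\ov|^2)^{\frac{p(\mfz)-2}{2}}|\nabla v-\nabla\ov|^2\ dz\apprle \iint_{(\Om^{\al}_{3\rho}\setminus B^{\al,+}_{2\rho})\times I^{\al}_{2\rho}}|\bbb(t,\nabla v)||\nabla v|\ dz,
\]
after we use that $v-\ov$ vanishes on $\pa_p Q^{\al,+}_{2\rho}(\mfz)$ except possibly on the Reifenberg-generated strip of $x_n$-thickness $\apprle \ga\scalex{\al}{\mfz}\rho$ near $\{x_n=0\}$. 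On this strip, $v$ vanishes on $\pa\Om^{\al}_{3\rho}$, so the measure-density Poincar\'e inequality (Theorem \ref{measure_density_poincare}) controls $\int|v|^{p(\mfz)}$ by $\int|\nabla v|^{p(\mfz)}$ times a factor depending on the strip thickness. Combining this with \eqref{bbounded} and H\"older's inequality, the right-hand side is bounded by $\ga^{\sigma_1}$ times $\iint_{K^{\al}_{3\rho}}|\nabla v|^{p(\mfz)}\ dz$ for some $\sigma_1>0$ that we upgrade using the self-improving higher integrability of $\nabla v$ (which follows from Theorem \ref{high_weak} applied to the homogeneous equation \eqref{vapprox_bnd}). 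This gives an estimate of the form $\iint|\nabla v-\nabla\ov|^{p(\mfz)}\ dz\le \eta(\ga)\iint_{K^{\al}_{3\rho}}|\nabla v|^{p(\mfz)}\ dz$ with $\eta(\ga)\to 0$ as $\ga\to 0$, from which we select $\ga$ small enough depending on $\ve$, $n$, $\plog$, $\lamot$ so that $\eta(\ga)\le \ve^{p^+}\le \ve^{p(\mfz)}$.

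\textbf{Lipschitz bound \eqref{ov est2}.} The coefficient $\bbb$ depends only on $t$ (not on $x$), has constant growth exponent $p(\mfz)$, and satisfies the standard structure conditions for the parabolic $p$-Laplacian-type system. Because $\ov$ vanishes on the flat face $\{x_n=0\}\cap Q^{\al,+}_{2\rho}(\mfz)$, we can apply the odd reflection $\ov(x',-x_n,t):=-\ov(x',x_n,t)$ in the $x_n$-variable; since $\bbb$ is independent of $x$, the reflected function solves a parabolic equation of the same class on the full cylinder $Q^{\al}_{2\rho}(\mfz)$. DiBenedetto's classical interior Lipschitz estimate for solutions of $\pa_t u-\dv\bbb(t,\nabla u)=0$ (see \cite{DiB1}) then gives
\[
\sup_{Q^{\al}_{\rho}(\mfz)}|\nabla \ov|\apprle \lbr \fiint_{Q^{\al}_{2\rho}(\mfz)}|\nabla \ov|^{p(\mfz)}\ dz+1\rbr^{\frac{1}{p(\mfz)}},
\]
and restricting to $Q^{\al,+}_{\rho}(\mfz)$ yields \eqref{ov est2}. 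Note that our intrinsic cylinders are precisely the natural scaling cylinders for the exponent $p(\mfz)$ (in the unified scaling of \cite{adimurthi2018sharp}), so no additional scaling deficit appears.

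\textbf{Main obstacle.} The delicate point is \eqref{ov est1}: the exponent on the right is $p(\mfz)$ (not $p(\mfz)(1-\be)$ or similar), so the transfer from smallness in $\ga$ to smallness $\ve^{p(\mfz)}$ must be carried out uniformly across the full range of $p(\mfz)\in [p^-,p^+]$. This forces us to invoke the higher integrability of $\nabla v$ from the homogeneous equation in order to absorb the H\"older loss incurred when bounding the strip integral, and to keep constants depending only on the structural data. The reflection argument in step (iii) is comparatively standard because $\bbb$ is $x$-independent.
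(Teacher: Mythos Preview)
Your proof has a genuine gap in the argument for \eqref{ov est2}. The odd reflection $\ov(x',-x_n,t):=-\ov(x',x_n,t)$ does \emph{not} in general produce a solution of an equation with $x$-independent nonlinearity on the full cylinder $Q^{\al}_{2\rho}(\mfz)$. A short computation shows that the reflected function satisfies $\tilde{\ov}_t-\dv\tilde{\bbb}(t,\nabla\tilde{\ov})=0$ on the lower half, where $\tilde{\bbb}(t,\zeta)=-R\,\bbb(t,-R\zeta)$ and $R=\mathrm{diag}(1,\dots,1,-1)$. While $\tilde{\bbb}$ satisfies the same structure bounds \eqref{bbounded}, it coincides with $\bbb$ only when $\bbb(t,-R\zeta)=-R\,\bbb(t,\zeta)$, which holds for the model $p(\mfz)$-Laplacian but fails for a generic averaged operator $\bbb$ coming from \eqref{def_bb}. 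Thus the glued function solves an equation whose nonlinearity jumps across $\{x_n=0\}$, and DiBenedetto's interior Lipschitz estimate (which needs $x$-independence, or at least H\"older dependence, of the structure) does not apply there. The paper avoids this entirely: it first rescales to $Q_2^+$, then invokes Lieberman's \emph{boundary} gradient estimate \cite[Theorem~1.6]{lieberman1993boundary} for solutions vanishing on a flat piece, and scales back.

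For \eqref{ov est1} your approach also differs from the paper's. The paper simply cites the compactness argument of \cite[Lemma~3.8]{BOS1} after scaling. Your direct energy method is plausible in spirit, but as written it glosses over the admissibility of the test function: $v-\ov$ does \emph{not} vanish on the flat face $\{x_n=0\}\cap Q^{\al,+}_{2\rho}(\mfz)$ (there $\ov=0$ but $v$ need not be), so it cannot be used as a test function in either \eqref{vapprox_bnd} or \eqref{Vapprox_bnd} without further modification. One can salvage the direct route by extending $\ov$ by zero to the thin Reifenberg strip, using that $\ov$ then solves an inhomogeneous equation on $K^{\al}_{2\rho}(\mfz)$ with a source supported on the strip, and combining the small strip measure with higher integrability of $\nabla v$; but this takes more care than your sketch indicates, and the boundary-data issue must be handled explicitly.
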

\begin{proof}
We will prove the lemma by scaling. Define the rescaled functions	
\begin{gather*}
V_{\al,\rho} (y,s) := \frac{1}{\al^{\frac{d}{2}}\rho} \ov\lbr \scalex{\al}{\mfz} \rho x, \scalet{\al}{\mfz} \rho^2 t \rbr \txt{and} {\overline{\mathbf{b}}}_{\al,\rho}(t,\zeta):= \al^{\frac{1-p(\mfz)}{p(\mfz)}} \bbb\lbr\al^{\frac{1}{p(\mfz)}}\zeta, \scalet{\al}{\mfz}\rho^2 t\rbr,
\end{gather*}
under the change of variables $x = \scalex{\al}{\mfz} ry$ and $t = \scalet{\al}{\mfz}\rho^2 s$. We then see that $(x,t) \in Q_{2\rho}^{\al,+}(\mfz)$ implies $(y,s) \in Q_{2}^{+}(\mfz)$. From that fact that $\ov$ solves \eqref{Vapprox_bnd}, we have
\begin{equation*}
\begin{array}{lll}
0 & = \ddt{\ov}(x,t) - \dv_x \bbb(\nabla_x \ov(x,t),t) \\
& = \frac{1}{\al^{-1+\frac{d}{2}}\rho} \lbr \dds{V_{\al,\rho}}(y,s) - \dv_y  {\overline{\mathbf{b}}}_{\al,\rho}\lbr \nabla_y V_{\al,\rho}(y,s),s\rbr \rbr \txt{for} (y,s) \in Q_{2}^{+}(\mfz).
\end{array}
\end{equation*}

In particular, we see that $V_{\al,\rho}(y,s)$ is a weak solution of 
\[
\left\{ \begin{array}{ll}
\dds{V_{\al,\rho}}(y,s) - \dv_y  {\overline{\mathbf{b}}}_{\al,\rho}\lbr \nabla_y V_{\al,\rho}(y,s),s\rbr = 0 & \txt{in} Q_{2}^{+}(\mfz), \\
V_{\al,\rho} = 0 & \txt{on} Q_2(\mfz) \cap \{y_n = 0\}. 
\end{array}\right.
\]

From \cite[Theorem 1.6]{lieberman1993boundary}, we obtain the estimate
\[
\sup_{Q^+_{1}(\mfz)} |\nabla V_{\al,\rho}| \leq C_{(n,\plog,\lamot)} \lbr \fiint_{Q^{+}_{2}(\mfz)} |\nabla V_{\al,\rho}|^{p(\mfz)} \ dz + 1\rbr^{\frac{1}{p(\mfz)}},
\]
which implies the estimate \eqref{ov est2}. Moreover, a similar argument of \cite[Lemma 3.8]{BOS1} yields the estimate \eqref{ov est1}.
\end{proof}

\subsection{Fixing the size of solutions}
\label{size_fix}

Let us define 
\begin{equation}
\label{def_M_0}
\bm_0 := \iint_{\Om_T} \lbr[[]|\bff|^{p(z)} + 1\rbr[]] \ dz + 1.
\end{equation}

 From \eqref{energy_u}, we see that
 \begin{equation}
 \label{size_u_f}
 \bm_u \leq C_{(n,\plog,\lamot)} \bm_0 \txt{where we have set}\bm_u := \iint_{\Om_T} \lbr[[]|\nabla u|^{p(z)}+1\rbr[]] \ dz + 1.
 \end{equation}

From \eqref{energy_w} (which holds for any $\rho>0$), we see that there holds
 \begin{equation}
 \label{size_w}
 \bm_w \leq C_{(n,\plog,\lamot)} \bm_0 \txt{where we have set}\bm_w := \iint_{K_{4\rho}^{\al}(\mfz)} \lbr[[]|\nabla w|^{p(z)}+1\rbr[]] \ dz + 1.
 \end{equation}

\section{First difference estimate below the natural exponent}
\label{five}

In this section, we will prove a difference estimate between the weak solution of \eqref{basic_pde} and the weak solution of \eqref{wapprox_int}. To do this, we will use the method of Lipschitz truncation developed by \cite{KL} which is modified for use in the current setting in Appendix \ref{lipschitz_truncation}.
\begin{theorem}\label{first_diff_thm}
Let $\al \geq 1$ be fixed, then there exists $\tilde{\rho}_3 = \tilde{\rho}_3(n,\plog,\lamot,\bm_0)$ such that for any $128 \rho \leq \tilde{\rho}_3$  and for any $\ve \in (0,1]$, there exists $\tilde{\be}_3 = \tilde{\be}_3(n,\lamot,\plog)$ such that for any $\be \in (0,\tilde{\be}_3]$, there holds the estimate
\begin{equation*}
\label{diff_est_one}
\fiint_{K_{4\rho}^{\al}(\mfz)} |\nabla u - \nabla w|^{\pp(1-\be)} \ dz \leq \ve \fiint_{K_{4\rho}^{\al}(\mfz)} |\nabla u|^{\pp(1-\be)}\ dz  + C_{(n,\lamot,\plog)} \fiint_{K_{4\rho}^{\al}(\mfz)} \lbr[[]|\bff|^{\pp(1-\be)} + 1 \rbr[]]\ dz. 
\end{equation*}
Here $u$ is the weak solution of \eqref{basic_pde} and $w$ is the weak solution to \eqref{wapprox_int}. 
\end{theorem}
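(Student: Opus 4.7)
The plan is to apply the parabolic Lipschitz truncation machinery from Appendix \ref{lipschitz_truncation} to the difference $u-w$, which solves the homogeneous-boundary Cauchy--Dirichlet problem
\[
(u-w)_t - \dv\bgh{\aa(x,t,\nabla u) - \aa(x,t,\nabla w)} = -\dv\lbr|\bff|^{\pp-2}\bff\rbr \quad \text{in } K_{4\rho}^{\al}(\mfz),
\]
with $u-w = 0$ on $\pa_p K_{4\rho}^{\al}(\mfz)$. Since both $u$ and $w$ enjoy the higher integrability provided by Theorem \ref{high_weak} and Theorem \ref{high_very_weak} (upgraded through the constants $\bm_0,\bm_w$ from Section \ref{size_fix} and Remark \ref{remark_radius}), the difference $u-w$ lies in a slightly better space than $L^{\pp}$; however, to reach the sub-natural exponent $\pp(1-\be)$ the only effective tool is to construct test functions that are globally Lipschitz in parabolic metric at a given level $\la$ and use them in the weak formulation. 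The monotonicity bound \eqref{monotonicity} will produce a left-hand side controlling $|\nabla u-\nabla w|^{\pp}$ on the good set where the truncation agrees with $u-w$.

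First I would fix a cutoff level $\la\ge 1$ and invoke Appendix \ref{lipschitz_truncation} to produce a parabolic Lipschitz truncation $(u-w)_{\la}$ with respect to the localized parabolic metric of Definition \ref{loc_parabolic_metric}, scaled to the intrinsic cylinder $K_{4\rho}^{\al}(\mfz)$. This yields an admissible test function that vanishes on $\pa_p K_{4\rho}^{\al}(\mfz)$, coincides with $u-w$ on a good set $E_{\la}$, has parabolic Lipschitz constant comparable to $\la$, and whose bad set $K_{4\rho}^{\al}(\mfz)\setminus E_{\la}$ satisfies a decay of the form
\[
\la^{\pp}\cdot |\{z\in K_{4\rho}^{\al}(\mfz) : z\notin E_\la\}| \apprle \iint_{\{|\nabla u|+|\nabla w|+|\bff|>c\la\}} \bgh{|\nabla u|^{\pp}+|\nabla w|^{\pp}+|\bff|^{\pp}+1}\, dz,
\]
plus a Steklov-average time error term that vanishes in the limit. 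Inserting $(u-w)_{\la}$ into the Steklov-averaged form of the difference equation and invoking the monotonicity \eqref{monotonicity} on $E_{\la}$, together with \eqref{abounded} on the complement, gives the single-level estimate
\[
\iint_{E_{\la}}(\mu^2+|\nabla u|^2+|\nabla w|^2)^{\frac{p(z)-2}{2}}|\nabla u-\nabla w|^{2}\,dz \apprle \la\,|K_{4\rho}^{\al}(\mfz)\setminus E_\la|^{1/p^{-}}(\cdots) + \text{bad-set}_{\bff},
\]
together with the standard $L^\infty$ bound on $\nabla (u-w)_\la$ which allows every right-hand side term to be bounded in terms of the Lipschitz level $\la$ and the cylinder measure.

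Once this single-level estimate is in place, I would carry out a layer-cake (Fubini) integration in $\la$ with weight $\la^{\pp(1-\be)-1}$. Writing
\[
\iint_{K_{4\rho}^{\al}(\mfz)}|\nabla u-\nabla w|^{\pp(1-\be)}\,dz \apprle \int_0^{\infty}\la^{\pp(1-\be)-1}\,|\{|\nabla u-\nabla w|>\la\}|\,d\la,
\]
splitting the integral at a carefully chosen threshold $\la_0$ (roughly a scaled mean of $|\nabla u|+|\bff|$), and using the decay estimate for $E_\la^c$ above, one moves a factor of $\la^{-\be p(z)}$ inside the bad-set integral and applies Fubini again. Because $\be<\tilde\be_3$ can be taken small, the exponent $\pp(1-\be)-\pp\be \cdot (\text{correction})$ still remains strictly positive and the rearrangement produces a factor of $\ve$ in front of $\iint |\nabla u|^{\pp(1-\be)}$ (absorbable on the left), plus the lower-order term $\iint(|\bff|^{\pp(1-\be)}+1)\,dz$. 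The variable exponent and the intrinsic scaling are handled exactly as in Corollary \ref{normalized_higher_integrability}: the log-H\"older bound \descref{R2}{R2}--\descref{R5}{R5} and the restriction $128\rho \leq \rho_0$ keep all terms $\al^{p^+-p^-}$ bounded by universal constants, and the choice $d$ from \eqref{def_d} makes the volume correction from the intrinsic scaling subcritical.

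\textbf{Main obstacle.} The delicate point is the layer-cake step: one must split the range of $\la$ so that the $L^{\infty}$ bound $\|\nabla(u-w)_\la\|\apprle \la$ from the truncation can be combined with the monotonicity inequality to produce an estimate whose right-hand side, after integration, gives the correct power $\pp(1-\be)$ rather than $\pp$. Because the monotonicity \eqref{monotonicity} has a weight $(\mu^2+|\nabla u|^2+|\nabla w|^2)^{(p(z)-2)/2}$ whose sign depends on whether $p(z)\gtrless 2$, the argument has to be carried out in the unified scaling framework of Section \ref{Intrinsic cylinders}, and one also has to pass to the limit $h\downarrow 0$ in the Steklov averaging before the Lipschitz-truncation test function loses regularity in time. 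This last technical step---already carried out in \cite{KL} for the constant-exponent case---is the reason Appendix \ref{lipschitz_truncation} is needed, and the proof of Theorem \ref{first_diff_thm} reduces to assembling those ingredients, choosing $\tilde\be_3$ small in terms of the log-H\"older data to absorb all $\bm_0$-dependent factors into the constant, and then choosing $\tilde\rho_3$ to render \descref{R2}{R2}--\descref{R5}{R5} and the higher-integrability thresholds uniformly satisfied.
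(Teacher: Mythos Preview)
Your overall architecture is right and matches the paper: test the difference equation with the Lipschitz truncation $\vlh$ from Appendix \ref{lipschitz_truncation}, extract the monotonicity term on the good set $\elam$, control the bad-set error by $\la\,|\RR^{n+1}\setminus\elam|$, then integrate in $\la$. But the layer-cake step as you describe it would not work, and this is not a cosmetic issue.

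You write the Fubini step as
\[
\iint |\nabla u-\nabla w|^{\pp(1-\be)}\,dz \apprle \int_0^\infty \la^{\pp(1-\be)-1}\,|\{|\nabla u-\nabla w|>\la\}|\,d\la,
\]
but this formula is ill-defined: $\pp$ depends on $z$, so $\la^{\pp(1-\be)-1}$ is not a weight in $\la$ alone. The paper avoids this by building the variable exponent into the level-set function itself: the good set is $\elam=\{g\le \la^{1-\be}\}$ where $g$ already contains the $p(z)$-th power (see \eqref{def_g_A}), so that $\{g>\la^{1-\be}\}$ morally corresponds to $\{|\nabla u|^{p(z)}+|\nabla w|^{p(z)}+|\bff|^{p(z)}+\cdots>\la\}$. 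The single-level estimate (your display with $|K\setminus E_\la|^{1/p^-}$ is not what one gets; it is simply $\la\,|\RR^{n+1}\setminus\elam|$) is then multiplied by the \emph{constant} weight $\la^{-1-\be}$ and integrated over $(1,\infty)$. Fubini turns the left-hand side into $\be^{-1}\iint \tg^{-\be}(\mu^2+|\nabla u|^2+|\nabla w|^2)^{(\pp-2)/2}|\nabla u-\nabla w|^2$ and the bad-set term into $(1-\be)^{-1}\iint\tg^{1-\be}$, which is controlled by the maximal function bound. No threshold $\la_0$ is introduced.

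The remaining step you do not mention, and which is where the $\ve$ actually appears, is an algebraic decomposition of $|\nabla u-\nabla w|^{\pp(1-\be)}$: on $\{p(z)<2\}$ one writes it as the product of $\bgh{\tg^{-\be}(\mu^2+|\nabla u|^2+|\nabla w|^2)^{(\pp-2)/2}|\nabla u-\nabla w|^2}^{\pp(1-\be)/2}$ times two compensating factors, and applies Young's inequality with three exponents; on $\{p(z)\ge 2\}$ one uses the trivial bound and Young directly. The small parameters from Young, together with the extra factor of $\be$ coming from the Fubini normalization $\be K_1\apprle K_2+\be K_3$, are what produce the $\ve$ in front of $\iint|\nabla u|^{\pp(1-\be)}$. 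Neither Corollary \ref{normalized_higher_integrability} nor the localization bounds \descref{R2}{R2}--\descref{R5}{R5} are invoked in this proof; the only structural input beyond Appendix \ref{lipschitz_truncation} is the maximal function bound of Lemma \ref{max_bound} applied to $\tg^{1-\be}$.
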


\begin{proof}

Let us denote 
\begin{equation*}
\label{def_s_a}
s:= \scalet{\al}{\mfz} (4\rho)^2,
\end{equation*}
and we consider the following cut-off function {$\zv \in C^{\infty} (\RR)$} such that $0 \leq \zv(t) \leq 1$ and 
\begin{equation*}
\label{def_zv}
\zv(t) = \left\{ \begin{array}{ll}
                1 & \text{for} \ t \in (\mft-s+\ve,\mft+s-\ve),\\
                0 & \text{for} \ t \in (-\infty,\mft-s)\cup (\mft+s,\infty).
                \end{array}\right.
\end{equation*}

It is easy to see that 
\begin{equation*}
\label{bound_zv}
\begin{array}{c}
\zv'(t) = 0 \ \txt{for} \  t \in (-\infty,\mft-s) \cup (\mft-s+\ve,\mft+s-\ve)\cup (\mft+s,\infty), \\
|\zv'(t)| \leq \frac{c}{\ve}\  \txt{for} \  t \in (\mft-s,\mft-s+\ve) \cup (\mft+s-\ve,\mft+s). 
\end{array}
\end{equation*}
Without loss of generality, we shall always take $2h \leq \ve$ since we will take limits in the following order $\lim_{\ve \rightarrow 0} \lim_{h \rightarrow 0}$.

We shall use $\vlh(z) \zv(t)$ as a test function in \eqref{basic_pde} and \eqref{wapprox_int} where $\vlh$ is as constructed in Appendix \ref{lipschitz_truncation} (more specifically in \eqref{lipschitz_function}). Thus we get
\begin{equation*}
\begin{array}{l}
L_1 + L_2 :=\iint_{{K_{4\rho}^{\al}(\mfz)}} \ddt{[u-w]_h} \vlh \zv \ dx \ dt + \iint_{{K_{4\rho}^{\al}(\mfz)}} \iprod{[A(x,t,\nabla u) - A(x,t,\nabla w)]_h}{\nabla \vlh} \zv \ dx \ dt \\
\hspace*{6cm} = \iint_{{K_{4\rho}^{\al}(\mfz)}} \iprod{[|\bff|^{\pp-2} \bff]_h}{\nabla \vlh} \zv  \ dx \ dt=: L_3.
\end{array}
\end{equation*}

\begin{description}
\item[Estimate for $L_1$:] Setting $\elam^{\tau} = \{(x,t) \in \elam: t=\tau\}$ where $\elam$ is as defined in \eqref{elambda}, we get
\begin{equation*}
   \label{6.38}
   \begin{array}{ll}
    L_1     
    & = \int_{\mft-s}^{\mft+s} \int_{{\Om_{4\rho}^{\al}(\mfx)}\setminus \elam^{\tau}}  \dds{\vlh}  (\vlh-\vh) \zv(s)\ dy \ d\tau +    \int_{\mft-s}^{\mft+s} \int_{{\Om_{4\rho}^{\al}(\mfx)}}  \frac{d{\lbr \lbr[[](\vh)^2 - (\vlh - \vh)^2\rbr[]]\zv(\tau) \rbr }}{d\tau} \ dy \ d\tau \\
    & \qquad - \int_{\mft-s}^{\mft+s} \int_{{\Om_{4\rho}^{\al}(\mfx)}} \dds{\zv} \lbr \vh^2 - (\vlh - \vh)^2 \rbr \ dy \ d\tau\\
    & := J_2 + J_1(\mft+s) - J_1(\mft-s) - J_3,
   \end{array}
  \end{equation*}
  where we have set 
  \begin{equation*}
  \label{def_i_1}J_1(\tau) := \frac12 \int_{{\Om_{4\rho}^{\al}(\mfx)}} ( (\vh)^2 - (\vlh - \vh)^2 ) (y,\tau) \zv(\tau) \ dy.
  \end{equation*}
Note that $J_1(\mft-s) = J_1(\mft+s) =0$ since $\zv(\mft-s) =\zv(\mft+s) =  0$.

Applying the bound from Lemma \ref{lemma6.8-2},  we have 
\begin{equation*}
    \label{6.39}
    \begin{array}{ll}
     |J_2| & \apprle \iint_{{K_{4\rho}^{\al}(\mfz)}\setminus \elam}   \left| \dds{\vlh}  (\vlh-\vh)\right| \ dy \ d\tau \apprle\la |\RR^{n+1} \setminus \elam| .
    \end{array}
   \end{equation*}

\item[Estimate for $L_2$:] We split $L_2$ and make use of the fact that $\vlh(z) = \vh(z)$ for all $z\in \elam\cap {K_{4\rho}^{\al}(\mfz)}.$ 
\begin{equation*}
\begin{array}{ll}
L_2 & = \iint_{{K_{4\rho}^{\al}(\mfz)}\cap \elam} \iprod{[A(x,t,\nabla u) - A(x,t,\nabla w)]_h}{\nabla \vlh} \zv\ dz  \\
& \qquad + \iint_{{K_{4\rho}^{\al}(\mfz)}\setminus \elam} \iprod{[A(x,t,\nabla u) - A(x,t,\nabla w)]_h}{\nabla \vlh} \zv\ dz\\
& = \iint_{{K_{4\rho}^{\al}(\mfz)}\cap \elam} \iprod{[A(x,t,\nabla u) - A(x,t,\nabla w)]_h}{\nabla [u-w]_h} \zv\ dz  \\
& \qquad + \iint_{{K_{4\rho}^{\al}(\mfz)}\setminus \elam} \iprod{[A(x,t,\nabla u) - A(x,t,\nabla w)]_h}{\nabla \vlh} \zv\ dz\\
& =: L_2^1 + L_2^2.
\end{array}
\end{equation*}

\begin{description}
\item[Estimate for $L_2^1$:] Using \eqref{monotonicity}, we get
\begin{equation*}
\begin{array}{ll}
L_2^1 & = \iint_{{K_{4\rho}^{\al}(\mfz)}\cap \elam} \iprod{[A(x,t,\nabla u) - A(x,t,\nabla w)]_h}{\nabla [u-w]_h} \zv\ dz \\
& \apprge \iint_{{K_{4\rho}^{\al}(\mfz)} \cap \elam} |\nabla [u-w]_h|^2 \lbr \mu^2 + |\nabla [u]_h|^2 + |\nabla [w]_h|^2 \rbr^{\frac{\pp-2}{2}} \zv \ dz.
\end{array}
\end{equation*}

\item[Estimate for $L_2^2$:] Using the bound  from Lemma \ref{lemma6.7-1}, \eqref{abounded}, we get
\begin{equation}
\label{7.10}
\begin{array}{ll}
L_2^2 & \apprle \iint_{{K_{4\rho}^{\al}(\mfz)}\setminus \elam} \left|[A(x,t,\nabla u) - A(x,t,\nabla w)]_h\right| |\nabla \vlh| \ dz\\
& \apprle \sum_{i\in\NN} \la^{\frac{1}{p(z_i)}} \iint_{2Q_i} \lbr[[]\lbr \mu^2 + |\nabla u|^2 + |\nabla w|^2 \rbr^{\frac{\pp-1}{2}}\rbr[]]_h \ dz \\
& \apprle \sum_{i \in \NN} \la^{\frac{1}{p(z_i)}} \la^{\frac{p^+_{2Q_i}-1}{p^-_{2Q_i}}}|\htq_i|\\
& \apprle \la |\RR^{n+1} \setminus \elam|.
\end{array}
\end{equation}
In the last inequality, we made use of $\la^{\frac{1}{p(z_i)} + \frac{p^+_{2Q_i}}{p^-_{2Q_i}}-\frac{1}{p^-_{2Q_i}}-1} \leq C(\plog,n)$.

\end{description}

\item[Estimate for $L_3$:] Analogously to estimate $L_2$, we split $L_3$ as follows:
\begin{equation*}
\begin{array}{ll}
L_3 & = \iint_{{K_{4\rho}^{\al}(\mfz)}\cap \elam} \iprod{[|\bff|^{\pp-2} \bff]_h}{\nabla \vlh} \zv\ dz + \iint_{{K_{4\rho}^{\al}(\mfz)}\setminus \elam} \iprod{[|\bff|^{\pp-2} \bff]_h}{\nabla \vlh} \zv \ dz\\
& = \iint_{{K_{4\rho}^{\al}(\mfz)}\cap \elam} \iprod{[|\bff|^{\pp-2} \bff]_h}{\nabla [u-w]_h} \zv\ dz + \iint_{{K_{4\rho}^{\al}(\mfz)}\setminus \elam} \iprod{[|\bff|^{\pp-2} \bff]_h}{\nabla \vlh} \zv\ dz\\
& =: L_3^1 + L_3^2.
\end{array}
\end{equation*}

\begin{description}
\item[Estimate for $L_3^1$:] Using the fact that $\vlh(z) = \vh(z)$ for all $z\in \elam\cap {K_{4\rho}^{\al}(\mfz)}$, we get
\begin{equation*}
\begin{array}{ll}
L_3^1 & = \iint_{{K_{4\rho}^{\al}(\mfz)}\cap \elam} \iprod{[|\bff|^{\pp-2} \bff]_h}{\nabla [u-w]_h} \zv \ dz \\
& \leq \iint_{{K_{4\rho}^{\al}(\mfz)} \cap \elam} [|\bff|^{\pp-1}]_h |\nabla [u-w]_h| \ dz.
\end{array}
\end{equation*}

\item[Estimate for $L_3^2$:] Similar to the bound in \eqref{7.10}, we get
\begin{equation*}
\begin{array}{ll}
L_3^2 & \apprle \la |\RR^{n+1} \setminus \elam|. 
\end{array}
\end{equation*}

\end{description}

\end{description}

Combining all the above estimates, we get
\begin{equation*}
\label{combined_1}
\begin{array}{l}
- \int_{\mft-s}^{\mft+s} \int_{{\Om_{4\rho}^{\al}(\mfx)}} \dds{\zv} \lbr \vh^2 - (\vlh - \vh)^2 \rbr \ dy \ d\tau +  \iint_{{K_{4\rho}^{\al}(\mfz)} \cap \elam} |\nabla [u-w]_h|^2 \lbr\mu^2 +  |\nabla [u]_h|^2 +|\nabla [w]_h|^2 \rbr^{\frac{\pp-2}{2}}\zv \ dz \\
\hspace*{6cm} \apprle \iint_{{K_{4\rho}^{\al}(\mfz)}\cap \elam} [|\bff|^{p-1}]_h{|\nabla [u-w]_h|} \ dz +  \la |\RR^{n+1} \setminus \elam| .
\end{array}
\end{equation*}

In order to estimate $- \int_{\mft-s}^{\mft+s} \int_{{\Om_{4\rho}^{\al}(\mfx)}} \dds{\zv} \lbr \vh^2 - (\vlh - \vh)^2 \rbr \ dy \ d\tau$, we observe that on $\elam$, there holds $\vl = v$.
Taking limits first in $h\searrow 0$ followed by $\ve \searrow 0$, we get
\begin{equation*}
\begin{array}{ll}
- \int_{\mft-s}^{\mft+s} \int_{{\Om_{4\rho}^{\al}(\mfx)}} \dds{\zv} \lbr \vh^2 - (\vlh - \vh)^2 \rbr \ dy \ d\tau  \xrightarrow{\lim_{\ve \searrow 0}\lim_{h \searrow 0}} & \int_{{\Om_{4\rho}^{\al}(\mfx)}} (v^2 - (\vl - v)^2 )(x,\mft+s) \ dx \\
& - \int_{{\Om_{4\rho}^{\al}(\mfx)}} (v^2 - (\vl - v)^2 )(x,\mft-s) \ dx.
\end{array}
\end{equation*}

For the second term, we observe that on $\elam$, we have $\vl = v$; and on $\elam^c$, we have $\vl(\cdot,\mft-s) = v(\cdot,\mft-s) = 0$. Thus, the second term vanishes because on $\elam$, we can use the initial boundary condition; and on $\elam^c$, it is zero by construction. Thus we get
\begin{equation*}
- \int_{\mft-s}^{\mft+s} \int_{{\Om_{4\rho}^{\al}(\mfx)}} \dds{\zv} \lbr \vh^2 - (\vlh - \vh)^2 \rbr \ dy \ d\tau  \xrightarrow{\lim_{\ve \searrow 0} \lim_{h \searrow 0}} \int_{{\Om_{4\rho}^{\al}(\mfx)}} (v^2 - (\vl - v)^2 )(x,\mft+s) \ dx.
\end{equation*}

In fact, if we consider a cut-off function $\zv^{t_0} (\tau)$ for some $t_0 \in (\mft-s,\mft+s)$, where 
\begin{equation*}
\zv^{t_0}(\tau) = \left\{ \begin{array}{ll}
                1 & \text{for} \ \tau \in (-t_0+\ve,t_0-\ve),\\
                0 & \text{for} \ \tau \in (-\infty,-t_0)\cup (t_0,\infty),
                \end{array}\right.
\end{equation*}
we would have obtained the  following estimate after taking limits:
\begin{equation*}
\label{combined_1_new_11}
\begin{array}{l}
\int_{{\Om_{4\rho}^{\al}(\mfx)}} (v^2 - (\vl - v)^2 )(x,t_0) \ dx +  \int_{-t_0}^{t_0} \int_{{\Om_{4\rho}^{\al}(\mfx)} \cap \elam} |\nabla (u-w)|^2 \lbr\mu^2 +  |\nabla u|^2 +|\nabla w|^2 \rbr^{\frac{\pp-2}{2}} \ dx \ dt \\
\hfill \apprle  \iint_{{K_{4\rho}^{\al}(\mfz)}\cap \elam} |\bff|^{\pp-1}|\nabla (u-w)| \ dz +  \la  |\RR^{n+1} \setminus \elam| .
\end{array}
\end{equation*}

In particular, we get for any $t_0 \in (\mft-s,\mft+s)$
\begin{equation}
\label{combined_1_new_1}
\begin{array}{l}
\int_{{\Om_{4\rho}^{\al}(\mfx)}} (v^2 - (\vl - v)^2 )(x,t_0) \ dx +   \iint_{{K_{4\rho}^{\al}(\mfz)} \cap \elam} |\nabla (u-w)|^2 \lbr\mu^2+ |\nabla u|^2 +|\nabla w|^2 \rbr^{\frac{\pp-2}{2}} \ dx \ dt \\
 \hfill \apprle \iint_{{K_{4\rho}^{\al}(\mfz)}\cap \elam} |\bff|^{\pp-1}|\nabla (u-w)| \ dz +  \la |\RR^{n+1} \setminus \elam| .
\end{array}
\end{equation}

Using  Lemma \ref{cruc_3}, for any $t \in (\mft-s,\mft+s)$, there holds
\begin{equation*}
 \label{4.13}
  \begin{array}{ll}
    \int_{{\Om_{4\rho}^{\al}(\mfx)}} | (v)^2 - (\vl - v)^2 | (y,t) \ dy   
   & \apprge   - \la |\RR^{n+1} \setminus \elam|.
  \end{array}
 \end{equation*}
 Furthermore, using the above estimate in \eqref{combined_1_new_1} gives
%
%
%
\begin{equation}
\label{fully_combined}
\begin{array}{ll}
\iint_{{K_{4\rho}^{\al}(\mfz)} \cap \elam} |\nabla (u-w)|^2 \lbr \mu^2 + |\nabla u|^2 +|\nabla w|^2 \rbr^{\frac{\pp-2}{2}} \ dx \ dt &\apprle \iint_{{K_{4\rho}^{\al}(\mfz)}\cap \elam} |\bff|^{\pp-1}|\nabla (u-w)| \ dz 
+\la |\RR^{n+1} \setminus \elam|.
\end{array}
\end{equation} 
 
Let us now multiply \eqref{fully_combined} with $\la^{-1-\be}$ and integrate over $(1,\infty)$ to  get
 \begin{equation*}
 \label{K_expression}
K_1 +K_2\apprle K_3,
 \end{equation*}
 where we have set
 \begin{equation*}
  \begin{array}{@{}r@{}c@{}l@{}}
  K_1 \ &:=& \ \int_{1}^{\infty} \la^{-1-\be} \iint_{{K_{4\rho}^{\al}(\mfz)} \cap \elam} |\nabla (u-w)|^2 \lbr\mu^2+ |\nabla u|^2 +|\nabla u|^2 \rbr^{\frac{\pp-2}{2}}\ dz \ d\la,\\
  K_2 \ &:=& \ \int_{1}^{\infty} \la^{-1-\be} \iint_{{K_{4\rho}^{\al}(\mfz)} \cap \elam} |\bff|^{\pp-1}|\nabla (u-w)| \ dz \ d\la,  \\
  K_3\  &:=& \ \int_{1}^{\infty} \la^{-1-\be}  \la |\RR^{n+1} \setminus \elam| \  d\la. \\
  \end{array}
 \end{equation*}

Let us define $\tg = \max\{ g, 1\}$ where $g$ is from \eqref{def_g_A}, then we estimate each of the above terms as follows:
 \begin{description}[leftmargin=*]
 \item[Estimate for $K_1$:] Applying Fubini's theorem, we get
 \begin{equation*}
 \label{3.13}
 K_1 \apprge \frac{1}{\be} \iint_{{K_{4\rho}^{\al}(\mfz)}} \tg(z)^{-\be}|\nabla (u-w)|^2 \lbr \mu^2 + |\nabla u|^2 +|\nabla u|^2 \rbr^{\frac{\pp-2}{2}}\ dz.
 \end{equation*}


 Let us define 
 \[
 K^+(\mfz) := \{ z \in K_{4\rho}^{\al}(\mfz): p(z) \geq 2\} \txt{and}K^-(\mfz) := \{ z \in K_{4\rho}^{\al}(\mfz): p(z) \leq 2\},
 \]
and consider the following two subcases:
 \begin{description}[leftmargin=*]
 \item[Subcase $K^-(\mfz)$:] We have the following simple decomposition:
 \begin{equation}
\label{p-great-twoo}
\begin{array}{rcl}
|\nabla u - \nabla w|^{p(z)(1-\be)}  &=&  \left[ (\mu^2+ |\nabla u|^2 + |\nabla w|^2)^{\frac{p(z)-2}{2}} |\nabla u - \nabla w |^2 g^{-\be} \right]^{\frac{p(z)(1-\be)}{2}}  \times \\ && \times \left( \mu^2 + |\nabla u|^2 + |\nabla w|^2 \right)^{\frac{p(z)(1-\be)(2-p(z))}{4}} \times \tg^{\frac{p(z)(1-\be)}{2}\be}.
\end{array}
\end{equation}

Integrating \eqref{p-great-twoo} over $K^-(\mfz)$ and making use of Young's inequality  with exponents $\frac{2}{p(z)(1-\be)}, \frac{2}{2-p(z)} $ and $\frac{2}{p(z)\be}$, we get
\begin{equation}
\label{p-less-twoo}
\begin{array}{ll}
\iint_{K^-(\mfz)}  |\nabla u - \nabla w|^{\pp(1-\be)} \, dx & \apprle \ep_1 \iint_{K^-(\mfz)} (\mu^2 +|\nabla u|^2 + |\nabla w|^2)^{\frac{\pp(1-\be)}{2}} \, dz  + \ep_2 \iint_{K^-(\mfz)} \tg(z)^{1-\be} \, dz   \\
 &\qquad + C_{(\ep_1,\ep_2)} \iint_{K^-(\mfz)} ( \mu^2 + |\nabla u|^2 + |\nabla w|^2)^{\frac{\pp-2}{2}} |\nabla u - \nabla w |^2 \tg(z)^{-\be} \, dz.
\end{array}
\end{equation}

From the strong maximal function bound of Lemma \ref{max_bound}, we see that 
\begin{equation}
\label{7.27}
\begin{array}{rcl}
\iint_{K^-(\mfz)} \tg(z)^{1-\be} \, dz & \apprle & \iint_{\RR^{n+1}} \tg^{1-\be} \ dz  + |K^-(\mfz)|\\
& \apprle & \iint_{K_{4\rho}^{\al}(\mfz)} \lbr \frac{|u-w|}{\rho} + |\nabla u| + |\nabla w| + |\bff| + \mu+ 1\rbr^{{\pp(1-\be)}} \ dz + |K^-(\mfz)|\\
& \apprle & \iint_{K_{4\rho}^{\al}(\mfz)} \lbr |\nabla u| + |\nabla w| + |\bff| +\mu+  1\rbr^{\pp(1-\be)} \ dz.
\end{array}
\end{equation}

Combining \eqref{p-less-twoo} and \eqref{7.27}, we get
\begin{equation*}
\label{p-less-twoo-young}
\begin{array}{ll}
\iint_{K^-(\mfz)}  |\nabla u - \nabla w|^{\pp(1-\be)} \, dz &  \apprle (\ep_1 + \ep_2) C_{(\plog,n,\lamot)}  \iint_{{K_{4\rho}^{\al}(\mfz)}} |\nabla u|^{\pp(1-\be)} + |\nabla w-\nabla u|^{\pp(1-\be)}\, dz \\
 &\qquad + C_{(\ep_1,\ep_2)} \iint_{K^-(\mfz)} ( \mu^2 + |\nabla u|^2 + |\nabla w|^2)^{\frac{\pp-2}{2}} |\nabla u - \nabla w |^2 \tg(z)^{-\be} \, dz \\
 & \qquad +  \iint_{{K_{4\rho}^{\al}(\mfz)}}\lbr[[]|\bff|^{\pp(-\be)} + 1\rbr[]]\, dz.\\
\end{array}
\end{equation*}

 \item[Subcase $K^+(\mfz)$:] In this case, we proceed as follows:
 \begin{equation*}
 \label{p-geq-two}
 \begin{array}{rcl}
 \iint_{K^+(\mfz)} |\nabla u - \nabla w|^{\pp(1-\be)} \ dz & \leq &C_{(\ve_3)}\iint_{{K_{4\rho}^{\al}(\mfz)}} \tg(z)^{-\be} |\nabla u - \nabla w|^{\pp} \ dz + \ep_3 \iint_{{K_{4\rho}^{\al}(\mfz)}} \tg(z)^{1-\be} \ dz \\
 &  \overset{\eqref{7.27}}{\apprle}& 
  C_{(\ep_3)} \iint_{K^+(\mfz)} \tg(z)^{-\be} ( \mu^2 + |\nabla u|^2 + |\nabla w|^2)^{\frac{\pp-2}{2}} |\nabla u - \nabla w |^2  \, dz\\
 && \qquad + \ep_3 \iint_{{K_{4\rho}^{\al}(\mfz)}}  |\nabla u-\nabla w|^{\pp(1-\be)} + |\nabla u|^{\pp(1-\be)} + |\bff|^{\pp(1-\be)}+ 1\, dz.
 \end{array}
 \end{equation*}

 \end{description}

 \item[Estimate for $K_2$:] Again by Fubini's theorem, we get
 \begin{equation*}
 \label{3.18}
 K_2  = \frac{1}{\be} \iint_{{K_{4\rho}^{\al}(\mfz)}} \tg(z)^{-\be} \iprod{|\bff|^{\pp-2} \bff}{\nabla u - \nabla w} \ dz.
 \end{equation*}
From the definition of $g(z)$, we see that for $z \in {K_{4\rho}^{\al}(\mfz)}$, we have $\tg(z) \geq |\nabla u - \nabla w|(z)$ which implies $\tg(z)^{-\be} \leq |\nabla u - \nabla w|^{-\be}(z)$. We can now apply Young's inequality with exponents $\frac{\pp(1-\be)}{\pp-1}$ and $\frac{\pp(1-\be)}{1-\pp\be}$  to get:
\begin{equation*}
\label{3.19}
\begin{array}{ll}
K_2 & 
 \apprle \frac{C_{(\ep_4)}}{\be} \iint_{{K_{4\rho}^{\al}(\mfz)}} |\bff|^{\pp(1-\be)} \ dz + \frac{\ep_4}{\be} \iint_{{K_{4\rho}^{\al}(\mfz)}} |\nabla u - \nabla w|^{\pp(1-\be)} \ dz.
\end{array}
\end{equation*}

 \item[Estimate for $K_3$:] Applying the layer cake representation followed by Lemma \ref{max_bound}, we get
 \begin{equation*}
 \label{3.22}
 \begin{array}{rcl}
 K_3 & = &\frac{1}{1-\be} \iint_{\RR^{n+1}} \tg(z)^{1-\be} \ dz \\
 & \overset{\eqref{7.27}}{\apprle} & \iint_{{K_{4\rho}^{\al}(\mfz)}} \lbr |\nabla u - \nabla w| + |\nabla u| + |\bff|+\mu  + 1\rbr^{\pp(1-\be)} \ dz.
 \end{array}
 \end{equation*}

 \end{description}

 Combining everything, we get the following estimate:
 \begin{equation*}
 \begin{array}{rcl}
 \iint_{{K_{4\rho}^{\al}(\mfz)}} |\nabla u - \nabla w|^{p-\be} \ dz 
 & \apprle & (\ve_1+\ve_2+\ve_3+C_{(\ve_1,\ve_2,\ve_3)}\be) \iint_{K_{4\rho}^{\al}(\mfz)} |\nabla u|^{\pp(1-\be)} \ dz  \\
 & & + (\ve_1+\ve_2+\ve_3+\ve_4+ C_{(\ve_1,\ve_2,\ve_3)}\be) \iint_{K_{4\rho}^{\al}(\mfz)} |\nabla u - \nabla w|^{\pp(1-\be)} \ dz  \\
 && + C_{(\ve_1,\ve_2,\ve_3,\ve_4,\be)} \iint_{K_{4\rho}^{\al}(\mfz)} \lbr[[]|\bff|^{\pp(1-\be)} + 1\rbr[]] \ dz.
 \end{array}
 \end{equation*}
 Choosing $\ve_1,\ve_2,\ve_3$ and $\ve_4$ small followed by $\be \in (0,\tilde{\be}_3]$, we get the proof of the estimate. 
\end{proof}


\section{Second difference estimate below the natural exponent}
\label{six}
In this section, we will prove a difference estimate between the weak solution of \eqref{wapprox_int} and the weak solution of \eqref{vapprox_bnd}. To do this, we will use the method of Lipschitz truncation from Appendix \ref{lipschitz_truncation_B}. 

For this section, let us denote 
\begin{equation*}
\label{def_s_b}
s:= \scalet{\al}{\mfz} (3\rho)^2.
\end{equation*}

\begin{theorem}
\label{second_diff_thm}
Let $(p(\cdot),\aa,\Om)$ be $(\gamma,\bs_0)$-vanishing. Suppose that $w$ and $v$ are weak solutions of \eqref{wapprox_int} and \eqref{vapprox_bnd}, respectively, and let $\al \geq 1$ be given such that the following assumptions hold:
\begin{equation}
\label{hypothesis}
\fiint_{K_{4\rho}^{\al}(\mfz)} |\nabla w|^{p(\mfz)(1-\be)} \ dz \le c_{\ast} \al^{1-\be} \txt{and} \al^{p^+_{K_{4\rho}^{\al}(\mfz)}-p^-_{K_{4\rho}^{\al}(\mfz)}}\leq c_p.
\end{equation}
Further assume that  
\begin{equation}\label{more_hyp} \al^{-\frac{n}{p(\mfz)} + \frac{nd}{2} + d} \leq \Ga^2 (4\rho)^{-(n+2)} \txt{and} p^+_{K_{4\rho}^{\al}(\mfz)}-p^-_{K_{4\rho}^{\al}(\mfz)} \leq \modp(4\rho\Ga),\end{equation} 
for some $\Ga, c_p, c_{\ast} >1$ to be selected as fixed constants in Section \ref{eight}.

Then there exists $\tilde{\rho}_4 = \tilde{\rho}_4(n,\plog,\lamot,\bm_0)$ such that for any $128 \rho \leq \tilde{\rho}_4$  and for any $\ve \in (0,1]$, there exist $\tilde{\be}_4 = \tilde{\be}_4(\ve, n,\lamot,\plog)$ and $\tilde{\ga}_0 = \tilde{\ga}_0(\ve, n,\lamot,\plog)$ such that for any $\be \in (0,\tilde{\be}_4]$ and any $\ga \in (0,\tilde{\ga}_0)$, the following estimate holds:

\begin{equation}
\label{diff_est_two}
\fiint_{K_{3\rho}^{\al}(\mfz)} |\nabla w - \nabla v|^{p(\mfz)(1-\be)} \ dz \leq \ve \al \txt{and} \fiint_{K_{3\rho}^{\al}(\mfz)} |\nabla v|^{p(\mfz)(1-\be)} \ dz \apprle \al.
\end{equation}

\end{theorem}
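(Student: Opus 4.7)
The plan is to mimic the structure of Theorem~\ref{first_diff_thm}, but with the test function now designed to capture $w-v$, and with an additional error term coming from replacing $\aa(x,t,\cdot)$ by its time-slice average $\bbb(t,\cdot)$. Because $\bbb$ is a constant-exponent operator (with exponent $p(\mfz)$) satisfying \eqref{bbounded}, its monotonicity will drive the coercive left-hand side, whereas the small BMO hypothesis \eqref{small_aa} applied over $K_{3\rho}^{\al}(\mfz)$ will make the new error tiny. The second bound in \eqref{diff_est_two} is then an immediate triangle-inequality consequence of the first together with hypothesis \eqref{hypothesis} (after scaling via $\al^{p^+-p^-}\leq c_p$).

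First I would construct, using Appendix~\ref{lipschitz_truncation_B}, a Lipschitz truncation $\vlh$ of the Steklov-averaged difference $[w-v]_h$ on the good set $\elam \cap K_{3\rho}^{\al}(\mfz)$, and take $\vlh\, \zeta_\ve$ as a test function simultaneously in \eqref{wapprox_int} and \eqref{vapprox_bnd}. The time-derivative term yields, after the usual $h\searrow 0$ and $\ve\searrow 0$ limits and the vanishing of the initial slice (since $w=v$ on $\pa_p K_{3\rho}^{\al}(\mfz)$ so the truncation is zero there), a nonnegative boundary contribution controlled below via Lemma~\ref{cruc_3}. The elliptic term splits into a good piece
\[
\iint_{K_{3\rho}^{\al}(\mfz)\cap\elam}\iprod{\bbb(t,\nabla w)-\bbb(t,\nabla v)}{\nabla(w-v)}\,dz
\]
which, by the monotonicity inequality analogous to \eqref{monotonicity} but for $\bbb$ at the frozen exponent $p(\mfz)$, dominates $\int (\mu^2+|\nabla w|^2+|\nabla v|^2)^{(p(\mfz)-2)/2}|\nabla w-\nabla v|^2$; a bad piece on $K_{3\rho}^{\al}(\mfz)\setminus\elam$ bounded by $\la|\RR^{n+1}\setminus\elam|$ exactly as in \eqref{7.10}; and, crucially, a new error piece
\[
\iint_{K_{3\rho}^{\al}(\mfz)\cap\elam}\iprod{\aa(x,t,\nabla w)-\bbb(t,\nabla w)}{\nabla(w-v)}\,dz,
\]
which is the novel ingredient.

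The BMO error is controlled by inserting \eqref{abounded}-style growth and the definition of $\bbb$ in Section~\ref{four-two}: it is dominated by $\ga\cdot(\mu^2+|\nabla w|^2)^{(p(\mfz)-1)/2}|\nabla(w-v)|$ on the good set. I would then multiply the resulting inequality by $\la^{-1-\be}$ and integrate $\la\in(1,\infty)$, apply Fubini together with the strong maximal bound of Lemma~\ref{max_bound} on the function $g$ built from $|\nabla(w-v)|$, $|\nabla w|$, $|\nabla v|$, and the quotient $|w-v|/\rho$, and split the exponent according to $p(\mfz)\gtrless 2$ exactly as in the $K^\pm(\mfz)$ dichotomy of Theorem~\ref{first_diff_thm}. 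Young's inequality with a small parameter absorbs the $|\nabla w-\nabla v|^{p(\mfz)(1-\be)}$ terms on the left. Rescaling via $\al$ (using \eqref{more_hyp} to switch between $p(x,t)(1-\be)$ and $p(\mfz)(1-\be)$ integrals at bounded multiplicative cost, exactly as in the calculation around \eqref{one5.17}--\eqref{5.20}) and using hypothesis \eqref{hypothesis} to bound the normalized $w$-energy by $c_\ast\al^{1-\be}$, one arrives at a bound of the form
\[
\fiint_{K_{3\rho}^{\al}(\mfz)}|\nabla w-\nabla v|^{p(\mfz)(1-\be)}\,dz
\ \apprle\ \bigl(\be + \ga^{\sigma} + \text{Young parameters}\bigr)\,\al,
\]
for some explicit $\sigma>0$ coming from a reverse H\"older bootstrap of $|\nabla w|$ via Corollary~\ref{normalized_higher_integrability} (which applies thanks to \eqref{hypothesis} and \eqref{more_hyp}). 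Choosing $\tilde\be_4$ and $\tilde\ga_0$ small delivers the first estimate in \eqref{diff_est_two}, and the triangle inequality together with \eqref{hypothesis} gives the second.

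The main obstacle I anticipate is the BMO error term. Unlike the first difference estimate, here the coercivity and the error both sit on the good set and involve essentially the same integrand $(\mu^2+|\nabla w|^2)^{(p(\mfz)-2)/2}|\nabla(w-v)|^2$ up to a factor of $\ga$, so Young's inequality with $\ga$ as the small parameter is the natural move, but one must then upgrade the $|\nabla w|^{p(\mfz)(1-\be)}$-energy to an $|\nabla w|^{p(\mfz)}$-energy (or $|\nabla w|^{p(\mfz)(1+\sigma)}$-energy) to close the Young-type argument. This requires invoking the reverse H\"older estimate of Corollary~\ref{normalized_higher_integrability} applied to the solution $w$, which in turn is why the hypotheses \eqref{hypothesis} and \eqref{more_hyp} have been introduced; verifying that Corollary~\ref{normalized_higher_integrability} is indeed applicable with the constants $c_\ast,c_p$ (i.e.\ that \eqref{bm_1}, \eqref{hyp_one} and \eqref{one_5.18} hold along the chain of scalings) will be the most delicate bookkeeping in the proof.
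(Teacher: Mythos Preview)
Your overall architecture matches the paper's: Lipschitz truncation from Appendix~\ref{lipschitz_truncation_B}, monotonicity of $\bbb$ for the coercive term, bad-set contributions of size $\la|\RR^{n+1}\setminus\elam|$, multiplication by $\la^{-1-\be}$ and Fubini, and finally Corollary~\ref{normalized_higher_integrability} to close. The second bound in \eqref{diff_est_two} via the triangle inequality is also exactly how the paper proceeds.

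There is, however, a real gap in your treatment of the error $\aa(x,t,\nabla w)-\bbb(t,\nabla w)$. You describe it as a single ``BMO error'' and assert it is ``dominated by $\ga\cdot(\mu^2+|\nabla w|^2)^{(p(\mfz)-1)/2}|\nabla(w-v)|$ on the good set.'' This is not correct: the error must be split as
\[
\aa-\bbb=(\aa-\bb)+(\bb-\bbb),
\]
and the two pieces behave quite differently. The piece $\bb-\bbb$ is the genuine BMO term; its pointwise factor is $\Theta(\aa,B^{\al}_{4\rho}(\mfx))(z)$, which is \emph{not} bounded by $\ga$ pointwise but only has average $\leq\ga$. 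You therefore need H\"older's inequality together with Corollary~\ref{normalized_higher_integrability} to extract a factor $\ga^{\sigma/(4+\sigma)}$ (this is the paper's $K_6$, and your remark about reverse H\"older suggests you see this part).

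The piece $\aa-\bb$ (the paper's $K_7$) is the one you have missed. By \eqref{def_bb} and the mean value theorem it is of size
\[
\omega_{p(\cdot)}(4\rho\Ga)\,(\mu^2+|\nabla w|^2)^{\frac{p(\mfz)-1}{2}}\bigl|\log(\mu^2+|\nabla w|^2)\bigr|,
\]
so after Young's inequality one is left with an integral of the form $\fiint|\nabla w|^{\mathfrak b}\bigl[\log(e+|\nabla w|^{\mathfrak b})\bigr]^{p(\mfz)/(p(\mfz)-1)}$. This cannot be absorbed by a naive H\"older step; the paper invokes the $L\log L$ bound of Lemma~\ref{llogl} to pass to $\bigl(\fiint|\nabla w|^{\mathfrak b(1+\sigma/4)}\bigr)^{4/(4+\sigma)}$, then uses Corollary~\ref{normalized_higher_integrability} to bound this by $c_p^{1/(p^--1)}\al^{1+\sigma/4}$, and finally must control a remaining factor $\log\al$. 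It is precisely here that the \emph{first} inequality in \eqref{more_hyp} is used: from $\al^{-n/p(\mfz)+nd/2+d}\leq\Ga^2(4\rho)^{-(n+2)}$ one gets $\log\al\apprle\log\bigl(\Ga^2(4\rho)^{-(n+2)}\bigr)$, whence $\omega_{p(\cdot)}(4\rho\Ga)\cdot\log\al\apprle\ga$ by the smallness condition \eqref{small_px}. Your proposal attributes \eqref{more_hyp} only to the exponent-switching calculations around \eqref{one5.17}--\eqref{5.20}; in fact its primary role is exactly this control of $\log\al$ in the $L\log L$ argument, and without it the $K_7$ term does not close.
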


\begin{proof}
The first estimate in \eqref{diff_est_two} and \eqref{hypothesis} directly implies the second estimate in \eqref{diff_est_two} after making use of  the triangle inequality. Thus we only prove the first estimate in \eqref{diff_est_two}.

Consider the following cut-off function \textcolor{black}{\bf $\zv \in C^{\infty} (\RR)$} such that $0 \leq \zv(t) \leq 1$ and 
\begin{equation*}
\label{def_zv_B}
\zv(t) = \left\{ \begin{array}{ll}
                1 & \text{for} \ t \in (\mft-s+\ve,\mft+s-\ve),\\
                0 & \text{for} \ t \in (-\infty,\mft-s)\cup (\mft+s,\infty).
                \end{array}\right.
\end{equation*}

It is easy to see that 
\begin{equation*}
\label{bound_zv_B}
\begin{array}{c}
\zv'(t) = 0 \ \txt{for} \  t \in (-\infty,\mft-s) \cup (\mft-s+\ve,\mft+s-\ve)\cup (\mft+s,\infty), \\
|\zv'(t)| \leq \frac{c}{\ve}\  \txt{for} \  t \in (\mft-s,\mft-s+\ve) \cup (\mft+s-\ve,\mft+s). 
\end{array}
\end{equation*}
Without loss of generality, we shall always take $2h \leq \ve$ since we will take limits in the following order $\lim_{\ve \rightarrow 0} \lim_{h \rightarrow 0}$.

We shall use $\vlh(z) \zv(t)$ as a test function where $\vlh$ is as constructed in Appendix \ref{lipschitz_truncation_B} (more specifically in \eqref{lipschitz_function_B}). This is valid since $\vlh \in C^{0,1}(K_{3\rho}^{\al}(\mfz))$. Using this, we get
\begin{equation*}
\begin{array}{l}
\iint_{{K_{3\rho}^{\al}(\mfz)}} \ddt{[w-v]_h} \vlh \zv \ dx \ dt + \iint_{{K_{3\rho}^{\al}(\mfz)}} \iprod{[\bbb(t,\nabla v) - \bbb(t,\nabla w)]_h}{\nabla \vlh} \zv \ dx \ dt \\
\hspace*{6cm} = \iint_{{K_{3\rho}^{\al}(\mfz)}} \iprod{[\bbb(t,\nabla w) - \bb(x,t,\nabla w)]_h}{\nabla \vlh} \zv  \ dx \ dt\\
\hspace*{6.5cm} + \iint_{{K_{3\rho}^{\al}(\mfz)}} \iprod{[\bb(x,t,\nabla w) - \aa(x,t,\nabla w)]_h}{\nabla \vlh} \zv  \ dx \ dt.
\end{array}
\end{equation*}

Proceeding as in Theorem \ref{first_diff_thm}, after taking limits, we get for any $t_0 \in (\mft-s,\mft+s)$, the estimate
\begin{equation}
\label{B_7.8}
\begin{array}{l}
\int_{\Om_{3\rho}^{\al}(\mfx)} (v^2 - (\vl - v)^2)(x,t_0) \ dx +\iint_{{K_{3\rho}^{\al}(\mfz)}\cap \elam} \iprod{\bbb(t,\nabla v) - \bbb(t,\nabla w)}{\nabla (v-w)} \zv \ dx \ dt  \\
\hspace*{3cm}= -\iint_{{K_{3\rho}^{\al}(\mfz)}\setminus \elam} \iprod{[\bbb(t,\nabla v) - \bbb(t,\nabla w)]_h}{\nabla \vlh} \zv \ dx \ dt  \\
 \hspace*{4cm}+ \iint_{{K_{3\rho}^{\al}(\mfz)}\setminus \elam} \iprod{[\bbb(t,\nabla w) - \bb(x,t,\nabla w)]_h}{\nabla \vlh} \zv  \ dx \ dt\\
 \hspace*{4cm}+\iint_{{K_{3\rho}^{\al}(\mfz)}\setminus \elam} \iprod{[\bb(x,t,\nabla w) - \aa(x,t,\nabla w)]_h}{\nabla \vlh} \zv  \ dx \ dt  \\
 \hspace*{4cm}+ \iint_{{K_{3\rho}^{\al}(\mfz)}\cap \elam} \iprod{[\bbb(t,\nabla w) - \bb(x,t,\nabla w)]_h}{\nabla (v-w)} \zv  \ dx \ dt\\
 \hspace*{4cm}+\iint_{{K_{3\rho}^{\al}(\mfz)}\cap \elam} \iprod{[\bb(x,t,\nabla w) - \aa(x,t,\nabla w)]_h}{\nabla (v-w)} \zv  \ dx \ dt.
\end{array}
\end{equation}

Let us multiply \eqref{B_7.8} by $\la^{-1-\be}$ and integrate over $[1,\infty)$ to get
\[
K_1 + K_2 \leq K_3 + K_4 + K_5 + K_6 + K_7,
\]

where 
\begin{equation*}\label{define_K}\begin{array}{rcl}
K_1 &:=& \int_1^{\infty} \la^{-1-\be} \int_{\Om_{\rho}(\mft)} (v^2 - (\vl - v)^2)(x,t_0) \ dx \ d\la,\\
K_2 &:=& \int_1^{\infty} \la^{-1-\be} \iint_{{K_{3\rho}^{\al}(\mfz)}\cap \elam} \iprod{\bbb(t,\nabla v) - \bbb(t,\nabla w)}{\nabla (v-w)} \zv \ dx \ d\la, \\
K_3 &:=& -\int_1^{\infty} \la^{-1-\be} \iint_{{K_{3\rho}^{\al}(\mfz)}\setminus \elam} \iprod{[\bbb(t,\nabla v) - \bbb(t,\nabla w)]_h}{\nabla \vlh} \zv \ dx \ dt \ d\la,\\
K_4 &:=& \int_1^{\infty} \la^{-1-\be} \iint_{{K_{3\rho}^{\al}(\mfz)}\setminus \elam} \iprod{[\bbb(t,\nabla w) - \bb(x,t,\nabla w)]_h}{\nabla \vlh} \zv  \ dx \ dt \ d\la, \\
K_5 &:=& \int_1^{\infty} \la^{-1-\be} \iint_{{K_{3\rho}^{\al}(\mfz)}\setminus \elam} \iprod{[\bb(x,t,\nabla w) - \aa(x,t,\nabla w)]_h}{\nabla \vlh} \zv  \ dx \ dt \ d\la, \\
K_6 &:=& \int_1^{\infty} \la^{-1-\be} \iint_{{K_{3\rho}^{\al}(\mfz)}\cap \elam} \iprod{[\bbb(t,\nabla w) - \bb(x,t,\nabla w)]_h}{\nabla (v-w)} \zv  \ dx \ dt \ d\la, \\
K_7 &:=& \int_1^{\infty} \la^{-1-\be} \iint_{{K_{3\rho}^{\al}(\mfz)}\cap \elam} \iprod{[\bb(x,t,\nabla w) - \aa(x,t,\nabla w)]_h}{\nabla (v-w)} \zv  \ dx \ dt \ d\la.
\end{array}\end{equation*}

Let us set  $\tg(z) := \max\{1,g(z)\}$ where $g(z)$ is defined in \eqref{def_g_B} and  estimate each of the terms as follows:
\begin{description}
\item[Estimate for $K_1$:] Using Lemma \ref{cruc_3_B}, we see that 
\[
\int_{\Om_{\rho}(t_0)} (v^2 - (\vl - v)^2)(x,t_0) \ dx  \geq - \la |\RR^{n+1} \setminus \elam|.
\]
Using this along with Fubini's theorem, we see that 
\begin{equation*}
\label{estimate_K1}
\begin{array}{rcl}
K_1 & \apprge & - \int_1^{\infty} \la^{-1-\be} \la |\{ z \in \RR^{n+1}: \tg(z) \geq \la| \ d\la \\
& = &-\frac{1}{1-\be}\iint_{\RR^{n+1}} \tg(z)^{1-\be} \ dz \\
& \apprge & - \iint_{K_{3\rho}^{\al}(\mfz)}\lbr |\nabla w - \nabla v| + |\nabla w| + 1\rbr^{p(\mfz)(1-\be)} \ dz. 
\end{array}
\end{equation*}

\item[Estimate for $K_2$:] Similar to the estimates in Theorem \ref{first_diff_thm}, we see that
\begin{equation*}
\label{estimate_K2}
\iint_{K_{3\rho}^{\al}(\mfz)} |\nabla w - \nabla v|^{p(\mfz)(1-\be)} \ dz \apprle C_{(\ve_1)} \be K_2 + \ve_1 \iint_{K_{3\rho}^{\al}(\mfz)} |\nabla w|^{p(\mfz)(1-\be)} + 1\ dz.
\end{equation*}

\item[Estimate for $K_3$:] Using the bound from Lemma \ref{lemma6.7-1_B}, we get
\begin{equation*}
\begin{array}{rcl}
\iint_{{K_{3\rho}^{\al}(\mfz)}\setminus \elam} \iprod{[\bbb(t,\nabla v) - \bbb(t,\nabla w)]_h}{\nabla \vlh} \zv \ dx \ dt & \apprle & \sum_{i \in \NN} \la^{\frac{1}{p(\mfz)}} \iint_{2Q_i} \lbr \mu^2 + |\nabla w|^2 + |\nabla v|^2 \rbr^{\frac{p(\mfz)-1}{2}} \ dz \\
& \apprle & \la^{\frac{1}{p(\mfz)}} \la^{\frac{p(\mfz) -1}{p(\mfz)}} |16Q_i| 
 \apprle  \la |\RR^{n+1} \setminus \elam|.
\end{array}
\end{equation*}
Using the above bound in $K_3$ followed by applying Fubini's theorem, we get
\begin{equation}
\label{estimate_K3}
\begin{array}{rcl}
K_3 & \apprle &  \int_1^{\infty} \la^{-1-\be} \la |\{ z \in \RR^{n+1}: \tg(z) \geq \la| \ d\la =\frac{1}{1-\be}\iint_{\RR^{n+1}} \tg(z)^{1-\be} \ dz \\
& \apprle &  \iint_{K_{3\rho}^{\al}(\mfz)}\lbr |\nabla w - \nabla v| + |\nabla w| + 1\rbr^{p(\mfz)(1-\be)} \ dz.
\end{array}
\end{equation}

\item[Estimate for $K_4$:] Similar to the estimate for $K_3$, we get
\begin{equation*}
\label{estimate_K4}
K_4 \apprle \iint_{K_{3\rho}^{\al}(\mfz)}\lbr |\nabla w - \nabla v| + |\nabla w| + 1\rbr^{p(\mfz)(1-\be)} \ dz. 
\end{equation*}

\item[Estimate for $K_5$:] In this case, we proceed as follows:
\begin{equation*}
\begin{array}{rcl}
&&\hspace*{-4cm}\iint_{{K_{3\rho}^{\al}(\mfz)}\setminus \elam} \iprod{[\bb(x,t,\nabla w) - \aa(x,t,\nabla w)]_h}{\nabla \vlh} \zv  \ dx \ dt \\ 
&  \apprle & \sum_{i\in\NN} \la^{\frac{1}{p(\mfz)}} \iint_{2Q_i} \abs{\bb(x,t,\nabla w) - \aa(x,t,\nabla w)}  \ dx \ dt \\
&  \overset{\eqref{aa_bb}}{\apprle} & \sum_{i\in\NN}|2Q_i|\la^{\frac{1}{p(\mfz)}} \fiint_{2Q_i} \lbr |\nabla w| + |\nabla v| + 1\rbr^{p(\mfz) -1} \ dx \ dt \\
& \overset{\eqref{elambda_B}}{\apprle} &   \sum_{i\in\NN}|2Q_i|\la^{\frac{1}{p(\mfz)}} \la^{\frac{p(\mfz) -1}{p(\mfz)}} \apprle \la |\RR^{n+1} \setminus \elam|.
\end{array}
\end{equation*}

This is bounded exactly as in \eqref{estimate_K3} to get 
\begin{equation*}
\label{estimate_K5}
K_5 \apprle \iint_{K_{3\rho}^{\al}(\mfz)}\lbr |\nabla w - \nabla v| + |\nabla w| + 1\rbr^{p(\mfz)(1-\be)} \ dz. 
\end{equation*}

\item[Estimate for $K_6$:] Applying Fubini's theorem, we see that 
\begin{equation*}
\begin{array}{rcl}
K_6: &=& \frac{1}{\be} \iint_{K_{3\rho}^{\al}(\mfz)} |\bbb(t,\nabla w) - \bb[x,t,\nabla w)| |\nabla (v-w)|\tg^{-\be}(z) \ dz  \\
& \apprle & \frac{1}{\be} \iint_{K_{3\rho}^{\al}(\mfz)} \Th(\aa, B_{4\rho}^{\al}(\mfx))(1+|\nabla w|)^{p(\mfz)-1} |\nabla v-\nabla w|\tg^{-\be}(z) \ dz  \\
& \apprle & \frac{1}{\be} \iint_{K_{3\rho}^{\al}(\mfz)} \Th(\aa, B_{4\rho}^{\al}(\mfx))(1+|\nabla w|)^{p(\mfz)-1} |\nabla v-\nabla w|^{1-\be} \ dz  \\
& \apprle &\frac{\ve_2}{\be} \iint_{K_{3\rho}^{\al}(\mfz)} |\nabla w - \nabla v|^{p(\mfz)(1-\be)} \ dz + \frac{C_{(\ve_2)}}{\be} \underbrace{\iint_{K_{3\rho}^{\al}(\mfz)} \Th(\aa, B_{4\rho}^{\al}(\mfx))^{\frac{p(\mfz)}{p(\mfz)-1}}(1+|\nabla w|)^{p(\mfz)} \ dz}_{\tilde{K}_6}.\\
\end{array}
\end{equation*}
We shall estimate the second term as follows: ($\sigma$ is to be chosen appropriately later on)
\begin{equation*}
\begin{array}{rcl}
 \frac{\tilde{K}_6}{|K_{3\rho}^{\al}(\mfz)|} & \apprle &  \lbr \fiint_{K_{3\rho}^{\al}(\mfz)} \Th(\aa, B_{4\rho}^{\al}(\mfx))^{\frac{p(\mfz)}{p(\mfz)-1}\frac{4+\sigma}{\sigma}}\ dz \rbr^{\frac{\sigma}{4+\sigma}} \lbr \fiint_{K_{3\rho}^{\al}(\mfz)}(1+|\nabla w|)^{p(\mfz)\frac{4+\sigma}{4}} \ dz\rbr^{\frac{4}{4+\sigma}}.
\end{array}
\end{equation*}
If we restrict $p^+_{K_{4\rho}(\mfz)} - p^-_{K_{4\rho}(\mfz)} \leq \frac{(p^--1)\sigma}{4}$, we see that the following two bounds hold:
\begin{equation}
\label{bounds_pp_mfz}
\begin{array}{c}
p(\mfz) \leq \frac{p(\mfz)}{p(\mfz) - 1} (p^+_{K_{4\rho}(\mfz)} -1) \leq \frac{p^-_{K_{4\rho}(\mfz)}(p^+_{K_{4\rho}(\mfz)}-1)}{p^-_{K_{4\rho}(\mfz)}-1} \leq \pp \lbr 1 + \frac{p^+_{K_{4\rho}(\mfz)} -p^-_{K_{4\rho}(\mfz)}}{p^--1} \rbr \leq \pp \lbr 1 + \frac{\sigma}{4} \rbr,\\
p(\mfz)\lbr 1 + \frac{\sigma}{4} \rbr\leq \frac{p(\mfz)}{p(\mfz) - 1} (p^+_{K_{4\rho}(\mfz)} -1) \lbr 1 + \frac{\sigma}{4} \rbr \leq  \pp \lbr 1 + \frac{p^+_{K_{4\rho}(\mfz)} -p^-_{K_{4\rho}(\mfz)}}{p^--1} \rbr \lbr 1 + \frac{\sigma}{4} \rbr  \leq \pp \lbr 1 + {\sigma} \rbr.
\end{array}
\end{equation}

Let us set $a = \frac{p^+_{K_{4\rho}(\mfz)} -p^-_{K_{4\rho}(\mfz)}}{p^--1} \lbr 1 + \frac{\sigma}{4} \rbr  + \frac{\sigma}{4} \leq \sigma$, then we get from Corollary \ref{normalized_higher_integrability} that 
\begin{equation}
\label{8.25}
\begin{array}{rcl}
\fiint_{K_{3\rho}^{\al}(\mfz)}(1+|\nabla w|)^{p(\mfz)\frac{4+\sigma}{4}} \ dz & \apprle& \fiint_{K_{3\rho}^{\al}(\mfz)}(1+|\nabla w|)^{\pp ( 1+a)} \ dz\\
& \overset{\text{Corollary \ref{normalized_higher_integrability}}}{\apprle} & \al^{1+a}\\
& = & \al^{1+\frac{\sigma}{4}}\al^{\frac{p^+_{K_{4\rho}(\mfz)} -p^-_{K_{4\rho}(\mfz)}}{p^--1} \lbr 1 + \frac{\sigma}{4} \rbr}\\
& \overset{\eqref{hypothesis}}{\apprle} & c_p^{\frac{1}{p^--1} \lbr 1 + \frac{\sigma}{4} \rbr} \al^{1+\frac{\sigma}{4}}.
\end{array}
\end{equation}

From \eqref{abounded} and \eqref{small_aa}, we see that 
\begin{equation*}
\lbr \fiint_{K_{3\rho}^{\al}(\mfz)} \Th(\aa, B_{4\rho}^{\al}(\mfx))^{\frac{p(\mfz)}{p(\mfz)-1}\frac{4+\sigma}{\sigma}}\ dz \rbr^{\frac{\sigma}{4+\sigma}}  \apprle \ga^{\frac{\sigma}{4+\sigma}}.
\end{equation*}

Combining everything, we see that 
\begin{equation*}
K_6 \apprle \frac{\ve_2}{\be} \iint_{K_{3\rho}^{\al}(\mfz)} |\nabla w - \nabla v|^{p(\mfz)(1-\be)} \ dz + \frac{C_{(\ve_2)}}{\be} \ga^{\frac{\sigma}{4+\sigma}} |K_{3\rho}^{\al}(\mfz)|	c_p^{\frac{1}{p^--1}}\al.
\end{equation*}

\item[Estimate for $K_7$:] Applying Fubini's theorem, we get 
\begin{equation*}
\begin{array}{rcl}
K_7 &=& \frac{1}{\be}\iint_{K_{3\rho}^{\al}(\mfz)} |\bb(x,t,\nabla w) - \aa(x,t,\nabla w)| |\nabla (v-w)|\tg^{-\be}(z) \ dz  \\
&\apprle & \frac{1}{\be}\iint_{K_{3\rho}^{\al}(\mfz)} |\bb(x,t,\nabla w) - \aa(x,t,\nabla w)| |\nabla (v-w)|^{1-\be} \ dz \\
&\overset{\eqref{def_bb},\eqref{abounded}}{\apprle} & \frac{1}{\be}\iint_{K_{3\rho}^{\al}(\mfz)} (\mu^2 + |\nabla w|^2)^{\frac{p(z)-1}{2}} \left|1 - (\mu^2 + |\nabla w|^2)^{\frac{p(\mfz)-p(z)}{2}} \right| |\nabla (v-w)|^{1-\be} \ dz. \\
\end{array}
\end{equation*}

Applying Young's inequality, we get for $E:=\{z \in K_{3\rho}^{\al}(\mfz): \mu^2 + |\nabla w(z)|^2>0\}$, the bound
\begin{equation}
\label{w_v_11}
\begin{array}{rcl}
\frac{K_7}{|K_{3\rho}^{\al}(\mfz)|} & \apprle & \frac{1}{\be} \ve_3 \fiint_{K_{3\rho}^{\al}(\mfz)} |\nabla w - \nabla v|^{p(\mfz)(1-\be)} \ dz  \\
& & \qquad + \frac{C_{(\ve_3)}}{\be |K_{3\rho}^{\al}(\mfz)|} \underbrace{\iint_E  \bgh{(\mu^2 + |\nabla w|^2)^{\frac{p(z)-1}{2}} \left|1 - (\mu^2 + |\nabla w|^2)^{\frac{p(\mfz)-p(z)}{2}} \right|}^{\frac{p(\mfz)}{p(\mfz)-1}} \ dz}_{J}.
\end{array}
\end{equation}

We shall now proceed with estimating the second term in \eqref{w_v_11} as follows: 
For each $z \in E$, in view of the mean value theorem applied to $(\mu^2 + |\nabla w|^2)^{\frac{p(\mfz)-p(z)}{2}\mathfrak{a}}$, there exists $\mathfrak{a}_z \in [0,1]$ such that we get
\begin{equation}
 \label{w_v_13}
 (\mu^2 + |\nabla w|^2)^{\frac{p(\mfz)-p(z)}{2}} -1 = \frac{p(\mfz)-p(z)}{2} (\mu^2 + |\nabla w|^2)^{\frac{p(\mfz)-p(z)}{2}\mathfrak{a}_z} \log (\mu^2 + |\nabla w|^2). 
\end{equation}
This implies
\begin{equation*}
 \label{w_v_13.1}
 \begin{array}{l}
 (\mu^2 + |\nabla w|^2)^{\frac{p(z)-1}{2}} \left|1 - (\mu^2 + |\nabla w|^2)^{\frac{p(\mfz)-p(z)}{2}} \right| 
 \apprle \modp(4\rho\Ga) (\mu^2 + |\nabla w|^2)^{\frac{(p(\mfz)-p(z))\mathfrak{a}_z + p(z) -1}{2}} \log (\mu^2 + |\nabla w|^2). 
 \end{array}
\end{equation*}

Let us now define the sets 
\begin{equation}
\label{split_sets}
E^1 : = \{ z \in K_{3\rho}^{\al}(\mfz) : |\nabla w(x)| \leq 1\} \quad \text{and} \quad  E^2 : = \{ x \in K_{3\rho}^{\al}(\mfz) : |\nabla w(x)| > 1\}. 
\end{equation}
Recall that $\mu \leq 1$ and hence using the inequality $t^{\be} |\log t| \leq \max \left\{ \frac{1}{e^{\be}}, 2^{\be}\log 2\right\}$ which holds for all $t \in (0,2]$ and any $\be >0$, we get for $z \in E^1$
\begin{equation}
 \label{w_v_14}
 (\mu^2 + |\nabla w|^2)^{\frac{p(z)-1}{2}} \left|1 - (\mu^2 + |\nabla w|^2)^{\frac{p(\mfz)-p(z)}{2}} \right| \apprle \modp(4\rho\Ga) \max \left\{ \frac{1}{e^{\frac{p^--1}{2}}}, 2^{\frac{p^+-1}{2}}\log 2 \right\}.
\end{equation}
To obtain the above estimate, with  $\be(z) := \frac{\mathfrak{a}_z (p(\mfz)-p(z)) + p(z) -1}{2}$, there holds 
\begin{equation*}
 \label{bound_beta}
 \frac{p^--1}{2} \leq \be(z) \leq \frac{p(\mfz)-1}{2} \leq \frac{p^+-1}{2}.
\end{equation*}

Hence using \eqref{split_sets} and combining \eqref{w_v_14} into \eqref{w_v_13}, we get
\begin{equation}
 \label{w_v_15}
 \begin{array}{ll}
  |\bb(z,\nabla w) - \aa(z,\nabla w)|& \apprle \lsb{\chi}{E^1}\modp(4\rho\Ga) \max \left\{ \frac{1}{e^{\frac{p^--1}{2}}}, 2^{\frac{p^+-1}{2}}\log 2 \right\}  \\
  & \qquad \qquad + \lsb{\chi}{E^2} {\modp(4\rho\Ga)} |\nabla w|^{(p(\mfz)-p(z))\mathfrak{a}_z + p(z) -1} \log(e + |\nabla w|). 
 \end{array}
\end{equation}

Combining \eqref{w_v_15} and \eqref{w_v_11}, we get
\begin{equation*}
 \label{w_v_16}
   J \apprle \modp(4\rho\Ga)^{\frac{p(\mfz)}{p(\mfz)-1}}|K_{3\rho}^{\al}(\mfz)| \lbr 1+ J_1\rbr. 
\end{equation*}
where $J_1:= \fiint_{K_{3\rho}^{\al}(\mfz)} |\nabla w|^{\mathfrak{b}} [\log(e+|\nabla w|^{\mathfrak{b}})]^{\frac{p(\mfz)}{p(\mfz)-1}} \ dz$ with $\mathfrak{b}:={\frac{p(\mfz)}{p(\mfz)-1}(p^+_{K_{3\rho}^{\al}(\mfz)}-1)}$.  Using the inequality $\log(e + ab) \leq \log(e+a) + \log(e+b)$ for  $a,b>0$ along with the simple bound $\frac{p(\mfz)}{p(\mfz)-1} \leq \frac{p^-}{p^--1}$,  we get
\begin{equation*}
 \label{w_v_17}
 \begin{array}{ll}
  J_1 & \apprle \fiint_{K_{3\rho}^{\al}(\mfz)} |\nabla w|^{\mathfrak{b}} \lbr[[]\log \lbr e + \frac{|\nabla w|^{\mathfrak{b}}}{\avg{|\nabla w|^{\mathfrak{b}}}{K_{3\rho}^{\al}(\mfz)}} \rbr \rbr[]]^{\frac{p^-}{p^--1}}\ dz \\
  & \qquad + \fiint_{K_{3\rho}^{\al}(\mfz)} |\nabla w|^{\mathfrak{b}} \lbr[[]\log \lbr e + {\avg{|\nabla w|^{\mathfrak{b}}}{K_{3\rho}^{\al}(\mfz)}} \rbr \rbr[]]^{\frac{p(\mfz)}{p(\mfz)-1}}\ dz \\
  & =: J_2 + J_3.
 \end{array}
\end{equation*}
\begin{description}
 \item[Estimate for $J_2$:] We now apply Lemma \ref{llogl} with $f = |\nabla w|^{\mathfrak{b}}$,  $\be = \frac{p^-}{p^--1}$ and $s=1+\frac{\sigma}{4}$ to get
 \begin{equation}
  \label{w_v_18}
  \begin{array}{rcl}
  J_2 &\apprle&  \lbr \fiint_{K_{3\rho}^{\al}(\mfz)} |\nabla w|^{\mathfrak{b} \lbr 1 + \frac{\sigma}{4} \rbr} \ dz \rbr^{\frac{4}{4+\sigma}} \leq \lbr \fiint_{K_{4\rho}^{\al}(\mfz)} (1+|\nabla w|)^{\pp(1+a)} \ dz \rbr^{\frac{4}{4+\sigma}} \\
  & \overset{\eqref{8.25}}{\apprle} & c_p^{\frac{1}{p^--1}}\al,
  \end{array}
 \end{equation}
where $a = \frac{p^+_{K_{4\rho}(\mfz)} -p^-_{K_{4\rho}(\mfz)}}{p^--1} \lbr 1 + \frac{\sigma}{4} \rbr  + \frac{\sigma}{4}$ satisfying $a \leq \sigma$.
 \item[Estimate for $J_3$:] From \eqref{bounds_pp_mfz} and \eqref{8.25}, we see that 
 \begin{equation}
  \label{w_v_19}
  \begin{array}{rcl}
   \log \lbr e + \avg{|\nabla w|^{\mathfrak{b}}}{K_{3\rho}^{\al}(\mfz)}\rbr & \leq &\log \lbr e + c_p^{\frac{1}{p^--1}}\al\rbr  = \log(e + c_1 \al)
   \apprle C_{(c_1)}( \log \al + 1) \\
  & \apprle &C_{(c_1)} \lbr \log \al^{-\frac{n}{p(\mfz)} + \frac{nd}{2} + d} + 1\rbr\\
    &  \overset{\eqref{more_hyp}}{ \leq} &C_{(c_1)} \left\{ \log \lbr \Ga^2 (4\rho)^{-(n+2)} \rbr +1 \right\}.
  \end{array}
 \end{equation}
 Here we have denoted $c_1 = c_p^{\frac{1}{p^--1}}$ where $c_p$ is from \eqref{8.25}. 
 Substituting \eqref{w_v_19} into $J_3$ and making use of the bound from \eqref{w_v_18}, we get
 
 \begin{equation*}
  \label{w_v_21}
   J_3 \apprle  C_{(c_1)} \left\{ \log \lbr \Ga^2 (4\rho)^{-(n+2)} \rbr +1 \right\}^{\frac{p(\mfz)}{p(\mfz)-1}}\al.
 \end{equation*}

\end{description}
Then we have
\begin{equation*}
 \label{estimate_K7_pre}
 \begin{array}{ll}
   |K_7| & \apprle \frac{\ep_3}{\be} \iint_{K_{3\rho}^{\al}(\mfz)} |\nabla w - \nabla v|^{p(\mfz)(1-\be)} \ dz \\
   & \qquad \qquad + \frac{C_{(\ep_3)}}{\be}\modp(4\rho\Ga)^{\frac{p^-}{p^--1}} |K_{3\rho}^{\al}(\mfz)| C_{(c_1)} \left\{ \log \lbr \Ga^2 (4\rho)^{-(n+2)} \rbr +1 \right\}^{\frac{p(\mfz)}{p(\mfz)-1}}\al.
   \end{array}
\end{equation*}
The restriction $\rho \leq \frac{1}{4e \Ga^{n+5}}$ implies
\begin{equation*}\label{two_7.34}
\begin{array}{rcl}
\modp(4\rho\Ga) \left\{ \log \lbr \Ga^2 (4\rho)^{-(n+2)} + 1 \rbr \right\}& =&  \modp(4\rho\Ga) \log\lbr 4\rho e \Ga^2 (4\rho)^{-(n+3)}\rbr\\
& \leq & \modp(4\rho\Ga) \log\lbr \Ga^{-(n+3)} (4\rho)^{-(n+3)}\rbr\\
& \leq & (n+3) \modp(4\rho\Ga) \log\lbr \frac{1}{4\rho\Ga}\rbr\\
& \apprle & \ga.
\end{array}
\end{equation*}

Using this, we get
\begin{equation*}
 \label{estimate_K7}
   |K_7| \apprle \frac{\ep_3}{\be} \int_{K_{3\rho}^{\al}(\mfz)} |\nabla w - \nabla v|^{p(\mfz)(1-\be)} \ dz + \frac{C_{(\ep_3)}}{\be}\ga^{\frac{p^-}{p^--1}} |K_{3\rho}^{\al}(\mfz)| C_{(c_1)}\al.
\end{equation*}	

\end{description}

Combining all the estimates, we get, 
\begin{equation*}
\begin{array}{rcl}
\iint_{K_{3\rho}^{\al}(\mfz)} |\nabla w - \nabla v|^{p(\mfz)(1-\be)} \ dz & \apprle & (\ve_1 + \ve_2 + \ve_4 + \be) \iint_{K_{3\rho}^{\al}(\mfz)} |\nabla w - \nabla v|^{p(\mfz)(1-\be)} \ dz  \\
& & + 3\be \iint_{K_{3\rho}^{\al}(\mfz)} |\nabla w|^{p(\mfz)(1-\be)} \ dz  \\
&&  + {C_{(\ep_3)}}\ga^{\frac{p^-}{p^--1}} |K_{3\rho}^{\al}(\mfz)| C_{(c_1)}\al.
\end{array}
\end{equation*}

Now choosing $\ve_1,\ve_2,\ve_4$ and $\be$ small, we get for any $\ve >0$, the estimate
\begin{equation*}
\begin{array}{rcl}
\fiint_{K_{3\rho}^{\al}(\mfz)} |\nabla w - \nabla v|^{p(\mfz)(1-\be)} \ dz & \le &  (\ve_1 + \ve_2 + \ve_4 + \be) \fiint_{K_{3\rho}^{\al}(\mfz)} |\nabla w|^{p(\mfz)(1-\be)} \ dz  + \ga^{\frac{p^-}{p^--1}}  C_{(c_1)}\al\\
   & \overset{\eqref{hypothesis}}{\le} & \ve \al^{1-\be} + \ga^{\frac{p^-}{p^--1}}  \lbr 1+c_p^{\frac{1}{p^--1}}\rbr\al.
\end{array}
\end{equation*}
Note that $\al \geq 1$ which implies $\al^{1-\be} \leq \al$. Now choose $\ga$ sufficiently small such that for any $\ve \in (0,1)$, there holds
\[
\fiint_{K_{3\rho}^{\al}(\mfz)} |\nabla w - \nabla v|^{p(\mfz)(1-\be)} \ dz \leq \ve \al,
\]
which completes the proof.
\end{proof}


\section{Covering arguments}
\label{eight}
Let $\beta\ \in (0,\be_0)$ and let $\bs_0 >0$ be given, where $\be_0$ is from Section \ref{def_be_0}.
Assume that  $(\pp,\aa,\Om)$ is $(\ga,\bs_0)$-vanishing in the sense of Definition \ref{further_assumptions}. Let $q(\cdot)$ be log-H\"older continuous in the sense of Definition \ref{definition_p_log}. We fix any $\rho \le \frac{\rho_0}{4}$, where $\rho_0$ is given in Remark \ref{remark_radius}, and fix any $\mfz = (\mfx,\mft) \in \OO_T$ with $\gh{\mft-(4\rho)^2, \mft+(4\rho)^2} \subset (-T,T)$.


We observe that from Theorem \ref{high_weak} and Theorem \ref{high_very_weak} that 
\begin{equation}\label{hi1}
\mint{K_\rho(\mfz)}{|\nabla u|^{p(z)(1-\beta)(1+\sigma)}}{dz} \lesssim \gh{\mint{K_{2\rho}(\mfz)}{\gh{|\nabla u|+|\bff|}^{p(z)(1-\beta)}}{dz}}^{1+\sigma\tilde{\theta}} + \mint{K_{2\rho}(\mfz)}{|\bff|^{p(z)(1-\beta)(1+\sigma)}}{dz} + 1,
\end{equation}
where $\be$ and $\sigma$ are given in Remark \ref{high_int_remark} and for some $\tilde{\theta} = \tilde{\theta}(n,p(\mfz)) > 0$. 


It follows from Section \ref{sub_radii} that for each $z \in K_{4\rho}(\mfz)$,
\begin{gather}
\label{ca1}
\frac{p(z)(1-\beta)q(z)}{q_{K_{4\rho}(\mfz)}^-} \le p(z)(1-\beta)\gh{1+\frac{\modq(8\rho)}{q^-}} \le p(z)(1-\beta)(1+\sigma),\\
\label{ca2}
\frac{p(z)(1-\beta)q(z)(1+\sigma)}{q_{K_{4\rho}(\mfz)}^-} \le p(z)(1-\beta)(1+3\sigma) \le \min\mgh{p(z), p(z)(1-\beta)q^-}.
\end{gather}

We first verify some parabolic localization properties under our unified intrinsic cylinders.
\begin{lemma}\label{cv lemma}
Let $c_a > 1$ and let $\bm_0$ be given in \eqref{def_M_0}. Then there is a constant $c_1 = c_1(n,\La_0, \La_1, \plog, \qlog) \ge 1$ such that for any $\la \ge 1$, any $\tilde{\mfz} \in K_{2\rho}(\mfz)$, and any $\tilde{\rho}>0$,
\begin{equation}\label{cv0-1}
\tilde{\rho} \le \Ga^{-2}\bs_0, \quad \text{where}\ \ \Ga := 2c_1 c_a \bm_0 \gamma^{-1} \ge 2,
\end{equation}
satisfying $K_{\tilde{\rho}}^\alpha(\tilde{\mfz}) \subset K_{2\rho}(\mfz)$, if
\begin{equation}\label{cv0-2}
\al \le c_a \mgh{ \mint{K_{\tilde{\rho}}^\alpha(\tilde{\mfz})}{|\nabla u|^{\frac{p(z)(1-\beta)q(z)}{q_{K_{4\rho}(\mfz)}^-}}}{dz} + \frac{1}{\gamma} \gh{\mint{K_{\tilde{\rho}}^\alpha(\tilde{\mfz})}{|\bff|^{\frac{p(z)(1-\beta)q(z)(1+\sigma)}{q_{K_{4\rho}(\mfz)}^-}}}{dz}}^{\frac{1}{1+\sigma}} },
\end{equation}
then we have
\begin{gather}
\label{cv0-3} 
\al^{-\frac{n}{p(\tilde{\mfz})} + \frac{nd}{2} + d} \le \Ga^2 \tilde{\rho}^{-(n+2)}, \qquad p_{Q_{\tilde{\rho}}^\alpha(\tilde{\mfz})}^+ - p_{Q_{\tilde{\rho}}^\alpha(\tilde{\mfz})}^- \le \modp(\Ga \tilde{\rho}), \qquad \al^{p_{Q_{\tilde{\rho}}^\alpha(\tilde{\mfz})}^+ - p_{Q_{\tilde{\rho}}^\alpha(\tilde{\mfz})}^-} \le e^\frac{2n+5}{-\frac{n}{p^-} + \frac{nd}{2} + d} =: c_p,\\
\label{cv0-4}
q_{Q_{\tilde{\rho}}^\alpha(\tilde{\mfz})}^+ - q_{Q_{\tilde{\rho}}^\alpha(\tilde{\mfz})}^- \le \modq(\Ga \tilde{\rho}), \qquad \al^{q_{Q_{\tilde{\rho}}^\alpha(\tilde{\mfz})}^+ - q_{Q_{\tilde{\rho}}^\alpha(\tilde{\mfz})}^-} \le e^\frac{(2n+5) L}{-\frac{n}{p^-} + \frac{nd}{2} + d} =: c_q.
\end{gather}
\end{lemma}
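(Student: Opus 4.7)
The plan is to prove the three sets of conclusions in the order they are stated. The driving observation is that the integral upper bound (8.2) on $\al$, combined with the global energy estimate and the measure density condition, forces the volume of $K_{\tilde\rho}^\al(\tilde\mfz)$ (which carries a negative power of $\al$) to be so small that $\al$ and $\tilde\rho$ become linked by an explicit power law.

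First, for the first inequality of (8.3), I would use (8.4)--(8.5) to bound the exponents of $|\nabla u|$ and $|\bff|$ appearing in (8.2) by $p(z)(1+\sigma)$. The global higher integrability result from Theorem \ref{high_weak} (valid since $\sigma \leq \tilde\be_1$) together with the basic energy estimate \eqref{energy_u} then yield $\iint_{\Om_T}(|\nabla u|^{p(z)(1+\sigma)} + |\bff|^{p(z)(1+\sigma)} + 1)\,dz \lesssim \bm_0$ globally. Hence each averaged integral in (8.2) is bounded by $C\bm_0/|K_{\tilde\rho}^\al(\tilde\mfz)|$ (directly for the first term; for the second, after Jensen's inequality and the comparison $t^{1/(1+\sigma)} \le t+1$). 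Plugging back into (8.2) produces $\al \le c_1 c_a \bm_0 \gamma^{-1}/|K_{\tilde\rho}^\al(\tilde\mfz)|$ for some universal $c_1$. The measure density lemma (Lemma \ref{measure_density}) applies since $\al \ge 1$ and $d \le 2/p^+$ force the spatial radius $\al^{-1/p(\tilde\mfz)+d/2}\tilde\rho \le \tilde\rho \le \bs_0$, so $|K_{\tilde\rho}^\al(\tilde\mfz)| \gtrsim |Q_{\tilde\rho}^\al(\tilde\mfz)| = 2|B_1|\,\al^{-n/p(\tilde\mfz)+nd/2+d-1}\tilde\rho^{n+2}$. Multiplying out and using $\gamma^{-1}\ge 1$ to arrange $\Ga^2 = (2c_1c_a\bm_0\gamma^{-1})^2 \ge C c_1 c_a \bm_0\gamma^{-1}$ (by choosing $c_1$ universally large) then yields the first inequality of (8.3).

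For the modulus-of-continuity bounds in (8.3) and (8.4) the argument is purely geometric: since $\al \ge 1$ and both $-1/p(\tilde\mfz)+d/2$ and $(-1+d)/2$ are nonpositive by the constraint on $d$ in \eqref{def_d}, the parabolic diameter of $Q_{\tilde\rho}^\al(\tilde\mfz)$ is at most $\tilde\rho \le \Ga\tilde\rho$, and the log-H\"older continuity of $p$ and $q$ (Remark \ref{remark_def_p_log}) immediately gives the two oscillation bounds. For the exponential bounds $\al^{p^+-p^-}\le c_p$ and $\al^{q^+-q^-}\le c_q$, I would take the logarithm of the first inequality of (8.3), multiply by $(p^+-p^-) \le \modp(\Ga\tilde\rho)$, and split $\log(1/\tilde\rho) = \log(1/(\Ga\tilde\rho)) + \log\Ga$. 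Since $\tilde\rho \le \Ga^{-2}\bs_0 \le \Ga^{-2}$, one has $\log\Ga \le \log(1/(\Ga\tilde\rho))$ and $\Ga\tilde\rho \le \bs_0$, so the smallness assumption \eqref{small_px} gives $\modp(\Ga\tilde\rho)\log(1/(\Ga\tilde\rho)) \le \gamma \le 1$, which also bounds $\modp(\Ga\tilde\rho)\log\Ga$ by $1$. Exponentiating yields the claimed bound $c_p$; the argument for $c_q$ is identical but uses the log-H\"older constant $L$ of $q$ in place of $\gamma$ (since no smallness assumption is imposed on $q$).

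The main obstacle is the first step: one must carefully match the exponent of $\al$ that emerges from the volume formula with the exponent $-n/p(\tilde\mfz)+nd/2+d$ appearing in (8.3), and then select the universal constant $c_1$ so that $\Ga = 2c_1c_a\bm_0\gamma^{-1}$ absorbs the numerical constant coming from Lemma \ref{measure_density}. Once this bookkeeping is done, Parts 2 and 3 reduce to routine calculations with log-H\"older continuity and the smallness assumption.
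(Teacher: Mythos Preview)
Your overall architecture matches the paper's, and your treatment of the oscillation bounds and the exponential bounds in \eqref{cv0-3}--\eqref{cv0-4} is essentially the same as the paper's (up to inessential bookkeeping in the constant $2n+5$).

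The gap is in your first step. You claim that Theorem~\ref{high_weak} together with \eqref{energy_u} gives
\[
\iint_{\Om_T}\bigl(|\nabla u|^{p(z)(1+\sigma)} + |\bff|^{p(z)(1+\sigma)} + 1\bigr)\,dz \lesssim \bm_0
\]
globally, with the implicit constant depending only on structural data. This is not available. First, $\bm_0$ is defined via $|\bff|^{p(z)}$ only, so it cannot control $\iint_{\Om_T}|\bff|^{p(z)(1+\sigma)}$; you should instead use \eqref{ca2} to bound the $\bff$-exponent by $p(z)$, after which the $\bff$-integral is dominated by $\bm_0$ directly. Second, Theorem~\ref{high_weak} is a \emph{local} estimate; patching it into a global bound would introduce a dependence on $|\Om_T|$ (via the number of covering cylinders) and on $\iint_{\Om_T}|\bff|^{p(z)(1+\sigma)}$, neither of which is allowed in $c_1=c_1(n,\La_0,\La_1,\plog,\qlog)$.

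The paper avoids this by enlarging the domain of integration only to $K_{2\rho}(\mfz)$ (using $K_{\tilde\rho}^\al(\tilde\mfz)\subset K_{2\rho}(\mfz)$), then invoking the local higher integrability \eqref{hi1} to pass from $K_{2\rho}(\mfz)$ to $K_{4\rho}(\mfz)$, and finally bounding $\fiint_{K_{4\rho}(\mfz)}(|\nabla u|+|\bff|)^{p(z)(1-\be)}\,dz \le \bm_0/|K_{4\rho}(\mfz)|$. The residual factor $(\bm_0/|K_{4\rho}(\mfz)|)^{\modq(8\rho)\tilde\theta/q^-}$ that arises from the exponent in \eqref{hi1} is then controlled by a universal constant using the radius restrictions of Section~\ref{sub_radii} (specifically \descref{R4}{R4}, which gives $\bm_0\lesssim\rho^{-1}$) together with the $\log$-H\"older continuity of $q(\cdot)$. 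This yields \eqref{cv0-5} and then the first inequality of \eqref{cv0-3} exactly as you outline, with $c_1$ depending only on the structural constants.
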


\begin{proof}
Fix $K_{\tilde{\rho}}^\alpha(\tilde{\mfz}) \subset K_{2\rho}(\mfz)$. 
We compute
\begin{equation}\label{cv0-5}
\begin{array}{rcl}
& &\hspace*{-3cm} \mint{K_{2\rho}(\mfz)}{|\nabla u|^{\frac{p(z)(1-\beta)q(z)}{q_{K_{4\rho}(\mfz)}^-}}}{dz} + \frac{1}{\gamma} \gh{\mint{K_{2\rho}(\mfz)}{|\bff|^{\frac{p(z)(1-\beta)q(z)(1+\sigma)}{q_{K_{4\rho}(\mfz)}^-}}}{dz}}^{\frac{1}{1+\sigma}}\\
& \overset{\eqref{ca1},\eqref{ca2}}{\le} & \frac{1}{\gamma} \mgh{ \mint{K_{2\rho}(\mfz)}{|\nabla u|^{p(z)(1-\beta)\gh{1+\frac{\modq(8\rho)}{q^-}}}}{dz} + \mint{K_{2\rho}(\mfz)}{|\bff|^{p(z)}}{dz} + 1 }\\
&\overset{\eqref{hi1}}{\apprle} &\frac{1}{\gamma} \mgh{ \gh{\mint{K_{4\rho}(\mfz)}{\gh{|\nabla u|+|\bff|}^{p(z)(1-\beta)}}{dz}}^{1+\frac{\modq(8\rho)}{q^-}\tilde{\theta}} + \mint{K_{4\rho}(\mfz)}{|\bff|^{p(z)}}{dz} + 1}\\
& \overset{\eqref{def_M_0}}{\apprle} & \frac{\bm_0}{\gamma|K_{4\rho}(\mfz)|} \mgh{\gh{\frac{\bm_0}{|K_{4\rho}(\mfz)|}}^{\frac{\modq(8\rho)}{q^-}\tilde{\theta}} + 1} \overset{\text{Section \ref{sub_radii}}}{\apprle} \frac{\bm_0}{\gamma|K_{4\rho}(\mfz)|}.
\end{array}\end{equation}

Then we see
\begin{equation*}
\begin{aligned}
\al^{-\frac{n}{p(\tilde{\mfz})} + \frac{nd}{2} + d} &\overset{\eqref{cv0-2}}{\le} \frac{c_a \al^{-\frac{n}{p(\tilde{\mfz})} + \frac{nd}{2} -1 + d}}{|K_{\tilde{\rho}}^\alpha(\tilde{\mfz})|} \mgh{ \integral{K_{2\rho}(\mfz)}{|\nabla u|^{\frac{p(z)(1-\beta)q(z)}{q_{K_{4\rho}(\mfz)}^-}}}{dz} + \frac{1}{\gamma} \gh{\integral{K_{2\rho}(\mfz)}{|\bff|^{\frac{p(z)(1-\beta)q(z)(1+\sigma)}{q_{K_{4\rho}(\mfz)}^-}}}{dz}}^{\frac{1}{1+\sigma}} }\\
&\overset{\eqref{cv0-5}}{\le} \frac{c_1 c_a \bm_0}{\gamma \tilde{\rho}^{n+2}} \overset{\eqref{cv0-1}}{\le} \Ga \tilde{\rho}^{-(n+2)},
\end{aligned}
\end{equation*}
for some $c_1 = c_1(n,\La_0, \La_1, \plog, \qlog) \ge 1$.

On the other hand, it follows from Remark \ref{remark_def_p_log} that
$$
p_{Q_{\tilde{\rho}}^\alpha(\tilde{\mfz})}^+ - p_{Q_{\tilde{\rho}}^\alpha(\tilde{\mfz})}^- \le \modp \gh{\max\mgh{\al^{-\frac{1}{p(\tilde{\mfz})} + \frac{d}{2}}, \al^{\frac{-1+d}{2}} } 2\tilde{\rho}} \le \modp(2 \tilde{\rho}) \le \modp(\Ga \tilde{\rho}),
$$
which implies
$$
\Ga^{p_{Q_{\tilde{\rho}}^\alpha(\tilde{\mfz})}^+ - p_{Q_{\tilde{\rho}}^\alpha(\tilde{\mfz})}^-} \le \Ga^{\modp(\Ga \tilde{\rho})} \overset{\eqref{cv0-1}}{\le} \gh{\frac{\Ga}{\bs_0}}^{\modp\gh{\frac{\bs_0}{\Ga}}} \overset{\eqref{small_px}}{\le} e^\gamma \le e.
$$
Then we discover
$$
\al^{p_{Q_{\tilde{\rho}}^\alpha(\tilde{\mfz})}^+ - p_{Q_{\tilde{\rho}}^\alpha(\tilde{\mfz})}^-} \le \gh{\Ga \tilde{\rho}^{-(n+2)}}^{\frac{\modp(\Ga \tilde{\rho})}{-\frac{n}{p(\tilde{\mfz})} + \frac{nd}{2} + d}} \le \Ga^{\frac{(n+3)\modp(\Ga \tilde{\rho})}{-\frac{n}{p(\tilde{\mfz})} + \frac{nd}{2} + d}}  \gh{\frac{1}{\Ga \tilde{\rho}}}^{\frac{(n+2)\modp(\Ga \tilde{\rho})}{-\frac{n}{p(\tilde{\mfz})} + \frac{nd}{2} + d}} \le \gh{e^{2n+5}}^{\frac{1}{-\frac{n}{p^-} + \frac{nd}{2} + d}}.
$$
Similarly, we can also obtain the inequalities \eqref{cv0-4}.
\end{proof}

We now consider a Vitali type covering lemma for intrinsic parabolic cylinders as follow:
\begin{lemma}\label{cv argument}
Let $\al, c_p, c_q >1$ and let $\F := \mgh{Q_{\rho_j}^{\al_j}(\mfz_j)}_{j \in \J} \subset Q_{2r}(\mfz)$ be any collection of intrinsic parabolic cylinders, where $\al_j := \al^{\frac{q_{Q_{4r}(\mfz)}^-}{q(\mfz_j)}}$ and $\rho_j >0$, satisfying
\begin{equation}\label{cv0-6}
\al_j^{p_{Q_{\rho_j}^{\alpha_j}(\mfz_j)}^+ - p_{Q_{\rho_j}^{\alpha_j}(\mfz_j)}^-} \le c_p \quad \text{and} \quad \al_j^{q_{Q_{\rho_j}^{\alpha_j}(\mfz_j)}^+ - q_{Q_{\rho_j}^{\alpha_j}(\mfz_j)}^-} \le c_q \quad \text{for every} \ \ j \in \J.
\end{equation}
Then there exists a countable subcollection $\G = \mgh{Q_{\rho_i}^{\al_i}(\mfz_i)}_{i \in \I}, \I \subset \J$, of mutually disjoint cylinders such that
\begin{equation*}
\bigcup_{j \in \J} Q_{\rho_j}^{\al_j}(\mfz_j) \subset \bigcup_{i \in \I} Q_{\chi \rho_i}^{\al_i}(\mfz_i),
\end{equation*}
for some constant $\chi = \chi_{(n,c_p,c_q,\plog,\qlog)} \ge 1$.
\end{lemma}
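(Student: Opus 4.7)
The plan is to apply the standard Vitali scale-induction argument to this family, reducing the proof to a single enlargement inclusion for any pair of intersecting intrinsic cylinders of comparable radii, and then verify that inclusion using the log-H\"older continuity of $p(\cdot), q(\cdot)$ together with the hypothesis \eqref{cv0-6}.

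First I would set up the scale decomposition. Since $\F \subset Q_{2r}(\mfz)$, the value $\rho_\ast := \sup_{j \in \J} \rho_j$ is bounded by $2r$, so I can partition $\J = \bigsqcup_{m \geq 1} \J_m$ with $\J_m := \{j \in \J : \rho_\ast 2^{-m} < \rho_j \leq \rho_\ast 2^{-(m-1)}\}$. Let $\G_1$ be a (countable) maximal pairwise-disjoint subfamily of $\{Q_{\rho_j}^{\al_j}(\mfz_j) : j \in \J_1\}$, and inductively let $\G_m$ be a maximal pairwise-disjoint subfamily of $\J_m$ whose members are disjoint from those already chosen in $\G_1 \cup \cdots \cup \G_{m-1}$; countability at each scale follows from the positive volume lower bound for cylinders at a given scale sitting inside the fixed set $Q_{2r}(\mfz)$. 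Setting $\I := \bigcup_m \G_m$, for any $j \in \J_m \setminus \G_m$ maximality supplies $i \in \bigcup_{k \leq m} \G_k$ with $Q_{\rho_j}^{\al_j}(\mfz_j) \cap Q_{\rho_i}^{\al_i}(\mfz_i) \neq \emptyset$, and necessarily $\rho_i > \rho_j/2$ since $\rho_i > \rho_\ast 2^{-m}$ while $\rho_j \leq \rho_\ast 2^{-(m-1)}$.

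The remaining task is the key enlargement claim: whenever $Q_{\rho_i}^{\al_i}(\mfz_i), Q_{\rho_j}^{\al_j}(\mfz_j) \in \F$ intersect with $\rho_j \leq 2\rho_i$, one has $Q_{\rho_j}^{\al_j}(\mfz_j) \subset Q_{\chi \rho_i}^{\al_i}(\mfz_i)$ for a uniform $\chi = \chi(n, c_p, c_q, \plog, \qlog)$. Because $\al_i, \al_j \geq 1$ and by \eqref{def_d} the exponents $-1/p(\cdot)+d/2$ and $-1+d$ are nonpositive, each intrinsic cylinder lies inside its ordinary parabolic version, so the intersection forces $d_p(\mfz_i, \mfz_j) \apprle \rho_i$; log-H\"older continuity then gives $|p(\mfz_i) - p(\mfz_j)| \leq \modp(2\rho_i)$ and $|q(\mfz_i) - q(\mfz_j)| \leq \modq(2\rho_i)$. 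Using the identity $\al = \al_j^{q(\mfz_j)/q^-_{Q_{4r}(\mfz)}}$ together with \eqref{cv0-6}, these modulus estimates translate into uniform upper and lower bounds for $\al_i/\al_j$ and for $\al_i^{1/p(\mfz_i)-1/p(\mfz_j)}$ depending only on $c_p, c_q, \plog, \qlog$. Factoring
\[
\al_j^{-1/p(\mfz_j)+d/2}\rho_j = \al_i^{-1/p(\mfz_i)+d/2}\rho_i \cdot \al_i^{1/p(\mfz_i)-1/p(\mfz_j)} \cdot (\al_j/\al_i)^{-1/p(\mfz_j)+d/2} \cdot (\rho_j/\rho_i),
\]
and doing the analogous factorization for the time direction $\al^{-1+d}\rho^2$, and then applying the triangle inequality through any point of $Q_{\rho_i}^{\al_i}(\mfz_i) \cap Q_{\rho_j}^{\al_j}(\mfz_j)$, yields the inclusion.

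The main obstacle is that \eqref{cv0-6} only controls the oscillation of $p, q$ over the (possibly very thin) intrinsic cylinder $Q_{\rho_j}^{\al_j}(\mfz_j)$, whereas the comparison of centers requires control at the full parabolic scale $\modp(2\rho_i), \modq(2\rho_i)$. Bridging this gap relies crucially on the log-H\"older decay $\modp(r) \log(1/r), \modq(r) \log(1/r) \apprle 1$: combining $(p^+_{Q_{\rho_j}^{\al_j}(\mfz_j)} - p^-_{Q_{\rho_j}^{\al_j}(\mfz_j)}) \log \al_j \leq \log c_p$ from \eqref{cv0-6} with the log-H\"older inequality yields polynomial-in-$1/\rho_j$ upper bounds on $\al_j$ (hence on $\al$), which, fed back through the log-H\"older decay once more, make $\al_i^{\modp(2\rho_i)}$ and $\al^{\modq(2\rho_i)}$ uniformly bounded in terms of $c_p, c_q, \plog, \qlog$ alone; an identical argument handles the $q$-direction.
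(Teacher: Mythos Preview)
Your Vitali scale-decomposition and the reduction to the single enlargement claim are exactly the paper's strategy. The gap is in your final paragraph, where you try to ``bridge'' from the oscillation bound \eqref{cv0-6} on the thin intrinsic cylinders to a bound on $\al_i^{\modp(2\rho_i)}$ at the full parabolic scale. The argument you sketch --- combine $(p^+_{Q_{\rho_j}^{\al_j}(\mfz_j)} - p^-_{Q_{\rho_j}^{\al_j}(\mfz_j)})\log\al_j \le \log c_p$ with the log-H\"older inequality to deduce a polynomial-in-$1/\rho_j$ upper bound on $\al_j$ --- does not work: the log-H\"older condition gives an \emph{upper} bound on the oscillation $p^+ - p^-$, not a lower bound, so from $(p^+ - p^-)\log\al_j \le \log c_p$ you learn nothing about the size of $\al_j$ when the oscillation happens to be small or zero. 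Under the hypotheses of the lemma as stated there is simply no a~priori bound on $\al$, so no such feedback argument can succeed.

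The paper sidesteps this entirely by a much simpler device: pick any point $\mfz_0 \in Q_{\rho_i}^{\al_i}(\mfz_i) \cap Q_{\rho_j}^{\al_j}(\mfz_j)$ and split
\[
q(\mfz_j) - q(\mfz_i) = \bigl(q(\mfz_j) - q(\mfz_0)\bigr) + \bigl(q(\mfz_0) - q(\mfz_i)\bigr).
\]
Since $\mfz_0$ and $\mfz_j$ both lie in $Q_{\rho_j}^{\al_j}(\mfz_j)$, the first piece is bounded by $q^+_{Q_{\rho_j}^{\al_j}(\mfz_j)} - q^-_{Q_{\rho_j}^{\al_j}(\mfz_j)}$; the second by the analogous oscillation over $Q_{\rho_i}^{\al_i}(\mfz_i)$. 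Writing $\al_j^{-1+d}/\al_i^{-1+d}$ as a power of $\al$ with exponent proportional to $q(\mfz_j) - q(\mfz_i)$, and then rewriting $\al$ back as a power of $\al_j$ (respectively $\al_i$) on each piece, \eqref{cv0-6} applies directly and gives $\al_j^{-1+d} \le c_q^{2(1-d)/q^-}\,\al_i^{-1+d}$. The spatial scaling factor $\al_j^{-1/p(\mfz_j)+d/2}$ is handled the same way, splitting the combined $p,q$ difference through $\mfz_0$ and invoking both bounds in \eqref{cv0-6}. This never requires any bound on $\al$ itself and never leaves the intrinsic cylinders, so the ``obstacle'' you identified does not actually arise.
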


\begin{proof}
The proof is similar to that of the standard Vitali covering lemma except in the setting of the unified intrinsic cylinders. See \cite[Lemma 5.3]{BO} and \cite[Lemma 7.1]{bogelein2014very} for other intrinsic cylinder cases. For completeness, we give the proof. 

Write $D := \sup_{j\in\J} \rho_j$. Set
$$
\F_k := \mgh{Q_{\rho_j}^{\al_j}(\mfz_j) \in \F : \frac{D}{2^k} < \rho_j \le \frac{D}{2^{k-1}}} \quad (k=1,2,\cdots).
$$
We define $\G_k \subset \F_k$ as follows:
\begin{itemize}
\item Let $\G_1$ be any maximal disjoint collection of intrinsic cylinders in $\F_1$.
\item Assuming that $\G_1, \cdots, G_{k-1}$ have been selected, we choose $\G_k$ to be any maximal disjoint subcollection of 
$$
\mgh{Q \in \F_k : Q \cap Q' = \emptyset \ \ \text{for all} \ \ Q' \in \bigcup_{l=1}^{k-1} \G_l}.
$$
\item Finally, we define
$$
\G :=\bigcup_{k=1}^\infty \G_k.
$$
\end{itemize}
Clearly $\G$ is a countable collection of disjoint intrinsic cylinders and $\G \subset \F$. Now it suffices to show that for each intrinsic cylinder $Q_{\rho_j}^{\al_j}(\mfz_j) \in \F$, there exists an intrinsic cylinder $Q_{\rho_i}^{\al_i}(\mfz_i) \in \G$ such that $Q_{\rho_j}^{\al_j}(\mfz_j) \cap Q_{\rho_i}^{\al_i}(\mfz_i) \neq \emptyset$ and $Q_{\rho_j}^{\al_j}(\mfz_j) \subset Q_{\chi \rho_i}^{\al_i}(\mfz_i)$.

Fix $Q_{\rho_j}^{\al_j}(\mfz_j) \in \F$. Then there is an index $k$ such that $Q_{\rho_j}^{\al_j}(\mfz_j) \in \F_k$. By the maximality of $\G_k$, there exists an intrinsic cylinder $Q_{\rho_i}^{\al_i}(\mfz_i) \in \bigcup_{l=1}^{k} \G_l$ with $Q_{\rho_j}^{\al_j}(\mfz_j) \cap Q_{\rho_i}^{\al_i}(\mfz_i) \neq \emptyset$. Since $\rho_i > \frac{D}{2^k}$ and $\rho_j \le \frac{D}{2^{k-1}}$, we know $\rho_j < 2 \rho_i$.
Choose $\mfz_0 \in Q_{\rho_j}^{\al_j}(\mfz_j) \cap Q_{\rho_i}^{\al_i}(\mfz_i)$. We compute 
\begin{equation*}
\begin{array}{rcl}
\al_j^{-1+d} &=& \al_i^{-1+d} \al^{q_{Q_{4r}(\mfz)}^- \frac{(1-d)(q(\mfz_j)-q(\mfz_0))-(1-d)(q(\mfz_0)-q(\mfz_i))}{q(\mfz_i)q(\mfz_j)}}\\
&\le & \al_i^{-1+d} \al_j^{\frac{(1-d)\gh{q_{Q_{\rho_j}^{\alpha_j}(\mfz_j)}^+ - q_{Q_{\rho_j}^{\alpha_j}(\mfz_j)}^-}}{q(\mfz_i)}} \al_i^{\frac{(1-d)\gh{q_{Q_{\rho_i}^{\alpha_i}(\mfz_i)}^+ - q_{Q_{\rho_i}^{\alpha_i}(\mfz_i)}^-}}{q(\mfz_j)}}\\
&\overset{\eqref{cv0-6}}{\le} &c_q^{\frac{2(1-d)}{q^-}} \al_i^{-1+d}.
\end{array}
\end{equation*}
where $d$ is given in \eqref{def_d}. Similarly, it follows from \eqref{cv0-6} that 
$$
\al_j^{-\frac{1}{p(\mfz_j)}+\frac{d}{2}} \apprle_{(c_p,c_q,\plog,\qlog)} \al_i^{-\frac{1}{p(\mfz_i)}+\frac{d}{2}}.
$$
Thus, from the definition of intrinsic cylinders in Section \ref{Intrinsic cylinders}, there exists a constant $\chi = \chi_{(n,c_p,c_q,\plog,\qlog)} \ge 1$ such that $Q_{\rho_j}^{\al_j}(\mfz_j) \subset Q_{\chi \rho_i}^{\al_i}(\mfz_i)$, which completes the proof.
\end{proof}

\subsection{Stopping-time argument}
We employ in this subsection a {\em stopping-time argument} from \cite{adimurthi2018weight} to derive a covering of the upper-level set of $|\nabla u|^\frac{p(\cdot)(1-\beta)q(\cdot)}{q_{K_{4\rho}(\mfz)}^-}$ with respect to some intrinsic parameter $\al$.

Let us define $\tilde{\al}$ by
\begin{equation}\label{cv2-1}
\tilde{\al}^\frac{1}{\vt_{K_{4\rho}(\mfz)}^+} := \mint{K_{2\rho}(\mfz)}{|\nabla u|^{\frac{p(z)(1-\beta)q(z)}{q_{K_{4\rho}(\mfz)}^-}}}{dz} + \frac{1}{\gamma} \mgh{\gh{\mint{K_{2\rho}(\mfz)}{|\bff|^{\frac{p(z)(1-\beta)q(z)(1+\sigma)}{q_{K_{4\rho}(\mfz)}^-}}}{dz}}^{\frac{1}{1+\sigma}} + 1},
\end{equation}
where the constants $\be$ and $\sigma$ are given in Remark \ref{high_int_remark} and 
\begin{equation}\label{vt_max_def}
\vt_{K_{4\rho}(\mfz)}^+ := \sup_{z \in K_{4\rho}(\mfz)} \vt(z) \overset{\eqref{vt_def}}{=} \frac{1}{-\frac{n}{p_{K_{4\rho}(\mfz)}^-}+\frac{nd}{2}+d}.
\end{equation}
For $\al \ge 1$ and $s \ge 1$, let $E(s,\al)$ denote the upper-level set of $|\nabla u(\cdot)|^\frac{p(\cdot)(1-\beta)q(\cdot)}{q_{K_{4\rho}(\mfz)}^-}$, defined by
\begin{equation}\label{cv2-2}
E(s,\al) := \mgh{z \in K_{s\rho}(\mfz) : |\nabla u(z)|^\frac{p(z)(1-\beta)q(z)}{q_{K_{4\rho}(\mfz)}^-} > \al}.
\end{equation}
Fix any $1 \le s_1 < s_2 \le 2$ and any $\al \ge 1$ satisfying
\begin{equation}\label{cv2-3}
\al > A \tilde{\al}, \quad \text{where} \ \ A := \mgh{\gh{\frac{16}{7}}^n \gh{\frac{120\chi}{s_2-s_1}}^{n+2}}^{\vt_{K_{4\rho}(\mfz)}^+}.
\end{equation}
Here $\chi$ is given in Lemma \ref{cv argument}. Fix any 
\begin{equation}\label{cv2-4}
\tilde{\rho} \in \left( \frac{(s_2-s_1)\rho}{60\chi}, (s_2-s_1)\rho \right].
\end{equation}
We check that for all $\tilde{\mfz} \in K_{s_1r}(\mfz)$,
\begin{equation*}
\begin{array}{rcl}
&&\hspace*{-3cm}\mint{K_{\tilde{\rho}}^{\al_{\tilde{\mfz}}}(\tilde{\mfz})}{|\nabla u|^{\frac{p(z)(1-\beta)q(z)}{q_{K_{4\rho}(\mfz)}^-}}}{dz} + \frac{1}{\gamma} \gh{\mint{K_{\tilde{\rho}}^{\al_{\tilde{\mfz}}}(\tilde{\mfz})}{|\bff|^{\frac{p(z)(1-\beta)q(z)(1+\sigma)}{q_{K_{4\rho}(\mfz)}^-}}}{dz}}^{\frac{1}{1+\sigma}}\\
&\le & \frac{|Q_{2r}|}{|K_{\tilde{\rho}}^{\al_{\tilde{\mfz}}}(\tilde{\mfz})|} \mgh{ \mint{K_{2\rho}(\mfz)}{|\nabla u|^{\frac{p(z)(1-\beta)q(z)}{q_{K_{4\rho}(\mfz)}^-}}}{dz} + \frac{1}{\gamma} \gh{\mint{K_{2\rho}(\mfz)}{|\bff|^{\frac{p(z)(1-\beta)q(z)(1+\sigma)}{q_{K_{4\rho}(\mfz)}^-}}}{dz}}^{\frac{1}{1+\sigma}} }\\
&\overset{\eqref{cv2-1}}{\le}& \frac{|Q_{2r}|}{\al_{\tilde{\mfz}}^{-\frac{n}{p(\tilde{\mfz})} + \frac{nd}{2} -1 +d} |K_{\tilde{\rho}}(\tilde{\mfz})|} \tilde{\al}^{\frac{1}{\vt_{K_{4\rho}(\mfz)}^+}} \overset{\eqref{measure_one}}{\le} \gh{\frac{16}{7}}^n \gh{\frac{2r}{\tilde{\rho}}}^{n+2} \al_{\tilde{\mfz}}^{\frac{n}{p(\mfz)} - \frac{nd}{2} +1 -d} \tilde{\al}^{\frac{1}{\vt_{K_{4\rho}(\mfz)}^+}}\\
&\overset{\eqref{cv2-4}}{\le}& \gh{\frac{16}{7}}^n \gh{\frac{120\chi}{(s_2-s_1)\rho}}^{n+2} \al_{\tilde{\mfz}}^{\frac{n}{p(\tilde{\mfz})} - \frac{nd}{2} +1 -d} \tilde{\al}^{\frac{1}{\vt_{K_{4\rho}(\mfz)}^+}}\\
&\overset{\eqref{cv2-3}}{<} &\al_{\tilde{\mfz}}^{\frac{n}{p(\tilde{\mfz})} - \frac{nd}{2} +1 -d} \al^{\frac{1}{\vt_{K_{4\rho}(\mfz)}^+}} \le \al,
\end{array}
\end{equation*}
where $\al_{\tilde{\mfz}} := \al^{\frac{q_{K_{4\rho}(\mfz)}^-}{q(\tilde{\mfz})}}$. The last inequality has used the fact that $\frac{1}{\vt_{K_{4\rho}(\mfz)}^+} \le -\frac{n}{p(\tilde{\mfz})} + \frac{nd}{2} + d$ and $1 \le \al_{\tilde{\mfz}} \le \al$.

On the other hand, in view of the Lebesgue differentiation theorem, for every Lebesgue point $\tilde{\mfz}$ of $|\nabla u|^{\frac{p(\cdot)(1-\beta)q(\cdot)}{q_{K_{4\rho}(\mfz)}^-}}$ in $E(s_1,\al)$, we have
\begin{equation*}\label{cv2-12}
\lim_{\tilde{\rho}\to 0} \mgh{ \mint{K_{\tilde{\rho}}^{\al_{\tilde{\mfz}}}(\tilde{\mfz})}{|\nabla u|^{\frac{p(z)(1-\beta)q(z)}{q_{K_{4\rho}(\mfz)}^-}}}{dz} + \frac{1}{\gamma} \gh{\mint{K_{\tilde{\rho}}^{\al_{\tilde{\mfz}}}(\tilde{\mfz})}{|\bff|^{\frac{p(z)(1-\beta)q(z)(1+\sigma)}{q_{K_{4\rho}(\mfz)}^-}}}{dz}}^{\frac{1}{1+\sigma}} } > \al.
\end{equation*}
Then for almost every such point, there exists $\rho_{\tilde{\mfz}} \in \left(0, \frac{(s_2-s_1)\rho}{60\chi} \right]$ such that
\begin{gather*}\label{cv2-13}
\mint{K_{\rho_{\tilde{\mfz}}}^{\al_{\tilde{\mfz}}}(\tilde{\mfz})}{|\nabla u|^{\frac{p(z)(1-\beta)q(z)}{q_{K_{4\rho}(\mfz)}^-}}}{dz} + \frac{1}{\gamma} \gh{\mint{K_{\rho_{\tilde{\mfz}}}^{\al_{\tilde{\mfz}}}(\tilde{\mfz})}{|\bff|^{\frac{p(z)(1-\beta)q(z)(1+\sigma)}{q_{K_{4\rho}(\mfz)}^-}}}{dz}}^{\frac{1}{1+\sigma}} = \al,\\
\mint{K_{\tilde{\rho}}^{\al_{\tilde{\mfz}}}(\tilde{\mfz})}{|\nabla u|^{\frac{p(z)(1-\beta)q(z)}{q_{K_{4\rho}(\mfz)}^-}}}{dz} + \frac{1}{\gamma} \gh{\mint{K_{\tilde{\rho}}^{\al_{\tilde{\mfz}}}(\tilde{\mfz})}{|\bff|^{\frac{p(z)(1-\beta)q(z)(1+\sigma)}{q_{K_{4\rho}(\mfz)}^-}}}{dz}}^{\frac{1}{1+\sigma}} < \al \quad \forall \tilde{\rho} \in \left(\rho_{\tilde{\mfz}}, \frac{(s_2-s_1)\rho}{60\chi} \right].
\end{gather*}

Applying Lemma \ref{cv lemma} and Lemma \ref{cv argument} to the collection of intrinsic cylinders $\mgh{Q_{\rho_{\mfz}}^{\al_{\tilde{\mfz}}}(\tilde{\mfz})}$ with $\rho_{\tilde{\mfz}}$ replacing $\tilde{\rho}$ and $\al_{\tilde{\mfz}}$ replacing $\al$, there exist $\mgh{\mfz_i}_{i=1}^\infty \subset E(s_1,\al)$ and 
$\rho_i \in \left(0, \frac{(s_2-s_1)\rho}{60\chi} \right]$, where $\al_i := \al^{\frac{q_{K_{4\rho}(\mfz)}^-}{q(\mfz_i)}}$ for 
$i=1,2,\cdots,$ such that $\mgh{Q_{\rho_i}^{\al_i}(\mfz_i)}_{i=1}^\infty$ is mutually disjoint,
\begin{equation}\label{cv2-6}
E(s_1,\al) \setminus N \subset \bigcup_{i=1}^{\infty} K_{\chi \rho_i}^{\al_i}(\mfz_i) \subset K_{s_2 \rho}(\mfz),
\end{equation}
for some Lebesgue measure zero set $N$, and for each $i$ we have
\begin{equation}\label{cv2-7}
\mint{K_{\rho_i}^{\al_i}(\mfz_i)}{|\nabla u|^{\frac{p(z)(1-\beta)q(z)}{q_{K_{4\rho}(\mfz)}^-}}}{dz} + \frac{1}{\gamma} \gh{\mint{K_{\rho_i}^{\al_i}(\mfz_i)}{|\bff|^{\frac{p(z)(1-\beta)q(z)(1+\sigma)}{q_{K_{4\rho}(\mfz)}^-}}}{dz}}^{\frac{1}{1+\sigma}} = \al,
\end{equation}
and
\begin{equation}\label{cv2-8}
\mint{K_{\tilde{\rho}}^{\al_i}(\mfz_i)}{|\nabla u|^{\frac{p(z)(1-\beta)q(z)}{q_{K_{4\rho}(\mfz)}^-}}}{dz} + \frac{1}{\gamma} \gh{\mint{K_{\tilde{\rho}}^{\al_i}(\mfz_i)}{|\bff|^{\frac{p(z)(1-\beta)q(z)(1+\sigma)}{q_{K_{4\rho}(\mfz)}^-}}}{dz}}^{\frac{1}{1+\sigma}} < \al,
\end{equation}
for any $\tilde{\rho} \in \left(\rho_i, (s_2-s_1)\rho \right]$.
Note that since $\min\mgh{1,\al^{\frac{1}{p(\mfz)}-\frac{d}{2}},\al^{\frac{1-d}{2}}} = 1$, we have $\bigcup_{i=1}^{\infty} K_{\chi \rho_i}^{\al_i}(\mfz_i) \subset K_{s_2 \rho}(\mfz)$.

\subsection{Power decay estimates on unified intrinsic cylinders}
Here we derive the power decay estimate \eqref{cv4-9} on the upper-level set of $|\nabla u|^\frac{p(\cdot)(1-\beta)q(\cdot)}{q_{K_{4\rho}(\mfz)}^-}$, where $\be$ is given in Remark \ref{high_int_remark}. For any $1 \le s_1 < s_2 \le 2$ and any $\al \ge 1$ satisfying \eqref{cv2-3}, we consider $Q_{\rho_i}^{\al_i}(\mfz_i)$, $i=1,2,\cdots,$ selected in the previous subsection, with 
\begin{equation}\label{cv3-1}
\al_i := \al^{\frac{q_{K_{4\rho}(\mfz)}^-}{q(\mfz_i)}} \quad \text{and} \quad 60\chi \rho_i \le (s_2 - s_1)\rho \le \rho,
\end{equation}
where $\chi$ is given in Lemma \ref{cv argument}.

We divide into the two cases: $Q_{4\chi \rho_i}^{\al_i}(\mfz_i) \subset \OO_T$ and $Q_{4\chi \rho_i}^{\al_i}(\mfz_i) \not\subset \OO_T$. We only consider the boundary case $Q_{4\chi \rho_i}^{\al_i}(\mfz_i) \not\subset \OO_T$. The interior case $Q_{4\chi \rho_i}^{\al_i}(\mfz_i) \subset \OO_T$ can be proved in a similar way. 

Since $Q_{4\chi \rho_i}^{\al_i}(\mfz_i) \not\subset \OO_T$, there exists a boundary point $(\tilde{\mfx}_i, \mft_i) \in \gh{\partial\OO \times (-T,T)} \cap Q_{4\chi \rho_i}^{\al_i}(\mfz_i)$. Since $(\pp,\aa,\Om)$ is $(\ga,\bs_0)$-vanishing, there exists a new coordinate system modulo rotation and
translation, which we still denote by $\{x_1, \cdots, x_n, t\}$,
with the origin is $(\tilde{\mfx}_i, \mft_i) + 56\chi \gamma \rho_i e_n$, where $e_n
:= (0, \cdots, 0,1)$ and 
\begin{equation*}
B_{\rho}^+(0) \ \subset \ \OO_{\rho}(0) \ \subset \ B_{\rho}(0) \cap \{(x, t) : x_n > - 112\chi \gamma \rho\} \quad \text{for any} \ \ 0 < \rho < 48\chi \rho_i.
\end{equation*}

Set $\tilde{\mfz}_i := (0,\mft_i)$. Since $|\mfx_i| \le |\mfx_i - \tilde{\mfx}_i| + |\tilde{\mfx}_i| \le (4+56\gamma)\chi \rho_i \le 11\chi \rho_i$, we have from \eqref{cv3-1} and \eqref{cv2-6} that 
\begin{equation*}\label{cv3-2.5}
K_{\chi \rho_i}^{\al_i}(\mfz_i) \subset K_{12\chi \rho_i}^{\al_i}(\tilde{\mfz}_i) \subset K_{48\chi \rho_i}^{\al_i}(\tilde{\mfz}_i) \subset K_{60\chi \rho_i}^{\al_i}(\mfz_i) \subset K_{s_2 \rho}(\mfz) \subset K_{4\rho}(\mfz),
\end{equation*}
and thus
\begin{equation*}\label{cv3-3}
p_{K_{48\chi \rho_i}^{\al_i}(\tilde{\mfz}_i)}^+ - p_{K_{48\chi \rho_i}^{\al_i}(\tilde{\mfz}_i)}^- \le p_{K_{4\rho}(\mfz)}^+ - p_{K_{4\rho}(\mfz)}^- \le \modp(2\rho_0) \quad \text{and} \quad q_{K_{48\chi \rho_i}^{\al_i}(\tilde{\mfz}_i)}^+ - q_{K_{48\chi \rho_i}^{\al_i}(\tilde{\mfz}_i)}^- \le \modq(2\rho_0).
\end{equation*}
We employ \eqref{cv2-7} with taking $c_a =2(48)^{n+2}$ to derive
\begin{equation*}\label{cv3-4}
\al_i \le \al < c_a\mgh{ \mint{K_{48\chi \rho_i}^{\al_i}(\tilde{\mfz}_i)}{|\nabla u|^{\frac{p(z)(1-\beta)q(z)}{q_{K_{4\rho}(\mfz)}^-}}}{dz} + \frac{1}{\gamma} \gh{\mint{K_{48\chi \rho_i}^{\al_i}(\tilde{\mfz}_i)}{|\bff|^{\frac{p(z)(1-\beta)q(z)(1+\sigma)}{q_{K_{4\rho}(\mfz)}^-}}}{dz}}^{\frac{1}{1+\sigma}} },
\end{equation*}
where $\be$ and $\sigma$ are given in Remark \ref{high_int_remark}. Now applying Lemma \ref{cv lemma} with $\al=\al_i$, $\tilde{\rho}=12\chi \rho_i$ and $\tilde{\mfz}=\tilde{\mfz}_i$,  we obtain
\begin{equation*}\label{cv3-5}
\begin{array}{c}
 \al_i^{-\frac{n}{p(\tilde{\mfz}_i)} + \frac{nd}{2} + d} \le \Ga^2 (12\chi \rho_i)^{-(n+2)}, \qquad  p_{K_{48\chi \rho_i}^{\al_i}(\tilde{\mfz}_i)}^+ - p_{K_{48\chi \rho_i}^{\al_i}(\tilde{\mfz}_i)}^- \le \modp(12\Ga \chi \rho_i),\\
 \al_i^{p_{K_{48\chi \rho_i}^{\al_i}(\tilde{\mfz}_i)}^+ - p_{K_{48\chi \rho_i}^{\al_i}(\tilde{\mfz}_i)}^-} \le c_p,
\end{array}
\end{equation*}
and 
\begin{gather}
\label{cv3-6} q_{K_{48\chi \rho_i}^{\al_i}(\tilde{\mfz}_i)}^+ - q_{K_{48\chi \rho_i}^{\al_i}(\tilde{\mfz}_i)}^- \le \modq(12\Ga \chi \rho_i), \quad \al_i^{q_{K_{48\chi \rho_i}^{\al_i}(\tilde{\mfz}_i)}^+ - q_{K_{48\chi \rho_i}^{\al_i}(\tilde{\mfz}_i)}^-} \le c_q,
\end{gather}
where $c_p$ and $c_q$ are given in \eqref{cv0-3} and \eqref{cv0-4}, respectively. We can now  directly compute to get
\begin{equation*}\label{cv3-7}
\begin{array}{rcl}
\mint{K_{48\chi \rho_i}^{\al_i}(\tilde{\mfz}_i)}{|\nabla u|^{p(z)(1-\beta)}}{dz} &\le & \gh{\mint{K_{48\chi \rho_i}^{\al_i}(\tilde{\mfz}_i)}{|\nabla u|^{\frac{p(z)(1-\beta)q(z)}{q_{K_{4\rho}(\mfz)}^-}}}{dz} + 1}^{\frac{q_{K_{4\rho}(\mfz)}^-}{q_{K_{48\chi \rho_i}^{\al_i}(\tilde{\mfz}_i)}^-}}\\ 
& \overset{\eqref{cv2-8}}{\apprle}&  \al^{\frac{q_{K_{4\rho}(\mfz)}^-}{q_{K_{48\chi \rho_i}^{\al_i}(\tilde{\mfz}_i)}^-}} = \al^{\frac{q_{K_{4\rho}(\mfz)}^- \gh{q(\mfz_i) - q_{K_{48\chi \rho_i}^{\al_i}(\tilde{\mfz}_i)}^-}}{q_{K_{48\chi \rho_i}^{\al_i}(\tilde{\mfz}_i)}^- q(\mfz_i)}} \al^{\frac{q_{K_{4\rho}(\mfz)}^-}{q(\mfz_i)}}\\
& \overset{\eqref{cv3-1}}{\le}&  \al_i^{\frac{q_{K_{48\chi \rho_i}^{\al_i}(\tilde{\mfz}_i)}^+ - q_{K_{48\chi \rho_i}^{\al_i}(\tilde{\mfz}_i)}^-}{q^-}} \al_i \overset{\eqref{cv3-6}}{\apprle} \al_i.
\end{array}
\end{equation*}
Proceeding similarly, we also get
\begin{equation*}\label{cv3-7.1}
\begin{aligned}
\mint{K_{48\chi \rho_i}^{\al_i}(\tilde{\mfz}_i)}{|\bff|^{p(z)(1-\beta)}}{dz} \apprle \gamma^{\frac{q^-}{q^+}}\al_i.
\end{aligned}
\end{equation*}
Therefore, applying Theorem \ref{first_diff_thm}, Theorem \ref{second_diff_thm}, and Lemma \ref{existence_ov}, we have the following lemma:
\begin{lemma}\label{cv3_lemma1}
For any $\ve \in (0,1)$, there exists $\gamma = \gamma(n,\La_0,\La_1,\plog,\qlog,\ve) > 0$ satisfying $Q_{4\chi \rho_i}^{\al_i}(\mfz_i) \not\subset \OO_T$ such that
\begin{gather*}
\label{cv3-8} \mint{K_{12\chi \rho_i}^{\al_i}(\tilde{\mfz}_i)}{|\nabla u-\nabla w|^{p(z)(1-\beta)}}{dz} \le \ve \al_i, \qquad \mint{K_{12\chi \rho_i}^{\al_i}(\tilde{\mfz}_i)}{|\nabla w-\nabla\bar{V}|^{p(\mfz)(1-\beta)}}{dz} \le \ve \al_i,\\
\label{cv3-9} \mint{K_{24\chi \rho_i}^{\al_i}(\tilde{\mfz}_i)}{|\nabla w|^{p(z)(1-\beta)}}{dz} \le \ve \al_i, \qquad \text{and} \quad \Norm{\nabla\bar{V}}_{L^\infty(K_{12\chi \rho_i}^{\al_i}(\tilde{\mfz}_i),\RR^n)}^{p(\mfz)(1-\beta)} \apprle \al_i.
\end{gather*}
\end{lemma}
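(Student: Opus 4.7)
The strategy is to chain the three comparison estimates that were assembled in Sections~\ref{four-two}--\ref{six}: Theorem~\ref{first_diff_thm} for the difference $u-w$, Theorem~\ref{second_diff_thm} for $w-v$, and Lemma~\ref{existence_ov} for $v-\bar{V}$ together with the Lipschitz bound on $\bar{V}$. The preparatory computations in the paragraph preceding the lemma have already delivered, on $K_{48\chi\rho_i}^{\al_i}(\tilde{\mfz}_i)$, both
\begin{equation*}
\fiint |\nabla u|^{p(z)(1-\beta)}\, dz \apprle \al_i \qquad \text{and} \qquad \fiint |\bff|^{p(z)(1-\beta)}\, dz \apprle \gamma^{q^-/q^+}\al_i,
\end{equation*}
together with the scaling compatibilities \eqref{cv3-5}--\eqref{cv3-6} obtained from Lemma~\ref{cv lemma}. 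These are exactly the preparatory ingredients required to feed each of the three comparison results.

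First I apply Theorem~\ref{first_diff_thm} with $4\rho = 12\chi\rho_i$ centered at $\tilde{\mfz}_i$, so that $w$ becomes the weak solution of \eqref{wapprox_int} on $K_{12\chi\rho_i}^{\al_i}(\tilde{\mfz}_i)$. Enlarging the averaging cylinder to $K_{48\chi\rho_i}^{\al_i}(\tilde{\mfz}_i)$ costs only the factor $4^{n+2}$, and inserting the two preparatory bounds collapses the right-hand side of the conclusion of Theorem~\ref{first_diff_thm} to $(\tilde{\ve} + C\gamma^{q^-/q^+})\al_i$, which is at most $\ve\al_i$ once $\tilde{\ve}$ and $\gamma$ are taken small enough in terms of $\ve$. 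The triangle inequality together with the preparatory bound on $|\nabla u|$ then yields the third listed inequality, $\fiint_{K_{24\chi\rho_i}^{\al_i}} |\nabla w|^{p(z)(1-\beta)}\, dz \apprle \al_i$, which in turn supplies the first hypothesis in \eqref{hypothesis}; the conversion from exponent $p(z)$ to $p(\mfz)$ is absorbed by $\al_i^{p^+-p^-}\le c_p$ from \eqref{cv3-5}. The second hypothesis in \eqref{hypothesis} and hypothesis \eqref{more_hyp} are already recorded as \eqref{cv3-5}. Theorem~\ref{second_diff_thm} then produces $\fiint_{K_{12\chi\rho_i}^{\al_i}} |\nabla w - \nabla v|^{p(\mfz)(1-\beta)}\, dz \le \tilde{\ve}\al_i$ and $\fiint|\nabla v|^{p(\mfz)(1-\beta)}\, dz \apprle \al_i$, provided $\gamma$ is shrunk once more.

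The final step invokes Lemma~\ref{existence_ov} on the half-cylinder $Q_{2\rho}^{\al_i,+}(\tilde{\mfz}_i)$ supplied by the Reifenberg coordinate system constructed just above the statement of the lemma, and feeds it the bound on $|\nabla v|$ from the previous step. This gives the $v \leftrightarrow \bar{V}$ comparison from which, combined by triangle inequality with the $w \leftrightarrow v$ bound of step two, the second listed estimate follows; it also gives the Lipschitz bound
\begin{equation*}
\|\nabla\bar{V}\|_{L^\infty}^{p(\mfz)} \apprle \fiint |\nabla\bar{V}|^{p(\mfz)}\, dz + 1 \apprle \fiint |\nabla v|^{p(\mfz)}\, dz + 1 \apprle \al_i,
\end{equation*}
where the last inequality upgrades the exponent $p(\mfz)(1-\beta)$ to $p(\mfz)$ via $\al_i^{p^+-p^-}\le c_p$ once more. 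The main obstacle is entirely bookkeeping: juggling the three intrinsic radii $12\chi\rho_i$, $24\chi\rho_i$, $48\chi\rho_i$ and converting between $L^{p(z)}$ and $L^{p(\mfz)}$ averages. Both difficulties are absorbed uniformly by \eqref{cv3-5}--\eqref{cv3-6}, so no new analytic input is required; the lemma is really the harvest of the machinery developed in Sections~\ref{five} and~\ref{six}.
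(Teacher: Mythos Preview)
Your proposal is correct and follows essentially the same route as the paper: the paper's entire proof is the one-line remark ``Therefore, applying Theorem~\ref{first_diff_thm}, Theorem~\ref{second_diff_thm}, and Lemma~\ref{existence_ov}, we have the following lemma,'' and you have simply unpacked that chain in the expected order, using the preparatory bounds (cv3-7) and (cv3-7.1) and the localization estimates from Lemma~\ref{cv lemma} as input. One small quibble: your final sentence about ``upgrading the exponent $p(\mfz)(1-\beta)$ to $p(\mfz)$ via $\al_i^{p^+-p^-}\le c_p$'' is not the right mechanism (that bound handles oscillation in $p$, not the missing factor $1-\beta$); the passage to $L^{p(\mfz)}$ for $\nabla v$ goes instead through the higher-integrability machinery of Corollary~\ref{normalized_higher_integrability} and the energy comparison built into Lemma~\ref{existence_ov}, but this does not alter the overall argument.
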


From a similar way in \cite[Corollary 5.6]{BO}, we can also obtain from Lemma \ref{cv3_lemma1} that the following estimates:
\begin{lemma}\label{cv3_lemma2}
Under the assumptions as in Lemma \ref{cv3_lemma1}, we have
\begin{equation}\label{cv3-10}
\mint{K_{12\chi \rho_i}^{\al_i}(\tilde{\mfz}_i)}{|\nabla u-\nabla \bar{V}|^{\frac{p(z)(1-\beta)q(z)}{q_{K_{4\rho}(\mfz)}^-}}}{dz} \le \ve \al \quad \text{and} \quad \Norm{|\nabla\bar{V}|^{\frac{p(\cdot)(1-\beta)q(\cdot)}{q_{K_{4\rho}(\mfz)}^-}}}_{L^\infty(K_{12\chi \rho_i}^{\al_i}(\tilde{\mfz}_i),\RR^n)} \le \al c_2 
\end{equation}
for some constant $c_2 = c_2(n,\La_0,\La_1,\plog,\qlog) \ge 1$.
\end{lemma}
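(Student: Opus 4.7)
The plan is to transfer the bounds of Lemma \ref{cv3_lemma1}, stated in the ``natural'' local exponent $p(z)(1-\be)$ (or $p(\tilde\mfz_i)(1-\be)$), to the slightly larger target exponent $\tilde p(z):=\frac{p(z)(1-\be)q(z)}{q^-_{K_{4\rho}(\mfz)}}$ appearing in \eqref{cv3-10}. Setting $r(z):=\tilde p(z)/(p(z)(1-\be))=q(z)/q^-_{K_{4\rho}(\mfz)}$, one has $1\le r(z)\le 1+\modq(12\chi\Ga\rho_i)/q^-\le 1+\sigma$ by \descref{R9}{R9}. Moreover, the stopping-time estimates \eqref{cv0-3}--\eqref{cv0-4} at $Q:=K_{48\chi\rho_i}^{\al_i}(\tilde\mfz_i)$ yield the stability $\al_i^{p^+_Q-p^-_Q}\le c_p$ and $\al_i^{q^+_Q-q^-_Q}\le c_q$, so any factor of the form $\al^{\mathrm{osc}(p)+\mathrm{osc}(q)}$ contributes only a harmless multiplicative constant. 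In effect, the variable exponents $p$ and $q$ may be ``frozen'' at their values at $\tilde\mfz_i$ or $\mfz_i$ up to a controlled error.

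For the $L^\infty$ part of \eqref{cv3-10} I would argue pointwise: from Lemma \ref{cv3_lemma1} one has $|\nabla\bar V(z)|^{p(\tilde\mfz_i)(1-\be)}\apprle \al_i$, and raising to the power $\tilde p(z)/(p(\tilde\mfz_i)(1-\be))$ gives
\[
|\nabla\bar V(z)|^{\tilde p(z)}\apprle \al_i^{p(z)q(z)/(p(\tilde\mfz_i)q^-_{K_{4\rho}(\mfz)})} = \al^{p(z)q(z)/(p(\tilde\mfz_i)q(\mfz_i))},
\]
after substituting $\al_i=\al^{q^-_{K_{4\rho}(\mfz)}/q(\mfz_i)}$. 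The resulting exponent equals $1+\eta(z)$ with $|\eta(z)|$ bounded by a universal multiple of $\modp(12\chi\Ga\rho_i)+\modq(12\chi\Ga\rho_i)$, so the factor $\al^{\eta(z)}$ is absorbed into a fixed constant $c_2$ by \eqref{cv0-3}--\eqref{cv0-4}.

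For the integral bound I would use the triangle inequality $|\nabla u-\nabla\bar V|\le|\nabla u-\nabla w|+|\nabla w-\nabla\bar V|$ and, for either difference $h$, the elementary pointwise split
\[
|h|^{\tilde p(z)}\le |h|^{p(z)(1-\be)}+|h|^{p(z)(1-\be)(1+\sigma)},
\]
obtained by considering $|h|\le 1$ and $|h|>1$ separately. Lemma \ref{cv3_lemma1} immediately bounds the averages of the first summand by $\ve\al_i\le\ve\al$. For the second summand I interpolate by H\"older's inequality at constant exponents between the small $p(z)(1-\be)$-average (again Lemma \ref{cv3_lemma1}) and the uniformly bounded $p(z)(1-\be)(1+\sigma)$-averages of $|\nabla u|$ and $|\nabla w|$ furnished by Corollary \ref{normalized_higher_integrability}, whose hypothesis \eqref{hyp_one} is verified via \eqref{cv2-7}--\eqref{cv2-8} together with Lemma \ref{cv3_lemma1}. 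The outcome has the shape $\ve^{\theta}\al^{1+O(\modq)}$ for some $\theta\in(0,1)$ depending only on the structural constants; the stray factor $\al^{O(\modq)}$ is absorbed by \eqref{cv0-3}--\eqref{cv0-4}, and taking $\ve$ in Lemma \ref{cv3_lemma1} slightly smaller (which only strengthens the smallness requirement on $\ga$) yields the stated $\ve\al$ bound.

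The main obstacle is the interpolation with \emph{variable} exponents: classical H\"older requires constant powers, whereas here both $r(z)$ and $p(z)(1-\be)$ vary with $z$. Two devices make this manageable. The pointwise split above reduces the variable-exponent power to the two constant extremes $p(z)(1-\be)$ and $p(z)(1-\be)(1+\sigma)$, and the stopping-time stability of Lemma \ref{cv lemma} replaces every remaining variable exponent by its frozen value at $\tilde\mfz_i$ or $\mfz_i$ with only a harmless multiplicative loss. Once this bookkeeping is done, the argument reduces to a routine combination of Lemma \ref{cv3_lemma1}, Corollary \ref{normalized_higher_integrability}, and H\"older's inequality.
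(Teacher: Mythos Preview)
Your outline matches the approach the paper defers to \cite[Corollary~5.6]{BO}: the $L^\infty$ bound by pointwise exponentiation together with the stability estimates \eqref{cv0-3}--\eqref{cv0-4}, and the integral bound by H\"older interpolation between the $p(z)(1-\be)$--smallness of Lemma~\ref{cv3_lemma1} and a higher-exponent endpoint, after freezing the variable exponents.

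One technical point deserves care. The hypothesis~\eqref{hyp_one} of Corollary~\ref{normalized_higher_integrability} requires $\fiint|\nabla\phi|^{p(z)(1-\be)}\le c_*\al_i^{1-\be}$, whereas the stopping-time bounds \eqref{cv2-7}--\eqref{cv2-8} (and the consequence recorded above Lemma~\ref{cv3_lemma1}) only yield $\apprle\al_i$; these differ by the unbounded factor $\al_i^{\be}$, so the corollary does not apply as you invoke it. The repair is not hard: for the interpolation endpoint you may use directly the stopping bound $\fiint|\nabla u|^{\tilde p(z)}<\al$ from \eqref{cv2-8} and the pointwise control $|\nabla\bar V|^{\tilde p(z)}\apprle c_2\al$ already established in the second half of \eqref{cv3-10}, combined with a level-set split at height $\ve^{1/2}\al_i$ (on the sub-level set $|h|^{\tilde p(z)}\apprle(\ve^{1/2}\al_i)^{r(z)}\apprle\ve^{1/2}\al$ via \eqref{cv0-4}; on the super-level set Chebyshev gives measure $\le\ve^{1/2}$, and the integrand is $\apprle|\nabla u|^{\tilde p(z)}+c_2\al$). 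The genuinely higher-exponent information you then need for $|\nabla u|$ is available from the reverse H\"older structure of Theorems~\ref{high_weak}--\ref{high_very_weak} on the intrinsic cylinder (this is what the rescaling in Corollary~\ref{normalized_higher_integrability} encodes, and what \cite{BO} carries out), just with a normalization adapted to the bound $\apprle\al_i$ rather than $\apprle\al_i^{1-\be}$.
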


We now estimate the integration of $|\nabla u|^{\frac{p(\cdot)(1-\beta)q(\cdot)}{q_{K_{4\rho}(\mfz)}^-}}$ on the upper-level set $E(s_1,B\al)$, where 
\begin{equation}\label{cv4-1}
B := 2^{\frac{p^+ (1-\be) q^+}{q^-}} c_2 \ge 1,
\end{equation}
and $c_2$ is given in Lemma \ref{cv3_lemma2}. Recalling \eqref{cv2-2}, it follows from \eqref{cv2-6} that
\begin{equation*}\label{cv4-2}
E(s_1, B\al) \setminus N \subset E(s_1, \al) \setminus N \subset \bigcup_{i=1}^{\infty} K_{\chi \rho_i}^{\al_i}(\mfz_i) \subset K_{s_2 \rho}(\mfz),
\end{equation*}
and
\begin{equation*}\label{cv4-3}
\integral{E(s_1, B\al)}{|\nabla u|^{\frac{p(z)(1-\beta)q(z)}{q_{K_{4\rho}(\mfz)}^-}}}{dz} \le \sum_{i=1}^{\infty} \integral{E(s_1, B\al) \cap K_{\chi \rho_i}^{\al_i}(\mfz_i)}{|\nabla u|^{\frac{p(z)(1-\beta)q(z)}{q_{K_{4\rho}(\mfz)}^-}}}{dz}.
\end{equation*}
We discover that for any $z \in E(s_1, B\al) \cap K_{12\chi \rho_i}^{\al_i}(\tilde{\mfz}_i)$,

\begin{equation*}\label{cv4-4}
\begin{array}{rcl}
|\nabla u|^{\frac{p(z)(1-\beta)q(z)}{q_{K_{4\rho}(\mfz)}^-}} &\overset{\eqref{cv3-10}}{\le} &2^{\frac{p^+ (1-\be) q^+}{q^-}-1} \gh{|\nabla u - \nabla\bar{V}|^{\frac{p(z)(1-\beta)q(z)}{q_{K_{4\rho}(\mfz)}^-}} + c_2 \al}\\
&\overset{\eqref{cv2-2},\eqref{cv4-1}}{\le}& 2^{\frac{p^+ (1-\be) q^+}{q^-}-1} |\nabla u - \nabla\bar{V}|^{\frac{p(z)(1-\beta)q(z)}{q_{K_{4\rho}(\mfz)}^-}} + \frac12 |\nabla u|^{\frac{p(z)(1-\beta)q(z)}{q_{K_{4\rho}(\mfz)}^-}}.
\end{array}
\end{equation*}
Then this implies
\begin{equation*}\label{cv4-5}
\begin{array}{rcl}
\integral{E(s_1, B\al) \cap K_{\chi \rho_i}^{\al_i}(\mfz_i)}{|\nabla u|^{\frac{p(z)(1-\beta)q(z)}{q_{K_{4\rho}(\mfz)}^-}}}{dz} &\le& 2^{\frac{p^+ (1-\be) q^+}{q^-}} \integral{E(s_1, B\al) \cap K_{12\chi \rho_i}^{\al_i}(\tilde{\mfz}_i)}{|\nabla u- \nabla\bar{V}|^{\frac{p(z)(1-\beta)q(z)}{q_{K_{4\rho}(\mfz)}^-}}}{dz}\\
&\overset{\eqref{measure_one}, \eqref{cv3-10}}{\apprle} & \ve \al |K_{\chi \rho_i}^{\al_i}(\mfz_i)|,
\end{array}
\end{equation*}
that is,
\begin{equation}\label{cv4-6}
\integral{E(s_1, B\al)}{|\nabla u|^{\frac{p(z)(1-\beta)q(z)}{q_{K_{4\rho}(\mfz)}^-}}}{dz} \apprle \ve \al \sum_{i=1}^{\infty} |K_{\chi \rho_i}^{\al_i}(\mfz_i)|.
\end{equation}

On the other hand, we know from \eqref{cv2-7} that either
\begin{equation*}\label{cv4-7}
\frac{\al}{2} \le \mint{K_{\rho_i}^{\al_i}(\mfz_i)}{|\nabla u|^{\frac{p(z)(1-\beta)q(z)}{q_{K_{4\rho}(\mfz)}^-}}}{dz}  \txt{or}  \frac{\al}{2} \le \frac{1}{\gamma} \gh{\mint{K_{\rho_i}^{\al_i}(\mfz_i)}{|\bff|^{\frac{p(z)(1-\beta)q(z)(1+\sigma)}{q_{K_{4\rho}(\mfz)}^-}}}{dz}}^{\frac{1}{1+\sigma}},
\end{equation*}
and then we calculate 
\begin{equation}\label{cv4-8}
\begin{array}{rcl}
|K_{\tilde{\rho}}^{\al_i}(\mfz_i)| &\le & \frac{4}{\al} \iint_{\mgh{z \in K_{\rho_i}^{\al_i}(\mfz_i) : |\nabla u(z)|^\frac{p(z)(1-\beta)q(z)}{q_{K_{4\rho}(\mfz)}^-} > \frac{\al}{4}}}{|\nabla u|^{\frac{p(z)(1-\beta)q(z)}{q_{K_{4\rho}(\mfz)}^-}}}{dz}\\
&& \quad + \gh{\frac{4}{\gamma\al}}^{1+\sigma} \iint_{\mgh{z \in K_{\rho_i}^{\al_i}(\mfz_i) : |\bff|^\frac{p(z)(1-\beta)q(z)}{q_{K_{4\rho}(\mfz)}^-} > \frac{\ga\al}{4}}}{|\bff|^{\frac{p(z)(1-\beta)q(z)(1+\sigma)}{q_{K_{4\rho}(\mfz)}^-}}}{dz}.
\end{array}
\end{equation}

Plugging \eqref{cv4-8} into \eqref{cv4-6} and using the fact that the family $\mgh{K_{\rho_i}^{\al_i}(\mfz_i)}_{i=1}^\infty \subset K_{s_2 \rho}(\mfz)$ is pairwise disjoint, we conclude
\begin{equation}\label{cv4-9}
\begin{aligned}
\integral{E(s_1, B\al)}{|\nabla u|^{\frac{p(z)(1-\beta)q(z)}{q_{K_{4\rho}(\mfz)}^-}}}{dz} &\apprle \ve \integral{E(s_2,\frac{\al}{4})}{|\nabla u|^{\frac{p(z)(1-\beta)q(z)}{q_{K_{4\rho}(\mfz)}^-}}}{dz}\\
&\quad + \frac{\ve}{\gamma^{1+\sigma} \al^{\sigma}} \integral{\mgh{z \in K_{s_2 \rho}(\mfz) : |\bff|^\frac{p(z)(1-\beta)q(z)}{q_{K_{4\rho}(\mfz)}^-} > \frac{\ga\al}{4}}}{|\bff|^{\frac{p(z)(1-\beta)q(z)(1+\sigma)}{q_{K_{4\rho}(\mfz)}^-}}}{dz}.
\end{aligned}
\end{equation}


\section{Proof of the main results}
\label{nine}

\subsection{Proof of Theorem \ref{main_theorem1}}\label{proof of local estimate}

Fix any $\mfz \in \Om_T$, $\be \in (0,{\be_0})$, and $\rho \in (0,\rho_0]$, where $\be_0$ and $\rho_0$ are given in Section \ref{def_be_0} and Remark \ref{remark_radius}, respectively. Define the constant $\bm$ by
\begin{equation}\label{def_MM}
\bm := \iint_{\Om_T} \lbr[[]|\bff|^{p(z)\max\mgh{(1-\beta)q^-,1}} + 1\rbr[]] \ dz + 1.
\end{equation}
Clearly, we have $\bm \apprge \bm_0 \ge 1$, where $\bm_0$ is given in \eqref{def_M_0}. Putting $\rho_0 =\frac{1}{C_0 \bm}$ for some constant $C_0 = {C_0}_{(\La_0,\La_1,\plog, \qlog, n, \bs_0)}>0$, we can apply all results in Section \ref{eight}.

For $k >0$, we define the truncation of $|\nabla u|^{\frac{p(\cdot)(1-\beta)q(\cdot)}{q_{K_{4\rho}(\mfz)}^-}}$ as
\begin{equation*}
\gh{|\nabla u|^{\frac{p(\cdot)(1-\beta)q(\cdot)}{q_{K_{4\rho}(\mfz)}^-}}}_k (z) := \min \mgh{|\nabla u|^{\frac{p(z)(1-\beta)q(z)}{q_{K_{4\rho}(\mfz)}^-}}, k}.
\end{equation*}
Let $1 \le s_1 < s_2 \le 2$. Lemma \ref{useful int} implies that for sufficiently large $k > 1$,
\begin{equation}\label{ma1-1}
\begin{aligned}
&\hspace*{-2cm}\integral{K_{s_1 \rho}(\mfz)}{\gh{|\nabla u|^{\frac{p(\cdot)(1-\beta)q(\cdot)}{q_{K_{4\rho}(\mfz)}^-}}}_k^{q_{K_{4\rho}(\mfz)}^- -1} |\nabla u|^{\frac{p(z)(1-\beta)q(z)}{q_{K_{4\rho}(\mfz)}^-}}}{dz}\\
&\quad = \gh{q_{K_{4\rho}(\mfz)}^- -1} \int_0^k \al^{q_{K_{4\rho}(\mfz)}^- -2} \integral{E(s_1,\al)}{|\nabla u|^{\frac{p(z)(1-\beta)q(z)}{q_{K_{4\rho}(\mfz)}^-}}}{dz d\al}\\
&\quad = \gh{q_{K_{4\rho}(\mfz)}^- -1} B^{q_{K_{4\rho}(\mfz)}^- -1} \int_0^{\frac{k}{B}} \al^{q_{K_{4\rho}(\mfz)}^- -2} \integral{E(s_1,B\al)}{|\nabla u|^{\frac{p(z)(1-\beta)q(z)}{q_{K_{4\rho}(\mfz)}^-}}}{dz d\al}\\
&\quad = \gh{q_{K_{4\rho}(\mfz)}^- -1} B^{q_{K_{4\rho}(\mfz)}^- -1} \int_0^{A\tilde{\al}} \al^{q_{K_{4\rho}(\mfz)}^- -2} \ d\al \integral{K_{s_1 \rho}(\mfz)}{|\nabla u|^{\frac{p(z)(1-\beta)q(z)}{q_{K_{4\rho}(\mfz)}^-}}}{dzd\al}\\
&\qquad \ + \gh{q_{K_{4\rho}(\mfz)}^- -1} B^{q_{K_{4\rho}(\mfz)}^- -1} \int_{A\tilde{\al}}^{\frac{k}{B}} \al^{q_{K_{4\rho}(\mfz)}^- -2} \integral{E(s_1,B\al)}{|\nabla u|^{\frac{p(z)(1-\beta)q(z)}{q_{K_{4\rho}(\mfz)}^-}}}{dz d\al}\\
&\quad =: I_1 + I_2,
\end{aligned}
\end{equation}
where $\tilde{\al}$, $A$, $B$, and $E(s_1,\al)$ are given in \eqref{cv2-1}, \eqref{cv2-3}, \eqref{cv4-1}, and \eqref{cv2-2}, respectively. For $I_1$, we compute directly that
\begin{equation}\label{ma1-2}
I_1 \le \gh{A B \tilde{\al}}^{q_{K_{4\rho}(\mfz)}^- -1} \integral{K_{2\rho}(\mfz)}{|\nabla u|^{\frac{p(z)(1-\beta)q(z)}{q_{K_{4\rho}(\mfz)}^-}}}{dz} \apprle \frac{ \tilde{\al}^{q_{K_{4\rho}(\mfz)}^- -1}}{(s_2-s_1)^{(n+2)(q^+ -1)\vt_{K_{4\rho}(\mfz)}^+}} \integral{K_{2\rho}(\mfz)}{|\nabla u|^{\frac{p(z)(1-\beta)q(z)}{q_{K_{4\rho}(\mfz)}^-}}}{dz}.
\end{equation}
For $I_2$, it follows from \eqref{cv4-9} and Lemma \ref{useful int} that
\begin{equation}\label{ma1-3}
\begin{aligned}
I_2 \apprle \ve \integral{K_{s_2 \rho}(\mfz)}{\gh{|\nabla u|^{\frac{p(\cdot)(1-\beta)q(\cdot)}{q_{K_{4\rho}(\mfz)}^-}}}_k^{q_{K_{4\rho}(\mfz)}^- -1} |\nabla u|^{\frac{p(z)(1-\beta)q(z)}{q_{K_{4\rho}(\mfz)}^-}}}{dz} + \ve \gamma^{-q_{K_{4\rho}(\mfz)}^-} \integral{K_{s_2 \rho}(\mfz)}{|\bff|^{p(z)(1-\beta)q(z)}}{dz}.
\end{aligned}
\end{equation}
Here we choose $\ve$ small enough which also  determines $\gamma_0$.

Plugging \eqref{ma1-2} and \eqref{ma1-3} into \eqref{ma1-1} and applying Lemma \ref{useful tech}, we deduce
\begin{equation*}\label{ma1-4}
\begin{aligned}
&\integral{K_{\rho}(\mfz)}{\gh{|\nabla u|^{\frac{p(\cdot)(1-\beta)q(\cdot)}{q_{K_{4\rho}(\mfz)}^-}}}_k^{q_{K_{4\rho}(\mfz)}^- -1} |\nabla u|^{\frac{p(z)(1-\beta)q(z)}{q_{K_{4\rho}(\mfz)}^-}}}{dz}\\
&\qquad \apprle \tilde{\al}^{q_{K_{4\rho}(\mfz)}^- -1} \integral{K_{2\rho}(\mfz)}{|\nabla u|^{\frac{p(z)(1-\beta)q(z)}{q_{K_{4\rho}(\mfz)}^-}}}{dz} +  \integral{K_{2\rho}(\mfz)}{|\bff|^{p(z)(1-\beta)q(z)}}{dz}.
\end{aligned}
\end{equation*}
As $k \to \infty$, we have
\begin{equation}\label{ma1-5}
\integral{K_{\rho}(\mfz)}{|\nabla u|^{p(z)(1-\beta)q(z)}}{dz} \apprle \tilde{\al}^{q_{K_{4\rho}(\mfz)}^- -1} \integral{K_{2\rho}(\mfz)}{|\nabla u|^{\frac{p(z)(1-\beta)q(z)}{q_{K_{4\rho}(\mfz)}^-}}}{dz} +  \integral{K_{2\rho}(\mfz)}{|\bff|^{p(z)(1-\beta)q(z)}}{dz}.
\end{equation}

On the other hand, we note that
\begin{equation}\label{ma1-6}
\begin{array}{rcl}
\gh{\mint{K_{2\rho}(\mfz)}{\bgh{|\nabla u|^{p(z)(1-\beta)}+|\bff|^{p(z)(1-\beta)q^-}}}{dz}}^{\modq(8\rho)} &\overset{\eqref{def_MM},\eqref{size_u_f}}{\apprle} &\gh{\frac{\bm}{|K_{2\rho}(\mfz)|}}^{\modq(8\rho)}\\
&\apprle &\gh{\frac{1}{8\rho}}^{(n+3)\modq(8\rho)} \overset{\text{Definition \ref{definition_p_log}}}{\apprle} 1,
\end{array}
\end{equation}
and similarly
\begin{equation}\label{ma1-6.5}
\gh{\mint{K_{2\rho}(\mfz)}{\bgh{|\nabla u|^{p(z)(1-\beta)}+|\bff|^{p(z)(1-\beta)q^-}}}{dz}}^{\modp(8\rho)} \apprle 1.
\end{equation} 
Recalling \eqref{vt_def} and \eqref{vt_max_def}, it follows 
\begin{equation}\label{ma1-7}
\vt_{K_{4\rho}(\mfz)}^+ - \vt(\mfz) \apprle \modp(8\rho).
\end{equation}
Then we see
\begin{equation}\label{ma1-8}
\begin{array}{rcl}
&&\hspace*{-2cm}\mint{K_{2\rho}(\mfz)}{|\nabla u|^{\frac{p(z)(1-\beta)q(z)}{q_{K_{4\rho}(\mfz)}^-}}}{dz} \overset{\eqref{ca1}}{\apprle} \mint{K_{2\rho}(\mfz)}{|\nabla u|^{p(z)(1-\beta) \gh{1+\frac{\modq(8\rho)}{q^-}}}}{dz} + 1\\
& \overset{\eqref{hi1}}{\apprle}& \gh{\mint{K_{4\rho}(\mfz)}{\gh{|\nabla u|+|\bff|}^{p(z)(1-\beta)}}{dz}}^{1+\frac{\modq(8\rho)}{q^-}\tilde{\theta}} + \mint{K_{4\rho}(\mfz)}{|\bff|^{p(z)(1-\beta)\gh{1+\frac{\modq(8\rho)}{q^-}}}}{dz} + 1\\
& \overset{\eqref{ma1-6},\eqref{ca2}}{\apprle} & \mint{K_{4\rho}(\mfz)}{|\nabla u|^{p(z)(1-\beta)}}{dz} + \gh{\mint{K_{4\rho}(\mfz)}{|\bff|^{p(z)(1-\beta)q^-}}{dz}}^{\frac{1}{q^-}} + 1\\
& \overset{\eqref{ma1-6}}{\apprle} & \mint{K_{4\rho}(\mfz)}{|\nabla u|^{p(z)(1-\beta)}}{dz} + \gh{\mint{K_{4\rho}(\mfz)}{|\bff|^{p(z)(1-\beta)q(z)}}{dz}}^{\frac{1}{q(\mfz)}} + 1,
\end{array}
\end{equation}
and 
\begin{equation}\label{ma1-9}
\begin{aligned}	
\tilde{\al}^{q_{K_{4\rho}(\mfz)}^- -1} &\overset{\eqref{cv2-1}}{\apprle} \mgh{\mint{K_{2\rho}(\mfz)}{|\nabla u|^{\frac{p(z)(1-\beta)q(z)}{q_{K_{4\rho}(\mfz)}^-}}}{dz} + \gh{\mint{K_{2\rho}(\mfz)}{|\bff|^{\frac{p(z)(1-\beta)q(z)(1+\sigma)}{q_{K_{4\rho}(\mfz)}^-}}}{dz}}^{\frac{1}{1+\sigma}} + 1}^{\vt_{K_{4\rho}(\mfz)}^+ \gh{q_{K_{4\rho}(\mfz)}^- -1}}\\
& \overset{\eqref{ca2}, \eqref{ma1-6}-\eqref{ma1-8}}{\apprle} \mgh{\mint{K_{4\rho}(\mfz)}{|\nabla u|^{p(z)(1-\beta)}}{dz} + \gh{\mint{K_{4\rho}(\mfz)}{|\bff|^{p(z)(1-\beta)q(z)}}{dz}}^{\frac{1}{q(\mfz)}} + 1}^{\vt(\mfz) \gh{q(\mfz) -1}}.
\end{aligned}
\end{equation}

We finally obtain from \eqref{ma1-5}, \eqref{ma1-8}, and \eqref{ma1-9} that
\begin{equation}\label{ma1-10}
\mint{K_{\rho}(\mfz)}{|\nabla u|^{p(z)(1-\beta)q(z)}}{dz} \apprle \mgh{ \mint{K_{4\rho}(\mfz)}{|\nabla u|^{p(z)(1-\beta)}}{dz} + \gh{\mint{K_{4\rho}(\mfz)}{|\bff|^{p(z)(1-\beta)q(z)}}{dz}}^{\frac{1}{q(\mfz)}} + 1}^{1+\vt(\mfz) \gh{q(\mfz) -1}},
\end{equation}
which completes the proof.

\subsection{Proof of Theorem \ref{main_theorem2}}
We extend the local estimate \eqref{ma1-10} up to the boundary. We first choose $\rho = \frac{1}{C_0 \bm}$, where $C_0$ and $\bm$ are given in Section \ref{proof of local estimate}. From the standard covering argument, we can find finitely many disjoint parabolic cylinders $\mgh{Q_{\frac{\rho}{3}}(\mfz_k)}_{k=1}^m$, $\mfz_k \in \OO_T$, such that $\bar{\OO}_T \subset \bigcup_{k=1}^m Q_{\rho}(\mfz_k)$. Note that for an integrable function $f$, we have $\sum_{k=1}^m \integral{K_{4\rho}(\mfz_k)}{f}{dz} \apprle_{(n)} \integral{\OO_T}{f}{dz}.$ 

Then it follows from \eqref{ma1-10} that
\begin{equation}\label{ge-1}
\begin{aligned}
&\integral{\OO_T}{|\nabla u|^{p(z)(1-\beta)q(z)}}{dz} \le \sum_{k=1}^m \integral{K_{\rho}(\mfz_k)}{|\nabla u|^{p(z)(1-\beta)q(z)}}{dz}\\
&\quad \lesssim \rho^{n+2} \mgh{\rho^{-(n+2)q^+} \gh{\integral{\OO_T}{\bgh{|\nabla u|^{p(z)(1-\beta)}+1}}{dz}}^{q^+} + \rho^{-(n+2)}\integral{\OO_T}{\bgh{|\bff|^{p(z)(1-\beta)q(z)}+1}}{dz}}^{1+\vt^+ \gh{q^+ -1}},
\end{aligned}
\end{equation}
where $\vt^+ :=\sup_{z \in\OO_T} \vt(z)$.

Let $M^+$ and $M^-$ be any two  constants such that additionally we have $1 < M^- \le q^- \leq q(\cdot) \le q^+\leq M^+<\infty$. In the proof of 
Theorem \ref{main_theorem1} in Section \ref{proof of local estimate}, we see that ${\beta_0}$ can be chosen to depend on $M^+$ instead of $q^{\pm}$. This, in particular, implies that we can choose $\beta_0$ independent of $M^-$. 


Let us now define $r(z) :=  \frac{p(z)(1-\beta)}{p(z)} q(z)$ for $\beta \in (0,\beta_0)$ (it is important to note that we cannot take $\beta=0$), then we trivially have
$$r^- \ge \gh{\min_{z\in\OO_T}\frac{p(z)(1-\beta)}{p(z)}} M^- \quad \text{and} \quad r^+ \le \gh{\max_{z\in\OO_T}\frac{p(z)(1-\beta)}{p(z)}} M^+.$$
Note that $r(\cdot)$ is clearly log-H\"{o}lder continuous with the log-H\"older constants equivalent to the ones satisfied by $\qq$.

Since all the estimates above are independent of $M^-$ and $\beta_0$ is is independent of $M^-$, we can choose $M^-$ small such that $\gh{\min_{z\in\OO_T}\frac{p(z)(1-\beta)}{p(z)}} M^- \leq 1$. This in particular allows $r^-=1$. 

For this choice of the exponent $r(\cdot)$, we conclude from \eqref{ge-1}, \eqref{def_MM}, and the definition of $\rho$ that
\begin{equation*}\label{ge-3}
\begin{aligned}
\integral{\OO_T}{|\nabla u|^{p(z)r(z)}}{dz} &\le C \mgh{\gh{\integral{\OO_T}{|\bff|^{p(z)r(z)}}{dz}}^{\gh{1+\vt^+ \gh{q^+ -1}}(n+3)q^+ - (n+2)} + 1}\\
&\le C \mgh{\gh{\integral{\OO_T}{|\bff|^{p(z)r(z)}}{dz}}^{\gh{1+\vt^+ \gh{M^+ -1}}(n+3)M^+ - (n+2)} +1},
\end{aligned}
\end{equation*}
for some constant $C=C_{(\La_0,\La_1,\plog, r^{\pm}_{\log}, M^+, n, \OO_T,\bs_0)}>0$, which completes the proof.


%
\begin{appendices}
\addtocontents{toc}{\def\protect\cftchappresnum{}} 

\section{The method of Lipschitz truncation - first difference estimate}
\label{lipschitz_truncation}
In this appendix, following the techniques developed in \cite{adimurthi2018sharp} which were originally pioneered in \cite{KL},  we will develop a modified version of Lipschitz truncation suited to our needs.
Recall that $u$ is a weak solution of \eqref{basic_pde} and $w$ is a weak solution of \eqref{wapprox_int}. For this section, we only need to assume the following restrictions on the size of the region $K_{4\rho}^{\al}(\mfz)$: In particular, we will take $\tilde{\rho}_3$  small such that \descref{R6}{R6} and \descref{R4}{R4} are applicable.

To simplify the notation, we will define 
\begin{equation}
\label{def_s}
s:= \scalet{\al}{\mfz} (4\rho)^2.
\end{equation}


Let us now collect some well known results that will be needed in the course of the proof. The first lemma is a time localised version of the parabolic Poincar\'e inequality (see \cite[Lemma 4.2]{adimurthi2018weight} for the proof):
\begin{lemma}
 \label{lemma_crucial_1}
 Let $f \in L^{\vt} (-T,T; W^{1,\vt}(\Om))$ with $\vt \in (1,\infty)$  and suppose that $\mathcal{B}_{r} \Subset \Om$ be compactly contained ball of radius $r>0$. Let $I \subset (-T,T)$ be a time interval  and $\rho(x,t) \in L^1(\mathcal{B}_r \times I)$ be any positive function such that $$\|\rho\|_{L^{\infty}(\mathcal{B}_r\times I)} \apprle_{(n)} \frac{|\mathcal{B}_r\times I|}{\|\rho\|_{L^1(\mathcal{B}_r\times I)}} $$ and $\mu(x) \in C_c^{\infty}(\mathcal{B}_r)$ be  such that $\int_{\mathcal{B}_r} \mu(x) \ dx = 1$ with $|\mu| \leq  \frac{C_{(n)}}{r^n}$ and $|\nabla \mu| \leq  \frac{C_{(n)}}{r^{n+1}}$, then there holds:
 \begin{equation*}
 \begin{array}{ll}
  \fiint_{\mathcal{B}_r \times I} \left|\frac{f(z)\lsb{\chi}{J} - \avgs{f\lsb{\chi}{J}}{\rho}}{r}\right|^{\vt} \ dz & \apprle_{(n,s,C^{\mu})} \fiint_{\mathcal{B}_r \times I} |\nabla f|^{\vt}\lsb{\chi}{J} \ dz + \sup_{t_1,t_2 \in I} \left| \frac{\avgs{f\lsb{\chi}{J}}{\mu}(t_2) - \avgs{f\lsb{\chi}{J}}{\mu}(t_1)}{r} \right|^{\vt}
  \end{array}
 \end{equation*}
where $\avgs{f\lsb{\chi}{J}}{\rho}:= \int_{\mathcal{B}_r\times I} f(z)\lsb{\chi}{J} \frac{\rho(z)}{\|\rho\|_{L^1(\mathcal{B}_r\times I)}} \ dz\ $, $\avgs{f\lsb{\chi}{J}}{\mu}(t_i) := \int_{\mathcal{B}_r} f(x,t_i) \mu(x) \lsb{\chi}{J} \ dx$ and $J \Subset (-\infty,\infty)$ is some fixed time-interval. 
\end{lemma}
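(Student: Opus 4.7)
The plan is to decompose the deviation $f(z)\chi_J(t)-\avgs{f\chi_J}{\rho}$ into a purely spatial fluctuation plus a purely temporal fluctuation, apply the classical weighted Poincar\'e inequality in space to the former, and handle the latter by inserting the pointwise-in-time average $\avgs{f\chi_J}{\mu}(t) := \int_{\mathcal{B}_r} f(x,t)\chi_J(t)\mu(x)\,dx$ as a bridge. Concretely, writing
\[
f(z)\chi_J(t) - \avgs{f\chi_J}{\rho}
\;=\; \bigl[f(z)\chi_J(t) - \avgs{f\chi_J}{\mu}(t)\bigr] + \bigl[\avgs{f\chi_J}{\mu}(t) - \avgs{f\chi_J}{\rho}\bigr],
\]
a triangle inequality in $L^{\vt}(\mathcal{B}_r\times I)$ followed by division by $r^{\vt}$ reduces the task to controlling the two bracketed pieces separately.

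For the spatial piece, I would slice in $t$ and apply the standard Poincar\'e inequality with the smooth weight $\mu$ on $\mathcal{B}_r$ (legitimate since $\int\mu=1$ and $|\mu|\le C(n)r^{-n}$, which gives the weight constant $\apprle 1$). Integrating the resulting bound $\int_{\mathcal{B}_r}|f(x,t)\chi_J(t)-\avgs{f\chi_J}{\mu}(t)|^{\vt}\,dx \apprle r^{\vt}\int_{\mathcal{B}_r}|\nabla f(x,t)|^{\vt}\chi_J(t)\,dx$ in time across $I$ then yields exactly the $\fiint |\nabla f|^{\vt}\chi_J$ term on the right-hand side. Note that since $\chi_J(t)$ is the same for both factors inside the bracket, slices with $t\notin J$ contribute nothing, and the characteristic function passes cleanly through the spatial Poincar\'e step.

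For the temporal piece, I would further split through the intermediate mixed average
\[
\avgs{f\chi_J}{\mu,\rho} := \frac{1}{\|\rho\|_{L^1}}\int_I \avgs{f\chi_J}{\mu}(\tau)\,\|\rho(\cdot,\tau)\|_{L^1(\mathcal{B}_r)}\,d\tau,
\]
so that $\avgs{f\chi_J}{\mu}(t)-\avgs{f\chi_J}{\rho} = [\avgs{f\chi_J}{\mu}(t)-\avgs{f\chi_J}{\mu,\rho}] + [\avgs{f\chi_J}{\mu,\rho}-\avgs{f\chi_J}{\rho}]$. The first term is a weighted time-average of differences $\avgs{f\chi_J}{\mu}(t)-\avgs{f\chi_J}{\mu}(\tau)$, hence is pointwise bounded by $\sup_{t_1,t_2\in I}|\avgs{f\chi_J}{\mu}(t_2)-\avgs{f\chi_J}{\mu}(t_1)|$, which after division by $r$ and raising to the $\vt$-th power gives precisely the second term on the right-hand side of the claim. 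The second term, for each fixed $\tau$, reduces via Fubini and pulling $\rho(x,\tau)$ outside to $\|\rho\|_{L^\infty}\int_{\mathcal{B}_r}|f(x,\tau)\chi_J(\tau)-\avgs{f\chi_J}{\mu}(\tau)|\,dx$, to which the spatial Poincar\'e inequality applies once more.

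The main technical point, and the place where the hypothesis $\|\rho\|_{L^\infty}\apprle |\mathcal{B}_r\times I|/\|\rho\|_{L^1}$ is indispensable, is the final normalization: one must trade the $L^\infty$-bound on $\rho$ against its $L^1$-mass so that the extra factor produced in the mixed-average step is absorbed into the averaged integral $\fiint_{\mathcal{B}_r\times I}$ rather than appearing as an unwanted dimensional constant. After this balancing and a Jensen/H\"older application to pass from the $L^1$-in-space gradient bound to the $L^{\vt}$-in-space-time bound, the two pieces combine to give the desired inequality, with the implicit constant depending only on $n$, $\vt$, and the constant in the $L^\infty$-to-$L^1$ comparison for $\rho$.
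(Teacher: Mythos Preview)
Your approach is correct and is the standard proof of this parabolic Poincar\'e inequality: split through the time-sliced $\mu$-average, control the spatial fluctuation by the usual weighted Poincar\'e inequality on $\mathcal{B}_r$, and control the temporal fluctuation by a further split through the mixed average, using the hypothesis $\|\rho\|_{L^\infty}\apprle |\mathcal{B}_r\times I|/\|\rho\|_{L^1}$ exactly where you indicate to pass from the $\rho$-weighted space average back to the plain average. The paper does not include its own proof of this lemma but simply cites \cite[Lemma~4.2]{adimurthi2018weight}; your outline is essentially the argument given there.
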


\begin{lemma}
\label{lemma_crucial_2}
 For any  $h \in (0,2s)$ and let $\phi(x) \in C_c^{\infty}({\Om_{4\rho}^{\al}(\mfx)})$ and $\varphi(t) \in C^{\infty}(\mft-s,\infty)$ with $\varphi(\mft-s) = 0$ be a  non-negative function and $[u]_h,[w]_h$ be the Steklov average as defined in \eqref{stek1}. Then   the following estimate holds for any time interval $(t_1,t_2) \subset [\mft-s,\mft+s]$:
 \begin{equation*}
  \label{lemma_crucial_2_est}
  \begin{array}{rcl}
  |\avgs{{[u-w]_h \varphi}}{\phi} (t_2) - \avgs{{[u-w]_h\varphi}}{\phi}(t_1)| & \leq & \|\nabla \phi\|_{L^{\infty}{({\Om_{4\rho}^{\al}(\mfz)})}} \|\varphi\|_{L^{\infty}(t_1,t_2)} \iint_{{\Om_{4\rho}^{\al}(\mfx)} \times (t_1,t_2)} \abs{\aa(z,\nabla w) - \aa(z,\nabla u)} \ dz \\
   & &\qquad  +\|\nabla \phi\|_{L^{\infty}{({\Om_{4\rho}^{\al}(\mfz)})}} \|\varphi\|_{L^{\infty}(t_1,t_2)} \iint_{{\Om_{4\rho}^{\al}(\mfx)} \times (t_1,t_2)} [|\bff|^{\pp-1}]_h \ dz \\
   & &\qquad + \|\phi\|_{L^{\infty}{({\Om_{4\rho}^{\al}(\mfx)})}} \|\varphi'\|_{L^{\infty}(t_1,t_2)} \iint_{{\Om_{4\rho}^{\al}(\mfx)} \times (t_1,t_2)} |[u-w]_h| \ dz.
  \end{array}
 \end{equation*}
\end{lemma}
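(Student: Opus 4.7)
The lemma is a purely computational bookkeeping step: I differentiate the quantity $\avgs{[u-w]_h\varphi}{\phi}(t)$ in time, use the Steklov-averaged PDEs to rewrite the time derivative of $[u-w]_h$, and then integrate back from $t_1$ to $t_2$. Since $\phi \in C_c^\infty(\Om_{4\rho}^{\al}(\mfx))$, extending by zero makes $\phi$ an admissible spatial test function in both \eqref{basic_pde} and \eqref{wapprox_int} simultaneously; in particular the boundary condition $w=u$ on $\pa_p K_{4\rho}^\al(\mfz)$ plays no role because $\phi$ vanishes near $\pa \Om_{4\rho}^{\al}(\mfx)$.

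First, observe that by definition
\[
\avgs{[u-w]_h\varphi}{\phi}(t) \;=\; \varphi(t)\int_{\Om_{4\rho}^{\al}(\mfx)} [u-w]_h(x,t)\,\phi(x)\,dx,
\]
so that the product rule gives
\[
\frac{d}{dt}\avgs{[u-w]_h\varphi}{\phi}(t) \;=\; \varphi(t)\int_{\Om_{4\rho}^{\al}(\mfx)} \frac{d[u-w]_h}{dt}\,\phi\,dx \;+\; \varphi'(t)\int_{\Om_{4\rho}^{\al}(\mfx)} [u-w]_h\,\phi\,dx.
\]
Taking Steklov averages of \eqref{basic_pde} and \eqref{wapprox_int}, subtracting, and testing with $\phi \in C_c^\infty(\Om_{4\rho}^{\al}(\mfx))$, the first integral becomes
\[
\int_{\Om_{4\rho}^{\al}(\mfx)} \frac{d[u-w]_h}{dt}\,\phi\,dx \;=\; -\int_{\Om_{4\rho}^{\al}(\mfx)} \iprod{[\aa(x,t,\nabla u)-\aa(x,t,\nabla w)]_h}{\nabla\phi}\,dx \;+\; \int_{\Om_{4\rho}^{\al}(\mfx)} \iprod{[|\bff|^{\pp-2}\bff]_h}{\nabla\phi}\,dx.
\]

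Integrating the resulting identity from $t_1$ to $t_2$ and taking absolute values then yields three contributions, each of which is estimated by pulling $\|\varphi\|_{L^\infty}$, $\|\varphi'\|_{L^\infty}$, $\|\nabla\phi\|_{L^\infty}$, or $\|\phi\|_{L^\infty}$ outside, and bounding the remaining integrand pointwise by Cauchy--Schwarz. For the first two terms I apply Jensen's inequality for Steklov averages, namely $|[f]_h| \leq [|f|]_h$, together with Fubini to replace $\iint|[\aa(x,t,\nabla u)-\aa(x,t,\nabla w)]_h|\,dz$ by $\iint|\aa(x,t,\nabla u)-\aa(x,t,\nabla w)|\,dz$ (after possibly enlarging the time interval by $h$, which is absorbed in the limit $h\to 0$ that is taken in the sequel), and similarly to bound $|[|\bff|^{\pp-2}\bff]_h| \leq [|\bff|^{\pp-1}]_h$. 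Collecting these three contributions gives exactly the stated inequality.

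There is no real obstacle here: the computation is essentially the definition of weak solution combined with the product rule. The only points requiring a little care are (i) the verification that $\phi$ is an admissible test function in the Steklov-averaged form of \emph{both} equations simultaneously on $\Om_{4\rho}^{\al}(\mfx)$, which is immediate from $\phi \in C_c^\infty$; and (ii) keeping track of which Steklov averages can be dispensed with via Jensen and which must remain, in order to match the precise form of the right-hand side as written in the conclusion.
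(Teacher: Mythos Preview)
Your approach is correct and is exactly the standard computation one would expect; the paper in fact states this lemma without proof, treating it as routine. The only point worth flagging is the one you already identified in (ii): the weak formulation produces $[\aa(\cdot,\nabla u)-\aa(\cdot,\nabla w)]_h$ rather than the un-averaged difference appearing on the right-hand side of the stated inequality, and passing from one to the other via Fubini shifts the time interval by $h$. Since every application of the lemma in the paper is followed by $h\searrow 0$ (see e.g.\ the proof of Lemma~\ref{improved_est} and estimate \eqref{est_J_2_one}), this discrepancy is harmless, exactly as you note.
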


\subsection{Construction of test function}

Let us denote the following functions:
\begin{gather*}
v(z) := u(z) - w(z) \txt{and} \vh(z) := [u-w]_h(z), \label{def_vh}
\end{gather*}
where $[u-w]_h(z)$ denotes the usual Steklov average. It is easy to  see that $\vh \xrightarrow{h \searrow 0} v$. We also note that $v(z) = 0$ for $z \in \pa_p K_{4\rho}^{\al}(\mfz)$.  For some fixed $\mathfrak{q}$ such that $1 <\mathfrak{q}< \frac{p^-}{p^+-1}$, with $\mm$ as given in \eqref{par_max}, let us now define
\begin{equation}
\label{def_g_A}
\begin{array}{ll}
g(z) & := \mm\lbr \lbr[[]\frac{|v|}{\scalex{\al}{\mfz}\rho} + |\nabla u| + |\nabla w| + |\bff| + 1\rbr[]]^{\frac{p(z)}{\mathfrak{q}}} \lsb{\chi}{K_{4\rho}^{\al}(\mfz)}\rbr^{\mathfrak{q}(1-\be)}.
\end{array}
\end{equation}

For a fixed $\la \geq 1$, let us define the \emph{good set} by 
\begin{equation}
\label{elambda}
\elam := \{ z \in \RR^{n+1} : g(z) \leq \la^{1-\be}\}.
\end{equation}

For the rest of this section,  we will always assume that the following bound holds:
\begin{lemma}
\label{bound_rho}
With $\rho \leq \tilde{\rho}_3$, there holds
\[
\rho^{\pm \abs{p^+_{K_{4\rho}^{\al}(\mfz)}-p^-_{K_{4\rho}^{\al}(\mfz)}}} \leq C_{(\plog,n)}.
\]
\end{lemma}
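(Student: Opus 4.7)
The plan is to reduce the bound to a direct consequence of the log-Hölder continuity of $\pp$ combined with the fact that, even after intrinsic scaling by $\al \geq 1$, the cylinder $K_{4\rho}^{\al}(\mfz)$ is parabolically small.

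First I would observe that since $\al \ge 1$ and, by the definition \eqref{def_d} of $d$, both scaling exponents are nonpositive: $-\tfrac{1}{p(\mfz)}+\tfrac{d}{2} \le -\tfrac{1}{p^+}+\tfrac{d}{2} \le 0$ and $-1+d \le 0$. Hence $\scalex{\al}{\mfz}\le 1$ and $\scalet{\al}{\mfz}\le 1$, so $K_{4\rho}^{\al}(\mfz)\subset K_{4\rho}(\mfz)$. In particular, for any two points $z_1,z_2 \in K_{4\rho}^{\al}(\mfz)$, the parabolic metric satisfies $d_p(z_1,z_2) \le 8\rho$. By shrinking $\tilde{\rho}_3$ if necessary so that $8\tilde{\rho}_3 \le \tfrac12$, we may apply Remark \ref{remark_def_p_log} (or directly Definition \ref{definition_p_log}) to conclude
\[
p^+_{K_{4\rho}^{\al}(\mfz)} - p^-_{K_{4\rho}^{\al}(\mfz)} \ \le\ \modp(8\rho).
\]

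Next I would bound the two powers of $\rho$ separately. Since $\rho \le \tilde{\rho}_3 \le 1$ and the exponent $\bigl|p^+_{K_{4\rho}^{\al}(\mfz)} - p^-_{K_{4\rho}^{\al}(\mfz)}\bigr|$ is nonnegative, the bound $\rho^{+|p^+-p^-|} \le 1$ is immediate. For the negative power, I would use the key defining property of the log-Hölder class from Remark \ref{remark_def_p_log}, namely $\modp(r)\log(1/r) \le L$ for $0<r\le 1/2$, applied with $r = 8\rho$:
\[
\rho^{-\modp(8\rho)} \ =\ e^{\modp(8\rho)\log(1/\rho)} \ =\ e^{\modp(8\rho)\log(1/(8\rho))}\, e^{\modp(8\rho)\log 8} \ \le\ e^{L}\, 8^{L},
\]
which is a constant depending only on $\plog$ (and trivially on $n$ through the ambient setting). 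Combining the two estimates yields $\rho^{\pm |p^+_{K_{4\rho}^{\al}(\mfz)}-p^-_{K_{4\rho}^{\al}(\mfz)}|} \le C_{(\plog,n)}$.

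There is no genuine obstacle here; the only care is to ensure the intrinsic rescaling does not expand the cylinder (which is why $\al \ge 1$ together with \eqref{def_d} is used) and to choose $\tilde{\rho}_3$ small enough that log-Hölder continuity is applicable on the scale $8\rho$. Both conditions are consistent with the restrictions already imposed in Section \ref{sub_radii}, so the restriction on $\rho_0$ listed as \descref{R1}{R1}--\descref{R8}{R8} is more than sufficient to absorb this extra smallness.
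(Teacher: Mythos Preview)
Your proof is correct and follows essentially the same approach as the paper: first bound the oscillation $p^+_{K_{4\rho}^{\al}(\mfz)}-p^-_{K_{4\rho}^{\al}(\mfz)}$ by $\modp(c\rho)$ using that $\al\ge1$ forces the intrinsic cylinder into the unscaled one, then exploit the log-H\"older bound $\modp(r)\log(1/r)\le L$ to control $\rho^{-\modp(c\rho)}$. Your handling of the constant $8$ via the split $\log(1/\rho)=\log(1/(8\rho))+\log 8$ is in fact cleaner than the paper's slightly informal step $\rho^{-\modp(32\rho)}\le\rho^{-32\modp(\rho)}$, but the idea is the same.
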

\begin{proof}
Since $\pp \in \plog$, we have from Remark \ref{remark_def_p_log}, 
\begin{equation*}
p^+_{K_{4\rho}^{\al}(\mfz)} - p^-_{K_{4\rho}^{\al}(\mfz)} \leq \modp \lbr \max\left\{ 8\scalex{\al}{\mfz}\rho, \sqrt{\scalet{\al}{\mfz}32\rho^2} \right\} \rbr \leq \modp(32\rho).
\end{equation*}

Since $\rho \leq 1$, we only need to bound  $\rho^{-(p^+_{K_{4\rho}^{\al}(\mfz)}-p^-_{K_{4\rho}^{\al}(\mfz)})}$, which we do as follows:
\begin{equation*}
\rho^{p^-_{K_{4\rho}^{\al}(\mfz)}-p^+_{K_{4\rho}^{\al}(\mfz)}}  \leq \rho^{-32\modp(\rho) } = e^{32 \modp(\rho) \log \frac{1}{\rho}}\leq C_{(\plog,n)}.
\end{equation*}
This completes the proof of the lemma. 
\end{proof}

Following the ideas from \cite[Lemma 5.10]{adimurthi2018sharp}, we can obtain a Vitali-type covering lemma.
\begin{lemma}
\label{lemma_vitali}
Let $\la \geq 1$ be such that  \eqref{elambda} is given,   then for every $z \in K_{4\rho}^{\al}(\mfz)\setminus\elam$, consider the parabolic cylinders of the form 
\begin{equation*}
\label{q_rho_z}
Q_{\rho_z}^{\la}(z) := B_{\scalex{\la}{z}\rho_z}(x) \times (t - \scalet{\la}{z}\rho_z^2,t + \scalet{\la}{z}\rho_z^2)
 \end{equation*}
 where 
 $\rho_z := d^{\la}_z(z,\elam) := \inf_{\tz \in \elam} d^{\la}_z(z,\tz).$
 Let $\mathfrak{k} \in (0,1]$ be a given constant and consider the open covering of $K_{4\rho}^{\al}(\mfz) \setminus \elam$ given by 
 \begin{equation*}
 \label{covering_F}
 \mcf := \left\{ Q_{\mathfrak{k} \rho_z}^{\la}(z)\right\}_{z \in K_{4\rho}^{\al}(\mfz) \setminus \elam}.
 \end{equation*}

 Then there exists a universal constant $\mfi = \mfi(\plog,n)\geq 9$ and a countable disjoint subcollection $ \mcg := \{Q_{\rho_i}^{\la}(z_i)\}_{i \in \NN}\subset \mcf$  such that there holds
 \begin{equation*}
 \bigcup_{\mcf}Q_{\mathfrak{k}\rho_z}^{\la}(z) \subset \bigcup_{\mcg} Q_{\mfi \rho_{z_i}}^{\la} (z_i).
 \end{equation*}
\end{lemma}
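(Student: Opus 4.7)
The plan is to run a classical Vitali greedy selection, dyadic in $\rho_z$, and to handle the fact that the candidate cylinders are intrinsically scaled by $p$-dependent factors at their respective centers by exploiting the log-H\"older continuity of $p(\cdot)$ together with the size restrictions of Section \ref{sub_radii}. I first observe that $D:=\sup_{z\in K_{4\rho}^{\al}(\mfz)\setminus\elam}\rho_z$ is finite: since $\scalex{\la}{\cdot}\leq 1$ and $\scalet{\la}{\cdot}\leq 1$ for $\la\geq 1$ (a direct consequence of $d<\min\{2/p^+,1\}$ in \eqref{def_d}), the quantity $\rho_z=d_z^{\la}(z,\elam)$ is bounded by a fixed multiple of $\rho$. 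I then set
\[
\mcf_k:=\left\{Q_{\mathfrak{k}\rho_z}^{\la}(z)\in\mcf \,:\, \tfrac{D}{2^k}<\rho_z\leq\tfrac{D}{2^{k-1}}\right\},\qquad k\geq 1,
\]
and inductively construct $\mcg_k\subset\mcf_k$ as a maximal pairwise disjoint subfamily that is also disjoint from every cylinder in $\bigcup_{l<k}\mcg_l$, setting $\mcg:=\bigcup_{k\geq 1}\mcg_k$. The family $\mcg$ is automatically countable since its elements are disjoint subsets of the bounded region $K_{4\rho}^{\al}(\mfz)$.

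Next, for any $Q_{\mathfrak{k}\rho_z}^{\la}(z)\in\mcf_k$, maximality of the greedy construction produces some $Q_{\mathfrak{k}\rho_{z_i}}^{\la}(z_i)\in\bigcup_{l\leq k}\mcg_l$ whose intersection with $Q_{\mathfrak{k}\rho_z}^{\la}(z)$ is nonempty, whence $\rho_{z_i}>D/2^k\geq\rho_z/2$. Picking an arbitrary $w\in Q_{\mathfrak{k}\rho_z}^{\la}(z)$ and a common point $w_0\in Q_{\mathfrak{k}\rho_z}^{\la}(z)\cap Q_{\mathfrak{k}\rho_{z_i}}^{\la}(z_i)$, the triangle inequality in the $d_{z_i}^{\la}$-metric through $w_0$ reduces the desired containment $Q_{\mathfrak{k}\rho_z}^{\la}(z)\subset Q_{\mfi\rho_{z_i}}^{\la}(z_i)$ to controlling $d_{z_i}^{\la}(w,w_0)$ and $d_{z_i}^{\la}(z,w_0)$, which are a priori distances in the metric $d_z^{\la}$ of comparable size to $\mathfrak{k}\rho_z\leq 2\mathfrak{k}\rho_{z_i}$.

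The only nontrivial step, and the one I expect to require care, is the conversion of $d_z^{\la}$ into $d_{z_i}^{\la}$: by the definitions of the two localized metrics the conversion factor is $\la^{(p(z_i)-p(z))/(p(z)p(z_i))}$ in the spatial direction and $1$ in the time direction. Since the centers in question lie inside $K_{4\rho}^{\al}(\mfz)\subset Q_{4\rho}(\mfz)$, Remark \ref{remark_def_p_log} yields $|p(z)-p(z_i)|\leq\modp(c\rho)$ with a universal $c$. Combining this with the log-H\"older inequality $\modp(c\rho)\log(1/(c\rho))\leq L$ and the radius restriction \descref{R4}{R4} (which, exactly as in the analogous boundary construction in \cite[Lemma 5.10]{adimurthi2018sharp}, forces $\la^{|p(z)-p(z_i)|}$ to be dominated by a universal constant depending only on $\plog$ and $n$ once the geometry of the covering has been fixed) bounds the conversion factor by $C(\plog,n)$. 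Feeding this back into the triangle inequality chain and using $\rho_z\leq 2\rho_{z_i}$ produces a universal enlargement factor $\mfi=\mfi(\plog,n)\geq 9$ with $Q_{\mathfrak{k}\rho_z}^{\la}(z)\subset Q_{\mfi\rho_{z_i}}^{\la}(z_i)$, which completes the covering.
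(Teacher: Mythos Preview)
Your greedy dyadic Vitali selection followed by a metric-comparison step is exactly the route the paper takes, as it defers the entire argument to \cite[Lemma~5.10]{adimurthi2018sharp}. One point does need correction, though. Your control of the conversion factor $\la^{|1/p(z)-1/p(z_i)|}$ via the \emph{global} oscillation bound $|p(z)-p(z_i)|\le\modp(c\rho)$ over $K_{4\rho}^{\al}(\mfz)$ together with \descref{R4}{R4} cannot produce a $\la$-independent constant, since $\la\ge 1$ is unrestricted in the lemma and nothing in \descref{R4}{R4} ties $\rho$ to $\la$. The actual mechanism in \cite{adimurthi2018sharp} (recorded in this paper as \eqref{2.2.28-1}) is local: because $\scalex{\la}{\cdot}\le 1$ and $\scalet{\la}{\cdot}^{1/2}\le 1$, the two intersecting Vitali cylinders have $d_p$-diameter at most $C\rho_{z_i}\max\{\scalex{\la}{z_i},\scalet{\la}{z_i}^{1/2}\}\le C\rho_{z_i}\la^{-c}$ for some $c=c(\plog,d)>0$, whence $|p(z)-p(z_i)|\le\modp(C\rho_{z_i}\la^{-c})$; the log-H\"older inequality then gives
\[
\la^{|p(z)-p(z_i)|}\le \exp\Bigl(\tfrac{L\log\la}{\log(1/(C\rho_{z_i}\la^{-c}))}\Bigr)\le \exp\Bigl(\tfrac{L\log\la}{c\log\la}\Bigr)=e^{L/c},
\]
uniformly in $\la$. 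With this local estimate in place your engulfing argument goes through and yields $\mfi=\mfi(\plog,n)$ as claimed. (Your side remark that $D=\sup_z\rho_z$ is a ``fixed multiple of $\rho$'' is also off for the same reason---$d_z^{\la}\ge d_p$ only gives a lower bound---but finiteness of $D$, which is all the greedy selection needs, is immediate since $\elam$ is nonempty.)
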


We now have the following Whitney type covering whose proof is very similar to \cite[Lemma 5.11]{adimurthi2018sharp}. 
  \begin{lemma}
 \label{whitney_covering}
 There exists a universal constant $\de \in (0,1/4)$ such that for $\mcf$, a given  covering of $K_{4\rho}^{\al}(\mfz) \setminus \elam$ given by the cylinders:
$\mcf := \left\{Q_{\frac{\de}{\mfi}\rho_z}^{\la}(z)\right\}_{z \in K_{4\rho}^{\al}(\mfz) \setminus \elam},$
 where $\mfi$ is the constant from Lemma \ref{lemma_vitali}, there exists a countable subcollection $\mcg = \left\{ Q_{\de \rho_{z_i}}^{\la}(z_i)\right\}_{i \in \NN} = \{ Q_{r_i}^{\la}(z_i)\}_{i \in \NN} $ subordinate to the covering $\mcf$ such that 
 the following holds:
 
\begin{description}
\descitem{(W1)}{W1} $K_{4\rho}^{\al}(\mfz) \setminus \elam \subset \bigcup_{i \in \NN}Q_i $.
\descitem{(W2)}{W2} Each point $z \in K_{4\rho}^{\al}(\mfz) \setminus \elam$ belongs to utmost $C_{(n,\plog)}$ cylinders of the form $2Q_i$. 
\descitem{(W3)}{W3} There exists a constant $C=C_{(n,\plog)}$ such that for any two cylinders $Q_i$ and $Q_j$ with $2Q_i \cap 2Q_j \neq \emptyset$, there holds
\begin{equation*}
|B_i| \leq C |B_j| \leq C |B_i| \txt{and} |I_i| \leq C |I_j| \leq C |I_i|.
\end{equation*}
In particular, there holds $|Q_i| \approx_{(\plog,n)} |Q_j|$.
\descitem{(W4)}{W4} There exists a constant $\hat{c} = \hat{c}_{(n,\plog)}\geq 9$ such that for all $i \in \NN$, there holds:
\begin{equation*}
\hat{c} Q_i \subset \RR^{n+1} \setminus \elam \txt{and} 8\hat{c} Q_i \cap \elam \neq \emptyset.
\end{equation*}
\descitem{(W5)}{W5} For the constant $\hat{c}$ from above, there holds $2Q_i \cap 2Q_j \neq \emptyset$ implies $2Q_i \subset \hat{c}Q_j$. 
\end{description}
\end{lemma}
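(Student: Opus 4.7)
The strategy is to apply the Vitali-type covering Lemma \ref{lemma_vitali} to the family $\mcf$ with $\mathfrak{k} = \de/\mfi$, and then verify that, for $\de \in (0,1/4)$ chosen sufficiently small depending only on $\mfi$ and the universal constants, the resulting disjoint subcollection $\mcg = \{Q_{\de\rho_{z_i}}^\la(z_i)\}_{i\in\NN}$ satisfies properties \descref{W1}{W1}--\descref{W5}{W5}. Property \descref{W1}{W1} is immediate from Lemma \ref{lemma_vitali}: the conclusion there gives exactly $\bigcup_{z \in K_{4\rho}^{\al}(\mfz)\setminus \elam}Q_{\frac{\de}{\mfi}\rho_z}^{\la}(z) \subset \bigcup_i Q_{\de \rho_{z_i}}^{\la}(z_i)$.

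The main obstacle, and the core of the proof, is \descref{W3}{W3}: establishing comparability of the radii of overlapping cylinders. Suppose $2Q_i \cap 2Q_j \neq \emptyset$ with $\rho_{z_i}\geq \rho_{z_j}$. Pick $\hat z \in 2Q_i \cap 2Q_j$. By the triangle inequality for $d^\la_{z_i}$, for any $\tz \in \elam$ one has $d^\la_{z_i}(z_i, \tz) \leq d^\la_{z_i}(z_i,z_j) + d^\la_{z_i}(z_j, \tz)$, and a direct bound via $\hat z$ together with the definition of $2Q_i, 2Q_j$ gives $d_{z_i}^\la(z_i,z_j) \le 2\de \rho_{z_i} + 2\de\rho_{z_j}$. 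The technical difficulty is that $d^\la_{z_i}$ and $d^\la_{z_j}$ differ through the exponent $p(\cdot)$ in $\scalex{\la}{\cdot}$; here one converts between them by comparing $\la^{1/p(z_i)-1/p(z_j)}$, which is controlled by Lemma \ref{bound_rho} and the log-H\"older condition, yielding $d^\la_{z_j}(z_j,\tz) \leq c \, d^\la_{z_i}(z_j,\tz)$ for a universal $c$. Combining these, $\rho_{z_j} = d^\la_{z_j}(z_j,\elam) \leq c\rho_{z_i}(1 + 4\de)$, and the reverse inequality follows symmetrically, yielding $\rho_{z_i} \approx_{(n,\plog)} \rho_{z_j}$ provided $\de<1/4$. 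The volume comparison $|Q_i|\approx |Q_j|$ is then immediate from the definition of the intrinsic cylinders combined once more with Lemma \ref{bound_rho}.

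Property \descref{W2}{W2} follows from \descref{W3}{W3} together with the disjointness of the $Q_i$: if a point $z$ lies in $2Q_{i_1},\ldots,2Q_{i_N}$, then by \descref{W3}{W3} all these cylinders have volume comparable to a fixed $|Q_{i_1}|$; since the unenlarged $Q_{i_k}$ are disjoint and are contained in $CQ_{i_1}$ (for a universal $C$, again by \descref{W3}{W3}), a volume count forces $N$ to be bounded by a constant depending only on $n$ and $\plog$. Property \descref{W5}{W5} is a direct consequence of the same kind of triangle-inequality argument used for \descref{W3}{W3}: if $2Q_i\cap 2Q_j\neq\emptyset$ and $\rho_{z_i}\approx \rho_{z_j}$, then all of $2Q_i$ lies within intrinsic distance $(2\de\rho_{z_i}+2\de\rho_{z_j}+2\de\rho_{z_i})$ of $z_j$, which is at most $\hat c \rho_{z_j}$ for $\hat c=\hat c(n,\plog)\ge 9$ chosen large enough. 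Finally, \descref{W4}{W4} is built into the choice of $\de$: since $\rho_{z_i} = d^\la_{z_i}(z_i,\elam)$, any cylinder $Q_{r}^\la(z_i)$ with $r<\rho_{z_i}$ is contained in $\RR^{n+1}\setminus \elam$, so picking $\de$ such that $\hat c \de < 1$ gives the first inclusion; for the nonempty intersection $8\hat c Q_i \cap \elam \neq \emptyset$, one simply observes that by definition of $\rho_{z_i}$ there exists a point of $\elam$ at intrinsic distance exactly $\rho_{z_i}$ from $z_i$, and the factor $8\hat c \de \geq 1$ (guaranteed by choosing $\de = 1/(8\hat c)$, say) ensures this point lies in $8\hat c Q_i$.

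Thus the proof reduces to \textbf{(i)} the Vitali step, which is handed to us by Lemma \ref{lemma_vitali}, \textbf{(ii)} the careful comparability estimate of the distance-function radii $\rho_{z_i}$ under the intrinsic scalings, and \textbf{(iii)} a calibration of the constants $\de$ and $\hat c$ at the end to make both parts of \descref{W4}{W4} simultaneously valid. The hard part is clearly \textbf{(ii)}, because the spatial scale $\scalex{\la}{z} = \la^{-1/p(z)+d/2}$ varies with $z$ through the variable exponent, so one cannot use the pseudo-distance $d^\la_z$ as a genuine metric; one must switch reference points via Lemma \ref{bound_rho} each time, and keep track that all implicit constants depend only on $n$ and $\plog$.
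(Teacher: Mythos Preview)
Your strategy is correct and matches the approach the paper defers to \cite[Lemma~5.11]{adimurthi2018sharp}: apply Lemma~\ref{lemma_vitali} with $\mathfrak{k}=\de/\mfi$, then verify \descref{W1}{W1}--\descref{W5}{W5} via comparability of the stopping radii $\rho_{z_i}$, calibrating $\de$ against $\hat c$ at the end.

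Two corrections worth making. First, Lemma~\ref{bound_rho} controls $\rho^{\pm|p^+-p^-|}$, which has the wrong base for the metric conversion you need; the relevant bound is $\la^{|p(z_i)-p(z_j)|}\le C_{(n,\plog)}$, which is \eqref{2.2.28-1} (recorded in the paper as a byproduct of the proof of Lemma~\ref{lemma_vitali}, not of Lemma~\ref{bound_rho}). Second, your displayed estimate $d^\la_{z_i}(z_i,z_j)\le 2\de\rho_{z_i}+2\de\rho_{z_j}$ already presupposes that conversion, since $\hat z\in 2Q_j$ only gives $d^\la_{z_j}(\hat z,z_j)\le 2\de\rho_{z_j}$; to avoid circularity one first works in the standard parabolic metric $d_p$ (using that $\la\ge1$ forces $Q^\la_r(z)\subset Q_r(z)$), extracts smallness of $|p(z_i)-p(z_j)|$ from the log-H\"older condition on the ambient scale, and only then invokes the $\la$-bound to pass back to the intrinsic metrics.
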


Once we have obtained the Whitney type covering lemma, we can now obtain the following standard partition of unity lemma:
\begin{lemma}
 \label{partition_unity}
 Subordinate to the covering $\mcg$ obtained in Lemma \ref{whitney_covering} , we obtain a partition of unity $\{ \psi\}_{i=1}^{\infty}$ on $\RR^{n+1} \setminus \elam$ that satisfies the following properties:
 \begin{itemize}
 \item $\sum_{i=1}^{\infty} \psi_i(z) = 1$ for all $z \in {K_{4\rho}^{\al}(\mfz)} \setminus\elam$.
 \item $\psi_i \in C_c^{\infty}(2Q_i)$.
 \item $\|\psi_i\|_{\infty} + \scalex{\la}{z_i} r_i \| \nabla \psi_i\|_{\infty} + \scalet{\la}{z_i} r_i^2 \| \pa_t \psi_i\|_{\infty} \leq C_{(\plog,n)}$ where we have used the notation $r_i := \de \rho_{z_i}$ which is the parabolic radius of $Q_i$ with respect to the metric $d_{z_i}^{\la}$ (see Lemma \ref{whitney_covering} for the notation).
 \item $\psi_i \geq C_{(\plog,n)}$ on $Q_i$. 
 \end{itemize}
 \end{lemma}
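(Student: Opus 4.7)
The plan is to construct a standard partition of unity in the intrinsic metric $d^{\la}_{z_i}$ by first producing bump functions adapted to each Whitney cylinder, then normalising by the overlap sum. First, for each $i \in \NN$, I would take a tensor-product bump $\tilde{\psi}_i(x,t) := \eta_i^x(x)\,\eta_i^t(t) \in C_c^{\infty}(2Q_i)$ where $\eta_i^x \in C_c^{\infty}(B_{2\scalex{\la}{z_i}r_i}(x_i))$ equals $1$ on $B_{\scalex{\la}{z_i}r_i}(x_i)$, $\eta_i^t \in C_c^{\infty}(I^{\la}_{2r_i}(t_i))$ equals $1$ on $I^{\la}_{r_i}(t_i)$, and the derivatives satisfy $|\nabla \eta_i^x| \apprle (\scalex{\la}{z_i}r_i)^{-1}$ and $|(\eta_i^t)'| \apprle (\scalet{\la}{z_i} r_i^2)^{-1}$. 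By construction $\tilde{\psi}_i = 1$ on $Q_i$, $0 \le \tilde{\psi}_i \le 1$, $\spt \tilde{\psi}_i \subset 2Q_i$, and
\begin{equation*}
\|\tilde{\psi}_i\|_{\infty} + \scalex{\la}{z_i}r_i\|\nabla \tilde{\psi}_i\|_{\infty} + \scalet{\la}{z_i}r_i^2 \|\pa_t \tilde{\psi}_i\|_{\infty} \apprle_{(n,\plog)} 1.
\end{equation*}

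Next, I would set $\Psi(z) := \sum_{j \in \NN} \tilde{\psi}_j(z)$ on $K_{4\rho}^{\al}(\mfz)\setminus \elam$ and define $\psi_i := \tilde{\psi}_i/\Psi$. By \descref{W1}{W1}, for every $z \in K_{4\rho}^{\al}(\mfz)\setminus\elam$ there exists at least one index $j$ with $z \in Q_j$, hence $\tilde{\psi}_j(z) = 1$ and $\Psi(z) \ge 1$. By \descref{W2}{W2}, at most $C_{(n,\plog)}$ of the cylinders $2Q_j$ contain $z$, so $\Psi(z) \le C_{(n,\plog)}$. Consequently $\Psi$ is bounded above and below by universal constants on the relevant set, the sum $\sum_i \psi_i \equiv 1$ there, $\psi_i \in C_c^{\infty}(2Q_i)$, and $\psi_i \geq 1/C_{(n,\plog)}$ on $Q_i$ since $\tilde\psi_i \equiv 1$ on $Q_i$.

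The derivative bounds for $\psi_i$ follow by the quotient rule: $\nabla \psi_i = \nabla \tilde\psi_i/\Psi - \tilde\psi_i \nabla\Psi/\Psi^2$ and similarly for $\pa_t$. The delicate point—which will be the main obstacle—is controlling $|\nabla \Psi|$ and $|\pa_t\Psi|$ pointwise on $2Q_i$ with a constant matching the scale $\scalex{\la}{z_i}r_i$ (respectively $\scalet{\la}{z_i}r_i^2$). Here I would invoke \descref{W3}{W3}: if $2Q_i \cap 2Q_j \neq \emptyset$ then $|B_i|\approx |B_j|$ and $|I_i|\approx|I_j|$, which upgrades to $\scalex{\la}{z_i}r_i \approx \scalex{\la}{z_j}r_j$ and $\scalet{\la}{z_i}r_i^2 \approx \scalet{\la}{z_j}r_j^2$ up to constants depending only on $(n,\plog)$. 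Combined with the bounded overlap from \descref{W2}{W2}, at any $z \in 2Q_i$ only $C_{(n,\plog)}$ summands $\nabla \tilde\psi_j$ are nonzero, each bounded by $C/(\scalex{\la}{z_j}r_j) \apprle C/(\scalex{\la}{z_i}r_i)$. This yields $|\nabla \Psi(z)| \apprle (\scalex{\la}{z_i}r_i)^{-1}$, and similarly $|\pa_t \Psi(z)| \apprle (\scalet{\la}{z_i}r_i^2)^{-1}$. Putting everything together in the quotient rule and using the two-sided bound on $\Psi$ delivers the third property and completes the proof.
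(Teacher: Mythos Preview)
Your proposal is correct and is precisely the standard construction; the paper does not give its own proof of this lemma, stating it instead as a routine consequence of the Whitney-type covering in Lemma~\ref{whitney_covering}. Your use of \descref{W1}{W1}--\descref{W3}{W3} to obtain the two-sided bound on $\Psi$ and to transfer the intrinsic scales between overlapping cylinders is exactly what is needed.
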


 Before we end this subsection, let us recall the following useful bound that will be used throughout this section. For a proof, see the proof of \cite[Lemma 5.10, (5.23)]{adimurthi2018sharp}.
 \begin{equation}
 \label{2.2.28-1}
 \la^{p^+_{2Q_i} - p^-_{2Q_i}} \leq C_{(\plog,n)}.
 \end{equation}

 \subsection{Construction of Lipschitz truncation function}

 Let us first clarify some of the notation that will subsequently be used in the rest of this section: for  $\hat{c}$ from \descref{W4}{W4}, we denote
 \begin{equation*}
 \label{def_qihat}
 \htq_i := \hat{c}Q_i = Q_{\hat{r}_i}^{\la}(z_i), \txt{where} \hat{r}_i := \hat{c} r_i.
 \end{equation*}
We shall also use the notation 
\begin{equation*}
\label{mcii}
\mci(i) := \{ j \in \NN : \spt(\psi_i) \cap \spt(\psi_j) \neq \emptyset\} \txt{and} \mci_z := \{ j \in \NN: z \in \spt(\psi_j) \}.
\end{equation*}

We are now ready to construct  the Lipschitz truncation function:
 \begin{equation}
 \label{lipschitz_function}
 \vlh(z) := \vh(z) - \sum_{i} \psi_i(z) \lbr \vh(z) - \vh^i\rbr,
 \end{equation}
  where we have defined
 \begin{equation}
 \label{def_tuh}
 \vh^i := \left\{
 \begin{array}{ll}
 \fiint_{2Q_i} \vh(z)\lsb{\chi}{[\mft-s,\mft+s]} \ dz & \text{if} \ \ 2Q_i \subset \Om_{4\rho}^{\al}(\mfx) \times (\mft-s,\infty), \\
 0 & \text{else}. 
 \end{array}\right.
 \end{equation}

  From construction in \eqref{lipschitz_function} and \eqref{def_tuh}, we see that 
 \begin{equation*}
 \spt(\vlh) \subset \Om_{4\rho}^{\al}(\mfx) \times (\mft-s,\infty).
 \end{equation*}

 We see that $\vlh$ has the right support for the test function and hence the rest of this section will be devoted to proving the Lipschitz regularity of $\vlh$ on $K_{4\rho}^{\al}(\mfx)$ as well as some useful estimates.

 \subsection{Some estimates on the test function}
 
 In this subsection, we will collect some useful estimates on the test function. The proofs of these estimates follow similarly to those in \cite{adimurthi2018sharp} and hence we will only provide an outline of the proofs.
 
 \begin{lemma}
\label{lemma3.6_pre}
Let $ \mfz \in K_{4\rho}^{\al}(\mfz) \setminus \elam$, then from \descref{W1}{W1}, we have that $\mfz \in 2Q_i$ for some $i \in \mci_{\mfz}$. For any $1 \leq \tht \leq \frac{p^-}{\mathfrak{q}}$,  there holds
\begin{gather}
|\vh^i|^{\tht} \leq  \fiint_{2Q_i} |\vh(\tz)|^{\tht}\lsb{\chi}{[\mft-s,\mft+s]} \ d\tz  \apprle_{(\plog,n)} (\scalex{\al}{\mfz}\rho)^{\tht} \la^{\frac{\tht}{p(z_i)}},  \label{lemma3.6_pre_one}\\
\fiint_{2Q_i}|\nabla \vh(\tz)|^{\tht}\lsb{\chi}{[\mft-s,\mft+s]} \ d\tz \apprle_{(\plog,n)}  \la^{\frac{\tht}{p(z_i)}}  \label{lemma3.6_pre_two}.
\end{gather}
\end{lemma}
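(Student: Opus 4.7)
The plan is to reduce everything to a single pointwise bound on the maximal function $\mm(F)$ at a well-chosen point, where $F(z) := \bigl(\tfrac{|v|}{\scalex{\al}{\mfz}\rho} + |\nabla u| + |\nabla w| + |\bff| + 1\bigr)^{p(z)/\mathfrak{q}}\lsb{\chi}{K_{4\rho}^{\al}(\mfz)}$. First I would invoke the Whitney property \descref{W4}{W4}: since $\mfz \in 2Q_i$ and $8\hat{c}Q_i \cap \elam \neq \emptyset$, there exists $\tz \in 8\hat{c}Q_i \cap \elam$. By definition of $\elam$ in \eqref{elambda}, $g(\tz) \leq \la^{1-\be}$, which unpacking the exponent $\mathfrak{q}(1-\be)$ of the maximal function in \eqref{def_g_A} gives $\mm(F)(\tz) \leq \la^{1/\mathfrak{q}}$.

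Next, because $\tz, 2Q_i \subset 8\hat{c}Q_i$, I can select a parabolic cylinder $\hat{Q}$ (admissible in the supremum defining \eqref{par_max}) that contains both $\tz$ and $2Q_i$ and satisfies $|\hat{Q}| \apprle_{(n,\plog)} |2Q_i|$ (using the structure of intrinsic cylinders plus $\hat{c} = \hat{c}(n,\plog)$). Averaging yields
\[
\fiint_{2Q_i} F(y,s) \, dy\, ds \;\leq\; \frac{|\hat{Q}|}{|2Q_i|}\,\mm(F)(\tz) \;\apprle\; \la^{1/\mathfrak{q}}.
\]
Denoting $G := \tfrac{|v|}{\scalex{\al}{\mfz}\rho} + |\nabla u| + |\nabla w| + |\bff| + 1 \geq 1$ and using $p_{2Q_i}^- \leq p(z)$ on $2Q_i$ together with $G \geq 1$, this bound upgrades to $\fiint_{2Q_i} G^{p_{2Q_i}^-/\mathfrak{q}}\,\chi_{K_{4\rho}^{\al}(\mfz)}\, dz \apprle \la^{1/\mathfrak{q}}$.

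For any $1 \leq \tht \leq p^-/\mathfrak{q} \leq p_{2Q_i}^-/\mathfrak{q}$, Jensen's inequality then gives $\fiint_{2Q_i} G^{\tht}\,\chi\, dz \apprle \la^{\tht/p_{2Q_i}^-}$; by \eqref{2.2.28-1} the exponent may be replaced by $\tht/p(z_i)$ at the cost of a harmless constant $C(\plog,n)$. For the gradient estimate \eqref{lemma3.6_pre_two}, I use that the Steklov operator commutes with the spatial gradient, so $|\nabla \vh|^\tht \leq \fint_t^{t+h}|\nabla u - \nabla w|^\tht \, d\tau \leq 2^{\tht} \fint_t^{t+h} G^\tht \, d\tau$; integrating over $2Q_i$ gives a slight time-shifted version of $2Q_i$ whose measure and content are comparable (by the enlargement properties of Whitney cubes together with $2h \leq \ve \ll s$), yielding \eqref{lemma3.6_pre_two}. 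For \eqref{lemma3.6_pre_one}, Jensen for Steklov averages gives $|\vh|^\tht \leq \fint_t^{t+h}|v|^\tht$, so the same $G \geq |v|/(\scalex{\al}{\mfz}\rho)$ bound produces $\fiint_{2Q_i}|\vh|^\tht\,\chi\,dz \apprle (\scalex{\al}{\mfz}\rho)^\tht \la^{\tht/p(z_i)}$; a final application of Jensen (using $\tht \geq 1$) to the definition \eqref{def_tuh} of $\vh^i$ converts this to the stated bound on $|\vh^i|^\tht$.

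The main obstacle is bookkeeping with the variable exponent: the maximal function inequality naturally delivers averages weighted by $p(z)/\mathfrak{q}$, but the desired conclusions are in terms of $p(z_i)$, and one must carefully use the $G \geq 1$ normalization together with \eqref{2.2.28-1} to trade $p(z)$ for $p_{2Q_i}^-$ and then for $p(z_i)$ without losing the correct dependence on $\la$. A minor technical point is that $2Q_i$ may not be contained in $K_{4\rho}^{\al}(\mfz)$, but this is irrelevant because the extension of $v$ by zero outside $K_{4\rho}^{\al}(\mfz)$ (from the boundary condition in \eqref{wapprox_int}) means the cutoff $\chi_{K_{4\rho}^{\al}(\mfz)}$ in $F$ is consistent with the integrals we bound.
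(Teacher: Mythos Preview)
Your proposal is correct and follows essentially the same approach as the paper: both use \descref{W4}{W4} to pick a point in $8\hat{c}Q_i \cap \elam$ where the maximal function is controlled by $\la^{1/\mathfrak{q}}$, then apply Jensen with exponent $p^-_{2Q_i}/\mathfrak{q} \geq \tht$ and finally invoke \eqref{2.2.28-1} to pass from $p^-_{2Q_i}$ to $p(z_i)$. The paper simply takes $\hat{Q} = 8\hat{c}Q_i$ directly rather than abstractly selecting an admissible cylinder, and it suppresses the Steklov-average bookkeeping that you spell out, but these are presentational differences only.
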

\begin{proof}
\begin{description}[leftmargin=*]
\item[Proof of \eqref{lemma3.6_pre_one}:] We prove this estimate as follows:
\begin{equation*}
|\vh^i|^{\tht}  
\apprle (\scalex{\al}{\mfz}\rho)^{\tht} \lbr \fiint_{8\hat{c}Q_i} \left[ 1 + \abs{\frac{v(z)}{\scalex{\al}{\mfz}\rho}} \right]^{\frac{\pp}{\mathfrak{q}}}\ d\tz\rbr^{\frac{\tht\mathfrak{\mathfrak{q}}}{p^-_{2Q_i}}} 
 \overset{\eqref{elambda}}{\apprle} (\scalex{\al}{\mfz}\rho)^{\tht} \la^{\frac{\tht}{p^-_{2Q_i}}} 
 \overset{\eqref{2.2.28-1}}{\apprle} (\scalex{\al}{\mfz}\rho)^{\tht} \la^{\frac{\tht}{p(z_i)}}.
\end{equation*}

\item[Proof of \eqref{lemma3.6_pre_two}:] From \eqref{elambda}, we see that 
\begin{equation*}\label{lemma3.6_pre_1}
\fiint_{2Q_i} |\nabla \vh|^{\tht}\lsb{\chi}{[\mft-s,\mft+s]} \ d\tz 
 \apprle   \lbr \fiint_{8\hat{c}Q_i} \left[|\nabla v| +1\right]^{\frac{\pp}{\mathfrak{q}}}\ d\tz  \rbr^{\frac{\tht\mathfrak{\mathfrak{q}}}{p^-_{2Q_i}}}
 \overset{\eqref{elambda}}{\apprle}  \la^{\frac{\tht}{p^-_{2Q_i}}} 
 \overset{\eqref{2.2.28-1}}{\apprle}  \la^{\frac{\tht}{p(z_i)}}.
\end{equation*}
\end{description}
\end{proof}

 \begin{corollary}
 \label{lemma3.6}
For any $z \in K_{4\rho}^{\al}(\mfz) \setminus \elam$, we have $z \in 2Q_i$ for some $i \in \mci_{z}$, then there holds
 \begin{equation*}
 |\vh(z)| \apprle_{(n,\plog,\lamot)}(\scalex{\al}{\mfz}\rho)  \la^{\frac{1}{p(z_i)}},
 \end{equation*}
where $z_i$ is the centre of $Q_i$. 
 \end{corollary}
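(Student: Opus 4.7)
The plan is to deduce the claimed pointwise bound on the Steklov average $\vh(z)$ at a point $z$ lying in the bad set directly from the average estimate already established in Lemma \ref{lemma3.6_pre}, combined with the Whitney structure of Lemma \ref{whitney_covering} and a maximal function argument. First, I would specialize Lemma \ref{lemma3.6_pre} to the exponent $\tht=1$, which yields both
\[
|\vh^i| \;\leq\; \fiint_{2Q_i} |\vh(\tz)|\, \lsb{\chi}{[\mft-s,\mft+s]} \, d\tz \;\apprle\; (\scalex{\al}{\mfz}\rho)\, \la^{\frac{1}{p(z_i)}},
\]
which provides the target bound on $L^1$-averages over $2Q_i$, and the stronger version with $\tht=p^-/\mathfrak{q}>1$, which will be used as a higher-integrability reservoir to pass from averages to pointwise values.

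Second, I would reduce the pointwise bound on $\vh(z)$ to a control on the strong parabolic maximal function $\mm(|\vh|\lsb{\chi}{K_{4\rho}^{\al}(\mfz)})(z)$ via the Lebesgue differentiation theorem, which controls $|\vh(z)|$ at almost every Lebesgue point. To bound this maximal function, I would split its defining suprema according to the parabolic radius of the cylinder containing $z$. For \emph{large} cylinders $\tilde{Q}\ni z$, i.e. those with radius comparable to or larger than $\hat{r}_i$, property \descref{W4}{W4} produces a point $\tilde{z}\in 8\hat{c}Q_i\cap \elam$, and up to an inflation by a universal factor $\tilde{Q}$ also contains $\tilde z$; then the good-set bound $g(\tilde z)\leq \la^{1-\be}$ and the pointwise estimate $|v|\leq (\scalex{\al}{\mfz}\rho)\,g^{1/(p(\cdot)(1-\be))}$ at Lebesgue points translate (through Jensen and the definition of $g$ in \eqref{def_g_A}) into an upper bound on $\fiint_{\tilde{Q}}|\vh|$ of size $\scalex{\al}{\mfz}\rho \,\la^{1/p(z_i)}$. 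For \emph{small} cylinders $\tilde{Q}\subset 2Q_i$, I would rely on the higher-integrability bound from Lemma \ref{lemma3.6_pre} with $\tht=p^-/\mathfrak{q}>1$, which by H\"older's inequality bounds the small-cube averages $\fiint_{\tilde{Q}}|\vh|$ uniformly by $\bigl(\fiint_{2Q_i}|\vh|^{p^-/\mathfrak{q}}\bigr)^{\mathfrak{q}/p^-}\lesssim \scalex{\al}{\mfz}\rho\,\la^{1/p(z_i)}$.

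Finally, I would reconcile the exponents: the two preceding estimates are naturally phrased in terms of $p(z_i)$ and $p(\tilde z)$ respectively, and to combine them into the single exponent $p(z_i)$ in the statement I would invoke the log-H\"older continuity of $\pp$ together with the crucial absorption bound $\la^{p^+_{2Q_i}-p^-_{2Q_i}}\leq C_{(\plog,n)}$ from \eqref{2.2.28-1}, which allows all of $p(z_i)$, $p(z)$, $p(\tilde z)$ to be interchanged at the cost of a universal multiplicative constant. Summing the large- and small-cylinder contributions of $\mm(|\vh|\lsb{\chi}{K_{4\rho}^{\al}(\mfz)})(z)$ then yields the corollary for a.e.\ $z\in K_{4\rho}^{\al}(\mfz)\setminus\elam$ with constant depending only on $n,\plog,\lamot$.

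The main obstacle will be the small-cylinder contribution to the maximal function at $z$: since $z$ lies in the bad set, the defining inequality $g(z)>\la^{1-\be}$ goes the \emph{wrong way} and obstructs a naive maximal-function bound centered at $z$. The key trick is therefore to replace pointwise control by the $L^{p^-/\mathfrak{q}}$ higher-integrability bound on $2Q_i$ already granted by Lemma \ref{lemma3.6_pre}, together with a Vitali-type decomposition reducing arbitrary small cylinders inside $2Q_i$ to fractions of $2Q_i$ itself, so that the interior $L^\infty$ norm of $\vh$ restricted to such cylinders can only exceed the average by a log-H\"older-controlled factor. This absorption is exactly what bounds \eqref{2.2.28-1} and \descref{W4}{W4}--\descref{W5}{W5} are designed to deliver, and it is the step where the careful choice $1<\mathfrak{q}<p^-/(p^+-1)$ in the definition \eqref{def_g_A} is essential.
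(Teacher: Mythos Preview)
The corollary as written bounds $|\vh(z)|$, but this is almost certainly a typo for $|\vlh(z)|$: the paper states it as an immediate corollary of Lemma~\ref{lemma3.6_pre} and gives no proof, and the one-line argument it has in mind is that on $K_{4\rho}^{\al}(\mfz)\setminus\elam$ the partition of unity satisfies $\sum_j\psi_j\equiv 1$, so by \eqref{lipschitz_function} one has $\vlh(z)=\sum_{j\in\mci_z}\psi_j(z)\,\vh^j$; then $|\vlh(z)|\le\max_{j\in\mci_z}|\vh^j|\apprle(\scalex{\al}{\mfz}\rho)\,\la^{1/p(z_j)}$ by \eqref{lemma3.6_pre_one} with $\tht=1$, and all exponents $p(z_j)$ can be replaced by $p(z_i)$ via \descref{W3}{W3}--\descref{W5}{W5} and \eqref{2.2.28-1}. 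This is also consistent with how the corollary is actually used later (e.g.\ in the proof of Lemma~\ref{lipschitz_continuity}, where a pointwise bound on $\vlh$ on $\elam^c$ is what is needed).

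Your maximal-function strategy attacks the literal statement (pointwise control of the raw Steklov average $\vh$), and the small-cylinder step contains a genuine gap. H\"older on $\tilde Q\subset 2Q_i$ gives
\[
\fiint_{\tilde Q}|\vh|\le\Bigl(\fiint_{\tilde Q}|\vh|^{p^-/\mathfrak q}\Bigr)^{\mathfrak q/p^-},
\]
not $\bigl(\fiint_{2Q_i}|\vh|^{p^-/\mathfrak q}\bigr)^{\mathfrak q/p^-}$; passing from $\tilde Q$ to $2Q_i$ costs a factor $(|2Q_i|/|\tilde Q|)^{\mathfrak q/p^-}$, which blows up as $\tilde Q$ shrinks. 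There is no uniform control on small-scale averages of $\vh$ at points of $\elam^c$ --- this set is by definition where the maximal function in \eqref{def_g_A} exceeds $\la^{1-\be}$, so small-scale averages (hence Lebesgue-point values) of $|v|$ can be arbitrarily large there. In other words, a pointwise bound on $\vh$ throughout $\elam^c$ is not attainable by this route and is generally false for an $L^p$ function on its bad set; the intended object is $\vlh$, for which the bound is immediate.
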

 

\begin{lemma}
\label{improved_est}
Let $2Q_i$ be a parabolic Whitney type cylinder, then for any $1 \leq \tht \leq \frac{p^-}{\mathfrak{q}}$, there holds
\begin{equation*}
\fiint_{2Q_i} |\vh(z)\lsb{\chi}{[\mft-s,\mft+s]}- \vh^i|^{\tht} \  dz \apprle_{(\plog,\lamot,n)} \min\left\{ \scalex{\al}{\mfz}\rho, \scalex{\la}{z_i}r_i\right\}^{\tht} \la^{\frac{\tht}{p(z_i)}}.
\end{equation*}
\end{lemma}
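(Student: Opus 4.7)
The plan is to prove the two bounds inside the minimum separately and then take the smaller one. The easier bound, $(\scalex{\al}{\mfz}\rho)^{\tht}\la^{\tht/p(z_i)}$, should follow directly from the pointwise estimate already established: Corollary \ref{lemma3.6} gives $|\vh(z)| \apprle \scalex{\al}{\mfz}\rho\cdot\la^{1/p(z_i)}$ for every $z \in 2Q_i \subset K_{4\rho}^{\al}(\mfz)\setminus\elam$, and \eqref{lemma3.6_pre_one} with $\tht = 1$ gives the same bound for $|\vh^i|$. A single application of the triangle inequality then produces the $\scalex{\al}{\mfz}\rho$ bound.

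The main work is the bound with $\scalex{\la}{z_i}r_i$, which I would obtain from a parabolic Poincar\'e argument. Consider first the interior case $2Q_i \subset \Om_{4\rho}^{\al}(\mfx)\times(\mft-s,\infty)$, so that $\vh^i$ is the integral average. Apply Lemma \ref{lemma_crucial_1} on $2Q_i = B_{\scalex{\la}{z_i}r_i}(x_i) \times I_{r_i}^{\la}(t_i)$ with constant weight $\rho \equiv 1$ and a standard bump $\mu \in C_c^\infty(B_{\scalex{\la}{z_i}r_i}(x_i))$. This yields
\[
\fiint_{2Q_i}\left|\frac{\vh\chi - \avgs{\vh\chi}{\rho}}{\scalex{\la}{z_i}r_i}\right|^{\tht}dz \apprle \fiint_{2Q_i}|\nabla\vh|^{\tht}\chi\, dz + \sup_{t_1,t_2 \in I_i}\left|\frac{\avgs{\vh\chi}{\mu}(t_2) - \avgs{\vh\chi}{\mu}(t_1)}{\scalex{\la}{z_i}r_i}\right|^{\tht}.
\]
The first term is bounded by $\la^{\tht/p(z_i)}$ by \eqref{lemma3.6_pre_two}. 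For the time-oscillation term I apply Lemma \ref{lemma_crucial_2} with $\phi = \mu$ and a suitable $\varphi$. The three integrals that appear on the right-hand side are estimated by exploiting $8\hat c Q_i \cap \elam \neq \emptyset$ from \descref{W4}{W4}: using \eqref{abounded} one bounds
$\fiint_{2Q_i}|\aa(z,\nabla u)-\aa(z,\nabla w)| \apprle \fiint_{8\hat c Q_i}(1+|\nabla u|+|\nabla w|)^{\pp-1}$, and applying Jensen's inequality together with the definition of $g$ in \eqref{def_g_A} (whose choice $\mathfrak{q} < p^-/(p^+-1)$ is made precisely so that $\pp - 1 \le p^-/\mathfrak{q}$) gives a bound by $\la^{(p(z_i)-1)/p(z_i)}$ after using \eqref{2.2.28-1}; the $[|\bff|^{\pp-1}]_h$ integral is estimated identically, and the integral of $|\vh|$ by Corollary \ref{lemma3.6}. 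Tracking the scaling (volume $|2Q_i| \approx (\scalex{\la}{z_i}r_i)^n\scalet{\la}{z_i}r_i^2$, the factor $\|\mu\|_\infty^{-1}\approx (\scalex{\la}{z_i}r_i)^n$, and the $\|\varphi'\|_\infty \apprle \scalet{\la}{z_i}^{-1}r_i^{-2}$ coming from the time-cutoff) collapses everything to exactly $\la^{\tht/p(z_i)}$.

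In the boundary case $\vh^i = 0$, the function $v = u - w$ vanishes on the relevant portion of $\pa_p K_{4\rho}^{\al}(\mfz)$ (laterally because $w = u$ on $\pa_p K_{4\rho}^{\al}(\mfz)$ and $u = 0$ on $\pa\Om$, and at the initial time because $w(\cdot,\mft-s) = u(\cdot,\mft-s)$). The corresponding zero set inside $2Q_i$ has measure comparable to $|2Q_i|$, by Lemma \ref{measure_density} (Reifenberg flatness) in the spatial case and by the presence of the factor $\chi_{[\mft-s,\mft+s]}$ in the initial-time case. Hence Theorem \ref{measure_density_poincare} applies in place of the mean-value Poincar\'e, allowing us to replace $\vh^i$ by $0$ and run the same time-oscillation argument as above (now using Lemma \ref{lemma_crucial_2} with $\varphi$ adapted to the truncated time interval). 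Taking the minimum of the two resulting bounds gives the conclusion.

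The main obstacle will be the careful bookkeeping of exponents in the time-oscillation term: one has to verify that $\la^{(p(z_i)-1)/p(z_i)}$, multiplied by the appropriate volume and test-function factors and raised to the $\tht$-th power, reduces exactly to $(\scalex{\la}{z_i}r_i)^\tht\la^{\tht/p(z_i)}$, and that the restriction $1 \le \tht \le p^-/\mathfrak{q}$ together with $\mathfrak{q} < p^-/(p^+-1)$ is strong enough to control all intermediate terms through Jensen and \eqref{2.2.28-1}. The boundary-case verification of the measure-density hypothesis near the corner where the spatial and temporal parts of $\pa_p K_{4\rho}^{\al}(\mfz)$ meet will also require some care, but should follow from the same Reifenberg-flat arguments used elsewhere in the paper.
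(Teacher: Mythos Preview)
Your approach is essentially the paper's: the $\scalex{\al}{\mfz}\rho$-bound by triangle inequality and \eqref{lemma3.6_pre_one}, and the $\scalex{\la}{z_i}r_i$-bound by the parabolic Poincar\'e Lemma~\ref{lemma_crucial_1} with the time oscillation handled through Lemma~\ref{lemma_crucial_2}. One point to correct: in the interior case the paper takes $\varphi\equiv 1$ in Lemma~\ref{lemma_crucial_2}, so that only the two flux integrals survive; your non-constant $\varphi$ with $\|\varphi'\|_\infty\apprle(\scalet{\la}{z_i}r_i^2)^{-1}$ produces a third term of size $\|\mu\|_\infty\|\varphi'\|_\infty\iint_{2Q_i}|\vh|\apprle \scalex{\al}{\mfz}\rho\,\la^{1/p(z_i)}$ (via \eqref{lemma3.6_pre_one}), which is \emph{not} dominated by $\scalex{\la}{z_i}r_i\,\la^{1/p(z_i)}$ when $r_i$ is small, so your claim that the scaling ``collapses everything to exactly $\la^{\tht/p(z_i)}$'' fails for that term. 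The fix is simply $\varphi\equiv 1$, exactly as the paper does in \eqref{est_J_2_one}. Your explicit treatment of the boundary case $\vh^i=0$ via a zero-trace/measure-density Poincar\'e is more careful than the paper's outline, which defers those details to \cite{adimurthi2018sharp}.
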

\begin{proof}
Let us consider the following two cases:
\begin{description}[leftmargin=*]
\item[Case $\scalex{\al}{\mfz}\rho \leq  \scalex{\la}{z_i}r_i$:] In this case, we can use triangle inequality along with \eqref{lemma3.6_pre_one} to get
\begin{equation}
\label{6.18}
\fiint_{2Q_i} |\vh(\tz)\lsb{\chi}{[\mft-s,\mft+s]}- \vh^i|^{\tht} \  d\tz \apprle 2 \fiint_{2Q_i} |\vh(\tz)|^{\tht}\lsb{\chi}{[\mft-s,\mft+s]} \ d\tz \overset{\eqref{lemma3.6_pre_one}}{\apprle} (\scalex{\al}{\mfz}\rho)^{\tht} \la^{\frac{\tht}{p(z_i)}}.
\end{equation}

\item[Case $\scalex{\al}{\mfz}\rho \geq  \scalex{\la}{z_i}r_i$:] Applying Lemma \ref{lemma_crucial_2} with  $\mu \in C_c^{\infty}(2B_i)$ such that $|\mu(x)| \apprle \frac{1}{\lbr \scalex{\la}{z_i} r_i\rbr^n}$ and $|\nabla \mu(x)| \apprle \frac{1}{\lbr \scalex{\la}{z_i} r_i\rbr^{n+1}}$, we get
\begin{equation}
\label{6.19}
\begin{aligned}
\fiint_{2Q_i} |\vh(z)\lsb{\chi}{[\mft-s,\mft+s]}- \vh^i|^{\tht} \  dz &\leq \lbr \scalex{\la}{z_i} r_i\rbr^{\tht} \fiint_{2Q_i} |\nabla \vh|^{\tht}\lsb{\chi}{[\mft-s,\mft+s]} \ d\tz\\
 &\qquad + \sup_{t_1,t_2 \in 2I_i\cap [\mft-s,\mft+s]} |\avgs{\vh}{\mu}(t_2) - \avgs{\vh}{\mu}(t_1)|^{\tht}.\\
\end{aligned}
\end{equation}
The first term on the right of \eqref{6.19} can be estimated using \eqref{lemma3.6_pre_two} to get
\begin{equation}
\label{est_J_1}
\lbr \scalex{\la}{z_i} r_i\rbr^{\tht} \fiint_{2Q_i} |\nabla \vh|^{\tht}\lsb{\chi}{[\mft-s,\mft+s]} \ d\tz \apprle \lbr \scalex{\la}{z_i} r_i\rbr^{\tht}    \la^{\frac{\tht}{p(z_i)}}.
\end{equation}

To estimate the second term on the right of \eqref{6.19}, we make use of Lemma \ref{lemma_crucial_2} with $\phi(x) = \mu(x)$ and $\varphi(t) \equiv 1$, we get
\begin{equation}
\label{est_J_2_one}
\begin{array}{rcl}
|\avgs{\vh}{\mu}(t_2) - \avgs{\vh}{\mu}(t_1)| & \apprle & \frac{|2Q_i|}{\lbr \scalex{\la}{z_i} r_i\rbr^{n+1}} \fiint_{2Q_i} |\aa(\tz,\nabla u) - \aa(\tz,\nabla w)| + |\bff|^{p(\tz)-1} \ d\tz \\
& \apprle & \frac{\scalet{\la}{z_i}r_i^2}{\scalex{\la}{z_i} r_i} \lbr \fiint_{8\hat{c}Q_i} (1 + |\nabla u|+|\nabla w| + |\bff|)^{\frac{p(\tz)}{\mathfrak{q}}} \ d\tz \rbr^{\frac{q(p^+_{2Q_i} -1)}{p^-_{2Q_i}}}\\
& \overset{\eqref{elambda}}{\apprle} & \la^{-1 + \frac{1}{p(z_i)} + \frac{d}{2}} r_i \la^{\frac{p^+_{2Q_i} -1}{p^-_{2Q_i}}}.
\end{array}
\end{equation}
Now making use of \eqref{2.2.28-1} along with the fact that $\la \geq 1$ and $p^-_{2Q_i} \leq p(z_i)$, we get
\begin{equation}
\label{6.22}
\la^{-1+ \frac{1}{p(z_i)} + \frac{p^+_{2Q_i}}{p^-_{2Q_i}} - \frac{1}{p^-_{2Q_i}}} = \la^{\frac{p^+_{2Q_i}-p^-_{2Q_i}}{p^-_{2Q_i}}} \la^{\frac{p^-_{2Q_i}-p(z_i)}{p(z_i)p^-_{2Q_i}}} \leq \la^{\frac{p^+_{2Q_i}-p^-_{2Q_i}}{p^-_{2Q_i}}} \overset{\eqref{2.2.28-1}}{\apprle} C_{(\plog,n)}. 
\end{equation}
Substituting \eqref{6.22} into \eqref{est_J_2_one}, we get
\begin{equation}
\label{est_J_2}
|\avgs{\vh}{\mu}(t_2) - \avgs{\vh}{\mu}(t_1)| \apprle \lbr \scalex{\la}{z_i} r_i \rbr \la^{\frac{1}{p(z_i)}}.
\end{equation}

Thus combining \eqref{est_J_1} and \eqref{est_J_2} into \eqref{6.19}, we get
\[
\fiint_{2Q_i} |\vh(\tz)\lsb{\chi}{[\mft-s,\mft+s]}- \vh^i|^{\tht} \  d\tz \apprle_{(\plog,\lamot,n)} \lbr \scalex{\la}{z_i} r_i\rbr^{\tht}    \la^{\frac{\tht}{p(z_i)}}.
\]
which proves the lemma. 
\end{description}
\end{proof}

\begin{corollary}
\label{corollary3.7}
For any $i \in \NN$ and any $j \in \mci_i$, there holds
\[
|\vh^i - \vh^j| \apprle_{(\plog,\lamot,n)} \min\left\{ \scalex{\al}{\mfz}\rho, \scalex{\la}{z_i}r_i\right\} \la^{\frac{1}{p(z_i)}}.
\]
\end{corollary}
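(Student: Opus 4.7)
The strategy is to reduce the estimate to Lemma \ref{improved_est} by exploiting the Whitney structure, specifically the overlap condition $j \in \mci(i)$ which forces the predecessor cylinders $2Q_i$ and $2Q_j$ to intersect. Once that intersection is established, properties \descref{W3}{W3} and \descref{W5}{W5} do all of the work in making the two centers $z_i$ and $z_j$ interchangeable, and the bound \eqref{2.2.28-1} allows the swap of the intrinsic scalings $\la^{1/p(z_i)} \leftrightarrow \la^{1/p(z_j)}$. So morally, this is nothing more than a triangle inequality plus a comparability argument.

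Concretely, I would begin by noting that $j \in \mci(i)$ means $\spt \psi_i \cap \spt \psi_j \neq \emptyset$, and since $\spt \psi_i \subset 2Q_i$ and $\spt \psi_j \subset 2Q_j$, this gives $2Q_i \cap 2Q_j \neq \emptyset$. From \descref{W5}{W5} I get $2Q_j \subset \hat c Q_i$ (and symmetrically), and \descref{W3}{W3} yields the measure comparability $|2Q_i| \approx |2Q_j| \approx |\hat c Q_i|$ as well as the comparability of the intrinsic radii $\scalex{\la}{z_i} r_i \approx \scalex{\la}{z_j} r_j$ and $\scalet{\la}{z_i} r_i^2 \approx \scalet{\la}{z_j} r_j^2$. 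Combining these with \eqref{2.2.28-1} gives $\la^{1/p(z_i)} \approx \la^{1/p(z_j)}$, so the minimum and the scaling factor in the right-hand side of the claimed bound can be written with either index.

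Next I would split using the triangle inequality, averaging over the smaller cylinder $2Q_j$:
\begin{equation*}
|\vh^i - \vh^j| \leq \fiint_{2Q_j}|\vh \lsb{\chi}{[\mft-s,\mft+s]} - \vh^j|\,dz + \fiint_{2Q_j}|\vh \lsb{\chi}{[\mft-s,\mft+s]} - \vh^i|\,dz.
\end{equation*}
The first term is precisely what Lemma \ref{improved_est} bounds with $\tht = 1$, giving $\apprle \min\{\scalex{\al}{\mfz}\rho,\,\scalex{\la}{z_j}r_j\}\la^{1/p(z_j)}$. For the second term, I use $2Q_j \subset \hat c Q_i$ together with $|\hat c Q_i|/|2Q_j| \apprle 1$ to pass to an average over $\hat c Q_i$, then insert and subtract $\vh^{\hat c}_i := \fiint_{\hat c Q_i} \vh \lsb{\chi}{[\mft-s,\mft+s]}\,dz$; the discrepancy $|\vh^{\hat c}_i - \vh^i|$ is absorbed into $\fiint_{\hat c Q_i}|\vh\lsb{\chi}{[\mft-s,\mft+s]} - \vh^{\hat c}_i|\,dz$, which satisfies the analog of Lemma \ref{improved_est} on $\hat c Q_i$ (the proof of Lemma \ref{improved_est} uses only that the cylinder lies in $\RR^{n+1}\setminus \elam$ and that a fixed enlargement of it meets $\elam$, both of which continue to hold for $\hat c Q_i$ by \descref{W4}{W4}, modulo replacing $8\hat c$ by a slightly larger universal multiplier).

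The only mild subtlety — and the place that requires a bit of attention — is the case where one of $\vh^i,\vh^j$ vanishes by the second branch of \eqref{def_tuh} because the corresponding $2Q$ fails to lie inside $\Om_{4\rho}^{\al}(\mfx)\times(\mft-s,\infty)$. In that situation the vanishing branch is close to the parabolic boundary of $K_{4\rho}^{\al}(\mfz)$, where $v = u - w = 0$; combined with the characteristic function $\lsb{\chi}{[\mft-s,\mft+s]}$ inside $\vh^{\bullet}$, one can reduce the bound to the corresponding Poincaré-type estimate on a nearby Whitney-scale cylinder that straddles the boundary, again controlled by $\min\{\scalex{\al}{\mfz}\rho,\scalex{\la}{z_i}r_i\}\la^{1/p(z_i)}$. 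Apart from this book-keeping, the argument is mechanical.
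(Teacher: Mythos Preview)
Your proposal is correct and follows exactly the route the paper intends: the statement is labeled a \emph{corollary} of Lemma~\ref{improved_est} and is left without proof, precisely because it reduces to the triangle-inequality-plus-Whitney-comparability argument you describe. Your handling of the boundary branch of \eqref{def_tuh} is also the right idea (and is the only place requiring care), so nothing further is needed.
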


 \subsection{Bounds on \texorpdfstring{$\vlh$ and $\nabla \vlh$}.}
 
 \begin{lemma}
\label{lemma6.7-1}
Let $Q_i$ be a parabolic Whitney type cylinder. Then for any $z \in 2Q_i$, we have the following bound:
\begin{equation}
\label{lemma6.7-1_est}
\lbr\frac{1}{\scalex{\al}{\mfz}\rho} |\vlh(z)| + |\nabla \vlh(z)|\rbr \lsb{\chi}{[\mft-s,\mft+s]} \apprle_{(\plog,\lamot,n)}  \la^{\frac{1}{p(z_i)}}.
\end{equation}
\end{lemma}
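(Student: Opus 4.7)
The plan is to exploit the fact that $2Q_i$ lies in the Whitney interior of the bad set. By \descref{W4}{W4}, $2Q_i \subset \hat{c}Q_i \subset \RR^{n+1}\setminus \elam$, so every point $z \in 2Q_i$ lies in the region where the partition of unity of Lemma \ref{partition_unity} satisfies $\sum_{j\in\mci_z} \psi_j(z) = 1$. Rewriting the definition \eqref{lipschitz_function}, this allows us to cancel the $\vh(z)$-term and express
\begin{equation*}
\vlh(z) \;=\; \sum_{j\in\mci_z} \psi_j(z)\, \vh^j \qquad \text{for every } z\in 2Q_i.
\end{equation*}
Moreover, by \descref{W2}{W2} and \descref{W5}{W5}, the cardinality of $\mci_z \subset \mci(i)$ is uniformly bounded by $C(n,\plog)$, and for every $j\in \mci(i)$ the cylinders $2Q_i$ and $2Q_j$ interact ($2Q_j\subset \hat{c}Q_i$), which gives $p(z_j)\approx p(z_i)$ modulo $\modp$ and, via \eqref{2.2.28-1}, the key equivalence $\la^{1/p(z_j)}\approx \la^{1/p(z_i)}$.

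For the bound on $|\vlh(z)|$, I would estimate directly using $|\psi_j|\leq C$ from Lemma \ref{partition_unity} together with the $L^1$-version ($\tht=1$) of \eqref{lemma3.6_pre_one} in Lemma \ref{lemma3.6_pre}:
\begin{equation*}
|\vlh(z)|\;\leq\; \sum_{j\in\mci_z} \psi_j(z)\,|\vh^j| \;\apprle\; \scalex{\al}{\mfz}\rho \sum_{j\in\mci_z}\la^{\frac{1}{p(z_j)}} \;\apprle\; \scalex{\al}{\mfz}\rho \,\la^{\frac{1}{p(z_i)}}.
\end{equation*}
Dividing by $\scalex{\al}{\mfz}\rho$ gives the first half of the claim.

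For $|\nabla \vlh(z)|$, I would use the standard trick that $\sum_j \nabla\psi_j \equiv 0$ on $\RR^{n+1}\setminus\elam$. This lets us subtract a constant from every $\vh^j$ free of charge; choosing the constant $\vh^i$ yields
\begin{equation*}
\nabla \vlh(z) \;=\; \sum_{j\in\mci_z} \nabla\psi_j(z)\,\bigl(\vh^j-\vh^i\bigr).
\end{equation*}
Now plug in the bound $|\nabla\psi_j(z)|\apprle (\scalex{\la}{z_j}r_j)^{-1}$ from Lemma \ref{partition_unity} and the oscillation bound $|\vh^j-\vh^i|\apprle \min\{\scalex{\al}{\mfz}\rho,\,\scalex{\la}{z_j}r_j\}\la^{1/p(z_j)}$ from Corollary \ref{corollary3.7}. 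The factors $\scalex{\la}{z_j}r_j$ cancel and, using again the finite overlap \descref{W2}{W2} together with $p(z_j)\approx p(z_i)$, we obtain $|\nabla \vlh(z)|\apprle \la^{1/p(z_i)}$, which is the remaining half.

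The only potential obstacle is bookkeeping: making sure all exponent comparisons $p(z_j)\leftrightarrow p(z_i)$ stay bounded independently of $\la$, which is exactly the role played by the Whitney-type inequality \eqref{2.2.28-1}, and taking care of the time cut-off $\lsb{\chi}{[\mft-s,\mft+s]}$, which is already encoded in the definition \eqref{def_tuh} of $\vh^i$ (the averages are set to zero when $2Q_i$ escapes the cylinder in time, so the inequality is trivially verified there).
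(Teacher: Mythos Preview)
Your proposal is correct and is exactly the natural argument one would expect from the preceding lemmas: the paper itself omits the proof of Lemma~\ref{lemma6.7-1} (referring to \cite{adimurthi2018sharp} for the analogous estimates), and the route you describe---rewriting $\vlh=\sum_j\psi_j\vh^j$ on $2Q_i$, invoking \eqref{lemma3.6_pre_one} for the zeroth-order bound, and using $\sum_j\nabla\psi_j\equiv0$ together with Corollary~\ref{corollary3.7} for the gradient bound---is precisely that standard argument. The only cosmetic remark is that Corollary~\ref{corollary3.7} is stated with $z_i$ (not $z_j$) in the exponent and in the scaling factor, so you do not even need to pass through $\la^{1/p(z_j)}$ for the gradient term; the comparability of $\scalex{\la}{z_i}r_i$ and $\scalex{\la}{z_j}r_j$ via \descref{W3}{W3} and \eqref{2.2.28-1} suffices to cancel against $|\nabla\psi_j|$.
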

%
%

 \begin{corollary}
 \label{corollary6.7-2}
Let $z \in K_{4\rho}^{\al}(\mfz) \setminus \elam$, then $z \in 2Q_i$ for some $i \in \NN$. Then there holds for any $\de \in (0,1]$, the estimates
 \begin{gather}
 \frac{1}{\scalex{\la}{z_i} r_i } |\vlh(z)| \apprle_{{(\plog,\lamot,n)}}\frac{\la^{\frac{1}{p(z_i)}}}{\de} +  \frac{\de}{\lbr \scalex{\la}{z_i}r_i\rbr^2 \la^{\frac{1}{p(z_i)}}} |\vh^i|^2, \label{bound+6.31}\\
  |\nabla \vlh(z)| \apprle_{{(\plog,\lamot,n)}}\frac{\la^{\frac{1}{p(z_i)}}}{\de}. \label{bound+6.31_two}
 \end{gather}

 \end{corollary}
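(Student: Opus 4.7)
The plan is to derive both estimates by first writing $\vlh$ as a convex combination of the averages $\vh^j$ on $K_{4\rho}^{\al}(\mfz)\setminus \elam$, then exploiting the geometric properties of the Whitney cylinders together with Corollary \ref{corollary3.7}. Specifically, since $\spt(\psi_j)\subset 2Q_j \subset \RR^{n+1}\setminus\elam$ and $\sum_j \psi_j \equiv 1$ on $K_{4\rho}^{\al}(\mfz)\setminus\elam$, we have $\vlh(z) = \sum_{j\in\mci_z}\psi_j(z)\,\vh^j$ for $z\in K_{4\rho}^{\al}(\mfz)\setminus\elam$. Adding and subtracting $\vh^i$ and using $\sum_{j\in\mci_z}\psi_j(z)=1$ yields the key identity
\[
\vlh(z) = \vh^i + \sum_{j\in\mci_z}\psi_j(z)\,(\vh^j - \vh^i),
\qquad
\nabla\vlh(z) = \sum_{j\in\mci_z}\nabla\psi_j(z)\,(\vh^j-\vh^i),
\]
where the second identity uses $\sum_j\nabla\psi_j(z)=0$.

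For the gradient bound \eqref{bound+6.31_two}, I would invoke the gradient estimate $|\nabla\psi_j|\apprle (\scalex{\la}{z_j}r_j)^{-1}$ from Lemma \ref{partition_unity}, the comparability $\scalex{\la}{z_j}r_j\approx \scalex{\la}{z_i}r_i$ from \descref{W3}{W3} for $j\in\mci(i)$, the bounded overlap $|\mci_z|\leq C_{(\plog,n)}$ from \descref{W2}{W2}, and the minimum form of Corollary \ref{corollary3.7}, namely $|\vh^j-\vh^i|\apprle \scalex{\la}{z_i}r_i\,\la^{1/p(z_i)}$. This immediately produces $|\nabla\vlh(z)|\apprle \la^{1/p(z_i)}$, and the factor $1/\de\geq 1$ is trivially absorbed.

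For the value bound \eqref{bound+6.31}, I would apply Corollary \ref{corollary3.7} to each term $\psi_j(z)(\vh^j-\vh^i)$, summing to obtain
\[
|\vlh(z)| \leq |\vh^i| + C\,\scalex{\la}{z_i}r_i\,\la^{1/p(z_i)}.
\]
Dividing by $\scalex{\la}{z_i}r_i$ leaves the term $|\vh^i|/(\scalex{\la}{z_i}r_i)$, to which I will apply Young's inequality in the form $ab\le \de\, a^2 + \tfrac{1}{4\de}b^2$ with
\[
a=\frac{|\vh^i|}{\scalex{\la}{z_i}r_i\,\la^{1/(2p(z_i))}},\qquad b=\la^{1/(2p(z_i))},
\]
which produces precisely the two terms appearing on the right-hand side of \eqref{bound+6.31}.

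The only mildly delicate point is ensuring that $\mci_z \subset \mci(i)$ so that Corollary \ref{corollary3.7} is applicable: this follows because $z\in\spt(\psi_j)\cap\spt(\psi_i)$ for every $j\in\mci_z$, forcing the supports to intersect. Beyond this bookkeeping, the argument is purely algebraic and the main substantive input is the $\min\{\scalex{\al}{\mfz}\rho,\scalex{\la}{z_i}r_i\}$ refinement in Corollary \ref{corollary3.7}, which is exactly what allows the transition from the global scale $\scalex{\al}{\mfz}\rho$ used in Lemma \ref{lemma6.7-1} to the local Whitney scale $\scalex{\la}{z_i}r_i$ needed here.
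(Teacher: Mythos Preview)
Your proposal is correct and follows the standard route implicit in the paper (which omits the proof, deferring to \cite{adimurthi2018sharp}): rewrite $\vlh(z)=\vh^i+\sum_{j\in\mci_z}\psi_j(z)(\vh^j-\vh^i)$ on the bad set, apply Corollary~\ref{corollary3.7} together with the Whitney properties \descref{W2}{W2}--\descref{W3}{W3} and Lemma~\ref{partition_unity}, and finish \eqref{bound+6.31} with Young's inequality exactly as you describe. The only cosmetic point is that your inclusion $\mci_z\subset\mci(i)$ need not hold literally under the support-based definition of $\mci(i)$, but what Corollary~\ref{corollary3.7} actually uses is the overlap condition $2Q_i\cap 2Q_j\neq\emptyset$ (which you do have, since $z\in 2Q_i\cap\spt(\psi_j)\subset 2Q_i\cap 2Q_j$), so the argument goes through unchanged.
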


%

 \begin{lemma}
 \label{lemma6.7-3}
 Let $z \in K_{4\rho}^{\al}(\mfz) \setminus \elam$, then $z \in 2Q_i$ for some $i \in \NN$. Then there holds for any $\de \in (0,1]$, the estimates
 \begin{gather}
 |\vlh(z)| \apprle_{(\plog,\lamot,n)} \frac{\scalex{\la}{z_i} r_i\la^{\frac{1}{p(z_i)}}}{\de} + \frac{\de}{\scalex{\la}{z_i} r_i\la^{\frac{1}{p(z_i)}}} \fiint_{\htq_i} |\vh(\tz)|^2 \ d\tz, \label{bound_6.7-3-1} \\
 |\nabla \vlh(z)| \apprle_{(\plog,\lamot,n)} \la^{\frac{1}{p(z_i)}} + \frac{\de}{\lbr \scalex{\la}{z_i} r_i\rbr^2\la^{\frac{1}{p(z_i)}}} \fiint_{\htq_i} |\vh(\tz)|^2 \ d\tz. \nonumber
 \end{gather}
 \end{lemma}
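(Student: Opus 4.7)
The plan is to derive Lemma \ref{lemma6.7-3} from Corollary \ref{corollary6.7-2} by replacing the pointwise quantity $|\vh^i|^2$ appearing in the latter with the local $L^2$-average $\fiint_{\htq_i} |\vh|^2 \, d\tz$. The starting point is that, since $\sum_j \psi_j = 1$ on $\RR^{n+1}\setminus\elam$ and $\sum_j \nabla\psi_j = 0$, the Lipschitz truncation defined in \eqref{lipschitz_function} can be rewritten for $z \in K_{4\rho}^{\al}(\mfz)\setminus\elam$ and any fixed $i$ with $z \in Q_i$ as
\[
\vlh(z) = \sum_{j \in \mci_z} \psi_j(z)\,\vh^j \txt{and} \nabla\vlh(z) = \sum_{j \in \mci_z} \nabla\psi_j(z)\,(\vh^j - \vh^i).
\]
This reduces the problem to controlling the discrete averages $\vh^j$ and their differences $\vh^j - \vh^i$ using material already developed in the previous lemmas.

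For the first bound I would use the trivial pointwise estimate $|\vlh(z)| \le \max_{j \in \mci_z} |\vh^j|$ and apply Young's inequality to $|\vh^j|$, pairing it with $|\vh^j|^2$ and tuning the splitting parameter to $\scalex{\la}{z_i} r_i \la^{1/p(z_i)}/\de$ so that the linear piece matches the desired leading term. This leaves a quadratic remainder $|\vh^j|^2$, which I would control by Jensen's inequality applied to the definition \eqref{def_tuh}: when $2Q_j \subset \Om_{4\rho}^{\al}(\mfx) \times (\mft-s,\infty)$ one gets $|\vh^j|^2 \le \fiint_{2Q_j}|\vh|^2\lsb{\chi}{[\mft-s,\mft+s]} \, d\tz$, and in the opposite case $\vh^j = 0$ and the inequality is trivial. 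Since $j \in \mci_z$ and $z \in Q_i$ force $2Q_i \cap 2Q_j \neq \emptyset$, property \descref{W5}{W5} gives $2Q_j \subset \htq_i$ and property \descref{W3}{W3} gives $|\htq_i| \approx |2Q_j|$, so $|\vh^j|^2 \apprle \fiint_{\htq_i} |\vh|^2 \, d\tz$, which is exactly what is needed to finish \eqref{bound_6.7-3-1}.

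For the gradient estimate, the identity above together with $\|\nabla\psi_j\|_\infty \apprle (\scalex{\la}{z_i} r_i)^{-1}$ from Lemma \ref{partition_unity}, the uniform overlap count \descref{W2}{W2}, and the inter-average estimate $|\vh^j - \vh^i| \apprle \scalex{\la}{z_i} r_i \, \la^{1/p(z_i)}$ supplied by Corollary \ref{corollary3.7} immediately yields $|\nabla\vlh(z)| \apprle \la^{1/p(z_i)}$, which is in fact stronger than what Lemma \ref{lemma6.7-3} claims. Since the additional term $\de \fiint_{\htq_i}|\vh|^2/((\scalex{\la}{z_i} r_i)^2 \la^{1/p(z_i)})$ in the stated bound is nonnegative, the second inequality of the lemma follows at once.

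All the ingredients — Young's and Jensen's inequalities, Corollary \ref{corollary3.7}, and the Whitney covering properties from Lemma \ref{whitney_covering} — are already in place, so the argument is conceptually routine rather than technically delicate. The only point that warrants careful bookkeeping is the two cases in \eqref{def_tuh} for $\vh^j$, which should be handled explicitly so that the Jensen step is unambiguous for every $j$ appearing in the sum; beyond this there is no genuine obstacle.
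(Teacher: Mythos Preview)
Your proposal is correct and follows the natural route the paper evidently intends: the lemma is stated without proof immediately after Corollary \ref{corollary6.7-2}, and the passage from $|\vh^i|^2$ there to $\fiint_{\htq_i}|\vh|^2$ here is precisely the Jensen-plus-Whitney step you describe (via \descref{W3}{W3} and \descref{W5}{W5}). Your direct treatment through $|\vlh(z)|\le\max_{j\in\mci_z}|\vh^j|$ is a minor stylistic variant of simply multiplying \eqref{bound+6.31} by $\scalex{\la}{z_i}r_i$ and applying Jensen to $|\vh^i|^2$, but the substance is identical.
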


\subsection{Estimates on the time derivative of \texorpdfstring{$\vlh$}.}

 \begin{lemma}
 \label{time_vlh}
 Let $ z \in {K_{4\rho}^{\al}(\mfz)}$, then $z \in 2Q_i$ for some $i \in \NN$. We then have the following estimates for the time derivative of $\vlh$: 
 \begin{equation}
 \label{bound_time_vlh_one}
 |\pa_t \vlh(\tz)| \apprle_{(\plog,\lamot,n)}  \frac{1}{ \scalet{\la}{z_i} r_i^2} \fiint_{Q_i} |\vh(z)| \lsb{\chi}{[-s-s]} \ dz.
 \end{equation}
We also have the improved estimate
\begin{equation}
\label{bound_time_vlh_two}
 |\pa_t \vlh(\tz)| \apprle_{(\plog,\lamot,n)} \frac{1}{\scalet{\la}{z_i} r_i^2} \la^{\frac{1}{p(z_i)}} \min \left\{\scalex{\la}{z_i}r_i,  \scalex{\al}{\mfz}\rho \right\}.
\end{equation}

 \end{lemma}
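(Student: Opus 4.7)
\textbf{Proof proposal for Lemma \ref{time_vlh}.} The starting point is to rewrite $\vlh$ in a form adapted to time differentiation. From the definition \eqref{lipschitz_function},
\[
\vlh(z) = \vh(z)\Bigl(1 - \sum_j \psi_j(z)\Bigr) + \sum_j \psi_j(z)\, \vh^j,
\]
and since $\hat{c}Q_i \subset \RR^{n+1}\setminus\elam$ by property \descref{W4}{W4}, we have $\sum_j \psi_j \equiv 1$ on a neighborhood of $2Q_i$. Hence on $2Q_i$ the first term vanishes identically, so differentiating in time yields $\pa_t \vlh(\tz) = \sum_{j\in\mci_{\tz}} \pa_t\psi_j(\tz)\, \vh^j$. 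The key observation is that differentiating $\sum_j \psi_j \equiv 1$ gives $\sum_{j\in\mci_{\tz}} \pa_t\psi_j(\tz) = 0$, which allows us to subtract any constant; picking a fixed index $i_0 \in \mci_{\tz}$ (for instance the given $i$ from the statement), we obtain
\[
\pa_t\vlh(\tz) = \sum_{j\in\mci_{\tz}} \pa_t\psi_j(\tz)\,\bigl(\vh^j - \vh^{i_0}\bigr).
\]

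Taking absolute values and using the bound $|\pa_t \psi_j| \apprle (\scalet{\la}{z_j}r_j^2)^{-1}$ from Lemma \ref{partition_unity}, together with the comparability $|Q_j|\approx |Q_{i_0}|$ (and consequently $\scalet{\la}{z_j}r_j^2 \approx \scalet{\la}{z_{i_0}}r_{i_0}^2$) from \descref{W3}{W3}, and the uniform cardinality bound $\#\mci_{\tz} \apprle 1$ from \descref{W2}{W2}, we get
\[
|\pa_t\vlh(\tz)| \apprle \frac{1}{\scalet{\la}{z_{i_0}}r_{i_0}^2} \max_{j\in\mci_{\tz}} |\vh^j - \vh^{i_0}|.
\]

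For the first estimate \eqref{bound_time_vlh_one}, I would use the triangle inequality $|\vh^j - \vh^{i_0}| \leq |\vh^j| + |\vh^{i_0}|$, expand $\vh^j$ and $\vh^{i_0}$ via the defining averages in \eqref{def_tuh}, and invoke \descref{W5}{W5} to embed $2Q_j \subset \hat{c}Q_{i_0}$ whenever $j \in \mci_{\tz}$. Combined with the size comparability from \descref{W3}{W3}, this allows both averages to be absorbed into $\fiint_{Q_i}|\vh|\lsb{\chi}{[\mft-s,\mft+s]}\,dz$ up to a universal constant (using that $Q_i \subset \hat{c}Q_i$ and writing the integral with the appropriate enlargement constant absorbed into $\apprle$).

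For the improved estimate \eqref{bound_time_vlh_two}, the bound on $|\vh^j - \vh^{i_0}|$ comes directly from Corollary \ref{corollary3.7}, which gives $|\vh^j - \vh^{i_0}| \apprle \min\{\scalex{\al}{\mfz}\rho,\, \scalex{\la}{z_{i_0}}r_{i_0}\}\, \la^{1/p(z_{i_0})}$. Substituting into the displayed inequality above yields exactly \eqref{bound_time_vlh_two}. The only genuine care needed is verifying that Corollary \ref{corollary3.7} applies between neighboring Whitney cylinders even when one of $\vh^j$ is set to zero via the alternative in \eqref{def_tuh}; this should follow from the Poincaré-type estimate in Lemma \ref{improved_est} with the boundary values contributing the same size, but this degenerate case is the only small obstacle and is handled uniformly by the same argument used to prove Corollary \ref{corollary3.7} itself. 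No other substantive difficulty appears.
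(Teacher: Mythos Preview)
Your proposal is correct and follows essentially the same approach as the paper: write $\pa_t\vlh = \sum_j \pa_t\psi_j\,\vh^j$ on $2Q_i$, then for \eqref{bound_time_vlh_one} bound $|\vh^j|$ by the average of $|\vh|$ (the paper does this directly via \eqref{lemma3.6_pre_one} rather than going through the subtraction-then-triangle-inequality detour, but the content is the same), and for \eqref{bound_time_vlh_two} use $\sum_j\pa_t\psi_j=0$ to subtract $\vh^i$ and invoke Corollary~\ref{corollary3.7}. Your remark about the degenerate case $\vh^j=0$ is well taken and is indeed absorbed into the proof of Corollary~\ref{corollary3.7} via Lemma~\ref{improved_est}.
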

 \begin{proof}
 Let us prove each of the assertions as follows:
 \begin{description}[leftmargin=*]
 
 \item[Estimate \eqref{bound_time_vlh_one}:] In this case, we proceed as follows
 \begin{equation*}
 \begin{array}{rcl}
 |\pa_t \vlh(z)| & \leq & \sum_{j \in I_i} |\vh^j| |\pa_t\psi_j(z)| \overset{\eqref{lemma3.6_pre_one}}{\apprle}   \frac{1}{\scalet{\la}{z_i} r_i^2} \fiint_{Q_i} |\vh(z)| \lsb{\chi}{[\mft-s,\mft+s]} \ dz.
 \end{array}
 \end{equation*}

 \item[Estimate \eqref{bound_time_vlh_two}:] From the fact that $\sum_{j \in I_i} \psi_j(z) = 1$, we see that $\sum_{j \in I_i} \pa_t \psi_j(z) = 0$ which along with Lemma \ref{partition_unity} gives the following sequence of estimates
 \begin{equation*}
 \begin{array}{rcl}
 |\pa_t\vlh(z)| & = & \abs{\sum_{j \in I_i} \lbr \vh^j - \vh^i \rbr \pa_t \psi_j(z)  } \\
 & \overset{\text{Corollary \ref{corollary3.7}}}{\apprle} & \frac{1}{\scalet{\la}{\mfz} r_i^2} \min\left\{\scalex{\al}{\mfz} \rho, \scalex{\la}{z_i}r_i\right\} \la^{\frac{1}{p(z_i)}}.
 \end{array}
 \end{equation*}

 \end{description}

 \end{proof}

 \subsection{Some important estimates for the test function}
 
 \begin{lemma}
 \label{lemma6.8}
 Let $Q_i$ be a Whitney-type parabolic cylinder for some $ i \in \NN$. Then for any $\vt \in [1,2]$, there holds
 \begin{equation*}
 \label{lemma6.8-one}
 \iint_{{K_{4\rho}^{\al}(\mfz)} \setminus \elam} |\vlh(z)|^{\vt} \ dz \apprle_{(\plog,\lamot,n)} \iint_{{K_{4\rho}^{\al}(\mfz)} \setminus \elam} |\vh(z)|^{\vt} \ dz.
 \end{equation*}
\end{lemma}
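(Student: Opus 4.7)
The plan is to exploit the identity that on the bad set $K_{4\rho}^{\al}(\mfz) \setminus \elam$, the partition of unity $\{\psi_i\}$ sums to $1$, so the Lipschitz truncation collapses to
\[
\vlh(z) = \vh(z) - \sum_i \psi_i(z)\bigl(\vh(z) - \vh^i\bigr) = \sum_i \psi_i(z)\, \vh^i,
\]
reducing the task to bounding $\sum_i \psi_i(z)\vh^i$ in $L^\vt$ by $\vh$ in $L^\vt$ on the bad set. This is essentially the standard argument from the Lipschitz truncation literature, so I do not anticipate a serious obstacle; the bookkeeping around supports and the Steklov average extension is the only delicate point.

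First I would apply Jensen's inequality pointwise: since $\vt \in [1,2]$ makes $t \mapsto t^\vt$ convex and $\{\psi_i(z)\}_i$ is a partition of unity at each $z$ in the bad set, we get
\[
|\vlh(z)|^\vt \le \Bigl(\sum_i \psi_i(z)|\vh^i|\Bigr)^\vt \le \sum_i \psi_i(z)|\vh^i|^\vt.
\]
Integrating and using $\|\psi_i\|_\infty \le C_{(\plog,n)}$ from Lemma \ref{partition_unity} together with $\spt \psi_i \subset 2Q_i$ yields
\[
\iint_{K_{4\rho}^{\al}(\mfz) \setminus \elam} |\vlh(z)|^\vt\,dz \apprle \sum_i |\vh^i|^\vt |2Q_i|.
\]

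Next I would apply Jensen's inequality to the defining average \eqref{def_tuh}: whenever $\vh^i \ne 0$,
\[
|\vh^i|^\vt \le \fiint_{2Q_i} |\vh(\tz)|^\vt \lsb{\chi}{[\mft-s,\mft+s]} \, d\tz,
\]
so $|\vh^i|^\vt|2Q_i| \le \iint_{2Q_i} |\vh|^\vt \lsb{\chi}{[\mft-s,\mft+s]}\,d\tz$; and when $\vh^i = 0$ the term drops out. Summing and invoking the bounded overlap property \descref{W2}{W2} gives
\[
\sum_i \iint_{2Q_i} |\vh(\tz)|^\vt \lsb{\chi}{[\mft-s,\mft+s]}\,d\tz \apprle \iint_{\bigcup_i 2Q_i} |\vh(\tz)|^\vt \lsb{\chi}{[\mft-s,\mft+s]}\,d\tz.
\]

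Finally, property \descref{W4}{W4} (with $\hat{c} \geq 9$) forces $2Q_i \subset \hat{c}Q_i \subset \RR^{n+1} \setminus \elam$, hence $\bigcup_i 2Q_i \subset \RR^{n+1} \setminus \elam$. Combining this with the fact that $\vh$, extended by zero to $\RR^n \setminus \Om_{4\rho}^{\al}(\mfx)$ via the boundary condition $v = 0$ on $\pa_p K_{4\rho}^{\al}(\mfz)$, is supported inside $K_{4\rho}^{\al}(\mfz)$, the last integral is bounded by $\iint_{K_{4\rho}^{\al}(\mfz) \setminus \elam}|\vh|^\vt\,dz$, completing the chain. The only point that requires some care is ensuring that the zero-extension of $\vh$ is consistent with the Steklov-average definition near the time-slice $t = \mft + s$, but this is handled uniformly by the cutoff $\lsb{\chi}{[\mft-s,\mft+s]}$ already built into \eqref{def_tuh}.
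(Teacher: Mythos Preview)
Your argument is correct and is precisely the standard route for this estimate: collapse $\vlh$ to $\sum_i \psi_i \vh^i$ on the bad set, apply Jensen twice, then use bounded overlap \descref{W2}{W2} together with $2Q_i \subset \hat c Q_i \subset \elam^c$ from \descref{W4}{W4} and the zero extension of $\vh$. The paper itself omits the proof of this lemma (deferring to \cite{adimurthi2018sharp}), and your write-up matches the argument one finds there.
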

%

 \begin{lemma}
 \label{lemma6.8-1}
 Let $Q_i$ be a Whitney-type parabolic cylinder for some $i \in \NN$, then there holds
 \begin{equation*}
 \label{lemma6.8-two}
 \fiint_{Q_i} |\vlh(z) - uh(z)| \ dz \apprle_{(\plog,\lamot,n)} \min\left\{ \scalex{\la}{z_i} r_i, \scalex{\al}{\mfz}\rho \right\} \la^{\frac{1}{p(z_i)}}.
 \end{equation*}

 \end{lemma}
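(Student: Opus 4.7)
The key observation is that the right-hand side of \eqref{lipschitz_function} admits the identity
\[
\vlh(z) - \vh(z) \ = \ -\sum_{j=1}^{\infty} \psi_j(z)\bigl(\vh(z) - \vh^j\bigr),
\]
so the entire task is to control this finite sum on $Q_i$. Since the Whitney property \descref{W4}{W4} gives $Q_i \subset \hat{c}Q_i \subset \RR^{n+1}\setminus\elam$, the partition of unity from Lemma \ref{partition_unity} satisfies $\sum_j \psi_j(z) = 1$ for every $z\in Q_i$, and only indices $j\in \mci(i)$ contribute. Moreover, by the finite overlap property \descref{W2}{W2}, $\#\mci(i) \leq C(n,\plog)$. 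These two facts are what reduce the sum to a bounded number of terms, each of which should be estimated by the target quantity $\min\{\scalex{\la}{z_i}r_i,\scalex{\al}{\mfz}\rho\}\la^{1/p(z_i)}$.

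The plan is then to insert $\vh^i$ artificially and split the sum. Using $\sum_{j\in\mci(i)}\psi_j(z)=1$ for $z\in Q_i$, I rewrite
\[
\vlh(z) - \vh(z) \ = \ \sum_{j\in\mci(i)} \psi_j(z)\bigl(\vh^j - \vh^i\bigr) \ + \ \bigl(\vh^i - \vh(z)\bigr),
\]
and take the averaged $L^1$ norm over $Q_i$. The triangle inequality and the bound $\|\psi_j\|_\infty \apprle 1$ from Lemma \ref{partition_unity} give
\[
\fiint_{Q_i}|\vlh(z)-\vh(z)|\,dz \ \apprle \ \sum_{j\in\mci(i)} |\vh^j-\vh^i| \ + \ \fiint_{Q_i} |\vh^i - \vh(z)|\, dz.
\]
The first sum on the right is handled directly by Corollary \ref{corollary3.7}, which yields exactly the desired bound $\min\{\scalex{\al}{\mfz}\rho,\scalex{\la}{z_i}r_i\}\la^{1/p(z_i)}$ for each summand, and there are only boundedly many of them by \descref{W2}{W2}.

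For the second term, I would apply Lemma \ref{improved_est} with $\tht=1$ on $2Q_i$ (absorbing the measure ratio $|2Q_i|/|Q_i|\approx 1$); since $Q_i\subset K_{4\rho}^\al(\mfz)$ forces $I_i\subset[\mft-s,\mft+s]$, the characteristic factor in the definition \eqref{def_tuh} of $\vh^i$ is identically $1$ on $Q_i$, and Lemma \ref{improved_est} delivers the same $\min\{\scalex{\al}{\mfz}\rho,\scalex{\la}{z_i}r_i\}\la^{1/p(z_i)}$ bound. Combining the two contributions completes the proof. The only minor subtlety (and the step most likely to require care) is verifying that $\vh^i$ is indeed the full (non-truncated) average on those Whitney cubes that matter, i.e.\ that the case $\vh^i=0$ from \eqref{def_tuh} either does not occur for Whitney cubes meeting $Q_i$ or else only enlarges the bound by the comparable quantity $\fiint_{Q_i}|\vh|\,dz$ which is itself controlled by Lemma \ref{improved_est} (after using $\vh=0$ on $\partial_p K_{4\rho}^\al(\mfz)$); in either case the stated estimate follows.
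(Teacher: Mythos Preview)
The paper does not actually supply a proof of this lemma; it is one of the results in the subsection introduced with the sentence ``The proofs of these estimates follow similarly to those in \cite{adimurthi2018sharp} and hence we will only provide an outline of the proofs,'' and for Lemma \ref{lemma6.8-1} no outline is given at all. So there is no in-paper proof to compare against.

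Your argument is the standard one and is correct: the decomposition
\[
\vlh(z)-\vh(z)=\sum_{j\in\mci(i)}\psi_j(z)\bigl(\vh^j-\vh^i\bigr)+\bigl(\vh^i-\vh(z)\bigr)
\]
on $Q_i$, followed by Corollary \ref{corollary3.7} for the cross terms and Lemma \ref{improved_est} (with $\tht=1$) for the diagonal term, is exactly how such an estimate is obtained in the Lipschitz-truncation literature and presumably in the referenced paper. One small inaccuracy worth flagging: the Whitney cubes $Q_i$ cover $K_{4\rho}^{\al}(\mfz)\setminus\elam$ but are not required to be \emph{contained} in $K_{4\rho}^{\al}(\mfz)$, so your assertion ``$Q_i\subset K_{4\rho}^{\al}(\mfz)$ forces $I_i\subset[\mft-s,\mft+s]$'' is not justified as written. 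This is harmless, however, because Lemma \ref{improved_est} is already stated with the characteristic function $\lsb{\chi}{[\mft-s,\mft+s]}$ built in, and the definition \eqref{def_tuh} of $\vh^i$ (together with $\vh$ vanishing outside $K_{4\rho}^{\al}(\mfz)$) makes the estimate go through without that containment; your closing parenthetical remark essentially acknowledges this.
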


 \begin{lemma}
 \label{lemma6.8-2}
 Let $Q_i$ be a Whitney-type parabolic cylinder for some $i \in \NN$, then there holds
 \begin{equation*}
 \label{lemma6.8-three}
 \begin{array}{rl}
 \iint_{K_{4\rho}^{\al}(\mfz) \setminus \elam } |\pa_t \vlh(z) \lbr \vlh(z) - \vh(z)\rbr|^{\vt} \ dz & \apprle_{(\plog,\lamot,n)} \la^{\vt} |\RR^{n+1} \setminus \elam|.
 \end{array}
 \end{equation*}
 \end{lemma}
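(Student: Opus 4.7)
The plan is to bound the integral over $K_{4\rho}^{\al}(\mfz) \setminus \elam$ by decomposing it along the Whitney covering $\{Q_i\}_{i \in \NN}$ from Lemma \ref{whitney_covering} and estimating each piece using the $L^{\infty}$ bound on $\pa_t \vlh$ supplied by \eqref{bound_time_vlh_two} together with the $L^1$-type bound on $\vlh - \vh$ furnished by Lemma \ref{lemma6.8-1}. The case $\vt = 1$ is the one needed in the proof of Theorem \ref{first_diff_thm}, so I will focus on that (the general $\vt \in [1,2]$ case is analogous after using the pointwise $L^\infty$ bound on $\pa_t \vlh$ and raising the estimate from Lemma \ref{lemma6.8-1} to the power $\vt$).

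First, by \descref{W1}{W1} and \descref{W2}{W2}, the cylinders $\{Q_i\}$ cover $K_{4\rho}^{\al}(\mfz) \setminus \elam$ with bounded overlap, so
\[
\iint_{K_{4\rho}^{\al}(\mfz) \setminus \elam} |\pa_t \vlh (\vlh - \vh)| \, dz \apprle \sum_{i \in \NN} \iint_{Q_i} |\pa_t \vlh| \, |\vlh - \vh| \, dz.
\]
On each Whitney cylinder $Q_i$, I will pull out $\|\pa_t \vlh\|_{L^\infty(Q_i)}$ via \eqref{bound_time_vlh_two} and integrate the remaining factor using Lemma \ref{lemma6.8-1}:
\[
\iint_{Q_i} |\pa_t \vlh| \, |\vlh - \vh| \, dz \apprle \frac{\la^{\frac{1}{p(z_i)}} \min\{\scalex{\la}{z_i} r_i, \scalex{\al}{\mfz}\rho\}}{\scalet{\la}{z_i} r_i^2} \cdot |Q_i| \cdot \min\{\scalex{\la}{z_i} r_i, \scalex{\al}{\mfz}\rho\} \, \la^{\frac{1}{p(z_i)}}.
\]

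The key computation is then a scaling cancellation: estimating the minimum from above by $\scalex{\la}{z_i} r_i$ and recalling the definitions $\scalex{\la}{z_i} = \la^{-\frac{1}{p(z_i)} + \frac{d}{2}}$ and $\scalet{\la}{z_i} = \la^{-1 + d}$, one finds
\[
\la^{\frac{2}{p(z_i)}} \cdot \frac{\scalex{\la}{z_i}^2 r_i^2}{\scalet{\la}{z_i} r_i^2} = \la^{\frac{2}{p(z_i)}} \cdot \la^{-\frac{2}{p(z_i)} + d - (-1 + d)} = \la,
\]
so that each term is bounded by $\la \, |Q_i|$. Summing and using both the bounded-overlap property \descref{W2}{W2} and the fact that $Q_i \subset \hat{c}Q_i \subset \RR^{n+1}\setminus \elam$ from \descref{W4}{W4} yields
\[
\sum_{i \in \NN} \la \, |Q_i| \apprle \la \, |\RR^{n+1} \setminus \elam|,
\]
which finishes the case $\vt=1$. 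For general $\vt \in [1,2]$, one uses the $L^\infty$ bound on $\pa_t \vlh$ raised to the power $\vt$, the analogous $L^\vt$-bound on $\vlh - \vh$ (which follows from Lemma \ref{lemma6.8-1} combined with the $L^\infty$ bound from Lemma \ref{lemma6.7-1}), and the same scaling identity.

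The main obstacle is the exponent bookkeeping: the min-factor could a priori be either $\scalex{\la}{z_i} r_i$ or $\scalex{\al}{\mfz}\rho$, and in both of the foregoing lemmas it is the same min that appears, so no delicate case distinction is needed, but one must verify that replacing the min by $\scalex{\la}{z_i} r_i$ indeed produces the desired $\la$ on the right (rather than a residual $\la^{1 - 2/p(z_i)}$ with $p(z_i)$ varying over the covering). This is exactly why the identity $\scalex{\la}{z_i}^2 / \scalet{\la}{z_i} = \la^{1 - 2/p(z_i)}$ is designed so as to cancel $\la^{2/p(z_i)}$, leaving a clean $\la$; once this is observed, no further logarithmic-type $\la^{p^+_{2Q_i} - p^-_{2Q_i}}$ factors enter (they would, if one replaced some exponents by $p^\pm_{2Q_i}$, but can be absorbed using \eqref{2.2.28-1}).
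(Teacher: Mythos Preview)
Your proposal is correct and follows essentially the same route as the paper: decompose over the Whitney cylinders, pull out $\|\pa_t \vlh\|_{L^\infty}$ via \eqref{bound_time_vlh_two}, bound the remaining factor $|\vlh-\vh|$ by $\min\{\scalex{\la}{z_i}r_i,\scalex{\al}{\mfz}\rho\}\,\la^{1/p(z_i)}$, and observe the scaling cancellation $\la^{2/p(z_i)}\,\scalex{\la}{z_i}^2/\scalet{\la}{z_i}=\la$. The only cosmetic difference is that the paper, instead of citing Lemma \ref{lemma6.8-1}, writes out $\vlh-\vh=\sum_{j\in I_i}\psi_j(\vh^j-\vh)$ and invokes Lemma \ref{improved_est} directly (which is already stated for general $\vt$), making the $\vt\in[1,2]$ case immediate without needing to interpolate with the $L^\infty$ bound.
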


\begin{proof}
From \descref{W2}{W2}, we see that $K_{4\rho}^{\al}(\mfz) \setminus \elam \subset \bigcup 2Q_i$, thus for a given $i \in \NN$, let use define the following
\begin{equation*}
\label{lm6.8-3-1}
\begin{array}{rcl}
 J_i:=\iint_{2Q_i } \abs{\pa_t \vlh(z) \lbr \vlh(z) - \vh(z)\rbr}^{\vt} \lsb{\chi}{K_{4\rho}^{\al}(\mfz)}\ dz.
\end{array}
\end{equation*}

Making use of \eqref{bound_time_vlh_two}, we  get
\begin{equation*}
\begin{array}{rcl}
J_i & \apprle & \lbr \frac{\la^{\frac{1}{p(z_i)}}}{\scalet{\la}{z_i} r_i^2} \min\{ \scalex{\al}{\mfz}\rho, \scalex{\la}{z_i} r_i\} \rbr^{\vt} \iint_{2Q_i} \abs{\vlh(z)\lsb{\chi}{K_{4\rho}^{\al}(\mfz)} - \vh(z) \lsb{\chi}{K_{4\rho}^{\al}(\mfz)}}^{\vt} \ dz \\
& \apprle & \lbr \frac{\la^{\frac{1}{p(z_i)}}}{\scalet{\la}{z_i} r_i^2} \min\{ \scalex{\al}{\mfz}\rho, \scalex{\la}{z_i} r_i\} \rbr^{\vt} \sum_{j \in I_i}\iint_{2Q_i} \abs{ \vh(z)\lsb{\chi}{K_{4\rho}^{\al}(\mfz)} - \vh^j }^{\vt} \ dz \\
& \overset{\text{Lemma \ref{improved_est}}}{\apprle} &  \lbr \frac{\la^{\frac{1}{p(z_i)}}}{\scalet{\la}{z_i} r_i^2} \min\{ \scalex{\al}{\mfz}\rho, \scalex{\la}{z_i} r_i\} \scalex{\la}{z_i} r_i \la^{\frac{1}{p(z_i)}}\rbr^{\vt} |\hQ_i| = |\htq_i| \la^{\vt}.
\end{array}
\end{equation*}
Summing over all $i \in \NN$, we get the desired inequality.
\end{proof}

 \subsection{Lipschitz continuity estimates}

 We will now show that the function $\vl$  constructed in \eqref{lipschitz_function} is Lipschitz continuous on $B_{4\rho}^{\al}(\mfx) \times (\mft-s,\mft+s)$ where $s$ is as defined in \eqref{def_s}. To do this, we shall use the integral characterization of Lipschitz continuous functions obtained in \cite[Theorem 3.1]{Prato} which says the following:
 \begin{lemma}[Lipschitz characterization]
 \label{deprato}
 Let $\tz \in B_{4\rho}^{\al}(\mfx) \times (\mft-s,\mft+s)$ and $r >0$ be given. Define the parabolic cylinder $Q_r(\tz) := B_r(\tx) \times (\tlt - r^2, \tlt+r^2)$, i.e., $Q_r(\tz) := \{ z \in \RR^{n+1}: d_p(z,\tz) \leq r\}$ where $d_p$ is as defined in Definition \ref{parabolic_metric}. Furthermore suppose that the  following expression is bounded independent of $\tz \in B_{4\rho}^{\al}(\mfx) \times (\mft-s,\mft+s)$ and $r>0$
 \[
 I_r(\tz) := \frac{1}{\abs{B_{4\rho}^{\al}(\mfx) \times (\mft-s,\mft+s) \cap Q_r(\tz)}} \iint_{B_{4\rho}^{\al}(\mfx) \times (\mft-s,\mft+s) \cap Q_r(\tz)} \abs{\frac{\vlh(z) - \avgs{\vlh}{B_{4\rho}^{\al}(\mfx) \times (\mft-s,\mft+s)\cap Q_r(\tz)}}{r}} \ dz < \infty,
 \]
then $\vl \in C^{0,1}(B_{4\rho}^{\al}(\mfx) \times (\mft-s,\mft+s))$.
 \end{lemma}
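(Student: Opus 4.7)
The plan is to establish the pointwise Lipschitz estimate by a Campanato-style chain argument: a uniform bound on the mean oscillation-type quantity $I_r(\tz)$ controls the discrepancy between averages of $\vlh$ on nested parabolic cylinders at consecutive dyadic scales, and summing a geometric series produces a Lipschitz bound in the parabolic metric $d_p$. Since $\vlh$ is a.e. equal to its Lebesgue value by the Lebesgue differentiation theorem (it is bounded, by Lemma \ref{lemma6.7-1}), the bound passes from averages to pointwise values.

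First I would fix arbitrary points $z_1,z_2$ in the cylinder $U := B_{4\rho}^{\al}(\mfx) \times (\mft-s,\mft+s)$, set $r := d_p(z_1,z_2)$, and introduce dyadic radii $r_k := r/2^k$ with averages $a_k^j := \avgs{\vlh}{U \cap Q_{r_k}(z_j)}$ for $j = 1,2$ and $k\geq 0$. Since $U$ is a product of a ball and an interval, it satisfies a parabolic measure-density condition: $|U \cap Q_{r_k}(z_j)| \apprge_{(n)} |Q_{r_k}(z_j)|$ for all $z_j \in U$ and $r_k > 0$ (independently of how close $z_j$ is to $\pa U$). Writing $K := \sup_{r,\tz} I_r(\tz) < \infty$ and using that $Q_{r_{k+1}}(z_j) \subset Q_{r_k}(z_j)$ with comparable measures, a standard triangle inequality gives the consecutive-scale estimate
\[
|a_k^j - a_{k+1}^j| \leq \frac{|U \cap Q_{r_k}(z_j)|}{|U \cap Q_{r_{k+1}}(z_j)|}\, I_{r_k}(z_j)\, r_k \apprle_{(n)} K r_k.
\]

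Second, summing $\sum_{k\geq 0} r_k = 2r$ yields $|a_0^j - \lim_{k\to\infty} a_k^j| \apprle K r$ for $j = 1,2$. By the Lebesgue differentiation theorem applied to the bounded function $\vlh$, we have $\lim_{k\to\infty} a_k^j = \vlh(z_j)$ for almost every $z_j \in U$. To connect the two base averages, I would use the containment $Q_r(z_1) \cup Q_r(z_2) \subset Q_{3r}(z_1)$ (which follows from $d_p(z_1,z_2) = r$) together with the hypothesis $I_{3r}(z_1) \leq K$, giving $|a_0^1 - \avgs{\vlh}{U \cap Q_{3r}(z_1)}| + |a_0^2 - \avgs{\vlh}{U \cap Q_{3r}(z_1)}| \apprle_{(n)} K r$. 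Combining these three estimates via the triangle inequality yields $|\vlh(z_1) - \vlh(z_2)| \apprle K\, d_p(z_1,z_2)$ for a.e. $z_1,z_2 \in U$, and redefining $\vlh$ on a null set (equivalently, choosing its continuous representative) promotes this to a pointwise bound on all of $\overline{U}$.

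The routine step is verifying the measure-density condition for $U$, which is immediate since $U$ is a product of a Euclidean ball and an interval. The only subtle point will be ensuring the constants in the chain argument are indeed absorbed into $K$ uniformly across scales $r_k$ and locations $z_j$; this relies on the fact that the doubling property of parabolic cylinders in $\RR^{n+1}$ is purely dimensional (constant $C = C(n)$), so the geometric summation is unaffected by position. Since Lemma \ref{deprato} is invoked as a known characterization from \cite{Prato}, in practice the proof above just reproduces the cited argument specialized to our product cylinder $U$ with the parabolic metric $d_p$; no new ingredient is needed beyond Lebesgue differentiation and the boundedness of $\vlh$ established in Lemma \ref{lemma6.7-1}.
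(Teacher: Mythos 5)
Your proposal is correct: it is the standard Campanato-type chain argument (dyadic telescoping of averages over nested parabolic cylinders, geometric summation, Lebesgue differentiation, and a comparison of the two base averages through a cylinder of radius $3r$). The paper itself offers no proof of this lemma --- it is quoted verbatim from Da Prato \cite[Theorem 3.1]{Prato} --- so your write-up simply supplies the cited argument, which is exactly what is needed.

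One small inaccuracy worth flagging: the measure-density constant for $U := B_{4\rho}^{\al}(\mfx)\times(\mft-s,\mft+s)$ relative to the \emph{standard} parabolic cylinders $Q_r(\tz)$ is not purely dimensional. The set $U$ is an intrinsic cylinder whose spatial radius $\scalex{\al}{\mfz}4\rho$ and temporal half-length $\scalet{\al}{\mfz}(4\rho)^2$ scale differently in $\al$, so $U$ is anisotropic for $d_p$ with eccentricity $\al^{|1/2-1/p(\mfz)|}$; consequently $|U\cap Q_r(\tz)|\apprge |Q_r(\tz)|$ holds for $r\leq \diam_p(U)$ only with a constant depending on $n$, $\al$ and $p(\mfz)$ (and fails outright for $r$ much larger than $\diam_p(U)$, though such radii never arise since $r=d_p(z_1,z_2)$). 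This only affects the size of the Lipschitz constant, not the qualitative conclusion $\vl\in C^{0,1}$, and the paper's subsequent application (Lemma \ref{lipschitz_continuity}) explicitly permits the constant to depend on $\al$, so the argument stands.
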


 \begin{remark}
 From \eqref{def_d} and the fact that $\al \geq 1$,  for any $\tz_1,\tz_2 \in \RR^{n+1}$ and any $\tz \in \RR^{n+1}$, we get
 \begin{equation}
 \label{equiv_dist}
 \begin{array}{rcl}
 d_p(\tz_1,\tz_2) & \overset{\text{Definition} \ \ref{parabolic_metric}}{:=} & \max\lbr[\{]|x_1-x_2|, \sqrt{|t_1-t_2|} \rbr[\}] \\
 & \leq&  \max\left\{\nscalex{\al}{z}|x_1-x_2|, \sqrt{\nscalet{\al}{z} |t_1-t_2|} \right\}\overset{\text{Definition} \ \ref{loc_parabolic_metric}}{ =:} d_{\tz}(\tz_1,\tz_2)\\
 & \leq &  \al^{{\frac{1}{p^-}-\frac{d}{2}}}\al^{\frac32-\frac{d}{2}}\max\left\{|x_1-x_2|, \sqrt{|t_1-t_2|} \right\} \leq C_{(\al,p^-,d)} d_p(\tz_1,\tz_2).
 \end{array}
 \end{equation}
 
 This shows that for any $\tz \in \RR^{n+1}$, we have $d_p \approx_{(\al,p^-,d)} d_{\tz}$. 
 \end{remark}

 In this subsection, we want to apply Lemma \ref{deprato}, hence we only need to ensure the constants involved are independent of $r>0$ and $\tz$ only. \emph{Only for this subsection, we will use the notation $o(1)$ to denote a constant which  can depend on $\al,\al_0,\plog,\lamot,n,\|uh\|_{L^1}, \|u\|_{L^1}$ but {\bf NOT} on  $r>0$ and the point $\tz$.}

 \begin{lemma}
 \label{lipschitz_continuity}
 Let $\al \geq 1$, then for any $\tz \in K_{4\rho}^{\al}(\mfz)$ and $r>0$, there exists a constant $C>0$ independent of $\tz$ and $r$ such that
\begin{equation*}\label{da_pr}
 I_r(\tz) := \frac{1}{\abs{K_{4\rho}^{\al}(\mfz)  \cap Q_r(\tz)}} \iint_{K_{4\rho}^{\al}(\mfz)  \cap Q_r(\tz)} \abs{\frac{\vlh(z) - \avgs{\vlh}{K_{4\rho}^{\al}(\mfz) \cap Q_r(\tz)}}{r}} \ dz \leq C < \infty.
 \end{equation*}
 In particular, this implies for any $\tz_1, \tz_2 \in B_{4\rho}^{\al}(\mfx) \times (\mft-s,\mft+s)$, there exists a constant $K>0$ such that
 \[
 |\vlh(\tz_1) - \vlh(\tz_2)| \leq K d_p(\tz_1,\tz_2).
 \]
 \end{lemma}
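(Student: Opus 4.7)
The plan is to verify the De Prato-type integral characterization of Lemma \ref{deprato} by bounding $I_r(\tz)$ uniformly in $\tz \in K_{4\rho}^{\al}(\mfz)$ and $r>0$. My strategy is to establish the stronger pointwise estimate
\[
|\vlh(z_1) - \vlh(z_2)| \apprle_{(\al,\plog,\lamot,n)} \la^{1/p^-}\, d_p(z_1, z_2)
\]
for almost every pair $z_1, z_2 \in K_{4\rho}^{\al}(\mfz) \cap Q_r(\tz)$, from which $I_r(\tz) \apprle \la^{1/p^-}$ follows immediately by integration. The metric comparison \eqref{equiv_dist} allows me to pass between $d_p$ and the localized metric $d^{\la}_{z_i}$ governing the Whitney covering at the cost of an $\al$-dependent multiplicative constant. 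The proof naturally splits according to whether $Q_r(\tz) \cap \elam$ is empty.

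In the case $Q_r(\tz) \cap \elam = \emptyset$, every point of the relevant set lies in some Whitney cylinder $2Q_i$ from Lemma \ref{whitney_covering}, and the Whitney property \descref{W4}{W4} forces $\scalex{\la}{z_i} r_i \apprle \text{dist}_{d^{\la}_{z_i}}(z,\elam) \apprle r$. I connect any $z_1, z_2 \in Q_r(\tz)$ by a finite chain of overlapping Whitney cylinders of comparable radii (using \descref{W3}{W3} and \descref{W5}{W5}); on each such $2Q_i$ Lemma \ref{lemma6.7-1} gives $|\nabla \vlh| \apprle \la^{1/p(z_i)}$ and Lemma \ref{time_vlh}, estimate \eqref{bound_time_vlh_two}, gives $|\pa_t \vlh| \apprle \la^{1/p(z_i)}/(\scalex{\la}{z_i} r_i)$, so telescoping along the chain yields the claimed bound. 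In the case $Q_r(\tz) \cap \elam \neq \emptyset$, I pick a Lebesgue point $\hz \in Q_r(\tz) \cap \elam$, so that $\vlh(\hz) = v(\hz)$. For $z \in \elam$, the maximal function bound $g(z) \leq \la^{1-\be}$ combined with a Hedberg--Calder\'on-type pointwise Sobolev--maximal inequality (applied to $v$ via \eqref{def_g_A}) yields $|v(z) - v(\hz)| \apprle \la^{1/p^-} d_p(z,\hz)$. For $z \notin \elam$ with $z \in 2Q_i$, I estimate $|\vlh(z) - \vh^i|$ by Lemma \ref{lemma6.7-1} via $\scalex{\la}{z_i} r_i \la^{1/p(z_i)} \apprle \la^{1/p^-} r$, then compare $\vh^i$ with $v(\hz)$ using Lemma \ref{improved_est} and Corollary \ref{corollary3.7}, picking up only $\la^{1/p^-} r$ in either case.

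The main obstacle is the subcase $z_1, z_2 \in \elam$: extracting a pointwise Lipschitz-type bound for $v$ from only a maximal function bound requires the classical Hedberg--Calder\'on pointwise inequality, but here the non-standard exponent $p(z)/\mathfrak{q}$ and the outer power $\mathfrak{q}(1-\be)$ inside \eqref{def_g_A} create an exponent mismatch that must be tracked carefully. The restriction $1 < \mathfrak{q} < p^-/(p^+-1)$, combined with the $\log$-H\"older oscillation control of Lemma \ref{bound_rho} and the uniform bound \eqref{2.2.28-1}, is precisely what allows this mismatch to be absorbed into a constant depending only on the listed structural quantities, and in particular independent of $r$ and $\tz$. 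Finally, the Lipschitz bound $|\vlh(\tz_1) - \vlh(\tz_2)| \leq K\, d_p(\tz_1,\tz_2)$ asserted in the second part of the lemma follows from applying Lemma \ref{deprato} to the bound on $I_r(\tz)$ just established.
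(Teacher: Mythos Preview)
Your approach has a genuine gap in the subcase $z_1, z_2 \in \elam$. You propose to obtain a pointwise bound $|v(z_1) - v(z_2)| \apprle \la^{1/p^-} d_p(z_1,z_2)$ from a Hedberg--Calder\'on maximal inequality applied through the definition \eqref{def_g_A}. But $g$ controls only $\mm\bigl((|v|/\rho + |\nabla u| + |\nabla w| + |\bff| + 1)^{p(\cdot)/\mathfrak{q}}\bigr)$; there is no term involving $\pa_t v$, and the standard pointwise Sobolev--maximal inequality does not furnish Lipschitz control in the time direction from spatial-gradient and zero-order information alone. Indeed $v = u - w$ is not, in general, Lipschitz even on the good set $\elam$ --- this is precisely why the Lipschitz truncation is needed --- so a pointwise bound of this type is simply false. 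The same issue reappears when you try to ``compare $\vh^i$ with $v(\hz)$'': Lemma \ref{improved_est} and Corollary \ref{corollary3.7} only relate averages over Whitney cylinders, not point values at a fixed $\hz \in \elam$.

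The paper avoids this by never attempting a pointwise bound on $v$. It bounds $I_r(\tz)$ directly: in the case $2Q \nsubseteq \elam^c$ it writes $|\vlh - \avgs{\vlh}{\,\cdot\,}| \le |\vlh - \vh| + |\vh - \avgs{\vh}{\,\cdot\,}|$, handles the first piece by summing Lemma \ref{improved_est} over the Whitney cylinders that meet $Q$, and for the second piece applies the parabolic Poincar\'e Lemma \ref{lemma_crucial_1} together with Lemma \ref{lemma_crucial_2}. The essential point you are missing is that the only available time-regularity for $v$ comes from the equations \eqref{basic_pde} and \eqref{wapprox_int}, and Lemma \ref{lemma_crucial_2} is exactly the device that converts the weak formulation into control of $|\avgs{\vh}{\mu}(t_2) - \avgs{\vh}{\mu}(t_1)|$ for spatial averages. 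Any viable argument in the mixed case must route the time oscillation through the PDE in this way rather than through a maximal-function bound.
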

\begin{proof}
Let $r>0$ and $\tz \in K_{4\rho}^{\al}(\mfz) $ and denote the cylinder $Q_r(\tz) = Q$. We will now proceed as follows:

\begin{description}[leftmargin=*]
\item[Case $2Q \subset \elam^c$:] From \eqref{lipschitz_function}, it is easy to see that $\vlh \in C^{\infty}(\elam^c)$. Thus, we can apply the mean value theorem to get
\begin{equation}\label{A.47}\begin{array}{rcl}
I_r(\tz) &\apprle& \frac{1}{r} \fiint_{Q \cap \lbr \RR^n \times {[\mft-s,\mft+s]}\rbr} \fiint_{Q \cap \lbr \RR^n \times {[\mft-s,\mft+s]}\rbr}  \abs{\vlh(z_1) - \vlh(z_2)} \ dz_1 \ dz_2 \\
&\apprle &\sup_{z\in Q \cap \lbr \RR^n \times {[\mft-s,\mft+s]}\rbr} \lbr |\nabla \vlh(z)| + r |\pa_t \vlh(z)| \rbr.
\end{array}
\end{equation}
Since $2Q \subset \elam^c$, we can use \eqref{bound+6.31_two} with $\de =1$ and \eqref{bound_time_vlh_two} to bound \eqref{A.47} as follows:
\begin{equation}\label{A.48}
I_r(\tz) \apprle \sup_{z \in Q \cap \lbr \RR^n \times {[\mft-s,\mft+s]}\rbr} \lbr \la^{\frac{1}{p(z)}}  + r \frac{\la^{\frac{d}{2}r_i}}{\scalet{\la}{z} r_i^2} \rbr.
\end{equation}
Here we recall that $z \in 2Q_i$ for some $i \in \NN$ and $r_i$ is the radius of the cylinder $Q_i$.

Since $Q \in \elam^c$, we also have that $z \in 2Q_i$ for some $i \in \NN$. Let $z_i$ be the centre of $Q_i$, then we have
\begin{equation}\label{A.49}
r \leq d_p(z,\elam) \leq d_p(z,z_i) + d_p(z_i,\elam) \leq r_i + d_{z_i}(z_i,\elam) \overset{\eqref{equiv_dist}}{\leq} r_i + \hat{c} r_i = (1+\hat{c}) r_i.
\end{equation}

Substituting \eqref{A.49} into \eqref{A.48}, we get 
\[
I_r(\tz) \apprle \la^{\frac{1}{p^-}} + (1+ \hat{c}) \la^{1- \frac{d}{2}} = o(1).
\]

\item[Case $2Q \nsubseteq \elam^c$:] In this case, we split the proof into three subcases as follows:
\begin{description}
\item[Subcase $2Q \subset \RR^n \times {(-\infty,s]}$ or $2Q \subset \RR^n \times {[-s,\infty)}$:] 

In this situation, it is easy to see that the following holds:
\begin{equation}
  \label{A.70}
  \abs{Q \cap \lbr \RR^n \times [\mft-s,\mft+s]\rbr} \apprge |Q|. 
 \end{equation}
 We apply   triangle inequality and estimate $I_r(\tz)$ by 
 \begin{equation*}
  \label{3.81}
  \begin{array}{rcl}
   I_r(\tz) 
   & \leq &  2J_1 + J_2,
  \end{array}
 \end{equation*}
 where we have set
 \begin{equation}\label{def_J_1_2}\begin{array}{c}
  J_1:= \fiint_{Q \cap \lbr \RR^n \times {[\mft-s,\mft+s]}\rbr} \left| \frac{\vlh(z) - \vh(z)}{r}\right| \ dz, \\
  J_2 := \fiint_{Q \cap \lbr \RR^n \times {[\mft-s,\mft+s]}\rbr} \left| \frac{\vh(z) - \avgs{\vh}{Q\cap \lbr \RR^n \times {[\mft-s,\mft+s]}\rbr}}{r}\right|\ dz.
 \end{array}\end{equation}
We now estimate each of the terms of \eqref{def_J_1_2} as follows:
\begin{description}
\item[Estimate for $J_1$:]  From \eqref{lipschitz_function}, we get
\begin{equation}
\label{3.82}
\begin{array}{ll}
J_1 &\apprle \sum_{i\in \NN} \frac{1}{|Q\cap \lbr\RR^n \times [\mft-s,\mft+s]\rbr|} \iint_{Q \cap\lbr \RR^n \times {[\mft-s,\mft+s]}\rbr\cap 2Q_i} \left| \frac{\vh(z) - \vh^i}{r}\right| \ dz \\
& = \sum_{i\in \NN} \frac{1}{|Q\cap\lbr \RR^n \times [\mft-s,\mft+s]\rbr|} \iint_{Q \cap\lbr \RR^n \times {[\mft-s,\mft+s]}\rbr\cap 2Q_i} \left| \frac{\vh(z)\lsb{\chi}{[\mft-s,\mft+s]} - \vh^i}{r}\right| \ dz.
\end{array}
\end{equation}
Let us fix an $i \in \NN$ and take two points $z_1 \in Q \cap 2Q_i$ and $z_2 \in \elam \cap 2Q$. Making use of  \descref{W5}{W5} along with the trivial bound $d_p (z_1, z_2) \leq  4r$ and $d_p (z_i, z_1) \leq 2r_i$,  we get
\begin{equation}
\label{3.83.1}
\hat{c}r_i =d_p(z_i,\elam) \leq d_p (z_i, z_1) + d_p(z_1, z_2) \leq 2r_i + 4r  \ \Longrightarrow \ r_i \apprle_{(\hat{c})} r,
\end{equation}
where $z_i$ denotes the centre of $Q_i$ as in \descref{W1}{W2} and $\hat{c}$ is from \descref{W4}{W4}.

Note that \eqref{A.70} holds and thus summing over all $i \in \NN$ such that  $Q \cap\lbr \RR^n \times {[\mft-s,\mft+s]}\rbr\cap 2Q_i \neq \emptyset$ in \eqref{3.82} and making use of  \eqref{3.83.1}, we get
\begin{equation*}
 \label{3.84.1}
 \begin{array}{rcl}
 J_1 &\apprle& \sum_{\substack{i\in\NN \\ Q \cap\lbr \RR^n \times {[\mft-s,\mft+s]}\rbr\cap 2Q_i \neq \emptyset }} \frac{|2Q_i|}{|Q\cap\lbr \RR^n \times {[\mft-s,\mft+s]}\rbr|} \fiint_{2Q_i} \left| \frac{\vh(z)\lsb{\chi}{[\mft-s,\mft+s]} - \vh^i}{r}\right| \ dz \\
 &\overset{\eqref{A.70},\eqref{3.83.1}}{\apprle} & \sum_{i\in \NN}  \fiint_{2Q_i} \left| \frac{\vh(z)\lsb{\chi}{[\mft-s,\mft+s]} - \vh^i}{r_i}\right| \ dz.
 \end{array}
\end{equation*}

Using Lemma \ref{improved_est}, we get
\begin{equation*}
\label{bound_J_1}
J_1 \apprle o(1).
\end{equation*}

\item[Estimate for $J_2$:] To estimate this term, we proceed as follows: Note that $Q \cap \lbr \RR^n \times {[\mft-s,\mft+s]}\rbr$ is another cylinder. If $Q \subset B_{4\rho}^{\al}(\mfx) \times \RR$, then choose a cut-off function $\mu \in C_c^{\infty}(B)$ with $|\nabla \mu| \leq \frac{C_{(n)}}{r^{n+1}}$ to get
\begin{equation*}
\begin{array}{ll}
J_2 
& = \fiint_{Q \cap \lbr \RR^n \times {[\mft-s,\mft+s]}\rbr} \left| \frac{\vh(z)\lsb{\chi}{[\mft-s,\mft+s]} - \avgs{\vh\lsb{\chi}{[\mft-s,\mft+s]}}{Q\cap \lbr \RR^n \times {[\mft-s,\mft+s]}\rbr}}{r}\right|\ dz \\
& \apprle  \fiint_{Q \cap \lbr \RR^n \times {[\mft-s,\mft+s]}\rbr} |\nabla \vh| \lsb{\chi}{[\mft-s,\mft+s]} + \sup_{t_1,t_2 \in [\mft-s,\mft+s] \cap Q} \left| \frac{\avgs{\vh\lsb{\chi}{[\mft-s,\mft+s]}}{\mu}(t_1) - \avgs{\vh\lsb{\chi}{[\mft-s,\mft+s]}}{\mu}(t_1)}{r} \right|.
\end{array}
\end{equation*}
Recall that we are in the case $2Q \cap \elam \neq \emptyset$ and $2Q \cap \elam^c \neq \emptyset$. Further applying Lemma \ref{lemma_crucial_2} and proceeding similarly to \eqref{est_J_2_one}, we see that 
\begin{equation*}
\label{J_2_bound_first_case}
J_2 \apprle o(1).
\end{equation*}

On the other hand, if $Q \nsubseteq B_{4\rho}^{\al}(\mfx) \times \RR$, then we can apply Poincar\'e's inequality directly to get
\begin{equation*}
\begin{array}{ll}
J_2 
& \apprle \fiint_{Q \cap \lbr \RR^n \times {[\mft-s,\mft+s]}\rbr} \left| \frac{\vh(z)\lsb{\chi}{[\mft-s,\mft+s]}}{r}\right|\ dz 
 \apprle  \fiint_{Q \cap \lbr \RR^n \times {[\mft-s,\mft+s]}\rbr} \left| \nabla \vh(z)\lsb{\chi}{[\mft-s,\mft+s]}\right|\ dz.
\end{array}
\end{equation*}
Recall that we are in the case $2Q \cap \elam \neq \emptyset$ and $2Q \cap \elam^c \neq \emptyset$. Using \eqref{A.70}, we thus get
\begin{equation}
\label{bound_J_2_second_case}
J_2 \apprle o(1).
\end{equation}

\end{description}
\item[Subcase $2Q \cap \RR^n \times (-\infty, s) \neq \emptyset$ and $2Q \cap \RR^n \times (-s,\infty)\neq \emptyset$ AND $r^2 \leq s$:] In this case, we see that $$|Q \cap \lbr \RR^n \times [\mft-s,\mft+s]\rbr| = |B| \times s.$$
 We apply   triangle inequality and estimate $I_r(z)$ by 
 \begin{equation*}
  \label{3.81_n}
  \begin{array}{ll}
   I_r(z) 
   & \leq 2J_1 + J_2,
  \end{array}
 \end{equation*}
 where we have set
 \begin{equation*}\label{def_J_1_2_n}\begin{array}{c}
  J_1:= \fiint_{Q \cap \lbr \RR^n \times [\mft-s,\mft+s]\rbr} \left| \frac{\vlh(z) - \vh(z)}{r}\right| \ dz, \\
  J_2 := \fiint_{Q \cap \lbr \RR^n \times [\mft-s,\mft+s]\rbr} \left| \frac{\vh(z) - \avgs{\vh}{Q\cap \lbr \RR^n \times [\mft-s,\mft+s]\rbr}}{r}\right|\ dz.
 \end{array}\end{equation*}
 
 Proceeding as before, we get
 \begin{equation*}
 \label{3.82.n}
 \begin{array}{rcl}
 J_1 & \apprle & \sum_{i \in \NN} \frac{|2Q_i|}{|Q \cap \lbr \RR^n \times [\mft-s,\mft+s]\rbr|} \fiint_{2Q_i} \left| \frac{\vh(z)\lsb{\chi}{[\mft-s,\mft+s]} - \vh^i}{r}\right| \ dz \\
 & \overset{\eqref{3.83.1}}{\apprle}& \frac{r_i^{n+2} \scalet{\la}{z_i}\scalexn{\la}{z_i}}{r^{n} s} \sum_{i\in \NN}  \fiint_{2Q_i} \left| \frac{\vh(z)\lsb{\chi}{[\mft-s,\mft+s]} - \vh^i}{r_i}\right| \ dz\\
 & \overset{\eqref{3.83.1}}{\apprle}& \frac{r^{n+2} \scalet{\la}{z_i}\scalexn{\la}{z_i} }{r^{n} s} \sum_{i\in \NN}  \fiint_{2Q_i} \left| \frac{\vh(z)\lsb{\chi}{[\mft-s,\mft+s]} - \vh^i}{r_i}\right| \ dz\\
 & \overset{\text{Lemma \ref{improved_est}}}{\apprle} & o(1).
 \end{array}
 \end{equation*}
To obtain the last inequality, we made use of the bound $r^2 \leq s$.

 The estimate for $J_2$ is exactly as in \eqref{bound_J_2_second_case} to get 
 \begin{equation*}
 \label{bound_J_2_third_case}
 J_2 \apprle o(1). 
 \end{equation*}
\item[Subcase $2Q \cap \RR^n \times (-\infty, s) \neq \emptyset$ and $2Q \cap \RR^n \times (-s,\infty)\neq \emptyset$ AND $r^2 \geq s$:] In this case, we proceed as follows. Using triangle inequality and the bound $|Q \cap \lbr \RR^n \times [\mft-s,\mft+s]\rbr| = |B| \times s$ where $s$ is from \eqref{def_s}, we get
 \begin{equation*}
  \label{3.91}
  \begin{array}{l}
  \fiint_{Q \cap \lbr \RR^n \times [\mft-s,\mft+s]\rbr}  \left| \frac{\vlh(z) - \avgs{\vlh}{Q \cap \lbr \RR^n \times [\mft-s,\mft+s]\rbr}}{r} \right|\ dz \\ 
  \hspace*{4cm} \apprle \frac{1}{|Q \cap \lbr \RR^n \times [\mft-s,\mft+s]\rbr|} \iint_{Q \cap \lbr \RR^n \times [\mft-s,\mft+s]\rbr \cap \elam} |\vlh(z)| \ dz \\
  \hspace*{4cm} \qquad + \frac{1}{|Q \cap \lbr \RR^n \times [\mft-s,\mft+s]\rbr|} \iint_{Q \cap \lbr \RR^n \times [\mft-s,\mft+s]\rbr \setminus \elam} |\vlh(z)| \ dz.
  \end{array}
 \end{equation*}

 By construction of $\vlh$ in \eqref{lipschitz_function}, we have $\vlh = \vh$ on $\elam$.  On $\lbr \RR^n \times [\mft-s,\mft+s]\rbr \setminus \elam$, we can apply Corollary  \ref{lemma3.6} to obtain the following bound:
 \begin{equation*}
  \begin{array}{ll}
   \fiint_{Q \cap \lbr \RR^n \times [\mft-s,\mft+s]\rbr}  \left| \frac{\vlh(z) - \avgsnoleft{Q \cap \lbr \RR^n \times [\mft-s,\mft+s]\rbr}{\vlh}}{r} \right|\ dz & \apprle  \frac{1}{r^n s} \iint_{\lbr \RR^n \times [\mft-s,\mft+s]\rbr} |\vh(z)| \ dz +  o(1)
    \apprle o(1).
  \end{array}
 \end{equation*}

\end{description}

\end{description}

This completes the proof of the Lipschitz continuity. 
\end{proof}
 
\subsection{Crucial estimates for the test function}
In this subsection, we shall prove three crucial estimates that will be needed. 

\begin{lemma}
\label{cruc_1}
Let $\la \geq 1$, then for any $i \in \NN$, $\de \in (0,1]$ and a.e. $t \in (\mft-s,\mft+s)$, there exists a constant $C = C_{(\plog,\lamot,n)}$ such that there holds
\begin{equation}
\label{cruc_est_1}
\abs{\int_{\Om_{4\rho}^{\al}(\mfx)} \lbr v(x,t) - v^i \rbr \vlh(x,t) \psi_i(x,t)\ dx} \leq C \lbr \frac{\la}{\de} |Q_i| + \de  |\htb_i|\fiint_{\htq_i} |v(z)|^2\lsb{\chi}{[\mft-s,\mft+s]}\ dz\rbr.
\end{equation}
\end{lemma}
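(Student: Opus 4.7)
The plan is to decouple the product $(v - v^i)\vlh\psi_i$ by a carefully weighted Young's inequality and to then exploit the pointwise bounds on $\vlh$ together with the Whitney-cylinder structure. Since $\psi_i$ is supported in $2Q_i$, the spatial integral at a fixed $t$ reduces to one over $2B_i \cap \Om_{4\rho}^{\al}(\mfx)$, and property \descref{W4}{W4} guarantees $2Q_i \subset \RR^{n+1}\setminus \elam$, so on this cylinder the Lipschitz truncation takes the explicit form $\vlh = \sum_{j \in \mci(i)} \psi_j \vh^j$.

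I would first write $\vlh = \vh^i + (\vlh - \vh^i)$ and split
\begin{equation*}
\int_{\Om_{4\rho}^{\al}(\mfx)} (v - v^i)\vlh\psi_i\ dx = \int_{\Om_{4\rho}^{\al}(\mfx)} (v - v^i)(\vlh - \vh^i)\psi_i\ dx + \vh^i\int_{\Om_{4\rho}^{\al}(\mfx)} (v - v^i)\psi_i\ dx.
\end{equation*}
For the first piece, Corollary \ref{corollary3.7} together with the structure of the partition of unity yields $|\vlh - \vh^i| \apprle \scalex{\la}{z_i} r_i \la^{1/p(z_i)}$ on $2Q_i$. I would then apply Young's inequality with weight $\de$, so that the $|\vlh - \vh^i|^2$ factor, after integration over $2B_i$, produces a quantity of the form $|2B_i|(\scalex{\la}{z_i} r_i)^2 \la^{2/p(z_i)}$. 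Using the identity $(\scalex{\la}{z_i})^2/\scalet{\la}{z_i} = \la^{1-2/p(z_i)}$ together with the uniform control \eqref{2.2.28-1} of $\la^{p^+_{2Q_i}-p^-_{2Q_i}}$, this expression collapses (up to harmless constants) to $\la |Q_i|$, yielding the first term $\frac{\la}{\de}|Q_i|$ of the right-hand side (with the $\de$ factor folded into Young's constant and the fact that $\de \leq 1$ allowing us to replace $\de\la$ by $\la/\de$). The paired $|v-v^i|^2$ factor in Young's inequality is handled by $|v-v^i|^2 \leq 2|v|^2 + 2|v^i|^2$, with the $|v^i|^2$ piece controlled via Jensen's inequality applied to the averaged definition \eqref{def_tuh} of $v^i$ and the enlargement $2Q_i \subset \htq_i$ from \descref{W4}{W4}, producing $\de |\htb_i|\fiint_{\htq_i}|v|^2\,dz$.

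For the second piece, I would use Lemma \ref{lemma3.6_pre} to obtain $|\vh^i| \apprle \scalex{\al}{\mfz}\rho\,\la^{1/p(z_i)}$, then again apply Young's inequality: the contribution of $|\vh^i|^2$ integrated over $2B_i$ is again absorbed into $\frac{\la}{\de}|Q_i|$ via the same scaling bookkeeping, while the remaining $|v - v^i|$ term is treated as in the previous paragraph.

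The main obstacle I expect is the conversion of the fixed-time spatial integral $\int_{2B_i}|v(\cdot,t)|^2\,dx$ into the spacetime quantity $|\htb_i|\fiint_{\htq_i}|v|^2\,dz$ appearing on the right-hand side. The cleanest route is to first carry out the estimate at the Steklov level, replacing $v$ by $\vh$ and $v^i$ by $\vh^i$, where the time-averaging built into the Steklov operator converts fixed-time slices into bona fide spacetime averages over windows contained in $\htq_i$; the stated estimate then follows after passing $h\searrow 0$ using the $L^p_{\mathrm{loc}}$-convergence of Steklov averages. Throughout the argument, the delicate point will be the conversion of the variable-exponent factor $\la^{1/p(z_i)}$ into a single power of $\la$ on the right-hand side, which is consistently enabled by the uniform bound \eqref{2.2.28-1}.
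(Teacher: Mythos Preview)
Your approach has a genuine gap precisely at the point you flag as the ``main obstacle'': there is no way, by Steklov averaging alone, to pass from the fixed-time slice $\int_{2B_i}|v(\cdot,t)|^2\,dx$ to the spacetime average $|\htb_i|\fiint_{\htq_i}|v|^2\,dz$. The Steklov operator averages over a window of width $h$, not over the full Whitney time interval $\hat{I}_i$, and in any case the statement is meant to hold after $h\searrow 0$. For a generic $L^2$ function the supremum of a time slice can be arbitrarily larger than its time average, so Young's inequality plus pointwise bounds on $\vlh$ cannot close the argument.

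The paper's proof avoids this obstruction by \emph{using the equation}: one tests the difference of \eqref{basic_pde} and \eqref{wapprox_int} with $\psi_i\vlh$ and integrates in time from the bottom of $2I_i$ up to the fixed level $t$. Because $\psi_i$ vanishes at the initial time, this produces an identity expressing the spatial integral $\int_{\Om_{4\rho}^{\al}(\mfx)}(v-v^i)\psi_i\vlh\,dx$ at time $t$ purely in terms of spacetime integrals over $2Q_i$ involving $[\aa(\cdot,\nabla w)-\aa(\cdot,\nabla u)]_h$, $[|\bff|^{\pp-2}\bff]_h$, and $v^i\,\pa_t(\psi_i\vlh)$. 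These spacetime quantities are then controlled by combining \eqref{abounded} with the good-set structure \eqref{elambda} (which bounds averages of $(|\nabla u|+|\nabla w|+|\bff|+1)^{p(\cdot)-1}$ over $\htq_i$ by $\la^{(p^+_{2Q_i}-1)/p^-_{2Q_i}}$), Lemma~\ref{lemma6.7-3} and Corollary~\ref{corollary6.7-2} for $|\vlh|$ and $|\nabla\vlh|$, and Lemma~\ref{time_vlh} together with Lemma~\ref{improved_est} for the time-derivative term. In short, the PDE---via the mechanism of Lemma~\ref{lemma_crucial_2}---is what converts the time slice into a spacetime quantity; your purely real-variable splitting cannot supply that ingredient.
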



\begin{proof}
Let us fix any $t \in (-s,s]$,  $i \in \NN$  and take $ \om_i(y,\tau) \vlh(y,\tau)$ as a test function in \eqref{basic_pde} and \eqref{wapprox_int}. Further integrating the resulting expression over $ \left(t_i - \scalet{\la}{z_i}4r_i^2 , t\right)$ along with making use of  the fact that $\psi_i(y,t_i - \scalet{\la}{z_i} 4r_i^2) = 0$, we get for  any $a\in \RR$, the equality
%
\begin{equation}
 \label{3.323}
 \begin{array}{ll}
  \int_{\Om_{4\rho}^{\al}(\mfx)} \lbr[(]  (\vh - a)  \psi_i \vlh \rbr (y,t) \ dy & = \int_{t_i - \max\{\scalet{\la}{z_i} 4r_i^2,-s\}}^t \int_{\Om_{4\rho}^{\al}(\mfx)} \pa_t \left(  (\vh - a) \psi_i \vlh \right) (y,\tau) \ dy \ d\tau \\
  & = \int_{t_i - \max\{\scalet{\la}{z_i} 4r_i^2,-s\}}^t \int_{\Om_{4\rho}^{\al}(\mfx)} \pa_t \left(  [u-w]_h\psi_i \vlh - a \psi_i \vlh \right) (y,\tau) \ dy \ d\tau \\
  & = \int_{t_i - \max\{\scalet{\la}{z_i} 4r_i^2,-s\}	}^t \int_{\Om} \iprod{[\aa(y,\tau,\nabla w)]_h - [\aa(y,\tau,\nabla u)]_h}{\nabla ( \psi_i \vlh)} \ dy \ d\tau  \\
  & \qquad +\  \int_{t_i - \max\{\scalet{\la}{z_i} 4r_i^2,-s\}}^t \int_{\Om}   [|\bff|^{\pp-2} \bff]_h  \ \nabla\lbr  \psi_i \vlh \right) (y,\tau) \ dy \ d\tau  \\
  & \qquad -\  \int_{t_i - \max\{\scalet{\la}{z_i} 4r_i^2,-s\}}^t \int_{\Om} a \pa_t \lbr \psi_i \vlh\rbr \ dy \ d\tau.
 \end{array}
\end{equation}

We can estimate $|\nabla ( \psi_i \vlh)|$ using the chain rule and Lemma \ref{partition_unity}, to get
\begin{equation}
 \label{3.326}
 \begin{array}{ll}
 |\nabla (\psi_i \vlh)| 
 & \apprle \frac{1}{\scalex{\la}{z_i}r_i} |\vlh| + |\nabla \vl|.
 \end{array}
\end{equation}
Similarly, we can estimate $\left|\pa_t\lbr \psi_i \vl \right)\right|$ using the chain rule, to get
 \begin{align*}
 \left| \pa_t \lbr \psi_i \vlh\rbr\right| & \apprle  \frac{1}{\scalet{\la}{z_i} r_i^2} |\vl| + |\pa_t \vl|.\label{3.328}
\end{align*}
%
%
Let us now prove each of the assertions of the lemma.
\begin{description}[leftmargin=*]
 \item[Proof of \eqref{cruc_est_1}:] Let us take  $a=\vh^i$ in the \eqref{3.323} followed by letting $h \searrow 0$ and making use of \eqref{3.326}, \eqref{abounded} and \eqref{bbounded},  we get
 \begin{equation*}
  \label{first_1}
  \begin{array}{ll}
   \left| \int_{\Om_{4\rho}^{\al}(\mfx)} \lbr[(] (v- v^i) \om_i \vl \rbr (y,t) \ dy \right| & \apprle J_1 + J_2 + J_3,
  \end{array}
 \end{equation*}
 where we have set 
 \begin{align}
  J_1& := \frac{1}{\scalex{\la}{z_i}r_i} \iint_{K_{4\rho}^{\al}(\mfz)} \lbr |\nabla u|+|\nabla w|+|\bff|+1\rbr^{p(z)-1}  |\vl|   \lsb{\chi}{2Q_i\cap K_{4\rho}^{\al}(\mfz)} \ dz, \nonumber \\
  J_2& :=   \iint_{K_{4\rho}^{\al}(\mfz)} \lbr |\nabla u|+|\nabla w|+|\bff|+1\rbr^{p(z)-1}  |\nabla \vl|   \lsb{\chi}{2Q_i\cap K_{4\rho}^{\al}(\mfz)} \ dz, \nonumber \\
  J_3&:= \iint_{K_{4\rho}^{\al}(\mfz)} |v-v^i| | \pa_t (\psi_i \vl)|  \lsb{\chi}{2Q_i\cap K_{4\rho}^{\al}(\mfz)} \ dz. \nonumber
 \end{align}

 Let us now estimate each of the terms as follows: 
 \begin{description}[leftmargin=*]
  \item[Bound for $J_1$:] We split the estimate into two cases, the first is when $\scalex{\al}{\mfz}\rho \leq \scalex{\la}{z_i} r_i$. In this case, we make use of \eqref{lemma6.7-1_est} along with \eqref{elambda} to get
  \[
  \begin{array}{rcl}
  J_1 &\apprle& \frac{\scalex{\al}{\mfz}\rho \la^{\frac{1}{p(z_i)}}}{\scalex{\la}{z_i} r_i} |Q_i| \fiint_{2Q_i} \lbr |\nabla u|+|\nabla w|+|\bff|+1\rbr^{p(z)-1}  \ dz  \\
  & \apprle & \frac{\scalex{\al}{\mfz}\rho \la^{\frac{1}{p(z_i)}}}{\scalex{\la}{z_i} r_i} |Q_i| \lbr \fiint_{2Q_i} \lbr |\nabla u|+|\nabla w|+|\bff|+1\rbr^{\frac{\pp}{\mathfrak{q}}}  \ dz\rbr^{\frac{p^+_{2Q_i} -1}{p^-_{2Q_i}}}\\
  & \apprle & \frac{\scalex{\al}{\mfz}\rho \la^{\frac{1}{p(z_i)}}}{\scalex{\al}{\mfz}\rho} |Q_i| \la^{\frac{p^+_{2Q_i} -1}{p^-_{2Q_i}}}\\
  & \apprle & |Q_i| \la \leq \frac{\la}{\de} |2Q_i|.
  \end{array}
  \]
  To obtain the last inequality, we have used $\la^{\frac{1}{p(z_i)} + \frac{p^+_{2Q_i}}{p^-_{2Q_i}}-\frac{1}{p^-_{2Q_i}}-1} \leq C_{(\plog,n)}$.

  In the case  $ \scalex{\al}{\mfz} \rho \geq \scalex{\la}{z_i} r_i$, we get for any $\de \in(0,1]$ using \eqref{bound_6.7-3-1}
  \[
  \begin{array}{rcl}
  J_1 & \apprle & \lbr \frac{\la^{\frac{1}{p(z_i)}}}{\de}  + \frac{\de}{\lbr \scalex{\la}{z_i} r_i \rbr^2 \la^{\frac{1}{p(z_i)}} } \fiint_{\htq_i} |v(z)|^2\lsb{\chi}{[\mft-s,\mft+s]} \ dz \rbr |Q_i| \fiint_{2Q_i} \lbr |\nabla u|+|\nabla w|+|\bff|+1\rbr^{p(z)-1}  \ dz  \\
  & \apprle & |Q_i| \lbr \frac{\la^{\frac{1}{p(z_i)}}}{\de}  + \frac{\de}{\lbr \scalex{\la}{z_i} r_i \rbr^2 \la^{\frac{1}{p(z_i)}} } \fiint_{\htq_i} |v(z)|^2\lsb{\chi}{[\mft-s,\mft+s]} \ dz \rbr  \la^{\frac{p^+_{2Q_i} -1}{p^-_{2Q_i}}}\\
  & \apprle & |Q_i| \frac{\la \la^{\frac{1}{p(z_i)} -1 + \frac{p^+_{2Q_i} -1}{p^-_{2Q_i}}}}{\de} + \de |B_i|  \frac{\scalet{\la}{z_i} r_i^2\la^{\frac{p^+_{2Q_i} -1}{p^-_{2Q_i}}} }{\lbr \scalex{\la}{z_i} r_i \rbr^2 \la^{\frac{1}{p(z_i)}}}\fiint_{\htq_i} |v(z)|^2\lsb{\chi}{[\mft-s,\mft+s]} \ dz\\ 
  & \apprle & \frac{\la}{\de} |Q_i| + \de |\hat{c}B_i|  \fiint_{\htq_i} |v(z)|^2\lsb{\chi}{[\mft-s,\mft+s]} \ dz.
  \end{array}
  \]
To obtain the last inequality, we again made use of $\la^{\frac{1}{p(z_i)} + \frac{p^+_{2Q_i}}{p^-_{2Q_i}}-\frac{1}{p^-_{2Q_i}}-1} \leq C_{(\plog,n)}$.

\begin{equation*}
 \label{bound_I_1}
 J_1 \apprle  \frac{\la}{\de} |\htq_i| +  \de |\htb_i| \fiint_{\htq_i} |v(z)|^2\lsb{\chi}{[\mft-s,\mft+s]} \ dz.
\end{equation*}

  \item[Bound for $J_2$:] In this case, we can directly use \eqref{bound+6.31_two} to get for any $\de \in (0,1]$, the bound
  \begin{equation*}
   \label{bound_I_22}
\begin{array}{rcl}
    J_2 & \apprle & \frac{\la^{\frac{1}{p(z_i)}}}{\de} |Q_i| \fiint_{2Q_i}\lbr |\nabla u|+|\nabla w|+|\bff|+1\rbr^{p(z)-1}  \ dz  \\
    & \apprle & |Q_i| \frac{\la^{\frac{1}{p(z_i)}}}{\de}\la^{\frac{p^+_{2Q_i} -1}{p^-_{2Q_i}}}  \apprle |Q_i| \frac{\la}{\de}.
   \end{array}
  \end{equation*}
  To obtain the last inequality, we again made use of $\la^{\frac{1}{p(z_i)} + \frac{p^+_{2Q_i}}{p^-_{2Q_i}}-\frac{1}{p^-_{2Q_i}}-1} \leq C_{(\plog,n)}$.
  \item[Bound for $J_3$:] Recall that $\htr_i = \hat{c} r_i$ where $\hat{c}$ is from \descref{W4}{W4}. In this case, we make use of \eqref{bound+6.31} and \eqref{bound_time_vlh_two} to get
  \begin{equation}
  \label{6.75}
  \begin{array}{rcl}
     \left| \pa_t \lbr \psi_i \vl\rbr\right| & \apprle & \frac{1}{\scalet{\la}{z_i} r_i^2} \lbr  \frac{\scalex{\la}{z_i}r_i \la^{\frac{1}{p(z_i)}}}{\de} + \frac{\de}{\scalex{\la}{z_i}r_i \la^{\frac{1}{p(z_i)}}}\fiint_{\htq_i} |v|^2 \lsb{\chi}{[\mft-s,\mft+s]} \ dz \rbr+ \\
     & & \quad + \frac{\scalex{\la}{z_i} r_i}{\scalet{\la}{z_i} r_i^2} \la^{\frac{1}{p(z_i)}}\\
  \end{array}
  \end{equation}
  
  Now making use of Lemma \ref{improved_est}, we see that 
  \begin{equation}
  \label{6.76}
  \begin{array}{rcl}
  \iint_{K_{4\rho}^{\al}(\mfz)} |v-v^i|  \lsb{\chi}{2Q_i\cap K_{4\rho}^{\al}(\mfz)} \ dz & \apprle & |Q_i| \fiint_{\htq_i} \abs{v\lsb{\chi}{[\mft-s,\mft+s]} - \avgs{v\lsb{\chi}{[\mft-s,\mft+s]}}{\htq_i}} \ dz\\
  & \apprle & |Q_i| \lbr \scalex{\la}{z_i} r_i \rbr \la^{\frac{1}{p(z_i)}}.
  \end{array}
  \end{equation}
  
  Combining \eqref{6.75} and \eqref{6.76}, we get
  \begin{equation*}
  \begin{array}{rcl}
  J_3 &\apprle & \frac{\la}{\de} |Q_i| + \frac{\de |Q_i|}{\scalet{\la}{z_i}r_i^2} \fiint_{\htq_i} |v|^2 \lsb{\chi}{[\mft-s,\mft+s]} \ dz \\
  & \apprle & \frac{\la}{\de} |Q_i| + \de |\htb_i| \fiint_{\htq_i} |v|^2 \lsb{\chi}{[\mft-s,\mft+s]} \ dz.
  \end{array}
  \end{equation*}
 \end{description}
\end{description}
This completes the proof of the lemma. 
\end{proof}

\begin{lemma}
\label{cruc_3}
Let $\la \geq 1$, then for a.e. $t \in [\mft-s,\mft+s]$, there exists a constant $C = C_{(\plog,\lamot,n)}$ such that there holds
\begin{equation}
\label{cruc_est_3}
\int_{\Om_{4\rho}^{\al}(\mfx)\setminus \elam(t)} \lbr |v|^2 - |v- \vl|^2 \rbr \ dx \geq -C \la |\RR^{n+1} \setminus \elam|.
\end{equation}
\end{lemma}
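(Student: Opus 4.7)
The plan is to use the algebraic identity $v^2 - (v - \vl)^2 = 2\vl(v - \vl) + \vl^2 \geq 2\vl(v - \vl)$ and then exploit the Whitney representation of $\elam^c$ provided by Lemma \ref{whitney_covering} and Lemma \ref{partition_unity}. Since $\vl^2 \geq 0$, a lower bound for the integral in question reduces to an upper bound on the absolute value of
\[ \int_{\Om_{4\rho}^{\al}(\mfx)\setminus\elam(t)}\vl(v - \vl)\, dx. \]

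Next, on $\elam^c$ the partition of unity $\sum_i\psi_i \equiv 1$ together with the construction \eqref{lipschitz_function} yields, after passing to the limit $h \searrow 0$, the identity $v - \vl = \sum_i \psi_i(v - v^i)$. Because \descref{W4}{W4} places $2Q_i \subset \elam^c$ and $\psi_i \in C_c^\infty(2Q_i)$, one has
\[ \int_{\Om_{4\rho}^{\al}(\mfx)\setminus\elam(t)}\vl(v-\vl)\, dx = \sum_i \int_{\Om_{4\rho}^{\al}(\mfx)}\psi_i(x,t)(v(x,t)-v^i)\vl(x,t)\, dx, \]
and each summand is exactly the quantity controlled by Lemma \ref{cruc_1}. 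Taking $\de = 1$ there produces the termwise bound $C(\la|Q_i| + |\htb_i|\fiint_{\htq_i}|v|^2\, dz)$. Summing the first piece via the bounded-overlap property \descref{W2}{W2} yields $C\la\sum_i|Q_i| \apprle C\la|\RR^{n+1}\setminus\elam|$, so the whole argument reduces to establishing
\[ \sum_i|\htb_i|\fiint_{\htq_i}|v|^2\, dz \apprle \la\,|\RR^{n+1}\setminus\elam|. \]

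This last estimate will be the main obstacle. A direct invocation of Lemma \ref{lemma3.6_pre} only gives the coarser bound $\fiint_{\htq_i}|v|^2 \apprle (\scalex{\al}{\mfz}\rho)^2 \la^{2/p(z_i)}$, in which the macroscopic spatial scale $\scalex{\al}{\mfz}\rho$ can be much larger than the intrinsic Whitney scale $\scalex{\la}{z_i}\htr_i$. To recover the correct power of $\la$, I would combine the Whitney property \descref{W4}{W4} (which furnishes $\tz \in 8\hat{c}Q_i\cap\elam$ with $g(\tz) \leq \la^{1-\be}$) with the pointwise inequality $(|v(z)|/(\scalex{\al}{\mfz}\rho))^{p(z)(1-\be)} \leq g(z)$ coming from the definition \eqref{def_g_A} and the Lebesgue differentiation theorem, and then deploy either a layer-cake decomposition or a H\"older interpolation at the admissible exponent $p^-/\mathfrak{q}$. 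The apparent scale mismatch between $\scalex{\al}{\mfz}\rho$ and $\scalex{\la}{z_i}\htr_i$ is then absorbed precisely by the factor $|\htb_i|/|\htq_i| = (2\scalet{\la}{z_i}\htr_i^2)^{-1}$ against the power of $\la$ dictated by the unified intrinsic scaling, delivering the desired bound and completing the proof.
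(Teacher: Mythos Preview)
Your reduction via $|v|^2-|v-\vl|^2=\vl^2+2\vl(v-\vl)\geq 2\vl(v-\vl)$ throws away precisely the term that is needed. The final sum you must control,
\[
\sum_i |\htb_i|\,\fiint_{\htq_i}|v|^2\lsb{\chi}{[\mft-s,\mft+s]}\,dz,
\]
is \emph{not} bounded by $\la|\RR^{n+1}\setminus\elam|$ in general. The only available integral bound is $\fiint_{\htq_i}|v|^2\apprle(\scalex{\al}{\mfz}\rho)^2\la^{2/p(z_i)}$ from Lemma~\ref{lemma3.6_pre}, and since $|\htb_i|/|\htq_i|=(2\scalet{\la}{z_i}\htr_i^2)^{-1}\to\infty$ as $r_i\to0$, the ratio $|\htb_i|\fiint_{\htq_i}|v|^2/(\la|Q_i|)$ is of order $(\scalex{\al}{\mfz}\rho)^2\la^{2/p(z_i)-d}r_i^{-2}$, which blows up along the Whitney cylinders accumulating on $\partial\elam$ (recall $d<2/p(z_i)$ by \eqref{def_d}). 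Your proposed remedy---pulling a point $\tz\in 8\hat{c}Q_i\cap\elam$ and invoking the maximal function---reproduces exactly the same coarse bound and does not resolve the scale mismatch; the factor $|\htb_i|/|\htq_i|$ makes matters worse, not better.

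The paper avoids this obstacle by a different algebraic decomposition. Writing, for each $i$,
\[
|v|^2-|v-\vl|^2=|v^i|^2+2\vl(v-v^i)-|\vl-v^i|^2,
\]
produces the nonnegative term $\sum_i\int\psi_i|v^i|^2\,dx$ (this is exactly the $\vl^2$ contribution you discarded, up to the controllable piece $J_2=\sum_i\int\psi_i|\vl-v^i|^2$). Lemma~\ref{cruc_1} is then applied with a \emph{small} parameter $\de$, so that the bad term $\de\sum_i|\htb_i||v^i|^2$ is absorbed into this positive contribution, while the remaining $\la\de^{-1}\sum_i|\htq_i|$ is harmless. In short: do not drop $\vl^2$, and use the freedom in $\de$ rather than fixing $\de=1$.
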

\begin{proof}
 Let us fix any $t\in [\mft-s,\mft+s]$ and any point $x \in \Om_{4\rho}^{\al}(\mfx) \setminus \elam(t)$.  Now define
 \begin{equation*}
  \tTh := \left\{  i \in \Th: \spt(\psi_i) \cap \Om_{4\rho}^{\al}(\mfx) \times \{t\} \neq \emptyset, \  |v| + |\vl| \neq 0 \  \text{on}\ \spt(\psi_i)\cap (\Om_{4\rho}^{\al}(\mfx) \times \{t\}) \right\}.
 \end{equation*}

 If $i \neq \tTh$, then $v= \vl = 0$ on $\spt(\psi_i) \cap \Om_{4\rho}^{\al}(\mfx) \times \{t\}$, which  implies
 \[
  \int_{\spt(\psi_i) \cap {\Om_{4\rho}^{\al}(\mfx)} \times\{t\}} |u|^2 - |u - \vl|^2 \ dx = 0.
 \]
 Hence we only need to consider $i \in \tTh$. Noting that $\sum_{i \in \tTh} \psi_i(\cdot,t)  \equiv 1$ on $\RR^{n} \cap \elam(t)$, we can rewrite the left-hand side of \eqref{cruc_est_3} as 
 \begin{equation}
  \label{3324}
  \begin{array}{rcl}
  \int_{{\Om_{4\rho}^{\al}(\mfx)} \setminus \elam(t)} (|u|^2 - |v- \vl|^2)(x,t) \ dx &=&  \sum_{i \in \tTh} \int_{\Om_{4\rho}^{\al}(\mfx)}\psi_i \lbr |v|^2 - |v- \vl|^2 \rbr \ dx = J_1 - J_2.\\
  \end{array}
 \end{equation}
 where we have set
\begin{gather*}
 J_1:= \sum_{i \in \tTh} \int_{\Om_{4\rho}^{\al}(\mfx)}\psi_i \lbr |v^i|^2 + 2 \vl (v- v^i) \rbr \ dx, \qquad 
 J_2:= \sum_{i \in \tTh} \int_{\Om_{4\rho}^{\al}(\mfx)}\psi_i  |\vl - v^i|^2  \ dx. 
\end{gather*}
We shall now estimate each of the terms as follows:
\begin{description}[leftmargin=*]
 \item[Estimate of $J_1$:] Using \eqref{cruc_est_1}, we get
  \begin{equation}
   \label{I_1_1}
   J_1 \apprge \sum_{i \in \tTh} \int_{{\Om_{4\rho}^{\al}(\mfx)}} \om_i(z)  |v^i|^2 \ dz - \de \sum_{i \in \tTh} |\htb_i| |v^i|^2 - \sum_{i \in \tTh} \frac{\la}{\de} |\htq_i|.
  \end{equation}
  From \eqref{def_tuh}, we have $v^i = 0$ whenever $\spt(\psi_i) \nsubseteq  \Om_{4\rho}^{\al}(\mfx) \times [-s,\infty)$. Hence we only have to sum over all those $i \in \tTh_1$ for which $\spt(\psi_i) \subset  \Om_{4\rho}^{\al}(\mfx) \times [-s,\infty)$.  In this case, we make use of a suitable choice for $\de \in (0,1]$, and use \descref{W4}{W4}  to estimate \eqref{I_1_1} from below. We get
  \begin{equation}
 \label{bound_I1}
 J_1 \apprge  -\la |\RR^{n+1} \setminus \elam|. 
\end{equation}

\item[Estimate of $J_2$:] For any $x \in K_{4\rho}^{\al}(\mfz) \setminus \elam(t)$, we have from Lemma \ref{partition_unity} that $\sum_{j} \psi_j(x,t) = 1$, which gives
  \begin{equation}
   \label{I_2_1}
   \begin{array}{ll}
    \psi_i(z) |\vlh(z) - v^i|^2  
    & \apprle   \sum_{j \in I_i} |\psi_j(z)|^2  \lbr v^j - v^i\rbr^2 \apprle \min\{ \rho, \scalex{\la}{z_i} r_i\}^2 \la^{\frac{2}{p(z_i)}}. 
   \end{array}
  \end{equation}
To obtain \redref{3.104.a}{a} above, we made use of Corollary \ref{corollary3.7}  along with \descref{W3}{W3}.  Substituting \eqref{I_2_1} into the expression for $J_2$, we get
\begin{equation}
 \label{bound_I_2}
 \begin{array}{rcl}
J_2  &\apprle&  \sum_{i \in \tTh} |{\Om_{4\rho}^{\al}(\mfx)} \cap 2B_i| {\lbr \scalex{\la}{z_i} r_i\rbr^2} \la^{\frac{2}{p(z_i)}}  \\
& \apprle& \sum_{i \in \tTh} \frac{|Q_i|}{\scalet{\la}{z_i} r_i^2} {\lbr \scalex{\la}{z_i} r_i\rbr^2} \la^{\frac{2}{p(z_i)}}  \\
&\apprle & \la |\RR^{n+1} \setminus \elam|.
\end{array}
\end{equation}
\end{description}

Substituting \eqref{bound_I1} and \eqref{bound_I_2} into \eqref{3324}, the proof of the lemma follows.
\end{proof}



\section{The method of Lipschitz truncation - second difference estimate}
\label{lipschitz_truncation_B}

In Appendix \ref{lipschitz_truncation}, we constructed a suitable test function which was used to obtain a difference estimate between the weak solutions of \eqref{basic_pde} and \eqref{wapprox_int}. In this appendix, we will obtain an analogous Lipschitz truncation method that will be used as a test function to obtain difference estimate between the weak solutions of \eqref{wapprox_int} and \eqref{vapprox_bnd}. Most of the estimates follow exactly as in Appendix \ref{lipschitz_truncation} and hence we will only highlight the modifications needed.

Let us first note that the Lipschitz truncation is now constructed over the constant exponent $p(\mfz)$ which actually simplifies a lot of the estimates from Appendix \ref{lipschitz_truncation}. 
Let us denote 
\begin{equation*}
\label{def_s_B}
s:= \scalet{\al}{\mfz} (3\rho)^2.
\end{equation*}

Firstly, let us recall the modified Lemma \ref{lemma_crucial_2}:
\begin{lemma}
\label{lemma_crucial_2_B}
 For any  $h \in (0,2s)$ and let $\phi(x) \in C_c^{\infty}({\Om_{3\rho}^{\al}(\mfx)})$ and $\varphi(t) \in C^{\infty}(\mft-s,\infty)$ with $\varphi(\mft-s) = 0$ be a  non-negative function and $[w]_h,[v]_h$ be the Steklov average as defined in \eqref{stek1}. Then   the following estimate holds for any time interval $(t_1,t_2) \subset [\mft-s,\mft+s]$:
 \begin{equation}
  \label{lemma_crucial_2_est_B}
  \begin{array}{rcl}
  |\avgs{{[w-v]_h \varphi}}{\phi} (t_2) - \avgs{{[w-v]_h\varphi}}{\phi}(t_1)| & \leq & \|\nabla \phi\|_{L^{\infty}{({\Om_{3\rho}^{\al}(\mfz)})}} \|\varphi\|_{L^{\infty}(t_1,t_2)} \iint_{{\Om_{3\rho}^{\al}(\mfx)} \times (t_1,t_2)} \abs{\bbb(t,\nabla v) - \aa(z,\nabla w)} \ dz \\
   & &\qquad + \|\phi\|_{L^{\infty}{({\Om_{3\rho}^{\al}(\mfx)})}} \|\varphi'\|_{L^{\infty}(t_1,t_2)} \iint_{{\Om_{3\rho}^{\al}(\mfx)} \times (t_1,t_2)} |[w-v]_h| \ dz.
  \end{array}
 \end{equation}
\end{lemma}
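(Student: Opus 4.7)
The strategy is to mirror the proof of Lemma \ref{lemma_crucial_2} in Appendix \ref{lipschitz_truncation}, with the operator $\aa(x,t,\nabla u) - \aa(x,t,\nabla w)$ replaced by $\aa(x,t,\nabla w) - \bbb(t,\nabla v)$ and with the forcing term $|\bff|^{p(\cdot)-2}\bff$ absent since both \eqref{wapprox_int} and \eqref{vapprox_bnd} have zero right-hand side. Concretely, I would take $\phi(x)\varphi(t) \in C_c^{\infty}(\Om_{3\rho}^{\al}(\mfx)) \otimes C^{\infty}(\mft-s,\infty)$ as a test function in the Steklov-averaged formulation of both \eqref{wapprox_int} and \eqref{vapprox_bnd}. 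Since $\phi$ has compact support inside $\Om_{3\rho}^{\al}(\mfx)$, the spatial boundary contributions vanish and we obtain, for almost every $\tau$,
\begin{equation*}
\int_{\Om_{3\rho}^{\al}(\mfx)} \ddt{[w-v]_h}\,\phi\,\varphi\,dx + \int_{\Om_{3\rho}^{\al}(\mfx)} \iprod{[\aa(x,t,\nabla w)-\bbb(t,\nabla v)]_h}{\nabla \phi}\,\varphi\,dx = 0.
\end{equation*}

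Next I would rewrite the time-derivative term as $\frac{d}{d\tau}\lbr[(]\avgs{[w-v]_h\varphi}{\phi}\rbr[)] - \avgs{[w-v]_h\varphi'}{\phi}$ and integrate the resulting identity over $(t_1,t_2) \subset [\mft-s,\mft+s]$. Because $\varphi(\mft-s)=0$ there is no issue with admissibility even when $t_1 = \mft-s$, and the integration produces
\begin{equation*}
\avgs{[w-v]_h\varphi}{\phi}(t_2) - \avgs{[w-v]_h\varphi}{\phi}(t_1) = \int_{t_1}^{t_2}\!\int_{\Om_{3\rho}^{\al}(\mfx)} [w-v]_h\,\phi\,\varphi'\,dx\,d\tau - \int_{t_1}^{t_2}\!\int_{\Om_{3\rho}^{\al}(\mfx)} \iprod{[\aa(x,t,\nabla w)-\bbb(t,\nabla v)]_h}{\nabla \phi}\,\varphi\,dx\,d\tau.
\end{equation*}

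Finally, applying the triangle inequality and pulling out the sup-norms $\|\nabla\phi\|_{L^\infty}\|\varphi\|_{L^\infty}$ and $\|\phi\|_{L^\infty}\|\varphi'\|_{L^\infty}$ yields \eqref{lemma_crucial_2_est_B}. There is essentially no obstacle here: the argument is structurally identical to that of Lemma \ref{lemma_crucial_2}, and is actually simpler because the forcing term $|\bff|^{p(\cdot)-2}\bff$ does not appear (both $w$ and $v$ solve homogeneous problems). The only minor points to verify are that $\phi\varphi$ is indeed admissible as a test function in the Steklov-averaged formulations of \eqref{wapprox_int} and \eqref{vapprox_bnd}, which follows from $\phi \in C_c^\infty(\Om_{3\rho}^{\al}(\mfx))$ together with the regularity $w \in L^{\pp}(I_{3\rho}^{\al}(\mft);W^{1,\pp}(\Om_{3\rho}^{\al}(\mfx)))$ and $v \in L^{p(\mfz)}(I_{3\rho}^{\al}(\mft);W^{1,p(\mfz)}(\Om_{3\rho}^{\al}(\mfx)))$ established in Section \ref{four-two} (notably Lemma \ref{sobolev_reg_w} and Lemma \ref{time_reg_w}), and that the Steklov average converges appropriately as $h \searrow 0$, a standard fact.
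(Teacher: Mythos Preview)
Your proposal is correct and follows precisely the approach the paper has in mind: the paper does not write out a proof of Lemma \ref{lemma_crucial_2_B} (nor of Lemma \ref{lemma_crucial_2}), treating it as a routine consequence of testing the Steklov-averaged equations for $w$ and $v$ with $\phi(x)\varphi(t)$, subtracting, integrating in time, and pulling out sup-norms. Your observation that the argument is simpler than Lemma \ref{lemma_crucial_2} because no forcing term $|\bff|^{p(\cdot)-2}\bff$ appears is exactly the point.
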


\subsection{Construction of test function}

Let us denote the following functions:
\begin{gather*}
v(z) := w(z) - v(z) \txt{and} \vh(z) := [w-v]_h(z) \label{def_vh_B}.
\end{gather*}
where $[w-v]_h(z)$ denotes the usual Steklov average. It is easy to  see that $\vh \xrightarrow{h \searrow 0} v$. We also note that $v(z) = 0$ for $z \in \pa_p K_{3\rho}^{\al}(\mfz)$.  For some fixed $\mathfrak{q}$ such that $1 <\mathfrak{q}< \frac{p^-}{p^+-1}$, with $\mm$ as defined in \eqref{par_max}, let us now define
\begin{equation}
\label{def_g_B}
\begin{array}{ll}
g(z) & := \mm\lbr \lbr[[]\frac{|v|}{\scalex{\al}{\mfz}\rho} + |\nabla w| + |\nabla v|  + 1\rbr[]]^{\frac{p(\mfz)}{\mathfrak{q}}} \lsb{\chi}{K_{3\rho}^{\al}(\mfz)}\rbr^{\mathfrak{q}(1-\be)}.
\end{array}
\end{equation}

For a fixed $\la \geq 1$, let us define the \emph{good set} by 
\begin{equation}
\label{elambda_B}
\elam := \{ z \in \RR^{n+1} : g(z) \leq \la^{1-\be}\}.
\end{equation}

Since we are dealing with constant exponent $p(\mfz)$, we have the following Whitney-type covering lemma (see \cite[Chapter 3]{bogelein2013regularity} or \cite[Lemma 3.1]{diening2010existence} for the proof):
  \begin{lemma}
 \label{whitney_covering_B}
 There exists a Whitney covering $\{Q_i(z_i)\}$ of $\elam^c$ in the following sense:
 \begin{description}
  \descitem{(W6)}{W6} $Q_j(z_j) = B_j(x_j) \times I_j(t_j)$ where $B_j(x_j) = B_{\scalex{\la}{\mfz}r_j}(x_j)$ and $I_j(t_j) = (t_j - \scalet{\la}{\mfz} r_j^2, t_j + \scalet{\la}{\mfz} r_j^2)$. 
  \descitem{(W7)}{W7} $\bigcup_j  Q_j(z_j) = \elam^c$.
  \descitem{(W8)}{W8} for all $j \in \NN$, we have $8Q_j \subset \elam^c$ and $16Q_j \cap \elam \neq \emptyset$.
  \descitem{(W9)}{W9} if $Q_j \cap Q_k \neq \emptyset$, then $\frac1{c} r_k \leq r_j \leq c r_k$.
  \descitem{(W10)}{W10} $\sum_j \lsb{\chi}{8Q_j}(z) \leq c(n)$ for all $z \in \elam^c$.
  \end{description}
  Subordinate to this Whitney covering, we have an associated partition of unity denoted by $\{ \psi_j\} \in C_c^{\infty}(\RR^{n+1})$ such that the following holds:
  \begin{description}
  \descitem{(W11)}{W11} $\lsb{\chi}{Q_j} \leq \psi_j \leq \lsb{\chi}{2Q_j}$.
  \descitem{(W12)}{W12} $\|\psi_j\|_{\infty} + \scalex{\la}{\mfz}r_j \| \nabla \psi_j\|_{\infty} + \scalet{\la}{\mfz}r_j^2 \| \pa_t \psi_j\|_{\infty} \leq C$.
  \end{description}
  For a fixed $k \in \NN$, let us define 
  \begin{equation*}\label{Ak}A_k := \left\{ j \in \NN: \frac34Q_k \cap \frac34Q_j \neq \emptyset\right\},\end{equation*} then we have
  \begin{description}
  \descitem{(W13)}{W13} Let $i \in \NN$ be given, then $\sum_{j \in A_i} \psi_j(z) = 1$  for all $z \in 2Q_i$.
  \descitem{(W14)}{W14} Let $i \in \NN$ be given and let  $j \in A_i$, then $\max \{ |Q_j|, |Q_i|\} \leq C_{(n)} |Q_j \cap Q_i|.$
  \descitem{(W15)}{W15}  Let $i \in \NN$ be given and let  $j \in A_i$, then $ \max \{ |Q_j|, |Q_i|\} \leq \left|2Q_j \cap 2Q_i\right|.$
  \descitem{(W16)}{W16} For any $i \in \NN$, we have $\# A_i \leq c(n)$.
  \descitem{(W17)}{W17} Let $i \in \NN$ be given, then for any $j \in A_i$, we have $2Q_j \subset 8Q_i$.
 \end{description}
\end{lemma}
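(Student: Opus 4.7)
The plan is to observe first that since the lemma is stated with respect to the constant exponent $p(\mfz)$, the anisotropic scaling factors $\scalex{\la}{\mfz}$ and $\scalet{\la}{\mfz}$ appearing in the cylinders $Q_j$ are fixed constants (independent of the base point $z_j$), unlike the situation in Appendix \ref{lipschitz_truncation} where $p(z_i)$ varied. Consequently the family of intrinsic cylinders $\{Q_r^{\la}(z)\}_{z\in\RR^{n+1},\,r>0}$ is generated by a homogeneous pseudo-metric $d_\mfz^{\la,d}$ (cf.\ Definition \ref{loc_parabolic_metric}), and $(\RR^{n+1},d_\mfz^{\la,d})$ is a doubling metric space. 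Since $g$ in \eqref{def_g_B} is lower semicontinuous (as a supremum of averages), the superlevel set $\elam^c = \{g > \la^{1-\beta}\}$ is open, which is precisely the setting in which a classical Whitney decomposition applies.

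First I would define, for each $z \in \elam^c$, the stopping-time radius
\[
r(z) := \tfrac{1}{\mathfrak{k}}\, d_\mfz^{\la,d}(z,\elam),
\]
for a sufficiently small constant $\mathfrak{k}\in(0,1)$ (say $\mathfrak{k}=1/16$), and consider the open covering $\mcf = \{Q_{r(z)}^{\la}(z)\}_{z\in \elam^c}$. Applying the Vitali covering argument of Lemma \ref{lemma_vitali} in this simpler constant-scaling setting yields a countable disjoint subcollection $\mcg = \{Q_{r_i}^{\la}(z_i)\}_{i\in\NN}$ such that the $5$-fold enlargements still cover $\elam^c$; after relabeling the radii, this produces the cylinders $Q_j = Q_{r_j}^{\la}(z_j)$ satisfying \descref{W6}{W6}--\descref{W8}{W8}. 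Property \descref{W9}{W9} follows because if $Q_j\cap Q_k\neq \emptyset$ then the triangle inequality in $d_\mfz^{\la,d}$ together with the definitions of $r_j$ and $r_k$ forces $r_j\approx r_k$ with a universal constant; this is the constant-exponent analogue of \descref{W3}{W3} and does not require the technical estimate \eqref{2.2.28-1}. The bounded overlap property \descref{W10}{W10} is then a standard consequence of \descref{W9}{W9} and the doubling property of the metric $d_\mfz^{\la,d}$.

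Next, I would construct the partition of unity $\{\psi_j\}$ by taking standard bump functions $\tilde\psi_j \in C_c^{\infty}(2Q_j)$ satisfying $\lsb{\chi}{Q_j}\leq \tilde\psi_j\leq \lsb{\chi}{2Q_j}$, with the natural bounds $\|\nabla \tilde\psi_j\|_\infty\lesssim (\scalex{\la}{\mfz}r_j)^{-1}$ and $\|\pa_t \tilde\psi_j\|_\infty\lesssim(\scalet{\la}{\mfz}r_j^2)^{-1}$, and then normalizing $\psi_j := \tilde\psi_j/\sum_k \tilde\psi_k$. The sum $\sum_k \tilde\psi_k$ is bounded above and below by universal constants on $\elam^c$ thanks to \descref{W10}{W10} and the fact that $\tilde\psi_k\geq 1$ on $Q_k$; combined with \descref{W9}{W9}, this yields properties \descref{W11}{W11} and \descref{W12}{W12}. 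Properties \descref{W13}{W13}--\descref{W17}{W17} are then immediate geometric consequences of \descref{W9}{W9}: the radii of two intersecting cylinders are comparable up to a universal constant, which forces any two enlargements of overlapping cylinders to be commensurable in measure and nested up to constant dilation.

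Since this is a standard Whitney decomposition in a homogeneous space, the only mildly delicate point is choosing the parameter $\mathfrak{k}$ small enough to guarantee both the disjointness of the selected cylinders and the covering property of their enlargements; the quantitative choice depends on the doubling constant of $d_\mfz^{\la,d}$ (which in turn is universal, depending only on $n$). Because the argument is the direct constant-exponent specialization of the covering already performed in Appendix \ref{lipschitz_truncation} (which in the variable-exponent case had to absorb the technical bound \eqref{2.2.28-1} to control the dependence of radii on base points), no substantial obstacle arises here, and one may alternatively simply cite \cite[Chapter 3]{bogelein2013regularity} or \cite[Lemma 3.1]{diening2010existence} for the final assembly.
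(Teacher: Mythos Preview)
Your proposal is correct and aligns with the paper's own treatment: the paper does not give a proof at all but simply cites \cite[Chapter 3]{bogelein2013regularity} and \cite[Lemma 3.1]{diening2010existence}, noting that the constant exponent $p(\mfz)$ reduces this to the standard Whitney decomposition. Your sketch supplies the details behind that citation and ends by invoking the same references, so there is nothing to correct.
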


 \subsection{Construction of Lipschitz truncation function}

We shall also use the notation 
\begin{equation*}
\label{mcii_B}
\mci(i) := \{ j \in \NN : \spt(\psi_i) \cap \spt(\psi_j) \neq \emptyset\} \txt{and} \mci_z := \{ j \in \NN: z \in \spt(\psi_j) \}.
\end{equation*}

We are now ready to construct  the Lipschitz truncation function:
 \begin{equation}
 \label{lipschitz_function_B}
 \vlh(z) := \vh(z) - \sum_{i} \psi_i(z) \lbr \vh(z) - \vh^i\rbr,
 \end{equation}
  where we have defined
 \begin{equation*}
 \label{def_tuh_B}
 \vh^i := \left\{
 \begin{array}{ll}
 \fiint_{2Q_i} \vh(z)\lsb{\chi}{[\mft-s,\mft+s]} \ dz & \text{if} \ \ 2Q_i \subset \Om_{3\rho}^{\al}(\mfx) \times (\mft-s,\infty), \\
 0 & \text{else}. 
 \end{array}\right.
 \end{equation*}

  From construction in \eqref{lipschitz_function} and \eqref{def_tuh}, we see that 
 \begin{equation*}
 \spt(\vlh) \subset \Om_{3\rho}^{\al}(\mfx) \times (\mft-s,\infty).
 \end{equation*}

 We see that $\vlh$ has the right support for the test function and hence the rest of this section will be devoted to proving the Lipschitz regularity of $\vlh$ on $K_{3\rho}^{\al}(\mfx)$ as well as some useful estimates.

 \subsection{Some estimates on the test function}
 
 In this subsection, we will collect some useful estimates on the test function. The proofs of these estimates are very similar to the corresponding ones from Appendix \ref{lipschitz_truncation} (in fact simpler because we are dealing with the constant exponent $p(\mfz)$) and will be omitted. Let us first derive a useful estimate:
 \begin{equation}
 \label{aa_bb}
 \begin{array}{rcl}
 |\bbb(t,\nabla v) - \aa(z,\nabla w)| & = &  |\bbb(t,\nabla v) - \bb(z,\nabla w)| + |\bb(z,\nabla w) - \aa(z,\nabla w)|\\
 & \overset{\eqref{bbounded}}{\apprle} &  \lbr \mu^2 + |\nabla v|^2 \rbr^{\frac{p(\mfz) -1}{2}} + \lbr \mu^2 + |\nabla w|^2 \rbr^{\frac{p(\mfz) -1}{2}} + |\bb(z,\nabla w) - \aa(z,\nabla w)|\\
 & \overset{\eqref{def_bb}}{\apprle} &  \lbr \mu^2 + |\nabla v|^2 \rbr^{\frac{p(\mfz) -1}{2}} + \lbr \mu^2 + |\nabla w|^2 \rbr^{\frac{p(\mfz) -1}{2}} +  |\aa(z,\nabla w)| \lbr \mu^2 + |\nabla w|^2 \rbr^{\frac{p(\mfz) -p(z)}{2}}\\
 & \overset{\eqref{abounded}}{\apprle} &  \lbr \mu^2 + |\nabla v|^2 \rbr^{\frac{p(\mfz) -1}{2}} + \lbr \mu^2 + |\nabla w|^2 \rbr^{\frac{p(\mfz) -1}{2}} +   \lbr \mu^2 + |\nabla w|^2 \rbr^{\frac{p(\mfz) -p(z)}{2}+ \frac{p(z) -1}{2}}\\
 & \apprle & \lbr \mu^2 + |\nabla v|^2 \rbr^{\frac{p(\mfz) -1}{2}} + \lbr \mu^2 + |\nabla w|^2 \rbr^{\frac{p(\mfz) -1}{2}}.
 \end{array}
 \end{equation}
 The primary use of \eqref{aa_bb} would be needed to estimate the first term on the right hand side of \eqref{lemma_crucial_2_est_B}.

 \begin{lemma}
\label{lemma3.6_pre_B}
Let $ \mfz \in K_{3\rho}^{\al}(\mfz) \setminus \elam$, then from \descref{W1}{W1}, we have that $\mfz \in 2Q_i$ for some $i \in \mci_{\mfz}$. For any $1 \leq \tht \leq \frac{p^-}{\mathfrak{q}}$,  there holds
\begin{equation} \label{lemma3.6_pre_two_B}\begin{array}{l}
|\vh^i|^{\tht} \leq  \fiint_{2Q_i} |\vh(z)|^{\tht}\lsb{\chi}{[\mft-s,\mft+s]} \ dz  \apprle_{(\plog,n)} (\scalex{\al}{\mfz}\rho)^{\tht} \la^{\frac{\tht}{p(\mfz)}},  \\
\fiint_{2Q_i}|\nabla \vh(z)|^{\tht}\lsb{\chi}{[\mft-s,\mft+s]} \ dz \apprle_{(\plog,n)}  \la^{\frac{\tht}{p(\mfz)}}.
\end{array}\end{equation}
\end{lemma}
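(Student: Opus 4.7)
The plan is to follow exactly the pattern of Lemma \ref{lemma3.6_pre} from Appendix \ref{lipschitz_truncation}, but with a notable simplification: the truncation here is built on the \emph{constant} exponent $p(\mfz)$ rather than the variable exponent $\pp$, so there is no oscillation factor of the form $\la^{p^+_{2Q_i}-p^-_{2Q_i}}$ to absorb, and one avoids the need for \eqref{2.2.28-1}. The leftmost inequality $|\vh^i|^{\tht} \leq \fiint_{2Q_i}|\vh|^{\tht}\lsb{\chi}{[\mft-s,\mft+s]}\,dz$ is immediate from Jensen's inequality applied to the definition of $\vh^i$ (combined, if $2Q_i \not\subset \Om_{3\rho}^{\al}(\mfx)\times(\mft-s,\infty)$, with the convention $\vh^i=0$).

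For the quantitative bound, I would write $|\vh|^{\tht} = (\scalex{\al}{\mfz}\rho)^{\tht} |\vh/(\scalex{\al}{\mfz}\rho)|^{\tht}$ and exploit the hypothesis $\tht \leq p^-/\mathfrak{q} \leq p(\mfz)/\mathfrak{q}$, which forces the ratio $\tht\mathfrak{q}/p(\mfz) \leq 1$. Jensen's inequality then upgrades the average to the larger exponent $p(\mfz)/\mathfrak{q}$ after dominating the Steklov-averaged integrand by $f(z) := |v(z)|/(\scalex{\al}{\mfz}\rho) + |\nabla w(z)| + |\nabla v(z)| + 1$ (handling the Steklov average via a trivial time-slice Jensen, or by enlarging the time interval slightly, absorbing the constant into the generic $C_{(\plog,n)}$). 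The upshot is
\[
\fiint_{2Q_i}|\vh|^{\tht}\lsb{\chi}{[\mft-s,\mft+s]}\,dz \apprle (\scalex{\al}{\mfz}\rho)^{\tht} \left(\fiint_{2Q_i} f^{p(\mfz)/\mathfrak{q}}\,dz\right)^{\tht\mathfrak{q}/p(\mfz)}.
\]
Property \descref{W8}{W8} of the Whitney covering furnishes a point $z^* \in 16Q_i \cap \elam$; the doubling property of the cylinders gives $\fiint_{2Q_i} f^{p(\mfz)/\mathfrak{q}}\,dz \apprle \fiint_{16Q_i} f^{p(\mfz)/\mathfrak{q}}\,dz \leq \mm\gh{f^{p(\mfz)/\mathfrak{q}}\lsb{\chi}{K_{3\rho}^{\al}(\mfz)}}(z^*) \leq \la^{1/\mathfrak{q}}$, where the last inequality follows by taking the $\mathfrak{q}(1-\be)$-th root of the defining estimate of $\elam$ in \eqref{elambda_B}. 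Raising to the power $\tht\mathfrak{q}/p(\mfz)$ yields the desired bound $(\scalex{\al}{\mfz}\rho)^{\tht}\la^{\tht/p(\mfz)}$.

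For the gradient estimate, I would repeat the argument verbatim starting from $|\nabla \vh(z)| \leq [|\nabla w| + |\nabla v|]_h(z) \leq [f]_h(z)$, which produces the same maximal-function chain but without the prefactor $(\scalex{\al}{\mfz}\rho)^{\tht}$, ending in $\la^{\tht/p(\mfz)}$. There is no genuine obstacle in this proof; the only mild technicality is ensuring that the Steklov averaging (which mixes integrands in time on the scale $h$) is compatible with the cylinder $2Q_i$ one is integrating over, which is standard and costs at worst an additional doubling factor since $2h \leq \ve$ is much smaller than the relevant time scales. The whole argument is shorter and cleaner than in Appendix \ref{lipschitz_truncation} precisely because $p(\mfz)$ is constant on $2Q_i$.
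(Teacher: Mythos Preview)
Your proposal is correct and follows essentially the same approach as the paper: the paper omits the proof of Lemma~\ref{lemma3.6_pre_B}, remarking only that it is ``very similar to the corresponding ones from Appendix~\ref{lipschitz_truncation} (in fact simpler because we are dealing with the constant exponent $p(\mfz)$),'' and your argument is precisely this---Jensen to raise to the exponent $p(\mfz)/\mathfrak{q}$, doubling from $2Q_i$ to $16Q_i$, then \descref{W8}{W8} together with \eqref{elambda_B} to bound the maximal function by $\la^{1/\mathfrak{q}}$, with no need for the oscillation correction \eqref{2.2.28-1}. The handling of the Steklov average is the same mild technicality the paper also glosses over in Lemma~\ref{lemma3.6_pre}.
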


 \begin{corollary}
 \label{lemma3.6_B}
For any $z \in K_{3\rho}^{\al}(\mfz) \setminus \elam$, we have $z \in 2Q_i$ for some $i \in \mci_{z}$, then there holds
 \begin{equation*}
 |\vh(z)| \apprle_{(n,\plog,\lamot)}(\scalex{\al}{\mfz}\rho)  \la^{\frac{1}{p(\mfz)}}.
 \end{equation*}
 \end{corollary}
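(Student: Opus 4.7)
The plan is to obtain Corollary \ref{lemma3.6_B} as the direct constant-exponent analogue of Corollary \ref{lemma3.6} from Appendix \ref{lipschitz_truncation}, the essential simplification being that the exponent is now the fixed $p(\mfz)$ instead of the point-dependent $p(z_i)$. I would first locate the relevant Whitney cube: since $z \in K_{3\rho}^{\al}(\mfz) \setminus \elam \subset \elam^c$, property \descref{W7}{W7} of Lemma \ref{whitney_covering_B} yields $i \in \NN$ with $z \in 2Q_i$, and \descref{W8}{W8} furnishes a good point $\tilde z \in 16 Q_i \cap \elam$, so that $g(\tilde z)\le \la^{1-\be}$.

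Second, I would decompose via triangle inequality $|\vh(z)| \le |\vh(z) - \vh^i| + |\vh^i|$, and control each piece separately. The second summand is handled immediately by Lemma \ref{lemma3.6_pre_B} with $\tht=1$, giving $|\vh^i| \apprle \scalex{\al}{\mfz}\rho\,\la^{1/p(\mfz)}$.

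Third, for the oscillation $|\vh(z) - \vh^i|$, I would exploit that $\vh$ is a Steklov average of $v$, so Jensen's inequality produces
\[
|\vh(z)|^{p(\mfz)/\mathfrak{q}} \le \lbr |v|^{p(\mfz)/\mathfrak{q}}\rbr_h(z),
\]
which at a Lebesgue point is bounded by $\mm\lbr |v|^{p(\mfz)/\mathfrak{q}} \lsb{\chi}{K_{3\rho}^{\al}(\mfz)}\rbr(z)$. The maximal function bound $g(\tilde z)\le \la^{1-\be}$ controls averages of the integrand appearing in \eqref{def_g_B} on any parabolic cylinder through $\tilde z$, in particular on $32Q_i$, which contains both $\tilde z$ and $z$ (by \descref{W9}{W9} and the scaling of the cylinders). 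Combining these estimates at the Lebesgue point $z$ gives the required pointwise bound $\scalex{\al}{\mfz}\rho\,\la^{1/p(\mfz)}$, and the set of non-Lebesgue points has measure zero, which is enough for all subsequent use in Appendix \ref{lipschitz_truncation_B}.

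The main obstacle is reconciling the pointwise statement of the corollary with the averaged nature of the ingredients (Lemma \ref{lemma3.6_pre_B} and the maximal-function characterization of $\elam$). The bridge is the Steklov-average structure of $\vh$, which lets one compare $|\vh(z)|$ with the strong maximal function of $|v|$ at Lebesgue points; everything else is routine scaling using that $|Q_i| \approx |32 Q_i|$ and that $p(\mfz)$ is a single exponent, so no $\la^{p^+_{2Q_i}-p^-_{2Q_i}}$-type factors appear (unlike in the proof of Corollary \ref{lemma3.6}).
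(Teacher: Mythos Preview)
There is a genuine gap in your third step. You correctly obtain $|\vh(z)|^{p(\mfz)/\mathfrak{q}}\le \mm\!\left(|v|^{p(\mfz)/\mathfrak{q}}\lsb{\chi}{K_{3\rho}^{\al}(\mfz)}\right)(z)$ at Lebesgue points, and you correctly use $g(\tilde z)\le\la^{1-\be}$ at the good point $\tilde z\in 16Q_i\cap\elam$ to control the \emph{average} of the integrand over $32Q_i$. But these two facts do not combine into a pointwise bound at $z$: the maximal-function inequality bounds $|\vh(z)|$ by $\mm(\cdot)(z)$, and the membership $z\in\elam^c$ says precisely that this maximal function \emph{exceeds} $\la^{1/\mathfrak{q}}$, so there is no upper control from that side; the bound at $\tilde z$ only gives $\fiint_{32Q_i}(\cdot)\le \la^{1/\mathfrak{q}}$, which is an average estimate and cannot be upgraded to the value at one particular $z\in 32Q_i$. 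This is exactly the content already recorded in Lemma \ref{lemma3.6_pre_B}, and it does not yield a pointwise $L^\infty$ bound on $\vh$ over the bad set. Indeed, no such bound is available in general --- the very purpose of the Lipschitz truncation is to replace the possibly unbounded $\vh$ by the controlled $\vlh$ on $\elam^c$.

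The paper supplies no proof here (it refers back to Corollary \ref{lemma3.6}, also unproved), but the argument its structure points to --- and the one consistent with the subsequent use in Lemma \ref{lemma6.7-1_B} and in the Lipschitz-continuity proof --- delivers the bound for $\vlh$ rather than $\vh$. On $\elam^c$ the partition of unity gives $\vlh(z)=\sum_{j\in\mci_z}\psi_j(z)\,\vh^{\,j}$, whence $|\vlh(z)|\le\max_{j\in\mci_z}|\vh^{\,j}|\apprle(\scalex{\al}{\mfz}\rho)\,\la^{1/p(\mfz)}$ by the first line of \eqref{lemma3.6_pre_two_B} with $\tht=1$ together with the finite overlap \descref{W16}{W16}. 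That is the one-line deduction the word ``Corollary'' promises; your triangle-inequality split and the Steklov/maximal-function bridge are not needed, and for the literal statement about $\vh$ they cannot be made to work.
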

 

\begin{lemma}
\label{improved_est_B}
Let $2Q_i$ be a parabolic Whitney type cylinder, then for any $1 \leq \tht \leq \frac{p^-}{\mathfrak{q}}$, there holds
\begin{equation*}
\fiint_{2Q_i} |\vh(z)\lsb{\chi}{[\mft-s,\mft+s]}- \vh^i|^{\tht} \  dz \apprle_{(\plog,\lamot,n)} \min\left\{ \scalex{\al}{\mfz}\rho, \scalex{\la}{\mfz}r_i\right\}^{\tht} \la^{\frac{\tht}{p(\mfz)}}.
\end{equation*}
\end{lemma}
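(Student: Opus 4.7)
The plan is to mirror the structure of the proof of Lemma \ref{improved_est} in Appendix \ref{lipschitz_truncation}, but in the simpler setting of the constant exponent $p(\mfz)$ and with the modified crucial lemma \eqref{lemma_crucial_2_est_B}, which only has two terms on the right. I will split into the two natural cases according to which of $\scalex{\al}{\mfz}\rho$ and $\scalex{\la}{\mfz}r_i$ is smaller.

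First, when $\scalex{\al}{\mfz}\rho \leq \scalex{\la}{\mfz}r_i$, I would simply apply the triangle inequality followed by the first bound in \eqref{lemma3.6_pre_two_B} to obtain directly $\fiint_{2Q_i}|\vh\lsb{\chi}{[\mft-s,\mft+s]} - \vh^i|^{\tht}\ dz \apprle (\scalex{\al}{\mfz}\rho)^{\tht} \la^{\tht/p(\mfz)}$, which matches the desired bound in this case.

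The main work is the second case, $\scalex{\al}{\mfz}\rho \geq \scalex{\la}{\mfz}r_i$. Here I would pick a cutoff $\mu \in C_c^{\infty}(2B_i)$ with $\int \mu = 1$, $\|\mu\|_{\infty} \apprle (\scalex{\la}{\mfz}r_i)^{-n}$ and $\|\nabla\mu\|_{\infty} \apprle (\scalex{\la}{\mfz}r_i)^{-(n+1)}$, and apply the parabolic Poincaré inequality of Lemma \ref{lemma_crucial_1} on $2Q_i$ to get
\[
\fiint_{2Q_i} |\vh\lsb{\chi}{[\mft-s,\mft+s]} - \vh^i|^{\tht}\ dz \apprle (\scalex{\la}{\mfz}r_i)^{\tht} \fiint_{2Q_i} |\nabla \vh|^{\tht}\lsb{\chi}{[\mft-s,\mft+s]}\ dz + \sup_{t_1, t_2 \in 2I_i \cap [\mft-s,\mft+s]} |\avgs{\vh}{\mu}(t_2) - \avgs{\vh}{\mu}(t_1)|^{\tht}.
\]
The spatial term is immediately controlled by the second bound in \eqref{lemma3.6_pre_two_B}, yielding $(\scalex{\la}{\mfz}r_i)^{\tht}\la^{\tht/p(\mfz)}$.

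The time oscillation is the main obstacle and is where Lemma \ref{lemma_crucial_2_B} enters: with $\phi = \mu$ and $\varphi \equiv 1$, the estimate \eqref{lemma_crucial_2_est_B} together with the pointwise bound \eqref{aa_bb} gives
\[
|\avgs{\vh}{\mu}(t_2) - \avgs{\vh}{\mu}(t_1)| \apprle \frac{|2Q_i|}{\scalex{\la}{\mfz}r_i}\fiint_{2Q_i} \lbr \mu^2 + |\nabla v|^2 + |\nabla w|^2 \rbr^{\frac{p(\mfz)-1}{2}}\ dz.
\]
Bounding this integrand by the definition of $g$ in \eqref{def_g_B} together with the inclusion $2Q_i \subset 8Q_i$ and \descref{W8}{W8} produces a factor $\la^{(p(\mfz)-1)/p(\mfz)}$ from \eqref{elambda_B}. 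A direct computation with $\scalex{\la}{\mfz} = \la^{-1/p(\mfz) + d/2}$ and $\scalet{\la}{\mfz} = \la^{-1+d}$ shows that $\scalet{\la}{\mfz}r_i^2/\scalex{\la}{\mfz}r_i \cdot \la^{(p(\mfz)-1)/p(\mfz)} = \scalex{\la}{\mfz}r_i \la^{1/p(\mfz)}$, which is exactly the target bound. Summing the two contributions and combining with the first case completes the proof. The conceptual subtlety, compared with Appendix \ref{lipschitz_truncation}, is that here no extra correction terms are produced by the mismatch between $p(z)$ and $p(z_i)$, so the bound \eqref{2.2.28-1} is not needed, which makes the argument cleaner.
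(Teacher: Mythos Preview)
Your proposal is correct and follows essentially the same route as the paper: the same two-case split, the same use of the parabolic Poincar\'e inequality (Lemma \ref{lemma_crucial_1}) in the second case, and the same control of the time oscillation via Lemma \ref{lemma_crucial_2_B} combined with \eqref{aa_bb} and the good-set property \descref{W8}{W8}. Two minor slips to clean up: the factor in your displayed time-oscillation bound should be $|2Q_i|/(\scalex{\la}{\mfz}r_i)^{n+1}$ (coming from $\|\nabla\mu\|_\infty$), which then collapses to the $\scalet{\la}{\mfz}r_i^2/\scalex{\la}{\mfz}r_i$ you correctly use afterwards; and the enlargement needed to touch $\elam$ is to $16Q_i$, not $8Q_i$, as stated in \descref{W8}{W8}.
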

\begin{proof}
Let us consider the following two cases:
\begin{description}[leftmargin=*]
\item[Case $\scalex{\al}{\mfz}\rho \leq  \scalex{\la}{\mfz}r_i$:] This is very similar to \eqref{6.18}. 

\item[Case $\scalex{\al}{\mfz}\rho \geq  \scalex{\la}{\mfz}r_i$:] Applying Lemma \ref{lemma_crucial_2} with  $\mu \in C_c^{\infty}(2B_i)$ such that $|\mu(x)| \apprle \frac{1}{\lbr \scalex{\la}{\mfz} r_i\rbr^n}$ and $|\nabla \mu(x)| \apprle \frac{1}{\lbr \scalex{\la}{\mfz} r_i\rbr^{n+1}}$, we get
\begin{equation}
\label{6.19_B}
\begin{aligned}
\fiint_{2Q_i} |\vh(z)\lsb{\chi}{[\mft-s,\mft+s]}- \vh^i|^{\tht} \  dz &\leq \lbr \scalex{\la}{\mfz} r_i\rbr^{\tht} \fiint_{2Q_i} |\nabla \vh|^{\tht}\lsb{\chi}{[\mft-s,\mft+s]} \ d\tz \\
&\qquad + \sup_{t_1,t_2 \in 2I_i\cap [\mft-s,\mft+s]} |\avgs{\vh}{\mu}(t_2) - \avgs{\vh}{\mu}(t_1)|^{\tht}.\\
\end{aligned}
\end{equation}
The first term on the right of \eqref{6.19_B} can be estimated using \eqref{lemma3.6_pre_two_B} to get
\begin{equation*}
\label{est_J_1_B}
\lbr \scalex{\la}{\mfz} r_i\rbr^{\tht} \fiint_{2Q_i} |\nabla \vh|^{\tht}\lsb{\chi}{[\mft-s,\mft+s]} \ d\tz \apprle \lbr \scalex{\la}{\mfz} r_i\rbr^{\tht}    \la^{\frac{\tht}{p(\mfz)}}.
\end{equation*}

To estimate the second term on the right of \eqref{6.19_B}, we make use of Lemma \ref{lemma_crucial_2_B} with $\phi(x) = \mu(x)$ and $\varphi(t) \equiv 1$, we get
\begin{equation*}
\label{est_J_2_one_B}
\begin{array}{rcl}
|\avgs{\vh}{\mu}(t_2) - \avgs{\vh}{\mu}(t_1)| & \apprle & \frac{|2Q_i|}{\lbr \scalex{\la}{\mfz} r_i\rbr^{n+1}} \fiint_{2Q_i} |\bbb(t,\nabla v) - \aa(z,\nabla w)| \ dz \\
& \overset{\eqref{aa_bb}}{\apprle} & \frac{\scalet{\la}{\mfz}r_i^2}{\scalex{\la}{\mfz} r_i} \lbr \fiint_{16Q_i} (1 + |\nabla w|+|\nabla v|)^{\frac{p(\mfz)}{\mathfrak{q}}} \ d\tz \rbr^{\frac{q(p(\mfz) -1)}{p(\mfz)}}\\
& \overset{\eqref{elambda_B}}{\apprle} & \la^{-1 + \frac{1}{p(\mfz)} + \frac{d}{2}} r_i \la^{\frac{p(\mfz) -1}{p(\mfz)}}= \lbr \scalex{\la}{\mfz} r_i \rbr \la^{\frac{1}{p(\mfz)}}.
\end{array}
\end{equation*}

Thus combining \eqref{est_J_1} and \eqref{est_J_2} into \eqref{6.19}, we get
\[
\fiint_{2Q_i} |\vh(\tz)\lsb{\chi}{[\mft-s,\mft+s]}- \vh^i|^{\tht} \  d\tz \apprle_{(\plog,\lamot,n)} \lbr \scalex{\la}{\mfz} r_i\rbr^{\tht}    \la^{\frac{\tht}{p(\mfz)}}.
\]
\end{description}
This  proves the lemma. 
\end{proof}

\begin{corollary}
\label{corollary3.7_B}
For any $i \in \NN$ and any $j \in \mci_i$, there holds
\[
|\vh^i - \vh^j| \apprle_{(\plog,\lamot,n)} \min\left\{ \scalex{\al}{\mfz}\rho, \scalex{\la}{\mfz}r_i\right\} \la^{\frac{1}{p(\mfz)}}.
\]
\end{corollary}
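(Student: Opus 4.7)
\textbf{Proof plan for Corollary \ref{corollary3.7_B}.} The strategy mirrors the analogous step in Appendix \ref{lipschitz_truncation} (Corollary \ref{corollary3.7}): since $j\in\mci_i$, the Whitney properties \descref{W9}{W9}, \descref{W15}{W15}, and \descref{W17}{W17} guarantee that $r_j\approx r_i$, that $|2Q_i\cap 2Q_j|\apprge|Q_i|\approx|Q_j|$, and that $2Q_j\subset 8Q_i$. These comparability facts are what allow the difference of averages $\vh^i-\vh^j$ to be controlled by a single oscillation of $\vh$ on a common region.

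I would split the argument into two cases according to the definition of $\vh^i$. In the \emph{interior case}, where both $2Q_i\subset\Omega_{3\rho}^{\al}(\mfx)\times(\mft-s,\infty)$ and $2Q_j\subset\Omega_{3\rho}^{\al}(\mfx)\times(\mft-s,\infty)$, both averages are nontrivial and I would write
\begin{equation*}
|\vh^i-\vh^j|\le \fint_{2Q_i\cap 2Q_j}\bigl(|\vh(z)\lsb{\chi}{[\mft-s,\mft+s]}-\vh^i|+|\vh(z)\lsb{\chi}{[\mft-s,\mft+s]}-\vh^j|\bigr)\,dz.
\end{equation*}
Using \descref{W15}{W15} to replace the domain of integration on the right by $2Q_i$ (respectively $2Q_j$) at the cost of a dimensional constant, and then applying Lemma \ref{improved_est_B} with $\tht=1$ to each term, yields the desired bound $\apprle\min\{\scalex{\al}{\mfz}\rho,\scalex{\la}{\mfz}r_i\}\la^{1/p(\mfz)}$, because $r_i\approx r_j$ makes the two minima of the same order.

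In the \emph{boundary case}, where at least one of $\vh^i,\vh^j$ vanishes by definition, say $\vh^j=0$, the cylinder $2Q_j$ touches the complement of $\Omega_{3\rho}^{\al}(\mfx)\times(\mft-s,\infty)$. Since $v$ vanishes on $\pa_p K_{3\rho}^{\al}(\mfz)$, this forces $v\lsb{\chi}{[\mft-s,\mft+s]}$ to vanish on a subset of $2Q_j$ of definite relative measure. Because $|2Q_i|\approx|2Q_j|$ and $2Q_i\cup 2Q_j\subset 8Q_i$, a Poincar\'e-type inequality on the constant-exponent cylinder (analogous to Theorem \ref{measure_density_poincare} combined with the slicing argument used in Lemma \ref{improved_est_B}) lets me estimate $|\vh^i|$ directly by the integral of $|\nabla \vh|$ plus the time-oscillation on $8Q_i$; the latter is then bounded exactly as in the second case of Lemma \ref{improved_est_B} via Lemma \ref{lemma_crucial_2_B} and the monotonicity bound \eqref{aa_bb}, giving the same estimate.

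The main technical obstacle is the boundary case: one must verify that the zero-set of $v$ inside $2Q_j$ has measure comparable to $|2Q_j|$, which requires using \descref{W8}{W8} (namely $16 Q_j\cap\elam\neq\emptyset$ is not enough here; rather one exploits that $2Q_j$ meets $\{(x,t):x\not\in\Omega_{3\rho}^{\al}(\mfx)\text{ or }t\le\mft-s\}$), together with the fact that the intrinsic scaling is comparable on $2Q_i$ and $2Q_j$ so that the Poincar\'e constant depends only on $n,\plog,\lamot$. Everything else is bookkeeping, and the final bound follows by combining both cases.
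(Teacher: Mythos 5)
Your core argument for the ``interior case'' --- passing through the average over $2Q_i\cap 2Q_j$, using \descref{W9}{W9} and \descref{W15}{W15} to replace that average by averages over $2Q_i$ and over $2Q_j$ separately, and then applying Lemma \ref{improved_est_B} with $\tht=1$ together with $r_i\approx r_j$ --- is exactly the intended proof. What you overlook is that this computation already covers every case: in the triangle inequality
\begin{equation*}
|\vh^i-\vh^j|\le \fiint_{2Q_i\cap 2Q_j}|\vh\lsb{\chi}{[\mft-s,\mft+s]}-\vh^i|\,dz+\fiint_{2Q_i\cap 2Q_j}|\vh\lsb{\chi}{[\mft-s,\mft+s]}-\vh^j|\,dz
\end{equation*}
the quantities $\vh^i,\vh^j$ enter only as constants, and Lemma \ref{improved_est_B} is stated for an arbitrary Whitney cylinder with $\vh^i$ as defined in \eqref{def_tuh_B}, i.e.\ including the case $\vh^i=0$. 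So no case distinction is needed at the level of the corollary; you simply cite the lemma twice.

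Your separate ``boundary case'' is therefore superfluous, and its key assertion is not correct as stated: if $2Q_j\not\subset\Om_{3\rho}^{\al}(\mfx)\times(\mft-s,\infty)$, the set where $\vh\lsb{\chi}{[\mft-s,\mft+s]}$ vanishes inside $2Q_j$ need \emph{not} occupy a definite fraction of $|2Q_j|$ --- the cylinder may protrude past the lateral boundary or past the time level $\mft-s$ by an arbitrarily thin sliver, so a measure-density Poincar\'e inequality in the spirit of Theorem \ref{measure_density_poincare} cannot be applied on $2Q_j$ with a universal $\ve$. This difficulty is genuine, but it belongs to the proofs of \eqref{lemma3.6_pre_two_B} and Lemma \ref{improved_est_B} themselves (where it is resolved, following \cite{adimurthi2018sharp}, by enlarging the cylinder until the measure density condition on $\Om$ from Definition \ref{measure_def} and \descref{R6}{R6} guarantees a zero set of definite proportion), not to the corollary. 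As written, your boundary case replaces a one-line citation by an argument containing an unproved (and, without enlargement, false) step.
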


 \subsection{Bounds on \texorpdfstring{$\vlh$ and $\nabla \vlh$}.}
 
 \begin{lemma}
\label{lemma6.7-1_B}
Let $Q_i$ be a parabolic Whitney type cylinder. Then for any $z \in 2Q_i$, we have the following bound:
\begin{equation*}
\label{lemma6.7-1_est_B}
\lbr\frac{1}{\scalex{\al}{\mfz}\rho} |\vlh(z)| + |\nabla \vlh(z)|\rbr \lsb{\chi}{[\mft-s,\mft+s]} \apprle_{(\plog,\lamot,n)}  \la^{\frac{1}{p(\mfz)}}.
\end{equation*}
\end{lemma}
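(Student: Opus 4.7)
The plan is to mirror the approach used for the analogous interior-version Lemma \ref{lemma6.7-1}, but exploit the simplification that here we are dealing with the constant exponent $p(\mfz)$, so that the Whitney cylinders are scaled uniformly by $\la^{1/p(\mfz)}$ rather than by the variable $\la^{1/p(z_i)}$. Because $2Q_i \subset 8Q_i \subset \elam^c$ by \descref{W8}{W8}, every $z \in 2Q_i$ lies in the good set's complement, where the partition of unity $\{\psi_j\}$ gives $\sum_{j \in A_i} \psi_j(z) = 1$ by \descref{W13}{W13}. This is the structural fact that drives the whole argument.

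First I would rewrite the test function from \eqref{lipschitz_function_B} on $2Q_i$ using $\vh^i$ as a reference: since $\sum_{j \in A_i} \psi_j \equiv 1$ on $2Q_i$, we have
\begin{equation*}
\vlh(z) = \sum_{j \in A_i} \psi_j(z)\, \vh^j = \vh^i + \sum_{j \in A_i} \psi_j(z)\lbr \vh^j - \vh^i \rbr.
\end{equation*}
Then $|\vlh(z)| \le |\vh^i| + \sum_{j \in A_i} \psi_j(z)|\vh^j - \vh^i|$. For the first term, I would invoke \eqref{lemma3.6_pre_two_B} to get $|\vh^i| \apprle \scalex{\al}{\mfz}\rho\, \la^{1/p(\mfz)}$. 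For the second, Corollary \ref{corollary3.7_B} yields $|\vh^j - \vh^i| \apprle \min\{\scalex{\al}{\mfz}\rho, \scalex{\la}{\mfz}r_i\}\la^{1/p(\mfz)} \le \scalex{\al}{\mfz}\rho\, \la^{1/p(\mfz)}$; combining with the finite-overlap bound $\#A_i \apprle 1$ from \descref{W16}{W16} (when transported to this setting) gives $|\vlh(z)|/(\scalex{\al}{\mfz}\rho) \apprle \la^{1/p(\mfz)}$.

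Next, differentiating and using $\sum_{j \in A_i}\nabla\psi_j(z) = 0$ on $2Q_i$, I would obtain
\begin{equation*}
\nabla \vlh(z) = \sum_{j \in A_i} \nabla\psi_j(z) \lbr \vh^j - \vh^i \rbr.
\end{equation*}
Here the min in Corollary \ref{corollary3.7_B} is essential: I would choose the bound $|\vh^j - \vh^i| \apprle \scalex{\la}{\mfz} r_i\, \la^{1/p(\mfz)}$, combine with the partition estimate $|\nabla\psi_j(z)| \apprle 1/(\scalex{\la}{\mfz} r_j) \apprle 1/(\scalex{\la}{\mfz} r_i)$ from \descref{W12}{W12} together with the comparability of radii in \descref{W9}{W9}, and use $\#A_i \apprle 1$ to conclude $|\nabla \vlh(z)| \apprle \la^{1/p(\mfz)}$.

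The only subtlety — and it is much milder than in Appendix \ref{lipschitz_truncation} — is the boundary-in-time case: if $2Q_i \not\subset \Om_{3\rho}^{\al}(\mfx)\times(\mft-s,\infty)$, then by definition $\vh^i = 0$ and we must fall back on the pointwise bound $|\vh(z)| \apprle \scalex{\al}{\mfz}\rho\, \la^{1/p(\mfz)}$ from Corollary \ref{lemma3.6_B} to handle the summands $\vh^j$ individually. Since the constant-exponent setting removes the need to carefully track powers of the type $\la^{(p^+_{2Q_i}-p^-_{2Q_i})/p^-_{2Q_i}}$ that appeared in the analogous step of Lemma \ref{lemma6.7-1}, there is no real obstacle here; the cutoff factor $\lsb{\chi}{[\mft-s,\mft+s]}$ simply ensures the estimate is void outside the time interval of interest, which is trivially consistent with the displayed bound.
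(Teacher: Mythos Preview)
Your proposal is correct and follows essentially the same approach the paper indicates: the paper omits the proof of Lemma~\ref{lemma6.7-1_B} (and of the analogous Lemma~\ref{lemma6.7-1}), noting only that the constant-exponent setting makes the argument simpler than in Appendix~\ref{lipschitz_truncation}. Your rewriting $\vlh(z) = \vh^i + \sum_{j \in A_i}\psi_j(z)(\vh^j - \vh^i)$ on $2Q_i$ via \descref{W13}{W13}, combined with \eqref{lemma3.6_pre_two_B}, Corollary~\ref{corollary3.7_B}, and the partition bounds \descref{W12}{W12}, \descref{W9}{W9}, \descref{W16}{W16}, is exactly the intended route; one minor point is that in your boundary-in-time subtlety you should cite \eqref{lemma3.6_pre_two_B} (the bound on $|\vh^j|$) rather than Corollary~\ref{lemma3.6_B} (which is a pointwise bound on $|\vh|$), but the argument itself is right.
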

%
%

 \begin{corollary}
 \label{corollary6.7-2_B}
Let $z \in K_{3\rho}^{\al}(\mfz) \setminus \elam$, then $z \in 2Q_i$ for some $i \in \NN$. Then there holds for any $\de \in (0,1]$, the estimates
 \begin{gather*}
 \frac{1}{\scalex{\la}{\mfz} r_i } |\vlh(z)| \apprle_{{(\plog,\lamot,n)}}\frac{\la^{\frac{1}{p(\mfz)}}}{\de} +  \frac{\de}{\lbr \scalex{\la}{\mfz}r_i\rbr^2 \la^{\frac{1}{p(\mfz)}}} |\vh^i|^2, \label{bound+6.31_B}\\
  |\nabla \vlh(z)| \apprle_{{(\plog,\lamot,n)}}\frac{\la^{\frac{1}{p(\mfz)}}}{\de}. \label{bound+6.31_two_B}
 \end{gather*}

 \end{corollary}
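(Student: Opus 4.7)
\textbf{Plan of proof for Corollary \ref{corollary6.7-2_B}.} My strategy mirrors the proof of the analogous interior estimate (Corollary \ref{corollary6.7-2}) in Appendix \ref{lipschitz_truncation}, but exploits the substantial simplification afforded by the fact that the Whitney construction here takes place over the \emph{constant} exponent $p(\mfz)$. The starting observation is that for $z \in K_{3\rho}^{\al}(\mfz)\setminus \elam$, the partition-of-unity property \descref{W13}{W13} together with \descref{W7}{W7} gives $\sum_{j} \psi_j(z) = 1$, so from the definition \eqref{lipschitz_function_B} we have the clean representation
\begin{equation*}
\vlh(z) = \sum_{j\in\mci_i} \psi_j(z)\, \vh^j = \vh^i + \sum_{j \in \mci_i} \psi_j(z)\lbr \vh^j - \vh^i \rbr.
\end{equation*}

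First I would treat the bound on $\vlh$. Using Corollary \ref{corollary3.7_B}, \descref{W9}{W9} and the fact that $\# \mci_i \leq c(n)$ by \descref{W16}{W16}, the telescoping sum above is bounded by $C_{(\plog,\lamot,n)}\scalex{\la}{\mfz} r_i\, \la^{1/p(\mfz)}$. This yields
\begin{equation*}
\frac{|\vlh(z)|}{\scalex{\la}{\mfz} r_i} \leq \frac{|\vh^i|}{\scalex{\la}{\mfz} r_i} + C\, \la^{\frac{1}{p(\mfz)}}.
\end{equation*}
The remaining term $|\vh^i|/\scalex{\la}{\mfz} r_i$ is handled by splitting $|\vh^i|/(\scalex{\la}{\mfz} r_i) = \bgh{|\vh^i|/(\scalex{\la}{\mfz} r_i \la^{\frac{1}{2p(\mfz)}})} \cdot \la^{\frac{1}{2p(\mfz)}}$ and applying Young's inequality with weight $\de \in (0,1]$, which is exactly what produces the $\de$--structure appearing in the conclusion.

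For the gradient bound, I would differentiate the representation above and use $\sum_{j} \nabla \psi_j(z) = 0$ on $\elam^c$ (which follows from \descref{W13}{W13}) to obtain
\begin{equation*}
\nabla \vlh(z) = \sum_{j\in\mci_i} \nabla \psi_j(z)\lbr \vh^j - \vh^i\rbr.
\end{equation*}
Invoking \descref{W12}{W12} to bound $|\nabla \psi_j| \apprle 1/(\scalex{\la}{\mfz} r_j)$, Corollary \ref{corollary3.7_B} to control $|\vh^j - \vh^i|$, and \descref{W9}{W9}, \descref{W16}{W16} to sum over $j\in\mci_i$, we obtain the sharper pointwise bound $|\nabla \vlh(z)| \apprle \la^{1/p(\mfz)}$, from which the claimed estimate $|\nabla \vlh(z)| \apprle \la^{1/p(\mfz)}/\de$ is immediate for any $\de \in (0,1]$.

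I do not foresee any serious technical obstruction, since the estimate is essentially a direct consequence of the oscillation bound in Corollary \ref{corollary3.7_B} together with the standard partition-of-unity properties; the only modest subtlety is the correct choice of Young exponents to produce the asymmetric weighting by $\de$ and by $\scalex{\la}{\mfz} r_i \la^{1/p(\mfz)}$, which is dictated by the way this corollary will be used downstream in Lemma \ref{cruc_1} (or its boundary analog) when controlling the commutator with the time derivative.
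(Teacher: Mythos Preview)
Your proposal is correct and follows precisely the approach the paper intends: the paper omits the proof of Corollary \ref{corollary6.7-2_B}, noting that the estimates in Appendix \ref{lipschitz_truncation_B} are proved exactly as their counterparts in Appendix \ref{lipschitz_truncation} (with the constant exponent $p(\mfz)$ simplifying matters), and your argument---the representation $\vlh(z)=\vh^i+\sum_{j\in\mci_i}\psi_j(z)(\vh^j-\vh^i)$ on $\elam^c$, the oscillation bound from Corollary \ref{corollary3.7_B}, and Young's inequality with weight $\de$---is exactly that counterpart reasoning.
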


%

 \begin{lemma}
 \label{lemma6.7-3_B}
 Let $z \in K_{3\rho}^{\al}(\mfz) \setminus \elam$, then $z \in 2Q_i$ for some $i \in \NN$. Then there holds for any $\de \in (0,1]$, the estimates
 \begin{gather*}
 |\vlh(z)| \apprle_{(\plog,\lamot,n)} \frac{\scalex{\la}{\mfz} r_i\la^{\frac{1}{p(\mfz)}}}{\de} + \frac{\de}{\scalex{\la}{\mfz} r_i\la^{\frac{1}{p(\mfz)}}} \fiint_{\htq_i} |\vh(\tz)|^2 \ d\tz, \label{bound_6.7-3-1_B} \\
 |\nabla \vlh(z)| \apprle_{(\plog,\lamot,n)} \la^{\frac{1}{p(\mfz)}} + \frac{\de}{\lbr \scalex{\la}{\mfz} r_i\rbr^2\la^{\frac{1}{p(\mfz)}}} \fiint_{\htq_i} |uh(\tz)|^2 \ d\tz. \label{bound_6.7-3-2_B}
 \end{gather*}
 \end{lemma}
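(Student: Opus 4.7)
The plan is to deduce both estimates from pointwise bounds that are already available for $\vlh$ and $\nabla\vlh$, combined with a single Jensen step that upgrades the Whitney average $|\vh^i|^{2}$ to the local $L^{2}$-integral $\fiint_{\htq_i}|\vh|^{2}\,dz$. No further analytic input beyond Lemma \ref{lemma6.7-1_B}, Corollary \ref{corollary6.7-2_B}, Jensen's inequality, and the Whitney covering properties of Lemma \ref{whitney_covering_B} is required.

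For the first estimate, I would start from Corollary \ref{corollary6.7-2_B}, which gives
\[
\frac{|\vlh(z)|}{\scalex{\la}{\mfz}r_i}
\apprle \frac{\la^{\frac{1}{p(\mfz)}}}{\de} + \frac{\de\,|\vh^i|^{2}}{\lbr\scalex{\la}{\mfz}r_i\rbr^{2}\la^{\frac{1}{p(\mfz)}}},
\]
and multiply through by $\scalex{\la}{\mfz}r_i$ to match the scaling of the target. It then remains to replace $|\vh^i|^{2}$ by $\fiint_{\htq_i}|\vh|^{2}\,dz$. When $2Q_i \subset \Om_{3\rho}^{\al}(\mfx)\times(\mft-s,\infty)$, the defining identity $\vh^i = \fiint_{2Q_i}\vh\,\lsb{\chi}{[\mft-s,\mft+s]}\,dz$ combined with Jensen's inequality gives $|\vh^i|^{2}\le \fiint_{2Q_i}|\vh|^{2}\lsb{\chi}{[\mft-s,\mft+s]}\,dz$, and the Whitney size comparison $|2Q_i|\approx|\htq_i|$ (with $\htq_i=\hat{c}Q_i$ and $\hat c$ a universal constant) upgrades this to $|\vh^i|^{2}\apprle \fiint_{\htq_i}|\vh|^{2}\,dz$. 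In the complementary case $\vh^i = 0$ by construction, so the inequality is automatic since the right-hand side is nonnegative.

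For the second estimate, the key observation is that Lemma \ref{lemma6.7-1_B} already furnishes the $\de$-free pointwise bound $|\nabla\vlh(z)| \apprle \la^{\frac{1}{p(\mfz)}}$ on $2Q_i$, which is exactly the first summand of the target. The second summand $\frac{\de}{(\scalex{\la}{\mfz}r_i)^{2}\la^{1/p(\mfz)}}\fiint_{\htq_i}|\vh|^{2}\,dz$ is nonnegative, so adding it preserves the bound. The reason for recording the estimate in this seemingly redundant $\de$-dependent form, rather than the tighter inequality $|\nabla\vlh|\apprle\la^{1/p(\mfz)}$, is that the tail will interact cleanly with the first estimate when the lemma is applied in the second difference estimate of Section \ref{six}: the parameter $\de$ is tuned there to absorb a term of the form $\fiint_{\htq_i}|\vh|^{2}$ against the coercive energy contribution coming from the good set $\elam$.

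The proof has no genuine obstacle. The only technical point worth verifying is that the Whitney size comparison $|2Q_i|\approx|\htq_i|$ and the containment $2Q_i\subset\htq_i$ carry constants depending only on $(\plog,\lamot,n)$; this is immediate from $\htq_i=\hat{c}Q_i$ with $\hat{c}$ universal, together with properties \descref{W9}{W9}--\descref{W10}{W10} of Lemma \ref{whitney_covering_B}.
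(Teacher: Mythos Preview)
Your proposal is correct and matches the approach the paper implicitly relies on: the paper omits the proof of both Lemma \ref{lemma6.7-3} and Lemma \ref{lemma6.7-3_B}, referring the reader to \cite{adimurthi2018sharp}, but the natural derivation is exactly the one you give --- multiply the first inequality of Corollary \ref{corollary6.7-2_B} by $\scalex{\la}{\mfz}r_i$ and replace $|\vh^i|^{2}$ by $\fiint_{\htq_i}|\vh|^{2}\,dz$ via Jensen plus the Whitney measure comparison, and for the gradient bound simply observe that Lemma \ref{lemma6.7-1_B} already yields $|\nabla\vlh|\apprle\la^{1/p(\mfz)}$ so the extra nonnegative $\de$-term can be appended freely.
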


\subsection{Estimates on the time derivative of \texorpdfstring{$\vlh$}.}

 \begin{lemma}
 \label{time_vlh_B}
 Let $ z \in {K_{3\rho}^{\al}(\mfz)}$, then $z \in 2Q_i$ for some $i \in \NN$. We then have the following estimates for the time derivative of $\vlh$: 
 \begin{equation*}
 \label{bound_time_vlh_one_B}
 |\pa_t \vlh(\tz)| \apprle_{(\plog,\lamot,n)}  \frac{1}{ \scalet{\la}{\mfz} r_i^2} \fiint_{\tQ_i} |\vh(z)| \lsb{\chi}{[-s-s]} \ dz.
 \end{equation*}
We also have the improved estimate
\begin{equation*}
\label{bound_time_vlh_two_B}
 |\pa_t \vlh(\tz)| \apprle_{(\plog,\lamot,n)} \frac{1}{\scalet{\la}{\mfz} r_i^2} \la^{\frac{1}{p(\mfz)}} \min \left\{\scalex{\la}{\mfz}r_i,  \scalex{\al}{\mfz}\rho \right\}.
\end{equation*}

 \end{lemma}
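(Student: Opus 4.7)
The plan is to mirror, line by line, the proof of Lemma \ref{time_vlh} in Appendix \ref{lipschitz_truncation}, exploiting the fact that throughout Appendix \ref{lipschitz_truncation_B} the exponent in play is the constant $p(\mfz)$ rather than the variable $p(\cdot)$. Recall from \eqref{lipschitz_function_B} that
$$\vlh(z) = \vh(z) - \sum_{j \in \NN} \psi_j(z)\bigl(\vh(z) - \vh^j\bigr),$$
and on $\elam^c \cap K_{3\rho}^{\al}(\mfz)$ the partition of unity satisfies $\sum_{j} \psi_j(z) = 1$ by \descref{W13}{W13}. Hence on this set $\vlh(z) = \sum_j \psi_j(z) \vh^j$. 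Since each slice-average $\vh^j$ is a constant in $t$, differentiating in time kills the $\vh$ and leaves only $\pa_t \psi_j$.

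For the first estimate, fix $z \in 2Q_i$ and simply bound
$$|\pa_t \vlh(z)| \leq \sum_{j \in \mci(i)} |\vh^j|\,|\pa_t \psi_j(z)|.$$
Using $|\pa_t \psi_j(z)| \leq C/(\scalet{\la}{\mfz} r_j^2)$ from \descref{W12}{W12}, the finite-overlap property \descref{W16}{W16}, the comparability of neighbouring radii from \descref{W9}{W9}, and \eqref{lemma3.6_pre_two_B} to replace each $|\vh^j|$ by an average of $|\vh|$ over $\tQ_i$, one obtains the first bound.

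For the improved second estimate, we exploit that $\sum_{j \in \mci(i)} \pa_t \psi_j(z) = 0$ on $2Q_i$, which lets us rewrite
$$\pa_t \vlh(z) = -\sum_{j \in \mci(i)} (\vh^j - \vh^i)\,\pa_t \psi_j(z).$$
Plugging in the pointwise bound on $|\pa_t \psi_j|$ together with Corollary \ref{corollary3.7_B}, which yields $|\vh^j-\vh^i| \apprle \min\{\scalex{\al}{\mfz}\rho,\,\scalex{\la}{\mfz} r_i\}\la^{1/p(\mfz)}$ for every $j \in \mci(i)$, closes the estimate.

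The main obstacle is not any individual step — each is routine — but rather in confirming that Corollary \ref{corollary3.7_B} is applicable in this setting. That corollary rests on Lemma \ref{improved_est_B}, whose key input is Lemma \ref{lemma_crucial_2_B} applied to the difference $\bbb(t,\nabla v) - \aa(z,\nabla w)$; here the auxiliary bound \eqref{aa_bb} replaces the more delicate variable-exponent scaling that appeared in Appendix \ref{lipschitz_truncation}. Because the exponent is now constant, the awkward factors $\la^{(p^+_{2Q_i}-p^-_{2Q_i})/p^-_{2Q_i}}$ that forced the use of \eqref{2.2.28-1} in the proof of Lemma \ref{time_vlh} disappear entirely, so the bookkeeping is considerably cleaner and the lemma follows.
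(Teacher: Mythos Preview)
Your proposal is correct and mirrors exactly the approach the paper takes for the analogous Lemma \ref{time_vlh} in Appendix \ref{lipschitz_truncation}; indeed, the paper omits the proof of Lemma \ref{time_vlh_B} entirely, remarking only that the arguments in Appendix \ref{lipschitz_truncation_B} are the constant-exponent simplifications of those in Appendix \ref{lipschitz_truncation}. The only quibble is a stray minus sign in your rewrite $\pa_t \vlh(z) = -\sum_{j}(\vh^j-\vh^i)\pa_t\psi_j(z)$, but since the estimate is on the absolute value this is immaterial.
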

%
%
%
%
%
%

 \subsection{Some important estimates for the test function}
 
 \begin{lemma}
 \label{lemma6.8_B}
 Let $Q_i$ be a Whitney-type parabolic cylinder for some $ i \in \NN$. Then for any $\vt \in [1,2]$, there holds
 \begin{equation*}
 \label{lemma6.8-one_B}
 \iint_{{K_{3\rho}^{\al}(\mfz)} \setminus \elam} |\vlh(z)|^{\vt} \ dz \apprle_{(\plog,\lamot,n)} \iint_{{K_{3\rho}^{\al}(\mfz)} \setminus \elam} |\vh(z)|^{\vt} \ dz.
 \end{equation*}
\end{lemma}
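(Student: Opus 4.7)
The plan is to exploit the fact that on the bad set $K_{3\rho}^{\al}(\mfz)\setminus \elam$, the partition of unity is actually a partition of the number $1$, so the Lipschitz truncation simplifies to a convex combination of the averages $\vh^i$. More precisely, by \descref{W13}{W13} we have $\sum_i \psi_i(z) = 1$ for all $z$ in $\elam^c$, hence on $K_{3\rho}^{\al}(\mfz)\setminus\elam$ the definition \eqref{lipschitz_function_B} collapses to
\[
\vlh(z) = \vh(z) - \sum_i \psi_i(z)\bigl(\vh(z) - \vh^i\bigr) = \sum_i \psi_i(z)\, \vh^i.
\]

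First I would fix $z\in K_{3\rho}^{\al}(\mfz)\setminus\elam$ and use \descref{W10}{W10} (finite overlap of the cylinders $8Q_i$) together with $0\le \psi_i\le \lsb{\chi}{2Q_i}$ from \descref{W11}{W11} to bound the sum above by at most $c_{(n)}$ nonzero terms. An elementary convexity estimate then yields
\[
|\vlh(z)|^{\vt} \le \Bigl(\sum_i \psi_i(z)|\vh^i|\Bigr)^{\vt} \apprle_{(n)} \sum_i \psi_i(z)\,|\vh^i|^{\vt}.
\]
Integrating over $K_{3\rho}^{\al}(\mfz)\setminus\elam$ and using the support property $\spt(\psi_i)\subset 2Q_i$, this gives
\[
\iint_{K_{3\rho}^{\al}(\mfz)\setminus\elam} |\vlh(z)|^{\vt}\,dz \apprle \sum_i |\vh^i|^{\vt}\, |2Q_i|.
\]

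Next I would insert the definition of $\vh^i$ and apply Jensen's inequality (valid since $\vt\ge 1$), which gives
\[
|\vh^i|^{\vt} \le \fiint_{2Q_i} |\vh(\tz)|^{\vt}\lsb{\chi}{[\mft-s,\mft+s]}\, d\tz,
\]
with the convention that the right-hand side is interpreted as $0$ whenever $2Q_i\not\subset \Om_{3\rho}^{\al}(\mfx)\times(\mft-s,\infty)$, consistent with the definition of $\vh^i$. Substituting back and combining the $|2Q_i|$ factors, we arrive at
\[
\iint_{K_{3\rho}^{\al}(\mfz)\setminus\elam} |\vlh(z)|^{\vt}\,dz \apprle \sum_i \iint_{2Q_i} |\vh(\tz)|^{\vt}\lsb{\chi}{K_{3\rho}^{\al}(\mfz)}\, d\tz.
\]

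Finally, the finite overlap property \descref{W10}{W10} (extended to $2Q_i$ via \descref{W9}{W9}) together with the Whitney containment $2Q_i\subset \elam^c$ (from \descref{W8}{W8}) allows us to collapse the sum on the right into a single integral over $\elam^c\cap K_{3\rho}^{\al}(\mfz) = K_{3\rho}^{\al}(\mfz)\setminus\elam$, up to a constant depending only on $n$. This yields the desired bound. The argument is essentially soft; no obstacle beyond correctly tracking the $c_{(n)}$ constants coming from convexity and overlap, together with the bookkeeping for the cylinders $2Q_i$ that straddle the time boundary $\{t = \mft-s\}$, where the convention $\vh^i = 0$ is precisely what is needed to keep the estimate clean.
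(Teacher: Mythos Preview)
Your proof is correct and is exactly the standard argument one expects here. The paper itself omits the proof of this lemma entirely, pointing back to the analogous Lemma~\ref{lemma6.8} in Appendix~\ref{lipschitz_truncation} (whose proof is in turn deferred to \cite{adimurthi2018sharp}), so there is nothing to compare against; your write-up fills the gap cleanly.

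One small cosmetic remark: in your convexity step you do not need the finite-overlap constant at all, since $\sum_i \psi_i(z)=1$ with $\psi_i\ge 0$ gives directly by Jensen
\[
\Bigl|\sum_i \psi_i(z)\,\vh^i\Bigr|^{\vt} \le \sum_i \psi_i(z)\,|\vh^i|^{\vt},
\]
with constant $1$. The $c_{(n)}$ only enters at the final step when you collapse $\sum_i \lsb{\chi}{2Q_i}$ via \descref{W10}{W10}.
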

%

 \begin{lemma}
 \label{lemma6.8-1_B}
 Let $Q_i$ be a Whitney-type parabolic cylinder for some $i \in \NN$, then there holds
 \begin{equation*}
 \label{lemma6.8-two_B}
 \fiint_{2Q_i} |\vlh(z) - \tvh(z)| \ dz \apprle_{(\plog,\lamot,n)} \min\left\{ \scalex{\la}{\mfz} r_i, \scalex{\al}{\mfz}\rho \right\} \la^{\frac{1}{p(\mfz)}}.
 \end{equation*}

 \end{lemma}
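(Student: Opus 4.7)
The plan is to unwind the definition \eqref{lipschitz_function_B} of the Lipschitz truncation on a fixed Whitney cylinder $2Q_i$ and then invoke the averaged estimates for $\vh$ that have already been obtained. By \descref{W8}{W8} one has $2Q_i \subset 8Q_i \subset \elam^c$, so the partition-of-unity identity $\sum_{j \in A_i}\psi_j(z) = 1$ from \descref{W13}{W13} holds pointwise on $2Q_i$. Subtracting $\vh$ from $\vlh$ in \eqref{lipschitz_function_B} and applying this identity yields
\[
\vlh(z) - \vh(z) \;=\; \sum_{j \in A_i} \psi_j(z)\bigl(\vh^j - \vh(z)\bigr), \qquad z \in 2Q_i.
\]
Combined with $|\psi_j| \le 1$ from \descref{W12}{W12}, the triangle inequality $|\vh(z) - \vh^j| \le |\vh(z) - \vh^i| + |\vh^i - \vh^j|$, and the uniform cardinality bound $\#A_i \le c(n)$ from \descref{W16}{W16}, this gives
\[
\fiint_{2Q_i} |\vlh - \vh| \, dz \;\apprle\; \fiint_{2Q_i} |\vh - \vh^i| \, dz \;+\; \max_{j \in A_i}|\vh^i - \vh^j|.
\]

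The first term on the right is exactly the quantity controlled by Lemma \ref{improved_est_B} with $\tht = 1$, producing the desired bound $\min\{\scalex{\al}{\mfz}\rho,\, \scalex{\la}{\mfz} r_i\}\,\la^{1/p(\mfz)}$. For every $j \in A_i$, Corollary \ref{corollary3.7_B} provides the same bound for the difference $|\vh^i - \vh^j|$ of averages on neighbouring cubes, since $j \in A_i$ implies $2Q_j \subset 8Q_i$ by \descref{W17}{W17} and hence $j \in \mci(i)$ in the indexing used by the corollary. Summing these two contributions and absorbing the $c(n)$ factor from $\#A_i$ into the implicit constant yields the claimed estimate.

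I anticipate no serious analytic obstacle here: the constant-exponent setting ($p(\mfz)$ in place of the variable $\pp$) avoids altogether the delicate $\la^{(p_{2Q_i}^+ - p_{2Q_i}^-)/p_{2Q_i}^-}$ corrections that had to be absorbed through \eqref{2.2.28-1} in the variable-exponent analogue Lemma \ref{lemma6.8-1} in Appendix \ref{lipschitz_truncation}. The only point requiring a small amount of care is to make sure that the averaging identity on the right of \eqref{lipschitz_function_B} really restricts the summation to the finite index set $A_i$; this is exactly the purpose of \descref{W13}{W13} and \descref{W16}{W16}, so the matter is purely bookkeeping once those Whitney-covering properties are invoked.
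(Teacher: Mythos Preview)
Your proof is correct and follows exactly the natural route: expand $\vlh - \vh$ via \eqref{lipschitz_function_B}, restrict the sum to the finite neighbour set $A_i$ using the partition-of-unity identity \descref{W13}{W13}, and then invoke Lemma \ref{improved_est_B} and Corollary \ref{corollary3.7_B} for the two resulting terms. The paper omits the proof entirely (both here and for the analogous Lemma \ref{lemma6.8-1} in Appendix \ref{lipschitz_truncation}), referring the reader to \cite{adimurthi2018sharp} and noting that the constant-exponent case is simpler; your argument is precisely the standard one those references would supply.
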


 \begin{lemma}
 \label{lemma6.8-2_B}
 Let $Q_i$ be a Whitney-type parabolic cylinder for some $i \in \NN$, then there holds
 \begin{equation*}
 \label{lemma6.8-three_B}
 \begin{array}{rl}
 \iint_{K_{3\rho}^{\al}(\mfz) \setminus \elam } |\pa_t \vlh(z) \lbr \vlh(z) - \vh(z)\rbr|^{\vt} \ dz & \apprle_{(\plog,\lamot,n)} \la^{\vt} |\RR^{n+1} \setminus \elam|.
 \end{array}
 \end{equation*}
 \end{lemma}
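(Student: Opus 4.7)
The proof of Lemma \ref{lemma6.8-2_B} will follow the same template as its Appendix \ref{lipschitz_truncation} counterpart Lemma \ref{lemma6.8-2}, with simplifications coming from the fact that we are working with the frozen exponent $p(\mfz)$ rather than the variable $p(z)$. The plan is to localize on the Whitney cover $\{2Q_i\}$ of $K_{3\rho}^{\al}(\mfz)\setminus\elam$ (using property \descref{W10}{W10} to control the overlap by a constant depending only on $n$), reducing the global estimate to a per-cylinder estimate
\[
J_i := \iint_{2Q_i} \abs{\pa_t \vlh(z)\lbr \vlh(z) - \vh(z)\rbr}^{\vt}\lsb{\chi}{K_{3\rho}^{\al}(\mfz)}\ dz \apprle \la^{\vt} |Q_i|,
\]
summed over $i\in\NN$.

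For each fixed $i$, I would first apply the refined pointwise bound from Lemma \ref{time_vlh_B} to extract
\[
\abs{\pa_t \vlh(z)} \apprle \frac{\la^{\frac{1}{p(\mfz)}}}{\scalet{\la}{\mfz}r_i^2}\min\mgh{\scalex{\al}{\mfz}\rho,\ \scalex{\la}{\mfz}r_i}
\]
as a constant in front of $\iint_{2Q_i}\abs{\vlh-\vh}^{\vt}\lsb{\chi}{K_{3\rho}^{\al}(\mfz)}\,dz$. To estimate the remaining integral, I would use the representation \eqref{lipschitz_function_B} together with $\sum_{j\in\mci(i)}\psi_j\equiv 1$ on $2Q_i$ (\descref{W13}{W13}, noting $\#\mci(i)\apprle 1$ by \descref{W16}{W16}) to rewrite
\[
\vlh(z)-\vh(z) = \sum_{j\in\mci(i)}\psi_j(z)\lbr \vh^j - \vh(z)\rbr,
\]
which reduces matters to bounding $\fiint_{2Q_j}|\vh\lsb{\chi}{[\mft-s,\mft+s]}-\vh^j|^{\vt}\,dz$ on each neighbor $2Q_j\subset 8Q_i$. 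This oscillation is exactly what Lemma \ref{improved_est_B} controls by $(\scalex{\la}{\mfz}r_i)^{\vt}\la^{\vt/p(\mfz)}$.

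Combining the two bounds and using the identity $\lbr\scalex{\la}{\mfz}r_i\rbr^2 \cdot \scalet{\la}{\mfz}^{-1}r_i^{-2} = \la^{-2/p(\mfz)+d}\la^{1-d} = \la^{1-2/p(\mfz)}$, the product of intrinsic scaling factors collapses to give $J_i \apprle \la^{\vt}|Q_i|$ with a constant depending only on $n$, $\plog$, $\La_0$, $\La_1$. Summing over $i\in\NN$ and invoking the bounded-overlap property \descref{W10}{W10} together with the obvious inclusion $\bigcup_i Q_i\subset \RR^{n+1}\setminus\elam$ yields the claimed global bound. The main (and only) subtlety is the bookkeeping of the intrinsic powers of $\la$; because $p(\mfz)$ is constant throughout this section, there are no modulus-of-continuity terms of the form $\la^{p^+_{2Q_i}-p^-_{2Q_i}}$ to absorb as in \eqref{2.2.28-1}, which makes the computation cleaner than in Appendix \ref{lipschitz_truncation}.
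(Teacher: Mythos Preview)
Your proposal is correct and follows essentially the same approach as the paper: the paper omits the proof of Lemma \ref{lemma6.8-2_B} and refers back to the proof of Lemma \ref{lemma6.8-2}, which proceeds exactly as you outline (localize to $2Q_i$, pull out the time-derivative bound from Lemma \ref{time_vlh_B}, expand $\vlh-\vh$ via the partition of unity, apply Lemma \ref{improved_est_B}, and collapse the intrinsic scaling factors). Your observation that the constant exponent $p(\mfz)$ removes the need for the $\la^{p^+_{2Q_i}-p^-_{2Q_i}}$ bookkeeping of \eqref{2.2.28-1} is precisely the simplification the paper alludes to.
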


\subsection{Lipschitz continuity}

\begin{lemma}
 \label{lipschitz_continuity_B}
 Let $\la \geq 1$, then for any $\tz \in \Om_{3\rho}^{\al}(\mfx) \times [\mft -s,\mft+s]$ and $r>$, there exists a constant $C>0$ independent of $\tz$ and $r$ such that
\begin{equation*}\label{da_pr_B}
 I_r(\tz) := \frac{1}{\abs{\Om_{3\rho}^{\al}(\mfx) \times [\mft -s,\mft+s]}} \iint_{\Om_{3\rho}^{\al}(\mfx) \times [\mft -s,\mft+s]} \abs{\frac{\vl(z) - \avgs{\vl}{\Om_{3\rho}^{\al}(\mfx) \times [\mft -s,\mft+s]}}{r}} \ dz \leq C < \infty.
 \end{equation*}
 In particular, this implies for any $z_1, z_2 \in \Om_{3\rho}^{\al}(\mfx) \times [\mft -s,\mft+s]$, there exists a constant $K>0$ such that
 \[
 |\vl(z_1) - \vl(z_2)| \leq K d_p(z_1,z_2).
 \]
 \end{lemma}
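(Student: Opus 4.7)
The plan is to mirror the argument used to establish Lemma \ref{lipschitz_continuity} in Appendix \ref{lipschitz_truncation}, the key simplification being that here we work with the constant exponent $p(\mfz)$ instead of the variable exponent $p(z)$, which removes the $\rho^{\pm(p^+_Q - p^-_Q)}$ bookkeeping that required Lemma \ref{bound_rho}. First, I would invoke the Da Prato integral characterization of Lipschitz functions (Lemma \ref{deprato}) applied to $\vlh$ on $\Om_{3\rho}^{\al}(\mfx) \times (\mft-s,\mft+s)$: it suffices to bound the averaged oscillation $I_r(\tz)$ over every parabolic ball $Q_r(\tz)$ intersected with the domain, by a constant independent of $\tz$ and $r>0$, and then observe via \eqref{equiv_dist} that the metrics $d_p$ and $d_{\tz}$ are comparable (with constants depending on $\al,p^-,d$, which are absorbed into the $o(1)$ notation exactly as in Lemma \ref{lipschitz_continuity}).

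Next, I would split into the same three regimes as in the proof of Lemma \ref{lipschitz_continuity}. In the \emph{interior case} $2Q \subset \elam^c$, the function $\vlh$ is smooth on $\elam^c$ (by \eqref{lipschitz_function_B} and the partition of unity from \descref{W11}{W11}--\descref{W12}{W12}), so the mean value theorem reduces $I_r(\tz)$ to a bound on $|\nabla \vlh| + r|\pa_t \vlh|$. These are controlled by Corollary \ref{corollary6.7-2_B} (with $\de = 1$) and Lemma \ref{time_vlh_B}, respectively, and the comparison $r \apprle r_i$ follows from \descref{W8}{W8} since $2Q \subset \elam^c$ forces $r \le d_p(z,\elam) \le (1+\hat{c}) r_i$ for any $z \in 2Q \cap 2Q_i$. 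In the \emph{mixed case} $2Q \nsubseteq \elam^c$ with $2Q$ lying on one side of the time slice $\{\mft\pm s\}$, I would decompose $I_r(\tz) \le 2J_1 + J_2$ where $J_1$ is the $L^1$-average of $|\vlh - \vh|/r$ and $J_2$ is the Poincar\'e-type oscillation of $\vh$; then $J_1$ is controlled by Lemma \ref{improved_est_B} (using \descref{W8}{W8} to get $r_i \apprle r$ for every Whitney cube meeting $Q$), and $J_2$ is controlled by either the parabolic Poincar\'e inequality (Lemma \ref{lemma_crucial_1}) combined with Lemma \ref{lemma_crucial_2_B} and \eqref{aa_bb}, or by a standard Poincar\'e inequality if $Q$ crosses the spatial boundary where $v\equiv 0$.

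Finally, in the remaining subcase where $2Q$ meets both $\{\mft-s\}$ and $\{\mft+s\}$, the proof splits further into $r^2 \le s$ (where the same $J_1+J_2$ decomposition works after accounting for the reduced measure $|Q \cap (\RR^n \times [\mft-s,\mft+s])| = |B| \cdot 2s$) and $r^2 \ge s$, where I would use triangle inequality together with Corollary \ref{lemma3.6_B} to control $\vlh$ on $\elam^c$ and $\vh$ on $\elam$ directly, bounding the whole integral by $o(1)$. The main obstacle, as in Appendix \ref{lipschitz_truncation}, is the mixed case with $r^2 \le s$: here one must track carefully the ratio $|Q_i|/|Q \cap (\RR^n \times [\mft-s,\mft+s])|$ and verify that the exponents of $\la$ collapse to a $\la$-independent constant; however, since we are now in the constant-exponent regime, the term $\la^{1/p(z_i) + (p^+_{2Q_i}-1)/p^-_{2Q_i} - 1}$ that appeared in the variable-exponent proof simplifies immediately to $\la^0 = 1$, so this obstacle is actually strictly easier here. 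Once $I_r(\tz) \le C$ uniformly is established, Lemma \ref{deprato} gives $\vl \in C^{0,1}(\Om_{3\rho}^{\al}(\mfx) \times (\mft-s,\mft+s))$ and hence the pointwise estimate $|\vl(z_1) - \vl(z_2)| \le K d_p(z_1,z_2)$ as claimed.
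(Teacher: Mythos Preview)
Your proposal is correct and follows exactly the approach intended by the paper: the paper does not give an explicit proof of Lemma \ref{lipschitz_continuity_B} but defers to the proof of Lemma \ref{lipschitz_continuity} in Appendix \ref{lipschitz_truncation}, replacing the variable-exponent ingredients by their constant-exponent $p(\mfz)$ counterparts (Lemmas \ref{improved_est_B}, \ref{lemma_crucial_2_B}, \ref{time_vlh_B}, Corollaries \ref{corollary6.7-2_B}, \ref{lemma3.6_B}, and the Whitney properties \descref{W6}{W6}--\descref{W17}{W17}). Your observation that the exponent bookkeeping $\la^{1/p(z_i) + (p^+_{2Q_i}-1)/p^-_{2Q_i} - 1}$ collapses to $1$ in the constant-exponent setting is precisely the simplification the paper alludes to at the start of Appendix \ref{lipschitz_truncation_B}.
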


\subsection{Crucial estimates for the test function}
In this subsection, we shall prove three crucial estimates that will be needed. Note that by the time these estimates are applied, we would have taken $h \searrow 0$ in the Steklov average. 

\begin{lemma}
\label{cruc_1_B}
Let $\la \geq 1$, then for any $i \in \NN$, $\de \in (0,1]$ and a.e. $t \in (\mft-s,\mft+s)$, there exists a constant $C = C_{(\plog,\lamot,n)}$ such that there holds
\begin{equation*}
\label{cruc_est_1_B}
\abs{\int_{\Om_{3\rho}^{\al}(\mfx)} \lbr v(x,t) - v^i \rbr \vl(x,t) \psi_i(x,t)\ dx} \leq C \lbr \frac{\la}{\de} |Q_i| + \de  |B_i|\fiint_{2Q_i} |v(z)|^2\lsb{\chi}{[\mft-s,\mft+s]}\ dz\rbr.
\end{equation*}
\end{lemma}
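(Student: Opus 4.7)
\noindent The plan is to mirror the proof of Lemma~\ref{cruc_1}, with the essential simplification that everything is now expressed with respect to the constant exponent $p(\mfz)$ rather than the variable exponent $p(\cdot)$. First, I would test the Steklov-averaged forms of the PDEs~\eqref{wapprox_int} and~\eqref{vapprox_bnd} with $\psi_i \vl$; this is admissible by the Lipschitz continuity of $\vl$ on $K_{3\rho}^{\al}(\mfz)$ from Lemma~\ref{lipschitz_continuity_B}, the support properties of $\psi_i$, and the time-regularity of $w$ and $v$ obtained in Lemma~\ref{time_reg_w}. Subtracting the two weak formulations, integrating on $(\max\{t_i - \scalet{\la}{\mfz} 4 r_i^2, \mft-s\},\, t)$, choosing $a = \vh^i$, and letting $h \searrow 0$ produces the identity
\begin{equation*}
\int_{\Om_{3\rho}^{\al}(\mfx)} \!\!(v - v^i) \psi_i \vl(y,t)\, dy
=
\int\!\!\int \iprod{\bbb(\tau,\nabla v) - \aa(y,\tau,\nabla w)}{\nabla(\psi_i \vl)}\, dy\, d\tau
- v^i \!\int\!\!\int \partial_\tau(\psi_i \vl)\, dy\, d\tau,
\end{equation*}
both integrals in $\tau$ running over the interval above and in $y$ over $\Om_{3\rho}^{\al}(\mfx)$.

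Next I would bound the three resulting pieces in parallel with the decomposition of Lemma~\ref{cruc_1} into $J_1, J_2, J_3$. The key input is the pointwise bound~\eqref{aa_bb}, which replaces the mixed terms $|[\aa(\cdot,\cdot,\nabla u) - \aa(\cdot,\cdot,\nabla w)]_h|$ and $[|\bff|^{p(z)-1}]_h$ appearing in Appendix~\ref{lipschitz_truncation} by the single factor $(\mu^2 + |\nabla w|^2)^{(p(\mfz)-1)/2} + (\mu^2 + |\nabla v|^2)^{(p(\mfz)-1)/2}$. Combining \descref{W8}{W8} (so that $16 Q_i \cap \elam \neq \emptyset$) with the definition of $\elam$ in~\eqref{elambda_B} yields directly
\begin{equation*}
\fiint_{2Q_i} \bigl(|\nabla w| + |\nabla v| + 1\bigr)^{p(\mfz)-1}\, dz \apprle \la^{\frac{p(\mfz)-1}{p(\mfz)}},
\end{equation*}
so the troublesome factor $\la^{1/p(z_i) + (p^+_{2Q_i}-1)/p^-_{2Q_i} - 1}$ from Lemma~\ref{cruc_1} collapses cleanly to $\la^{1/p(\mfz)} \la^{(p(\mfz)-1)/p(\mfz)} = \la$ with no appeal to \eqref{2.2.28-1}.

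For the term $J_1$ carrying the $\nabla \psi_i$ part of $\nabla(\psi_i \vl)$, I would split into the two sub-cases $\scalex{\al}{\mfz}\rho \leq \scalex{\la}{\mfz} r_i$ and $\scalex{\al}{\mfz}\rho \geq \scalex{\la}{\mfz} r_i$, using Lemma~\ref{lemma6.7-1_B} in the first and the $\de$-dependent bound of Lemma~\ref{lemma6.7-3_B} in the second; this is precisely where the $\de |B_i| \fiint_{2Q_i} |v|^2\, dz$ term on the right-hand side is generated. For $J_2$, which carries the $\psi_i |\nabla \vl|$ factor, Corollary~\ref{corollary6.7-2_B} with any $\de \in (0,1]$ gives $J_2 \apprle \frac{\la}{\de}|Q_i|$ at once. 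For $J_3$, the time-derivative term, I would apply the improved pointwise bound of Lemma~\ref{time_vlh_B} together with Lemma~\ref{improved_est_B} to control $\iint_{2Q_i}|v - v^i|\, dz$ by $|Q_i| (\scalex{\la}{\mfz} r_i)\la^{1/p(\mfz)}$, giving a contribution of the same form $\frac{\la}{\de}|Q_i| + \de |B_i| \fiint_{\htq_i}|v|^2\, dz$ after unraveling the intrinsic scaling factors $\scalex{\la}{\mfz}r_i$ and $\scalet{\la}{\mfz}r_i^2$.

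I do not anticipate a genuine conceptual obstacle here: the proof is a simplification of Lemma~\ref{cruc_1}. The main bookkeeping point will be verifying that, in passing from $\htq_i$ (used in Lemma~\ref{cruc_1}) to $2Q_i$ (as stated here), the bounded-overlap property \descref{W10}{W10} absorbs all constants uniformly in $i$, and that all appearances of $\scalex{\la}{\mfz}$ and $\scalet{\la}{\mfz}$ cancel against the intrinsic geometry of $Q_i$ so that only the clean factor $\la$ survives in the final bound.
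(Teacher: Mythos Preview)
Your proposal is correct and matches the paper's approach: the paper omits the proof of Lemma~\ref{cruc_1_B} entirely, stating only that the estimates in Appendix~\ref{lipschitz_truncation_B} follow from those in Appendix~\ref{lipschitz_truncation} with simplifications due to the constant exponent $p(\mfz)$, which is precisely the strategy you outline. One small remark: your displayed identity should carry $(v-v^i)\,\partial_\tau(\psi_i\vl)$ rather than just $-v^i\,\partial_\tau(\psi_i\vl)$ (the same slip appears in the paper's~\eqref{3.323}), but your subsequent treatment of $J_3$ via Lemma~\ref{improved_est_B} already uses the correct factor $|v-v^i|$, so the argument goes through.
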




\begin{lemma}
\label{cruc_3_B}
Let $\la \geq 1$, then for a.e. $t \in [\mft-s,\mft+s]$, there exists a constant $C = C_{(\plog,\lamot,n)}$ such that there holds
\begin{equation*}
\label{cruc_est_3_B}
\int_{\Om_{3\rho}^{\al}(\mfx)\setminus \elam(t)} \lbr |\tv|^2 - |v- \vl|^2 \rbr \ dx \geq -C \la |\RR^{n+1} \setminus \elam|.
\end{equation*}
\end{lemma}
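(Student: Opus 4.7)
The plan is to mimic the proof of Lemma \ref{cruc_3} almost verbatim, with the simplification that we are now working with the constant exponent $p(\mfz)$ rather than the variable exponent $\pp$. Fix $t \in [\mft-s,\mft+s]$, and set
\[
 \tTh := \left\{ i \in \NN : \spt(\psi_i) \cap \Om_{3\rho}^{\al}(\mfx) \times \{t\} \neq \emptyset,\ |v|+|\vl| \not\equiv 0 \text{ on } \spt(\psi_i)\cap(\Om_{3\rho}^{\al}(\mfx)\times\{t\}) \right\}.
\]
For $i \notin \tTh$ the contribution is zero, so only the sum over $\tTh$ matters. Using the algebraic identity $|v|^2 - |v-\vl|^2 = |v^i|^2 + 2\vl(v-v^i) - |\vl - v^i|^2$ and the fact that $\sum_{i\in\tTh}\psi_i(\cdot,t)\equiv 1$ on the relevant set (by \descref{W13}{W13}), I would write
\[
 \int_{\Om_{3\rho}^{\al}(\mfx)\setminus\elam(t)} \bigl(|v|^2 - |v-\vl|^2\bigr)\ dx \;=\; J_1 \;-\; J_2,
\]
where $J_1 := \sum_{i\in\tTh}\int_{\Om_{3\rho}^{\al}(\mfx)}\psi_i\bigl(|v^i|^2 + 2\vl(v-v^i)\bigr)\ dx$ and $J_2 := \sum_{i\in\tTh}\int_{\Om_{3\rho}^{\al}(\mfx)}\psi_i |\vl - v^i|^2\ dx$.

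For $J_1$, I would invoke Lemma \ref{cruc_1_B} directly for each $i \in \tTh$, obtaining
\[
 J_1 \;\apprge\; \sum_{i\in\tTh}\int_{\Om_{3\rho}^{\al}(\mfx)}\psi_i |v^i|^2\ dx \;-\; \de\sum_{i\in\tTh}|B_i|\,|v^i|^2 \;-\; \frac{\la}{\de}\sum_{i\in\tTh}|Q_i|.
\]
Note that by the very definition of $v^i$ in the modified test function construction (the analogue of \eqref{def_tuh}), we have $v^i = 0$ whenever $\spt(\psi_i) \not\subset \Om_{3\rho}^{\al}(\mfx)\times[\mft-s,\infty)$, so only Whitney cylinders genuinely inside the parabolic cylinder contribute. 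Choosing $\de$ small enough depending only on the Whitney covering constants (in particular using the fact that $\scalet{\la}{\mfz}r_i^2 |B_i| \approx |Q_i|$ so that $\de |B_i||v^i|^2$ can be absorbed into the positive $\int \psi_i |v^i|^2\, dx$-type term after integrating in time if necessary, exactly as in the proof of Lemma \ref{cruc_3}) and using \descref{W10}{W10} so that $\sum_{i\in\tTh}|Q_i| \apprle |\RR^{n+1}\setminus\elam|$, gives
\[
 J_1 \;\apprge\; -\,\la\,|\RR^{n+1}\setminus\elam|.
\]

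For $J_2$, I would use Lemma \ref{partition_unity} (or rather \descref{W13}{W13}) and write $\vl - v^i = \sum_{j\in A_i}\psi_j(v^j - v^i)$ pointwise on $\spt(\psi_i)$, then apply Corollary \ref{corollary3.7_B} to each difference $|v^j - v^i|$ to obtain
\[
 \psi_i(z)|\vl(z) - v^i|^2 \;\apprle\; \min\bigl\{\scalex{\al}{\mfz}\rho,\,\scalex{\la}{\mfz} r_i\bigr\}^2 \la^{\frac{2}{p(\mfz)}} \;\apprle\; \bigl(\scalex{\la}{\mfz} r_i\bigr)^2 \la^{\frac{2}{p(\mfz)}}.
\]
Integrating over $\Om_{3\rho}^{\al}(\mfx)$ with $|B_i|\apprle |Q_i|/\scalet{\la}{\mfz}r_i^2$, using the definition of the intrinsic scaling $\scalex{\la}{\mfz}$, $\scalet{\la}{\mfz}$ to see that $(\scalex{\la}{\mfz}r_i)^2 \la^{\frac{2}{p(\mfz)}} / \scalet{\la}{\mfz}r_i^2 \apprle \la$, and summing over $i\in\tTh$ using \descref{W10}{W10} yields
\[
 J_2 \;\apprle\; \la\,|\RR^{n+1}\setminus\elam|.
\]
Combining the two bounds gives the claim.

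The main technical point, as in the proof of Lemma \ref{cruc_3}, is ensuring the correct interplay between the two minima $\scalex{\al}{\mfz}\rho$ and $\scalex{\la}{\mfz}r_i$ (the boundary term of $v$ on $\pa_p K_{3\rho}^{\al}(\mfz)$ forces the former into play) so that the $|v^i|^2$ growth in $J_1$ really can be controlled by the positive terms and absorbed; this is automatic once one chooses $\de \in (0,1]$ small enough depending only on the structural constants. The rest is routine bookkeeping using the Whitney covering \descref{W6}{W6}--\descref{W17}{W17}.
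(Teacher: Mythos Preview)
Your proposal is correct and follows essentially the same approach as the paper: the paper omits the proof of Lemma \ref{cruc_3_B} entirely, referring back to Lemma \ref{cruc_3}, and your argument is precisely that adaptation---same algebraic decomposition $J_1-J_2$, same use of Lemma \ref{cruc_1_B} and Corollary \ref{corollary3.7_B}, same Whitney bookkeeping, with the expected simplification that the constant exponent $p(\mfz)$ replaces $p(z_i)$ throughout (so no appeal to \eqref{2.2.28-1} is needed). One small slip: in your $J_1$ estimate you write $\de\sum|B_i||v^i|^2$, whereas Lemma \ref{cruc_1_B} actually gives $\de|B_i|\fiint_{2Q_i}|v|^2$; the paper's proof of Lemma \ref{cruc_3} makes the same substitution, and the absorption argument still goes through after summing and using bounded overlap, but it is worth being aware that this is where the ``suitable choice of $\de$'' really does its work.
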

\end{appendices}

\section*{References}

\end{document}